\newtheorem{thm}{Theorem}
\newtheorem{lem}{Lemma}[section]
\newtheorem{rem}{Remark}[section]
\newcommand{\dis}{\displaystyle}
\newcommand{\R}{{\Bbb R}}
\newcommand{\N}{{\Bbb N}}
\newcommand{\pa}{\partial}
\title{\Large\sf Existence of blowup solutions to the semilinear heat equation
 with double power nonlinearity}
\author{Junichi Harada
\\{\small Faculty of Education and Human Studies, Akita University}
%\\[1mm]{\small 1-1 Tegata-Gakuenmachi, Akita 010-8502, Japan}
\\[1mm]{\small email: harada-j@math.akita-u.ac.jp}}
\begin{document}
\maketitle
\thispagestyle{empty}
%%%%%%%%%%%%%%%%%%%%%%%%%%%%%%%%%%%%%%%%%%%%%%%%%%%%%%%%%%%%%%%%

%%%%%%%%%%%%%%%%%%%%%%%%%%%%%%%%%%%%%%%%%%%%%%%%%%%%%%%%%%%%%%%%
 \begin{abstract}
 We consider
 the semilinear heat equation
 $u_t=\Delta u+|u|^{p-1}u-|u|^{q-1}u$ in $\R^n\times(0,T)$,
 where $n=5$, $p=\frac{n+2}{n-2}$ and $q\in(0,1)$.
 By the presence of $-|u|^{q-1}u$,
 this equation has a finite time extinction property.
 We show the existence of a new type of blowup solutions
 by using this property.
 In fact,
 we obtain such blowup solutions
 by connecting
 a specific blowup solution of $u_t=\Delta u+|u|^{p-1}u$
 and
 a specific solution of $u_t=\Delta u-|u|^{q-1}u$,
 and
 by adding correction terms.
 \end{abstract}
%%%%%%%%%%%%%%%%%%%%%%%%%%%%%%%%%%%%%%%%%%%%%%%%%%%%%%%%%%%%%%%%

%%%%%%%%%%%%%%%%%%%%%%%%%%%%%%%%%%%%%%%%%%%%%%%%%%%%%%%%%%%%%%%%
 \noindent
 {\bf Keyword}: semilinear heat equation; double power nonlinearity;
 type II blowup; matched asymptotic expansion; 
%%%%%%%%%%%%%%%%%%%%%%%%%%%%%%%%%%%%%%%%%%%%%%%%%%%%%%%%%%%%%%%
 \tableofcontents

 \section{Introduction}
 \label{sec_1}
%%%%%%%%%%%%%%%%%%%%%%%%%%%%%%%%%%%%%%%%%%%%%%%%%%%%%%%%%%%%%%%%
 We are concerned with blowup solutions to the semilinear heat equation.
 \begin{equation}\label{eq1.1}
 \begin{cases}
 u_t = \Delta u+|u|^{p-1}u-|u|^{q-1}u
 &
 \text{in } \R^n\times(0,T),\\
 u|_{t=0}=u_0(x)
 &
 \text{on } \R^n.
 \end{cases}
 \end{equation}
 The exponents $p$ and $q$ are given by
 \[
 p=\tfrac{n+2}{n-2},
 \qquad
 q\in(0,1).
 \]
 From the sign of the last term $|u|^{q-1}u$,
 \eqref{eq1.1} admits a unique local classical solution
 for any bounded continuous initial data $u_0$.
 If there exits $T>0$ such that $\limsup_{t\to T}\|u(t)\|_\infty=\infty$,
 we say that a solution $u(x,t)$ blows up in a finite time $T$.
 Blowup solutions are often classified into two cases
 in terms of the blowup rate.
 As in the case of $u_t=\Delta u+|u|^{p-1}u$,
 we define
 \begin{align*}
 \limsup_{t\to T}(T-t)^\frac{1}{p-1}\|u(t)\|_\infty
 &<\infty
 \qquad (\text{type I}),
 \\
 \limsup_{t\to T}(T-t)^\frac{1}{p-1}\|u(t)\|_\infty
 &=\infty
 \qquad (\text{type II}).
 \end{align*}
 The factor $(T-t)^{-\frac{1}{p-1}}$ in this definition
 comes from
 the blowup rate of ODE solutions of
 $u_t=|u|^{p-1}u-|u|^{q-1}u$.
 On the other hand,
 by the presence of the absorption term $-|u|^{q-1}u$ with $q\in(0,1)$,
 this equation has a finite time extinction property.
 In fact,
 if $|u_0(x)|<A<1$,
 the solution $u(x,t)$ becomes identically zero at $t=T$,
 where $T$ is a certain positive time.
 By the uniqueness of solutions to \eqref{eq1.1},
 the solution $u(x,t)$ must be zero for $t>T$.
 In this paper,
 we try to understand the role of the extinction properties
 in blowup phenomena.
 % A goal of this paper is
% to understand the role of
% this kind of extinction properties
% in blowup phenomena.
%%%%%%%%%%%%%%%%%%%%%%%%%%%%%%%%%%%%%%%%%%%%%%%%%%%%%%%%%%%%%%%%
% A goal of this paper is
% to understand the role of
% this kind of extinction properties
% in blowup phenomena.

 Our motivation of this paper
 comes from the work
 by Le Coz, Martel and Rapa\'el \cite{Coz} (see also \cite{Matsui}).
 They consider
 \begin{equation}\label{eq1.2}
 i\pa_tu
 +
 \Delta u
 +
 |u|^{p-1}u
 +
 \epsilon
 |u|^{q-1}
 u
 =0
 \end{equation}
 with $p=1+\frac{4}{n}$, $\epsilon=\pm1$ and $q\in(1,p)$.
 For the case $\epsilon=1$,
 they show the existence of minimal blowup solutions of \eqref{eq1.2}
 satisfying
 \[ 
 \|\nabla u(t)\|_2=c_q(T-t)^{-\sigma_q}
 \]
 with
 $\sigma_q=\frac{4}{4+n(q-1)}$.
 In \cite{Coz},
 they mention that
 this blowup speed $\sigma_q$ is discontinuous at $q=p$,
 since the blowup speed for $q=p$ is given by
 $\|\nabla u(t)\|_2=c(T-t)^{-1}$,
 which corresponds to a standard minimal blowup solution of
 $i\pa_tu+\Delta u+2|u|^{p-1}u=0$
 (on the other hand, $\sigma_q$ is continuous at $q=1$).
 The problem \eqref{eq1.1} is the heat equation version of \eqref{eq1.2}.
 We are also interested in
 the blowup dynamics of \eqref{eq1.1} and $q$ dependence of them.
 Since the range of $q$ in our case is quite different from their case,
 blowup solutions of \eqref{eq1.1} are expected to exhibit
 a new type of asymptotic formula.

 The study of type II blowup solutions to the nonlinear heat equation
 are started from Herrero and Vel\'azquez
 \cite{HerreroV2,HerreroV3} (see also \cite{Mizoguchi}).
 In the pioneering work \cite{HerreroV2,HerreroV3},
 they construct positive radial type II blowup solutions to
 the Fujita equation
 \begin{equation}\label{eq1.3}
 u_t=\Delta u+|u|^{p-1}u
 \end{equation}
 for the case $p>p_{\rm JL}$.
 Here we do not give the definition of the exponent
 $p_{\rm JL}$,
 which is defined only for $n\geq10$ and satisfies $p_{\rm JL}>\frac{n+2}{n-2}$.
%%%%%%%%%%%%%%%%%%%%%%%%%%%%%%%%%%%%%%%%%%%%%%%%%%%%%%%%%%%%%%%%
 After their work,
 Seki obtains
 a new type of type II blowup solutions to \eqref{eq1.3}
 for a certain range of $p\geq p_{\rm JL}$ \cite{Seki2, Seki3}.
%%%%%%%%%%%%%%%%%%%%%%%%%%%%%%%%%%%%%%%%%%%%%%%%%%%%%%%%%%%%%%%%
 For the critical case $p=\frac{n+2}{n-2}$,
 Filippas, Herrero and Vel\'azquez \cite{Filippas}
 find a quite different type of
 type II blowup solutions to \eqref{eq1.3}.
 They formally obtain type II blowup solutions
 by using the matched asymptotic expansion technique.
 The first rigorous proof for the existence of
 type II blowup solutions to the critical problem
 is given by Schweyer \cite{Schweyer}.
 He constructs a type II blowup solution for $n=4$.
 Recently
 del Pino, Musso and Wei
 \cite{del_Pino, del_Pino2, del_Pino3}
 construct type II blowup solutions for the critical case.
 They
 develop a new method so-called the inner-outer gluing method
 and obtain type II blowup solutions for the case $n=3,4,5$.
 The author \cite{Harada,Harada_2}
 also constructs type II blowup solutions
 for the critical case with $n=5,6$
 applying their gluing method.
%%%%%%%%%%%%%%%%%%%%%%%%%%%%%%%%%%%%%%%%%%%%%%%%%%%%%%%%%%%%%%%%
%%%%%%%%%%%%%%%%%%%%%%%%%%%%%%%%%%%%%%%%%%%%%%%%%%%%%%%%%%%%%%%%

 We now go back to \eqref{eq1.1}.
 Since a blowup solution $u(x,t)$ behaves like
 $u(x,t)\to\infty$ near the singular point,
 the behavior of the solution near singular point
 is dominated by $u_t=\Delta u+|u|^{p-1}u$.
 We recall that
 a blowup solution of \eqref{eq1.3}
 constructed in \cite{del_Pino, del_Pino2, del_Pino3}
 behaves like
 \[
 u(x,t)
 =
 \lambda(t)^{-\frac{2}{p-1}}
 {\sf Q}(y)
 \qquad
 \text{with }
 x=\lambda(t) y
 \]
 near the singular point.
 Here
 ${\sf Q}(y)$ is the standard Talenti function
 and
 $\lambda(t)$ is a scaling function satisfying $\lim_{t\to T}\lambda(t)=0$.
 Under this assumption,
 the solution is expected to be
 $\lim_{t\to T}u(x,t)=0$
 for $|y|\to\infty$, $|x|\ll1$.
 For such a region,
 the solution is governed by
 \begin{equation}\label{eq1.4}
 u_t=\Delta u-|u|^{q-1}u.
 \end{equation}
 The asymptotic formula for solutions to \eqref{eq1.4}
 satisfying $\lim_{t\to T}u(x,t)=0$ is well understood.
 There are two possibilities.
 \begin{enumerate}[(I)]
 \item 
 One is given by
 $u(x,t)=(1-q)^\frac{1}{1-q}(T-t)^\frac{1}{1-q}$,
 \item
 the other is more delicate and complicated,
 which is a kind of type II behavior (see \cite{Guo,Seki}).
 \end{enumerate}
 From this observation,
 we look for solutions of the form
 \begin{align}\label{eq1.5}
 u(x,t)=
 \begin{cases}
 \lambda(t)^{-\frac{2}{p-1}}
 {\sf Q}(y)
 &
 \text{near the singular point},
 \\
 \text{(I) or (II)}
 &
 \text{for }
 |y|\to\infty,\
 |x|\ll1.
 \end{cases}
 \end{align}
%%%%%%%%%%%%%%%%%%%%%%%%%%%%%%%%%%%%%%%%%%%%%%%%%%%%%%%%%%%%%%%%
 Therefore
 our problem is reduced to the following question:
 ``can we construct blowup solutions of \eqref{eq1.1}
 by connecting
 a specific blowup solution of \eqref{eq1.3}
 and
 a specific solution of \eqref{eq1.4} satisfying $\lim_{t\to T}u(x,t)=0$?''
 This question is a concrete motivation of this paper,
 which seems to provide a new perspective on blowup problems.
 This paper gives an affirmative answer to this question.
 In fact,
 such a solution will be constructed
 by the addition of several correction terms.
%%%%%%%%%%%%%%%%%%%%%%%%%%%%%%%%%%%%%%%%%%%%%%%%%%%%%%%%%%%%%%%%

% \newpage
 \section{Main result}
 \label{sec_2}
%%%%%%%%%%%%%%%%%%%%%%%%%%%%%%%%%%%%%%%%%%%%%%%%%%%%%%%%%%%%%%%%
 To state our main theorem,
 we briefly prepare several notations.
% We do not give precise information about them here.
 Let ${\sf Q}(y)$ be the Talenti solution
 defined by
 \[
 {\sf Q}(y)
 =
 \left( 1+\tfrac{1}{n(n-2)}|y|^2 \right)^{-\frac{n-2}{2}}.
 \]
 This gives a positive solution of
 $\Delta_y{\sf Q}+{\sf Q}^p=0$.
 Furthermore
 let
 ${\sf U}(\xi)$
 be a positive radially symmetric solution of
 $\Delta_\xi{\sf U}-{\sf U}^q=0$,
 and
 let
 ${\sf U}_\infty(x)$
 be a nonnegative radially symmetric solution of
 $\Delta_x{\sf U}-{\sf U}^q=0$
 with ${\sf U}_\infty|_{x=0}=0$.
 Let us consider a linearized problem around ${\sf U}_\infty(x)$.
 \begin{equation}\label{eq2.1}
 \pa_tw
 =
 \Delta_x
 w
 -
 q{\sf U}_\infty^{q-1}
 w.
 \end{equation}
 Fix $J\in\N$
 and
 let $\Theta_J(x,t)$ be the explicit solution of \eqref{eq2.1}
 defined in Section \ref{eq4.2}.
 Furthermore
 we introduce
%a parameter and several new variables.
 \begin{itemize}
% \item 
% ${\sf U}_{\sf c}(x,t)
% =
% \eta^\frac{2}{1-q}{\sf U}(\xi)
% \chi_2
% +
% {\sf U}_\infty(x)
% (1-\chi_2)
% \chi_4$,
 \item
 $\eta(t)=(T-t)^{J(\frac{2}{1-q}-\gamma)^{-1}}$,
 \item
 $y=\lambda(t)^{-1}x$,
 \item
 $\xi=\eta(t)^{-1}x$,
 \item
 $z=(T-t)^{-\frac{1}{2}}x$.
 \end{itemize}
 The cut off functions $\chi_i$ ($i=1,2,3,4$) are defined in Section \ref{sec_5.1}.
%%%%%%%%%%%%%%%%%%%%%%%%%%%%%%%%%%%%%%%%%%%%%%%%%%%%%%%%%%%%%%%%
% Under this preparation,
% our main result is stated as follows.
 \begin{thm}\label{Thm1}
 Let $n=5$ and $J\in\N$.
 There exist $T>0$ and a radially symmetric solution
 $u(x,t)\in C(\R^5\times[0,T))\cap C^{2,1}(\R^5\times(0,T))$
 of \eqref{eq1.1} satisfying the following properties\,{\rm$:$}
 \begin{enumerate}[\rm(i)]
 \item the solution $u(x,t)$ is written as
 \begin{align*}
 u(x,t)
 &=
 \dis
 \lambda^{-\frac{n-2}{2}}
 {\sf Q}(y)
 \chi_2
 +
 \lambda^{-\frac{n-2}{2}}\sigma
 T_1(y)
 \chi_1
% -
% {\sf U}_{\sf c}(x,t)
 \\
 & \quad
 -
 (
 \eta^\frac{2}{1-q}{\sf U}(\xi)
 \chi_2
 +
 {\sf U}_\infty(x)
 (1-\chi_2)
 \chi_4
 )
 (1-\chi_1)
 \\
 & \quad
 -
 (
 \theta(x,t)
 +
 \Theta_J(x,t)
 )
 (1-\chi_2)
 \chi_3
 +
 u_1(x,t),
 \end{align*}
 where  $T_1(y)$ is a bounded smooth function defined in Section
 {\rm\ref{sec_4.1}},
 $\theta(x,t)$ is a correction term defined in Section {\rm\ref{sec_4.5}}
 and
 $u_1(x)$ is a remainder term,
 \item
 $\lambda(t)=(1+o(1))(T-t)^{\frac{2}{6-n}\Gamma_J}$
 with
 $\Gamma_J=
 (\frac{2J}{1-q}+\frac{2}{1-q}-\gamma)
 (\frac{2}{1-q}-\gamma)^{-1}$,
 \item
 $\sigma(t)=(1+o(1))\lambda\dot\lambda$,
 \item
 there exist
 ${\sf d}_1,{\sf c}_1>0$
 and
 $\kappa>2$
 such that
 \begin{align*}
 |u_1(x,t)|
 &<
 \begin{cases}
 (T-t)^{\frac{1}{2}{\sf d}_1}
 \lambda(t)^{-\frac{n-2}{2}}
 \sigma
 & {\rm for}\ |y|<1,
 \\[1mm] \dis
 (T-t)^{{\sf d}_1}
 \eta(t)^\frac{2}{1-q}
 {\sf U}(\xi)
 & {\rm for}\ |y|>1,\ |\xi|<1,
 \\
 (T-t)^{{\sf d}_1}
 (1+|z|^2)^{\frac{3}{2}{\sf d}_1}
 \Theta_J(x,t)
 & {\rm for}\ |\xi|>1,\ |z|<(T-t)^{-\frac{1}{\kappa}},
 \\
 \tfrac{1}{8}
 {\sf U}_\infty(x)
 &
 {\rm for}\ |z|>(T-t)^{-\frac{1}{\kappa}},\ |x|<1,
 \\
 {\sf c}_1
 &
 {\rm for}\ |x|>1.
 \end{cases}
 \end{align*}
% where ${\sf M}_0={\sf U}_\infty(x)|_{|x|=1}$.
 \end{enumerate}
 \end{thm}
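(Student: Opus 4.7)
The plan is to construct the solution by matched asymptotic expansion across four spatial regions---the inner soliton scale $|y|\lesssim 1$, the absorption scale $|\xi|\lesssim 1$, the intermediate self-similar scale $|z|\lesssim (T-t)^{-1/\kappa}$, and the outer field $|x|\gtrsim 1$---and then close a fixed point argument for the remainder $u_1$ in a norm tailored to the bounds in (iv). The approximate solution is precisely the glued profile displayed in (i): the Talenti soliton $\lambda^{-(n-2)/2}{\sf Q}(y)$ plus the inner correction $\sigma T_1(y)$, the absorption profile $\eta^{2/(1-q)}{\sf U}(\xi)$ transitioning to ${\sf U}_\infty(x)$ in the middle region, and the explicit heat--absorption term $\theta(x,t)+\Theta_J(x,t)$ in the intermediate region. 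The cut-offs $\chi_i$ from Section~\ref{sec_5.1} are positioned so that each successive transition happens where the two adjacent profiles have comparable size.

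First I would fix the profiles: $T_1(y)$ is chosen so that $L T_1=\Lambda{\sf Q}$ (with $L=\Delta+p{\sf Q}^{p-1}$ and $\Lambda$ the dilation generator), so that $\sigma T_1$ cancels the source produced by $\pa_t(\lambda^{-(n-2)/2}{\sf Q}(y))$ in the inner region, and $\theta(x,t)$ from Section~\ref{sec_4.5} is chosen so that ${\sf U}_\infty+\theta$ matches the absorption solution ${\sf U}$ at the interface. Substituting the ansatz into \eqref{eq1.1} and projecting the residue onto the kernel of $L^*$ spanned by $\Lambda{\sf Q}$ yields the solvability constraint that forces $\sigma=(1+o(1))\lambda\dot\lambda$. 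The dominant balance between the heat--absorption correction $\Theta_J$ and the leading inner error---whose contribution is enhanced by the slow decay ${\sf Q}(y)\sim|y|^{-(n-2)}$ in dimension $n=5$---then pins down the exponent $\Gamma_J$ and hence the scaling law $\lambda(t)=(1+o(1))(T-t)^{2\Gamma_J/(6-n)}$.

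The core analytic step is the fixed point argument for $u_1$. I would introduce a piecewise weighted $L^\infty$ norm that encodes exactly the bounds in (iv), parameterised by ${\sf d}_1$, ${\sf c}_1$ and $\kappa$. In each region the linearised problem around the approximate solution reduces, after rescaling, to a well-studied operator: $\pa_\tau + L$ in self-similar Talenti variables near the core, $\pa_\tau + \Delta_\xi - q{\sf U}^{q-1}$ at the absorption scale, $\pa_t+\Delta-q{\sf U}_\infty^{q-1}$ at the self-similar scale (for which $\Theta_J$ already serves as an explicit building block), and essentially the free heat semigroup in the far field. Coercivity up to a finite-dimensional unstable subspace, removed by modulation of $\lambda$ and $\sigma$, supplies the linear estimates in each weighted region; the nonlinear contributions are smaller by a factor of $(T-t)^{{\sf d}_1}$ and are absorbed directly. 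A Banach or Schauder fixed point, combined with a topological shooting argument on the modulation parameters to eliminate the residual unstable direction, then produces $u_1$ satisfying (iv) and regularity $C(\R^5\times[0,T))\cap C^{2,1}(\R^5\times(0,T))$.

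The main obstacle will be the matching across the Talenti/absorption interface in dimension $n=5$. Because ${\sf Q}(y)\sim|y|^{-3}$ has borderline decay, the overlap error between $\lambda^{-(n-2)/2}{\sf Q}(y)$ and $\eta^{2/(1-q)}{\sf U}(\xi)$ projects onto the direction $\Lambda{\sf Q}$ governing $\lambda(t)$ at exactly the critical order, and the formal solvability condition must therefore be verified with sharp pointwise accuracy. Controlling this requires iterating the correction $\Theta_J$ through $J$ generations (which is why the theorem is indexed by $J\in\N$) and choosing $\kappa>2$ so that the self-similar region bridges to the far-field bound $\tfrac{1}{8}{\sf U}_\infty$ without reintroducing uncontrolled errors. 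Once this quantitative matching is in place, the remainder of the construction follows the inner--outer gluing scheme developed in \cite{del_Pino,del_Pino2,del_Pino3,Harada,Harada_2}.
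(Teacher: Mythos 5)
Your overall architecture (four regions, matched asymptotics, glued ansatz, weighted fixed point for the remainder) coincides with the paper's, but two of the mechanisms you describe are wrong in ways that would prevent you from ever deriving the rate $\Gamma_J$ or closing the error estimates. First, $J$ does not count ``generations of iterating the correction $\Theta_J$.'' In the paper $\Theta_J=K(T-t)^{\frac{\gamma}{2}+J}e_J(z)$ is the $J$-th eigenmode of the self-similar linearization $-(\Delta_z-\tfrac{z}{2}\cdot\nabla_z-qL_1^{q-1}|z|^{-2})$ around $\pm{\sf U}_\infty$, and the blowup rate comes from the matching between the semiinner and self-similar regions: comparing the subleading term ${\sf B}_1|\xi|^\gamma$ of ${\sf U}(\xi)={\sf U}_\infty(\xi)+{\sf B}_1|\xi|^\gamma+\cdots$ with $K{\sf D}_J(T-t)^J\eta^\gamma|\xi|^\gamma$ forces $\eta(t)=(T-t)^{J(\frac{2}{1-q}-\gamma)^{-1}}$, and then the inner/semiinner matching $\lambda^{-\frac{n-2}{2}}\sigma{\sf A}_1=-\eta^{\frac{2}{1-q}}$ yields $\Gamma_J$. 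The iterated corrections are the $\theta_0,\dots,\theta_L$ with $L\gg1$, an index unrelated to $J$. Second, $\theta$ is not there to ``match ${\sf U}_\infty$ to ${\sf U}$'' (that matching is automatic from the expansion of ${\sf U}$ just quoted); it is there because in the self-similar region the discarded source $f({\sf U}_\infty)={\sf U}_\infty^p$ is \emph{not} small compared with $\Theta_J$, so one must solve $(\Delta-q{\sf U}_\infty^{q-1})\theta_0=-{\sf U}_\infty^p$ (giving $\theta_0=a_0{\sf U}_\infty^{p+1-q}$) and iterate. Without this your residual error in the region $|z|\sim1$ dominates $\Theta_J$ and the fixed point cannot close.

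On the functional-analytic side your plan is workable but differs from the paper in execution: the paper does not shoot. It splits $u_1=\lambda^{-\frac{n-2}{2}}\epsilon\chi_{\sf in}+\eta^{\frac{2}{1-q}}v\chi_{\sf mid}+w$ into a coupled system of three parabolic problems; $\sigma$ is not a free modulation parameter but is fixed as $-{\sf A}_1^{-1}\eta^{\frac{2}{1-q}}\lambda^{\frac{n-2}{2}}$, $\lambda$ is determined by orthogonality of the inner equation to the near-zero mode $\psi_2$, the single unstable mode $\psi_1$ ($\mu_1<0$) is removed by an explicit integral choice of the initial datum $\epsilon_0=\alpha_1\psi_1$ rather than by a topological argument, and the stable part is controlled via the lower bound $\mu_3^{(R)}\gtrsim R^{-n/2}$ (Lemma 3.3). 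The semiinner and self-similar equations are then handled by comparison functions and by the explicit spectral decomposition in $L^2_\rho$, with a Schauder fixed point on $(w,v)$ and a limit $\delta\to0$. These are differences of technique rather than gaps, but you should be aware that the shooting step you propose is not needed if the unstable coefficient is prescribed by the usual backward integral formula.
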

%%%%%%%%%%%%%%%%%%%%%%%%%%%%%%%%%%%%%%%%%%%%%%%%%%%%%%%%%%%%%%%%
 \begin{rem}
 \label{Rem2.1}
 The blowup rate of this solution is given by
 \[
 \|u(t)\|_\infty
 =
 \|\lambda(t)^{-\frac{n-2}{2}}{\sf Q}(y)\|_\infty
 =
 \lambda(t)^{-\frac{n-2}{2}}
 =
 (T-t)^{-\frac{n-2}{6-n}\Gamma_J}
 \]
 with
 $\Gamma_J=
 (\frac{2J}{1-q}+\frac{2}{1-q}-\gamma)
 (\frac{2}{1-q}-\gamma)^{-1}$.
 Since $0<\frac{2}{1-q}-\gamma<2$ {\rm(}see {\rm\eqref{eq4.9}}{\rm)},
 the blowup rate
 $\sigma_q=\frac{n-2}{6-n}\Gamma_J$
 diverges to infinity as $q\to 1$
 for any $J\in\N$.
 This discontinuity phenomenon at $q=1$
 is the same as that of the problem \eqref{eq1.2}
 discussed in {\rm\cite{Coz}}.
 \end{rem}
%%%%%%%%%%%%%%%%%%%%%%%%%%%%%%%%%%%%%%%%%%%%%%%%%%%%%%%%%%%%%%%%
 \begin{rem}\label{Rem2.2}
 Since the solution in Theorem {\rm\ref{Thm1}}
 satisfies $\lim_{t\to T}u(x,t)=0$ in the region $|z|\sim1$,
 the solution is approximately dominated by $u_t=\Delta u-|u|^{q-1}u$
 in this region.
 However,
 it can be seen that the effect from $|u|^{p-1}u$ is not so small
 in this region.
 Therefore
 the correction term $\theta(x,t)$ is needed to compensate this effect.
 \end{rem}
%%%%%%%%%%%%%%%%%%%%%%%%%%%%%%%%%%%%%%%%%%%%%%%%%%%%%%%%%%%%%%%%
 \begin{rem}\label{Rem2.3}
 We can expect that
 three types of type II blowup solutions exist in \eqref{eq1.1}.
 \begin{enumerate}[\rm(i)]
 \item 
 Blowup solutions described in Theorem {\rm\ref{Thm1}}
 give the first one,
 which corresponds to {\rm (II)} in \eqref{eq1.5}.
 \item
 The second one is obtained from {\rm (I)} in \eqref{eq1.5}.
 A formal computation will be given in Section {\rm\ref{sec_4}}.
 Since its asymptotic formula is simpler,
 we omit a rigorous proof.
 \item
 The third one is given by
 blowup solutions of \eqref{eq1.3} obtained in {\rm\cite{Filippas,del_Pino}}.
 We recall from {\rm\cite{Filippas,del_Pino,Harada}} that
 \eqref{eq1.3} with $n=5$
 admits infinite many blowup solutions $\{u_k(x,t)\}_{k\in\N}$ satisfying
 \begin{align*}
 \|u_k(t)\|_\infty
 =
 (T-t)^{-3k}
 \qquad
 (k\in\N).
 \end{align*}
 We can verify that
 the additional term $-|u|^{q-1}u$ in \eqref{eq1.1}
 does not have much effect on $u_k(x,t)$
 only for the case $k=1$
 {\rm(}see the asymptotic formula for $u_k(x,t)$ in {\rm\cite{Filippas}}
 and the proof in {\rm\cite{del_Pino,Harada}}{\rm)}.
 Hence
 we can construct a blowup solution of \eqref{eq1.1}
 asymptotically behaves like $u_k(x,t)$ with $k=1$.
 \end{enumerate}
 \end{rem}
%%%%%%%%%%%%%%%%%%%%%%%%%%%%%%%%%%%%%%%%%%%%%%%%%%%%%%%%%%%%%%%
 \begin{rem}\label{Rem2.4}
 From the asymptotic formula for blowup solutions to \eqref{eq1.3}
 {\rm(}see {\rm\cite{Filippas}}{\rm)},
 we believe that
 a type II blowup exists only for the case $n=5$.
 \end{rem}
%%%%%%%%%%%%%%%%%%%%%%%%%%%%%%%%%%%%%%%%%%%%%%%%%%%%%%%%%%%%%%%%
 \begin{rem}\label{Rem2.5}
 For the case $1\leq q<p$,
 \eqref{eq1.1} is similar to \eqref{eq1.2}
 discussed in {\rm\cite{Coz}}.
 When $q$ is close to $1$,
 \eqref{eq1.1} admits
 blowup solutions
 with the same asymptotic form as
 $u_k(x,t)$ {\rm(}$k\in\N${\rm)}
 describe in Remark {\rm\ref{Rem2.3}} {\rm(iii)}
 {\rm(}see proof in {\rm\cite{del_Pino,Harada}}{\rm)}.
 However,
 it is not clear what happens when $q\to p$.
 \end{rem}
%%%%%%%%%%%%%%%%%%%%%%%%%%%%%%%%%%%%%%%%%%%%%%%%%%%%%%%%%%%%%%%%

 We explain the strategy of the proof.
 Our proof is a combination of
 the matched asymptotic expansion technique
 (see {\rm\cite{Filippas,Seki}})
 and
 the inner-outer gluing method developed in
 (see {\rm\cite{Cortazar,del_Pino,del_Pino2,del_Pino3}}).
 We divide the whole space $x\in\R^n$ into four parts.
 \begin{enumerate}[\rm(i)]
 \item inner region $|y|\sim1$\quad ($x=\lambda(t)y$),
 \item semiinner region $|\xi|\sim1$ \quad ($x=\eta(t)\xi$),
 \item selfsimilar region $|z|\sim1$ \quad ($x=z\sqrt{T-t}$),
 \item outer region $|x|\sim1$,
 \end{enumerate}
 where $0<\lambda(t)<\eta(t)<\sqrt{T-t}$.
 We first construct approximate solutions in each region separately.
 In the region (i),
 \eqref{eq1.1} is approximately written as
 $u_t=\Delta u+|u|^{p-1}u$.
 Hence
 we can use
 $u(x,t)=\lambda(t)^{-\frac{n-2}{2}}{\sf Q}(y)
 +\lambda(t)^{-\frac{n-2}{2}}\sigma(t)T_1(y)$
 as a approximate solution in this region,
 which is the same one as in \cite{Filippas,del_Pino,Harada}
 (Section \ref{sec_4.1}).
 In the region (ii)-(iii),
 we first look for solutions of $u_t=\Delta u-|u|^{p-1}u$.
 The function
 $u(x,t)=\eta(t)^\frac{2}{1-q}{\sf U}(\xi)$ gives an approximate solution in (ii)
 (Section \ref{sec_4.2}).
 This solution is borrowed from \cite{Seki}.
 In the region (iii),
 we need a correction term $\theta(x,t)$.
 We will see that
 $u(x,t)={\sf U}_\infty(x)+\theta(x,t)+\Theta_J(x,t)$
 gives an appropriate approximate solution in this region
  (Section \ref{sec_4.2}, Section\ref{sec_4.5}).
 The region (iv) is not important in our analysis.
 We next investigate
 two matching conditions
 between (i) and (ii),
 and between (ii) and (iii)
 to connect each solution obtained in the first step
 (Section \ref{sec_4.3} - Section \ref{sec_4.4}).
 This procedure determines $\lambda(t)$ and $\eta(t)$.
 We finally construct blowup solutions
 near the approximate solutions obtained in the first and the second step
 by a fixed point argument (Section \ref{sec_5}).
 To investigate the behavior of solutions more precisely,
 we introduce a system of three parabolic equations
 from \eqref{eq1.1}.
 This three equations correspond to (i) - (iii) respectively.
%%%%%%%%%%%%%%%%%%%%%%%%%%%%%%%%%%%%%%%%%%%%%%%%%%%%%%%%%%%%%%%%
 The first equation is treated in the same way as in \cite{Cortazar,del_Pino}
 (Section \ref{sec_6}),
 and the last two equations are analyzed based on \cite{Seki}
 (Section \ref{sec_7}, Section \ref{sec_8} respectively).
%%%%%%%%%%%%%%%%%%%%%%%%%%%%%%%%%%%%%%%%%%%%%%%%%%%%%%%%%%%%%%%%
 Although our argument is not new,
 there are two remarks on the analysis of this three parabolic equations.
 \begin{enumerate}
 \item
 Due to Lemma \ref{Lem3.3}, 
 our argument in the first equation
 slightly simplifies that of \cite{Cortazar,del_Pino,Harada}
 (Section \ref{sec_6.2}). 

 \item
 Unlike the setting in \cite{Seki},
 we introduces the second equation corresponding to (ii).
 This provides a simpler alternative proof of \cite{Seki}.
 In fact,
 our argument does not require
 the explicit form of the heat kernel of
 $w_t=\Delta w-q{\sf U}_\infty^{q-1}w$
 (Section \ref{sec_8}). 
 \end{enumerate}
%%%%%%%%%%%%%%%%%%%%%%%%%%%%%%%%%%%%%%%%%%%%%%%%%%%%%%%%%%%%%%%%

 \section{Preliminary}
 \label{sec_3}
%\subsection{Linearization around the ground state}
%\label{S3.2}
%%%%%%%%%%%%%%%%%%%%%%%%%%%%%%%%%%%%%%%%%%%%%%%%%%%%%%%%%%%%%%%%
%Let us consider the eigenvalue problem related to the linearized problem around
%the ground state ${\sf Q}(y)={\sf Q}_\lambda(y)|_{\lambda=1}$.
% \begin{equation}\label{3.1}
% -H_y\psi=\mu\psi \qquad\text{in } \R^n,
% \end{equation}
%where the operator $H_y$ is defined by
% \[
% H_y=\Delta_y+V(y),
% \qquad
% V(y)=f'({\sf Q}(y))=p{\sf Q}(y)^{p-1}.
% \]
%We recall that the operator $H_y$ has a negative eigenvalue $\mu_1<0$ and
%a zero eigenvalue (see Proposition 5.5 p. 37 \cite{Duyckaerts}).
%We denote by $\psi_1(r)$ a positive radially symmetric eigenfunction
%associated to the negative eigenvalue with $\psi_1(0)=1$.
%It is know that
%there exists $C>0$ such that (see p. 18 \cite{Cortazar})
% \[
% \psi_1(r)
% <
% C\left( 1+r \right)^{-\frac{n-1}{2}}
% e^{-\sqrt{|\mu_1|}\,r}.
% \]
%The eigenfunction associated to the zero eigenvalue is explicitly given by
% \[
% \Lambda_y{\sf Q}(y)
% =
% \left( \frac{n-2}{2}+y\cdot\nabla_y \right){\sf Q}(y)
% =
% -
% \left. \frac{\pa}{\pa\lambda}{\sf Q}_\lambda \right|_{\lambda=1}.
% \]
%%%%%%%%%%%%%%%%%%%%%%%%%%%%%%%%%%%%%%%%%%%%%%%%%%%%%%%%%%%%%%%%

 \subsection{Linearized problem around the ground state ${\sf Q}(y)$
 and its eigenvalue problem}
 \label{sec_3.1}
%%%%%%%%%%%%%%%%%%%%%%%%%%%%%%%%%%%%%%%%%%%%%%%%%%%%%%%%%%%%%%%%
 To describe the asymptotic behavior of solutions around
 the ground state ${\sf Q}(y)={\sf Q}_\lambda(y)|_{\lambda=1}$,
 we study the linearized problem.
 \[
 \epsilon_t=H_y\epsilon,
 \] 
 where the operator $H_y$ is defined by
 \[
 H_y=\Delta_y+V(y),
 \qquad
 V(y)=f'({\sf Q}(y))=p{\sf Q}(y)^{p-1}.
 \]
 Following the idea in \cite{Cortazar},
 we consider this problem on $B_R$ instead of $\R^n$.
 Solutions of $\epsilon_t=H_y\epsilon$ are completely described by
 the following eigenvalue problems.
 \begin{equation}\label{eq3.1}
 \begin{cases}
 -H_y\psi=\mu\psi & \text{in } B_R,
 \\
 \psi=0 & \text{on } \pa B_R,
 \\
 \psi \text{ is radially symmetric}.
 \end{cases}
 \end{equation}
 We denote the $i$th eigenvalue of \eqref{eq3.1} by $\mu_i^{(R)}$
 and the associated eigenfunction by $\psi_i^{(R)}$.
 We normalize $\psi_i^{(R)}(r)$ as  $\psi_i^{(R)}(0)=1$.
 We recall two lemmas obtained in \cite{Cortazar} and \cite{Harada}.
%%%%%%%%%%%%%%%%%%%%%%%%%%%%%%%%%%%%%%%%%%%%%%%%%%%%%%%%%%%%%%%%
 \begin{lem}[Lemma 3.1-Lemma 3.2 \cite{Harada}]
 \label{Lem3.1}
 There exists $c>0$ such that
 if $R>1$,
 then
 \begin{align*}
 0
 <
 \psi_1^{(R)}(r)
 &<
 c
 \left( 1+r \right)^{-\frac{n-1}{2}}
 e^{-\sqrt{|\mu_1|}\,r}
 \quad\text{\rm for } r\in(0,R),
 \\
 |\psi_2^{(R)}(r)|
 &<
 c
 \left( 1+r \right)^{-(n-2)}
 \qquad\text{\rm for } r\in(0,R).
 \end{align*}
 The constant $\mu_1<0$ is
 the first eigenvalue of \eqref{eq3.1}
 on $\R^n$.
 \end{lem}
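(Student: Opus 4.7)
The plan is to prove both estimates by barrier (maximum-principle) arguments applied to the radial ODE form of \eqref{eq3.1}, namely
\[
\psi'' + \tfrac{n-1}{r}\psi' + V(r)\psi + \mu_i^{(R)}\psi = 0, \qquad \psi(0)=1,\ \psi(R)=0,
\]
where $V(r) = p{\sf Q}(r)^{p-1}$ decays like $c\,r^{-4}$ at infinity since $(n-2)(p-1) = 4$. Positivity of $\psi_1^{(R)}$ is the standard Perron--Frobenius statement for the first eigenfunction of a self-adjoint elliptic operator on a bounded smooth domain, so I would only need to argue the decay rates.

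For the exponential bound on $\psi_1^{(R)}$, I would first invoke monotonicity of the Rayleigh quotient in $R$ to deduce $\mu_1^{(R)} \to \mu_1 < 0$, and in particular $|\mu_1^{(R)}|$ is bounded away from zero uniformly for $R \geq R_0$. I then construct the explicit supersolution $\bar\psi_1(r) = C(1+r)^{-(n-1)/2}e^{-\sqrt{|\mu_1|}\,r}$ and verify by direct computation that $-H_y\bar\psi_1 - \mu_1^{(R)}\bar\psi_1 \geq 0$ on $\{r \geq r_0\}$ for $r_0$ large, with the exponential factor beating the $V(r) \sim r^{-4}$ perturbation and any residual gap between $|\mu_1^{(R)}|$ and $|\mu_1|$. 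Choosing $C$ large enough to dominate $\psi_1^{(R)}$ at $r = r_0$ (possible thanks to a uniform $L^\infty$ bound obtained from elliptic regularity and the $L^2$ normalization) and using $\bar\psi_1(R) \geq 0 = \psi_1^{(R)}(R)$, the maximum principle on $\{r_0 \leq r \leq R\}$ delivers the bound.

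For $\psi_2^{(R)}$, the key input is the explicit zero mode $\Lambda_0{\sf Q} = \tfrac{n-2}{2}{\sf Q} + y\cdot\nabla{\sf Q}$ of $H_y$ on $\R^n$, which decays like $|y|^{-(n-2)}$ and arises from the scaling invariance of $\Delta{\sf Q} + {\sf Q}^p = 0$. A compactness argument together with $L^2$-orthogonality yields $\mu_2^{(R)} \to 0$ and, after normalization, $\psi_2^{(R)} \to c_0\Lambda_0{\sf Q}$ locally. The barrier is now $\bar\psi_2(r) = C(1+r)^{-(n-2)}$; since $\Delta(r^{-(n-2)}) = 0$ away from the origin, the verification $-H_y\bar\psi_2 - \mu_2^{(R)}\bar\psi_2 \geq 0$ reduces at leading order to $V(r)r^{-(n-2)} \sim r^{-(n+2)}$ dominating $|\mu_2^{(R)}|r^{-(n-2)}$, which holds provided $|\mu_2^{(R)}|r^2 \ll 1$. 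Applying the maximum principle to $\bar\psi_2 \pm \psi_2^{(R)}$ on the resulting annulus then yields $|\psi_2^{(R)}| \leq \bar\psi_2$.

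The main obstacle is the $\psi_2^{(R)}$ estimate. It requires quantitative control on the rate $\mu_2^{(R)} \to 0$ (one expects $\mu_2^{(R)} = O(R^{-2})$ or faster, so that the barrier closes on the full ball up to $r=R$), together with a uniform convergence statement for $\psi_2^{(R)}$ at the inner boundary $r = r_0$. Both follow from the explicit knowledge of $\Lambda_0{\sf Q}$ as a smooth, pointwise-decaying zero mode, combined with a quantitative variational argument placing $\psi_2^{(R)}$ within $O(R^{-\alpha})$ of the orthogonal projection of $\Lambda_0{\sf Q}$ onto $H^1_0(B_R)$. By contrast, the $\psi_1^{(R)}$ bound is comparatively soft because $|\mu_1^{(R)}|$ stays uniformly positive.
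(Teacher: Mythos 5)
First, a framing point: the paper does not actually prove Lemma \ref{Lem3.1} --- it is imported verbatim from \cite{Harada} --- and the closest in-paper analogue (the proof of Lemma \ref{Lem3.3}) proceeds not by barriers but by the variation-of-parameters representation of $\psi_i^{(R)}$ in terms of the two explicit radial solutions $Z_1=\Lambda_y{\sf Q}\sim r^{-(n-2)}$ and $Z_2\sim\mathrm{const}$ of $H_yZ=0$, from which the pointwise decay is read off using the smallness of the eigenvalue, with no delicate sign bookkeeping. Your barrier strategy is a legitimate alternative route, but as written both verifications contain errors that prevent the argument from closing.

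For $\psi_2^{(R)}$: you claim the supersolution inequality for $\bar\psi_2=C(1+r)^{-(n-2)}$ reduces at leading order to ``$V(r)r^{-(n-2)}$ dominating $|\mu_2^{(R)}|r^{-(n-2)}$.'' But the relevant operator is $-H_y-\mu_2^{(R)}=-\Delta-V-\mu_2^{(R)}$ with $V>0$ and $\mu_2^{(R)}>0$ (Lemma \ref{Lem3.2}), so \emph{both} the potential term and the eigenvalue term contribute negatively; at your declared leading order the quantity that must be nonnegative is strictly negative. What actually saves the computation is precisely the term you discard by treating $(1+r)^{-(n-2)}$ as harmonic, namely $-\Delta(1+r)^{-(n-2)}=\tfrac{(n-1)(n-2)}{r(1+r)^{n}}\sim r^{-(n+1)}>0$, and this dominates $\mu_2^{(R)}r^{-(n-2)}$ on all of $(r_0,R)$ only if $\mu_2^{(R)}\lesssim R^{-3}$. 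Your stated sufficient condition ``$\mu_2^{(R)}=O(R^{-2})$ or faster'' is therefore \emph{not} enough to close the barrier at $r=R$; one needs the sharper bound $\mu_2^{(R)}\lesssim R^{-(n-2)}$ (which holds, and gives $R^{-3}$ only marginally when $n=5$, so the constant, or a corrected barrier, matters). A cousin of the same sign issue affects $\psi_1^{(R)}$: domain monotonicity gives $\mu_1^{(R)}\downarrow\mu_1$, hence $|\mu_1^{(R)}|<|\mu_1|$, so the exponential $e^{-\sqrt{|\mu_1|}\,r}$ in your barrier produces the residue $(|\mu_1^{(R)}|-|\mu_1|)\bar\psi_1<0$ --- the exponential does not ``beat'' the gap between the eigenvalues, it \emph{is} the source of the unfavorable term. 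That gap must be absorbed by the $O(r^{-2})$ contribution of the prefactor $(1+r)^{-(n-1)/2}$, which forces you to prove the quantitative rate $|\mu_1-\mu_1^{(R)}|=o(R^{-2})$ (true, indeed exponentially small, but nowhere established in your sketch). With the corrected signs and these two quantitative eigenvalue estimates supplied, the barrier argument can be completed; without them it does not.
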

%%%%%%%%%%%%%%%%%%%%%%%%%%%%%%%%%%%%%%%%%%%%%%%%%%%%%%%%%%%%%%%%
 \begin{lem}[Lemma 7.2 \cite{Cortazar}, Lemma 3.3 \cite{Harada}]
 \label{Lem3.2}
 Let $n\geq5$.
 Then
 \begin{itemize}
 \item
 $\dis\lim_{R\to\infty}\mu_1^{(R)}=\mu_1<0$ and
 \item
 there exits $c>0$ such that $\mu_2^{(R)}>cR^{-(n-2)}$ for $R>1$.
 \end{itemize}
 \end{lem}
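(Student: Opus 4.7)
The plan is to prove the two items separately. For the first, I would use the standard min-max characterization together with a cut-off of the whole-space ground state. For the second, the key tool is a Wronskian identity against the explicit scaling zero mode
\[
\Lambda{\sf Q}(y):=\tfrac{n-2}{2}{\sf Q}(y)+y\cdot\nabla_y{\sf Q}(y),
\]
which satisfies $H_y\Lambda{\sf Q}=0$ on $\R^n$ (by differentiating ${\sf Q}_\lambda(y)=\lambda^{-(n-2)/2}{\sf Q}(y/\lambda)$ in $\lambda$ at $\lambda=1$) and which decays like $\Lambda{\sf Q}(r)\sim -c_n r^{-(n-2)}$ as $r\to\infty$, with $c_n>0$ computable from ${\sf Q}$. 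Since $n=5\ge 5$, $\Lambda{\sf Q}\in L^2(\R^n)$.

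For the convergence $\mu_1^{(R)}\to\mu_1$, the Rayleigh characterization
\[
\mu_1^{(R)}=\inf_{\psi\in H_0^1(B_R)\setminus\{0\}}\frac{\int_{B_R}(|\nabla\psi|^2-V\psi^2)\,dy}{\int_{B_R}\psi^2\,dy}
\]
gives $\mu_1^{(R)}\ge\mu_1$ by extension of test functions by zero, together with monotonicity of $R\mapsto\mu_1^{(R)}$. For the matching upper bound I would take the whole-space ground state $\psi_1$ (which exists because $\mu_1<0$ sits below the essential spectrum $[0,\infty)$ of $-H_y$), use its exponential decay from the first bullet of Lemma~\ref{Lem3.1}, and multiply it by a smooth cutoff $\chi_R$ supported in $B_R$ with $\chi_R\equiv 1$ on $B_{R/2}$. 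The commutator contribution to the Rayleigh quotient is then $O(e^{-\delta R})$ for some $\delta>0$, yielding $\mu_1^{(R)}\le\mu_1+o(1)$.

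For the lower bound on $\mu_2^{(R)}$, set $\phi:=\psi_2^{(R)}$ and $\lambda:=\mu_2^{(R)}$. Both $\phi$ and $\Lambda{\sf Q}$ are radial, and using $-H_y\phi=\lambda\phi$ together with $H_y\Lambda{\sf Q}=0$ a direct calculation gives
\[
\bigl(r^{n-1}(\phi'\Lambda{\sf Q}-\phi(\Lambda{\sf Q})')\bigr)'=-\lambda\,r^{n-1}\phi\,\Lambda{\sf Q}.
\]
Integrating from $0$ to $R$, exploiting $\phi(R)=0$ and the vanishing of the Wronskian at $r=0$ (smoothness of $\phi$ and $\Lambda{\sf Q}$ at the origin), I obtain the exact identity
\[
\mu_2^{(R)}=\frac{-R^{n-1}\phi'(R)\,\Lambda{\sf Q}(R)}{\displaystyle\int_0^R r^{n-1}\phi\,\Lambda{\sf Q}\,dr}.
\]
Using $\chi_R\Lambda{\sf Q}$ as a trial function (almost orthogonal to $\psi_1^{(R)}$) shows $\mu_2^{(R)}\to 0$; continuous dependence for the radial ODE then forces $\phi/\phi(0)\to \Lambda{\sf Q}/\Lambda{\sf Q}(0)$ locally uniformly, so the denominator stays bounded below by a positive constant. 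On the numerator side, matching $\phi$ to $\Lambda{\sf Q}$ outside a fixed large ball yields $\phi'(R)\asymp R^{-(n-1)}$ with the correct sign, while $\Lambda{\sf Q}(R)\asymp -R^{-(n-2)}$, giving $\mu_2^{(R)}\gtrsim R^{-(n-2)}$ for all $R$ sufficiently large; the remaining compact range $R\in[1,R_0]$ is handled by the continuity and strict positivity of $\mu_2^{(R)}$.

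The main obstacle is the quantitative outer matching of $\phi$ to $\Lambda{\sf Q}$ near $r=R$, which simultaneously pins down the sign and the magnitude of the boundary term $\phi'(R)\Lambda{\sf Q}(R)$. I would carry this out by a shooting argument for the radial initial-value problem $\psi(\cdot;\mu)$ with $\psi(0;\mu)=1$, $\psi'(0;\mu)=0$: at $\mu=0$ the normalized solution coincides with $\Lambda{\sf Q}/\Lambda{\sf Q}(0)$ and has a single zero on $(0,\infty)$, while for small $\mu>0$ a second zero appears at a large radius $r_2(\mu)$; the Wronskian identity above, read as a self-consistent equation between $\mu$ and $r_2(\mu)=R$, linearises in $\mu$ to give the precise $R^{-(n-2)}$ scaling. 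All the remaining manipulations are routine Sturm--Liouville and variational arguments.
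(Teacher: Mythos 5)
The paper does not actually prove this lemma; it is quoted from \cite{Cortazar} (Lemma 7.2) and \cite{Harada} (Lemma 3.3), so there is no in-paper proof to compare against line by line. Your plan is nevertheless sound and follows the same circle of ideas as the cited proofs and as the paper's own proof of the neighbouring Lemma \ref{Lem3.3}: the first bullet is the standard monotonicity-plus-cutoff Rayleigh-quotient argument (using the exponential decay of $\psi_1$ from Lemma \ref{Lem3.1}), and the second bullet hinges on the explicit zero mode $Z_1=\Lambda_y{\sf Q}$ with its $-c\,r^{-(n-2)}$ tail. Where you differ from the paper's technique for Lemma \ref{Lem3.3} is in how the zero mode is exploited: you evaluate the Wronskian identity pointwise at $r=R$ and must then extract the precise size and sign of the boundary term $R^{n-1}\psi_2^{(R)\prime}(R)\,\Lambda{\sf Q}(R)$, whereas the paper works with the full variation-of-parameters representation in terms of $Z_1$ and the second solution $Z_2$ (see \eqref{eq3.2}--\eqref{eq3.3}) and only takes $L^2_{\mathrm{rad}}$ norms, which buys a lower bound on the eigenvalue without any pointwise boundary asymptotics. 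Your route yields more (the sharp two-sided asymptotics $\mu_2^{(R)}\asymp R^{-(n-2)}$), but at the price of the step you yourself flag as the main obstacle.

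That step deserves one caution: a priori you only know $\mu_2^{(R)}\to0$, so when you match $\psi_2^{(R)}$ to $\alpha Z_1+\beta Z_2$ near $r=R$, the $O(\mu_2^{(R)})$ correction to $\psi_2^{(R)\prime}(R)$ coming from the inhomogeneous (Duhamel) terms is of size $O(\mu_2^{(R)}R^{-3})$, which is not obviously smaller than the main term $Z_1'(R)\asymp R^{-(n-1)}$ when $n=5$. The argument is not circular, but you must close it through the self-consistency you allude to: inserting $\psi_2^{(R)\prime}(R)=\alpha Z_1'(R)+O(\mu_2^{(R)}R^{-3})+O(R^{-(n+1)})$ back into the Wronskian identity gives $\mu_2^{(R)}\bigl(D+O(R^{-4})\bigr)=c'R^{-(n-2)}$ with $D=\int_0^\infty r^{n-1}(\Lambda{\sf Q})^2dr/\Lambda{\sf Q}(0)>0$ (finite precisely because $n\geq5$), from which the claim follows. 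You should also make explicit that the denominator's convergence uses the uniform bound $|\psi_2^{(R)}|\lesssim(1+r)^{-(n-2)}$ of Lemma \ref{Lem3.1} to dominate the tail, not just locally uniform convergence. With these points spelled out, the proposal is a complete and correct proof.
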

%%%%%%%%%%%%%%%%%%%%%%%%%%%%%%%%%%%%%%%%%%%%%%%%%%%%%%%%%%%%%%%%

 We here provide information of the third eigenvalue of \eqref{eq3.1}.
 This lemma simplifies arguments in Section \ref{sec_6}.
%%%%%%%%%%%%%%%%%%%%%%%%%%%%%%%%%%%%%%%%%%%%%%%%%%%%%%%%%%%%%%%%
 \begin{lem}
 \label{Lem3.3}
 Let $n\geq5$.
 There exists $c>0$ such that
 $\mu_3^{(R)}>cR^{-\frac{n}{2}}$ for $R>1$.
 \end{lem}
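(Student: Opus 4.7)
The plan is to use Sturm's oscillation theorem to reduce the eigenvalue problem for $H_y$ on radial functions to counting the zeros of the ODE
\[
\phi_\mu''+\tfrac{n-1}{r}\phi_\mu'+(V(r)+\mu)\phi_\mu=0,\qquad \phi_\mu(0)=1,\ \phi_\mu'(0)=0,
\]
with $V(r)=p{\sf Q}(r)^{p-1}=p(1+r^2/(n(n-2)))^{-2}$. Then $\mu_k^{(R)}$ is the unique $\mu>0$ such that $\phi_\mu$ has exactly $k-1$ positive zeros in $(0,R)$ and $\phi_\mu(R)=0$, so the statement $\mu_3^{(R)}>cR^{-n/2}$ is equivalent to showing that the third positive zero $r_3(\mu)$ of $\phi_\mu$ satisfies $r_3(\mu)>R$ whenever $0<\mu<cR^{-n/2}$.

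At $\mu=0$, $\phi_0$ is a positive multiple of the Talenti kernel $\Lambda{\sf Q}(y):=\tfrac{n-2}{2}{\sf Q}(y)+y\cdot\nabla{\sf Q}(y)$, which satisfies $H_y\Lambda{\sf Q}=0$, has a unique positive zero $r_0=r_0(n)$, and decays as $-C_*r^{-(n-2)}$ at infinity. By continuous dependence on $\mu$, the first zero $r_1(\mu)$ of $\phi_\mu$ stays bounded as $\mu\downarrow 0$. The second and third zeros, in contrast, come from the oscillatory behaviour at large $r$: after the Liouville substitution $u(r)=r^{(n-1)/2}\phi_\mu(r)$ the equation becomes
\[
u''+\Bigl(V(r)+\mu-\tfrac{(n-1)(n-3)}{4r^2}\Bigr)u=0,
\]
whose coefficient tends to $\mu$ as $r\to\infty$. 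On the outer interval $r\geq A\mu^{-1/2}$ (with $A$ large depending only on $n$), the coefficient lies in $[\mu/2,\,3\mu/2]$ because $V(r)=O(r^{-4})=O(\mu^2)$ and $(n-1)(n-3)/(4r^2)\leq (n-1)(n-3)\mu/(4A^2)$. A Sturm comparison with $w''+c\mu w=0$ for $c\in[1/2,3/2]$ then forces consecutive zeros of $u$ on this outer interval to be separated by distances of order $\mu^{-1/2}$. Combined with the uniform boundedness of $r_1(\mu)$, this gives $r_3(\mu)\leq C\mu^{-1/2}$, hence $\mu_3^{(R)}\geq cR^{-2}\geq cR^{-n/2}$ for $R>1$ and $n\geq 5$.

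The main technical obstacle is to rule out spurious extra zeros of $\phi_\mu$ in the intermediate region $r_0<r<A\mu^{-1/2}$, where neither the $\mu=0$ profile nor the outer Bessel asymptotics apply directly. Because $\Lambda{\sf Q}\in L^2(\R^n)$ for $n\geq 4$, the value $\mu=0$ is an embedded eigenvalue of $-H_y$ in the radial class, so the naive regular expansion $\phi_\mu=\phi_0+\mu\phi_1+\cdots$ is singular at this level. This can be bypassed either by a matched asymptotic expansion (the inner profile $\phi_0\sim -C_*r^{-(n-2)}$ is matched through the intermediate ansatz $a+br^{-(n-2)}$ to the outer Bessel combination $r^{-(n-2)/2}Y_{(n-2)/2}(\sqrt\mu r)$, whose zeros lie at $r=y_{(n-2)/2,k}/\sqrt\mu$), or, more cleanly, by exploiting the sign structure of the Liouville coefficient $V+\mu-(n-1)(n-3)/(4r^2)$ together with standard Sturm disconjugacy to show that no additional zero of $\phi_\mu$ appears in the intermediate region. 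Either route completes the proof and in fact yields the sharper bound $\mu_3^{(R)}\geq cR^{-2}$, from which $\mu_3^{(R)}>cR^{-n/2}$ follows trivially for $n\geq 5$ and $R>1$.
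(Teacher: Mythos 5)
Your overall strategy (Sturm oscillation: locate the third zero $r_3(\mu)$ of the regular solution $\phi_\mu$ and invert) is genuinely different from the paper's, which instead represents $\psi_3^{(R)}$ through the Green's function built from $Z_1=\Lambda_y{\sf Q}$ and $Z_2$, takes $L^2_{\rm rad}$ norms, and removes the dangerous term via the orthogonality $(\psi_3^{(R)},\psi_2^{(R)})=0$. However, as written your argument has two genuine gaps. First, the step you yourself flag as the main obstacle --- showing $\phi_\mu$ has \emph{no} zeros in the intermediate region $r_0<r<A\mu^{-1/2}$ --- is not merely delicate, it is false. Because $\Lambda_y{\sf Q}\in L^2$ for $n\geq5$, zero is an eigenvalue of $-H_y$ in the radial sector, and $\mu_2^{(R)}\asymp R^{-(n-2)}$ (the lower bound is Lemma \ref{Lem3.2}; the matching upper bound follows from testing with $\Lambda_y{\sf Q}$ cut off at scale $R$). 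Equivalently, the second zero satisfies $r_2(\mu)\asymp\mu^{-1/(n-2)}$, which for $n\geq5$ lies well inside your intermediate region, far below the Bessel scale $\mu^{-1/2}$. In the matched expansion this is exactly the effect of the resonant term: $\phi_\mu\approx-C_*r^{-(n-2)}+\beta\mu$ with $\beta>0$ in the intermediate zone, producing a zero at $r\sim\mu^{-1/(n-2)}$. So the correct claim is that the intermediate region contains \emph{exactly one} zero (the second), and one must additionally show $\phi_\mu$ keeps one sign from $r_2(\mu)$ out to $r\sim\mu^{-1/2}$; your proposed disconjugacy statement cannot be proved because it contradicts Lemma \ref{Lem3.2}.

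Second, the final implication is inverted. From the equivalence you correctly state at the outset, $\mu_3^{(R)}>cR^{-2}$ requires $r_3(\mu)>R$ for all $\mu<cR^{-2}$, i.e.\ a \emph{lower} bound $r_3(\mu)\geq c'\mu^{-1/2}$. The upper bound $r_3(\mu)\leq C\mu^{-1/2}$ that you derive (from bounded $r_1$ plus outer-region spacing) yields only the upper bound $\mu_3^{(R)}\leq C^2R^{-2}$, which is not what the lemma asserts. The Sturm comparison you invoke does give the needed lower bound on the spacing of consecutive zeros in the outer region (compare with $w''+\tfrac{3\mu}{2}w=0$), so the ingredients for the lower bound are present; but the conclusion must be assembled from a lower bound on $r_2$ in the corrected intermediate analysis plus the lower spacing bound, not from an upper bound on $r_3$. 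If both repairs are made, your route would in fact yield the sharper estimate $\mu_3^{(R)}\gtrsim R^{-2}$, which implies the lemma for $n\geq5$, $R>1$; but as it stands the proof does not close.
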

%%%%%%%%%%%%%%%%%%%%%%%%%%%%%%%%%%%%%%%%%%%%%%%%%%%%%%%%%%%%%%%%
 \begin{proof}
 We note that $Z_1(r)={\Lambda_y}{\sf Q}(r)$ gives a radial solution of
 $H_yZ=0$ on $\R^n$.
 Let $Z_2(r)=\Gamma(r)$ be another independent radial solution of $H_yZ=0$ on $\R^n$.
 From a direct computation,
 we see that
 \begin{align}
 \label{eq3.2}
 \begin{cases}
 Z_2(r)
 =
 a_1r^{-(n-2)}
 +
 o(r^{-(n-2)})
 \qquad
 \text{as } r\to0,
 \\
 Z_2(r)
 =
 a_2
 +
 o(1)
 \qquad
 \text{as } r\to\infty
 \end{cases}
 \end{align}
 for some $a_1,a_2\not=0$.
 It is known that
 any radial solution of
 the inhomogeneous problem $H_yu=f$
 can be represented
 by $Z_1,Z_2,f$
 (see proof of Lemma 7.2 in \cite{Cortazar}).
 From this formula,
 $\psi_3^{(R)}(r)$ is written as
 \begin{align}
 \label{eq3.3}
 \psi_3^{(R)}(r)
 &=
 k\mu_3^{(R)}Z_2(r)\int_0^r\psi_3^{(R)}Z_1r_1^{n-1}dr_1
 +
 k\mu_3^{(R)}Z_1(r)\int_r^R\psi_3^{(R)}Z_2r_1^{n-1}dr_1
 \nonumber
 \\
 &\qquad
 -
 k\mu_3^{(R)}
 \frac{Z_2(R)}{Z_1(R)}
 Z_1(r)
 \int_0^R\psi_3^{(R)}Z_1r^{n-1}dr.
 \end{align}
 The constant $k$ depends only on $n$.
 To estimate $\mu_3^{(R)}$,
 we compute $L^2$ norm of both sides on \eqref{eq3.3}.
 Throughout this proof,
 we denote by $c$ a general positive constant
 which depends only on $n$.
 For simplicity,
 we write 
 \[
 \|\psi\|_{L_{\text{rad}}^2}^2
 =
 \int_0^R
 |\psi(r)|^k
 r^{n-1}
 dr.
 \]
 We first compute the first term of \eqref{eq3.3}.
 \begin{align*}
 \|
 Z_2(r)
 \int_0^r\psi_3^{(R)}Z_1
 &
 r_1^{n-1}
 dr_1
 \|_{L_{\text{rad}}^2}
 =
 \left\|
 \cdots
 \right\|_{L_{\text{rad}}^2(r<1)}
 +
 \left\|
 \cdots
 \right\|_{L_{\text{rad}}^2(1<r<R)}
 \\
 &\leq
 \|\psi_3^{(R)}Z_1\|_{L^\infty(r<1)}
 \|
 Z_2
 r^n
 \|_{L_{\text{rad}}^2(r<1)}
 +
 \|
 Z_2
 \|_{L_{\text{rad}}^2(1<r<R)}
 \|
 \psi_3^{(R)}
 \|_{L_{\text{rad}}^2}^2
 \|
 Z_1
 \|_{L_{\text{rad}}^2}^2.
 \end{align*}
%%%%%%%%%%%%%%%%%%%%%%%%%%%%%%%%%%%%%%%%%%%%%%%%%%%%%%%%%%%%%%%%
 We note that
 $Z_1(r)\in L_{\text{rad}}^2(\R^n)$ if $n\geq5$
 and
 $\|Z_2\|_{L_\text{rad}^2(1<r<R)}\lesssim R^\frac{n}{2}$
 (see \eqref{eq3.2}).
 Hence
 we get
 \begin{align*}
 \|
 Z_2(r)
 \int_0^r
 &
 \psi_3^{(R)}Z_1r_1^{n-1}dr_1
 \|_{L_{\text{rad}}^2}
 \leq
 \|
 \cdots
 \|_{L_{\text{rad}}^2(r<1)}
 +
 \|
 \cdots
 \|_{L_{\text{rad}}^2(1<r<R)}
 \\
 &\leq
 \|\psi_3^{(R)}Z_1\|_{L_{\text{rad}}^\infty(r<1)}
 \|Z_2r^n\|_{L_{\text{rad}}^2(r<1)}
 +
 \|Z_2\|_{L_{\text{rad}}^2(1<r<R)}
 \|\psi_3^{(R)}\|_{L_{\text{rad}}^2}
 \|Z_1\|_{L_{\text{rad}}^2}
 \\
 &\leq
 c
 (
 \|\psi_3^{(R)}\|_{L_{\text{rad}}^\infty}
 +
 R^\frac{n}{2}
 \|\psi_3^{(R)}\|_{L_{\text{rad}}^2}
 ).
 \end{align*}
 From the order property of eigenvalues:
 $0<\mu_3^{(R_2)}<\mu_3^{(R_1)}$ if $R_1<R_2$
 and
 a local parabolic estimates,
 it holds that
 $\|\psi_3^{(R)}\|_{L_{\text{rad}}^\infty}
 <c\|\psi_3^{(R)}\|_{L_{\text{rad}}^2}$.
 Therefore
 it follows that
 \begin{align}\label{eq3.4}
 \|
 Z_2(r)
 \int_0^r
 \psi_3^{(R)}Z_1r_1^{n-1}dr_1
 \|_{L_{\text{rad}}^2}
 \leq
 c
 R^\frac{n}{2}
 \|\psi_3^{(R)}\|_{L_{\text{rad}}^2}.
 \end{align}
%%%%%%%%%%%%%%%%%%%%%%%%%%%%%%%%%%%%%%%%%%%%%%%%%%%%%%%%%%%%%%%%
 The second term of \eqref{eq3.3} can be computed in the same manner.
 \begin{align}\label{eq3.5}
 \|
 Z_1(r)
 \int_r^R
 \psi_3^{(R)}Z_2r_1^{n-1}dr_1
 \|_{L_{\text{rad}}^2}
 &\leq
 \|Z_1\|_{L_{\text{rad}}^2}
 (
 \int_0^1
 |\psi_3^{(R)}Z_2|r_1^{n-1}dr_1
 +
 \int_1^R
 |\psi_3^{(R)}Z_2|r_1^{n-1}dr_1
 )
 \nonumber
 \\
 &\leq
 c
 R^2
 \|\psi_3^{(R)}\|_{L_{\text{rad}}^2}.
 \end{align}
% Since $\liminf_{R\to\infty}\|\psi_3^{(R)}\|_{L_{\text{rad}}^2}>0$,
% it follows that
% \begin{align}\label{eq3.4}
% \|
% Z_2(r)
% \int_0^r
% \psi_3^{(R)}Z_1r_1^{n-1}dr_1
% \|_{L_{\text{rad}}^2}
% +
% \|
% Z_1(r)
% \int_r^R
% \psi_3^{(R)}Z_2r_1^{n-1}dr_1
% \|_{L_{\text{rad}}^2}
% \leq
% c_3
% R^\frac{n}{2}
% \|\psi_3^{(R)}\|_{L_{\text{rad}}^2}.
% \end{align}
%%%%%%%%%%%%%%%%%%%%%%%%%%%%%%%%%%%%%%%%%%%%%%%%%%%%%%%%%%%%%%%%
 Since $(\psi_2^{(R)},\psi_3^{(R)})_{L_y^2(B_R)}=0$,
 the last term of \eqref{eq3.3} is rewritten as
 \begin{align}\label{eq3.6}
 \int_0^R
 \psi_3^{(R)}
 Z_1r^{n-1}
 dr
 =
 (\psi_3^{(R)},Z_1)_{L_{\text{rad}}^2}
 =
 (\psi_3^{(R)},Z_1-\alpha\psi_2^{(R)})_{L_{\text{rad}}^2}
 \end{align}
 for any $\alpha\in\R$.
 We here choose
 $\alpha=
 (k\mu_2^{(R)}\frac{Z_2(R)}{Z_1(R)}(\psi_2^{(R)},Z_1)_{L_{\text{rad}}^2})^{-1}$.
 We express $\psi_2^{(R)}(r)$
 in the same form as \eqref{eq3.3}.
 \begin{align*}
 \nonumber
 Z_1(r)
 &
 -
 \alpha
 \psi_2^{(R)}(r)
 \\
 &=
 Z_1(r)
 -
 \alpha
 k\mu_2^{(R)}Z_2(r)\int_0^r\psi_2^{(R)}Z_1r^{n-1}dr
 -
 \alpha
 k\mu_2^{(R)}Z_1(r)\int_r^R\psi_2^{(R)}Z_2 r^{n-1}dr
 \nonumber
 \\
 \nonumber
 &\quad
 +
 \alpha
 k\mu_2^{(R)}
 \tfrac{Z_2(R)}{Z_1(R)}
 Z_1(r)
 (\psi_2^{(R)},Z_1)_{L_{\text{rad}}^2}
 \\
 &=
 -
 \alpha
 k\mu_2^{(R)}Z_2(r)\int_0^r\psi_2^{(R)}Z_1r^{n-1}dr
 -
 \alpha
 k\mu_2^{(R)}Z_1(r)\int_r^R\psi_2^{(R)}Z_2 r^{n-1}dr.
 \end{align*}
 We can estimate
 two integrals on the right-hand side
 in the same way as \eqref{eq3.4} - \eqref{eq3.5}.
 \begin{align*}
 \|
 Z_2(r)
 \int_0^r
 \psi_2^{(R)}Z_1r_1^{n-1}dr_1
 \|_{L_{\text{rad}}^2}
 +
 \|
 Z_1(r)
 \int_r^R
 \psi_2^{(R)}Z_2r_1^{n-1}dr_1
 \|_{L_{\text{rad}}^2}
 \leq
 c
 (R^\frac{n}{2}+R^2)
 \|\psi_2^{(R)}\|_{L_{\text{rad}}^2}.
 \end{align*}
%%%%%%%%%%%%%%%%%%%%%%%%%%%%%%%%%%%%%%%%%%%%%%%%%%%%%%%%%%%%%%%%
 Therefore
 we derive a better estimate of \eqref{eq3.6}.
 \begin{align}\label{eq3.7}
 (\psi_3^{(R)},Z_1-\alpha\psi_2^{(R)})_{L_{\text{rad}}^2}
 \leq
 \|\psi_3^{(R)}\|_{L_{\text{rad}}^2}
 \|Z_1-\alpha\psi_2^{(R)}\|_{L_{\text{rad}}^2}
 \leq
 c
 \alpha
 \mu_2^{(R)}
 R^\frac{n}{2}
 \|\psi_3^{(R)}\|_{L_{\text{rad}}^2}.
 \end{align}
%%%%%%%%%%%%%%%%%%%%%%%%%%%%%%%%%%%%%%%%%%%%%%%%%%%%%%%%%%%%%%%%
 We note from 
 Lemma \ref{Lem3.1} and \eqref{eq3.2}
 that
 \[
 \alpha
 =
 (
 k\mu_2^{(R)}
 \tfrac{Z_2(R)}{Z_1(R)}(\psi_2^{(R)},Z_1)_{L_{\text{rad}}^2}
 )^{-1}
 \lesssim
 (\mu_2^{(R)})^{-1}
 \tfrac{Z_1(R)}{Z_2(R)}
 \lesssim
 R^{n-2}
 \cdot
 R^{-(n-2)}
 =1.
 \]
 Therefore
 we take $L^2$ norm of both sides on \eqref{eq3.3}
 and
 combine \eqref{eq3.4}-\eqref{eq3.5} and \eqref{eq3.7},
 we conclude
 \begin{align*}
 \|\psi_3^{(R)}\|_{L_\text{rad}^2}
 \leq
 c
 \mu_3^{(R)}
 R^\frac{n}{2}
 \|
 \psi_3^{(R)}
 \|_{L_{\text{rad}}^2}.
 \end{align*}
 This completes the proof.
 \end{proof}
%%%%%%%%%%%%%%%%%%%%%%%%%%%%%%%%%%%%%%%%%%%%%%%%%%%%%%%%%%%%%%%%

%\subsection{Behavior of the Laplace equation with a perturbation term}
%\label{S3.4}
%%%%%%%%%%%%%%%%%%%%%%%%%%%%%%%%%%%%%%%%%%%%%%%%%%%%%%%%%%%%%%%%%
%Consider a radially symmetric solution of
% \[
% \Delta_yp+(1-\chi_M)V(y)p=0 \quad\text{in } \R^n,
% \qquad
% \chi_M=\eta\left( \frac{|y|}{M} \right).
% \]
% Let $p_M(r)$ be a radially symmetric solution of this problem satisfying
% $p_M(r)=1$ for $r<M$.
%%%%%%%%%%%%%%%%%%%%%%%%%%%%%%%%%%%%%%%%%%%%%%%%%%%%%%%%%%%%%%%%%
% \begin{lem}[see proof of Lemma 7.3 \cite{Cortazar}, Lemma 3.3 \cite{Harada2}]
% \label{L3.3}
% There exist $k\in(0,1)$ and $M_1>0$ such that if $M>M_1$
% \[
% k<p_M(r)\leq1
% \quad\text{\rm for}\ r\in(0,\infty).
% \]
% \end{lem}
%%%%%%%%%%%%%%%%%%%%%%%%%%%%%%%%%%%%%%%%%%%%%%%%%%%%%%%%%%%%%%%%%

 \subsection{Local $L^\infty$ bound and gradient estimates for parabolic equations}
 \label{sec_3.2}
%%%%%%%%%%%%%%%%%%%%%%%%%%%%%%%%%%%%%%%%%%%%%%%%%%%%%%%%%%%%%%%%
In this subsection,
we consider
 \begin{equation}\label{eq_sec_3.2}
 u_t=\Delta_xu+{\bf b}(x,t)\cdot\nabla_xu+V(x,t)u+f(x,t)
 \qquad\text{in } Q,
 \end{equation}
where $Q=B_2\times(0,1)$, $B_r=\{x\in\R^n;\ |x|<r\}$.
The coefficients are assumed to be
 \begin{equation}
 \tag{A1}
 {\bf b}(x,t)\in(L^\infty(Q))^n,
 \quad
 V(x,t)\in L^\infty(Q)
 \qquad\text{with }
 \|{\bf b}\|_{L^\infty(Q)}+\|V\|_{L^\infty(Q)}<M.
 \end{equation}
For $p,q\in[1,\infty]$,
we define
 \[
 \|f\|_{L^{p,q}(Q)}
 =
 \begin{cases}
 \dis
 \left( \int_0^1\|f(t)\|_{L^p(B_2)}^qdt \right)^\frac{1}{q}
 & \text{if } q\in[1,\infty),
 \\
 \dis
 \sup_{t\in(0,1)}\|f(t)\|_{L^p(B_2)}
 & \text{if } q=\infty.
 \end{cases}
 \]
%%%%%%%%%%%%%%%%%%%%%%%%%%%%%%%%%%%%%%%%%%%%%%%%%%%%%%%%%%%%%%%%

\begin{lem}[Exercise 6.5 p. 154 \cite{Lieberman}, Theorem 8.1 p. 192 \cite{Ladyzenskaja}]
 \label{Lem3.4}
%%%%%%%%%%%%%%%%%%%%%%%%%%%%%%%%%%%%%%%%%%%%%%%%%%%%%%%%%%%%%%%%
 Let $p,q\in(1,\infty]$ satisfy $\frac{n}{2p}+\frac{1}{q}<1$ and assume {\rm(A1)}.
 There exists $c>0$ depending on $p$, $q$, $n$ and $M$ such that
 \begin{enumerate}[{\rm (i)}]
 \item
 if $u(x,t)$ is a weak solution of \eqref{eq_sec_3.2},
 then
 \[
 \|u\|_{L^\infty(B_1\times(\frac{1}{2},1))}
 <
 c\left( \|u\|_{L^2(Q)}+\|f\|_{L^{p,q}(Q)} \right),
 \]
 \item
 if $u(x,t)\in C(B_2\times[0,1))$ is a weak solution of \eqref{eq_sec_3.2} with $u(x,t)|_{t=0}=0$,
 then
 \[
 \|u\|_{L^\infty(B_1\times(0,1))}
 <
 c\left( \|u\|_{L^2(Q)}+\|f\|_{L^{p,q}(Q)} \right).
 \]
 \end{enumerate}
%%%%%%%%%%%%%%%%%%%%%%%%%%%%%%%%%%%%%%%%%%%%%%%%%%%%%%%%%%%%%%%%
 \end{lem}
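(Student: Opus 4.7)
The statement is the classical parabolic Moser--De~Giorgi $L^\infty$ bound for weak solutions with bounded drift, bounded potential and forcing in a mixed Lebesgue space subject to the subcritical scaling condition $\tfrac{n}{2p}+\tfrac{1}{q}<1$. The plan is to prove both parts by a parabolic Moser iteration, handling (i) first and noting that (ii) follows by allowing the cutoffs to extend down to $t=0$ thanks to $u|_{t=0}=0$. After the substitution $u\mapsto e^{-Mt}u$ to make the potential nonpositive, I fix $\beta\geq 1$, a nested family of parabolic cylinders $Q_k=B_{r_k}\times(s_k,1)$ with $r_k\downarrow 1$, $s_k\uparrow 1/2$, and smooth cutoffs $\zeta_k$ supported in $Q_k$, equal to $1$ on $Q_{k+1}$, with $|\nabla\zeta_k|+|\partial_t\zeta_k|^{1/2}\lesssim 2^k$. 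Testing the equation against $|u|^{2\beta-2}u\zeta_k^2$ and absorbing the drift via Cauchy--Schwarz produces the energy inequality
\[
\sup_t\int_{B_{r_{k+1}}}|u|^{2\beta}dx+\iint_{Q_{k+1}}|\nabla(|u|^\beta)|^2\,dx\,dt\leq C(\beta,M)4^k\iint_{Q_k}|u|^{2\beta}dx\,dt+C\beta\iint_{Q_k}|f||u|^{2\beta-1}\zeta_k^2\,dx\,dt.
\]

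Next, the parabolic Gagliardo--Nirenberg interpolation $\|v\|_{L^{2\kappa}(Q)}\leq C(\|v\|_{L^\infty_tL^2_x}+\|\nabla v\|_{L^2})$ with $\kappa=1+2/n$, applied to $v=|u|^\beta\zeta_{k+1}$, converts the energy inequality into a reverse-Hölder gain. To handle the forcing term, I apply the mixed-norm Hölder inequality with conjugate exponents $p'=p/(p-1)$, $q'=q/(q-1)$; the hypothesis $\tfrac{n}{2p}+\tfrac{1}{q}<1$ is equivalent to $\tfrac{n}{2p'}+\tfrac{1}{q'}>\tfrac{n}{2}$, so $L^{p',q'}$ embeds strictly into some $L^{r}$ with $r<\kappa$. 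This yields a bound on the $f$-contribution by $\|f\|_{L^{p,q}}\|u\|_{L^{2\beta r}}^{(2\beta-1)/r}$ with an exponent on $u$ strictly smaller than the gain $2\kappa\beta$ on the left, so Young's inequality absorbs one term into the left-hand side and leaves a contribution linear in $\|f\|_{L^{p,q}}$.

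Choosing $\beta_k=\kappa^k$ and iterating the resulting recursion
\[
\|u\|_{L^{2\kappa\beta_k}(Q_{k+1})}\leq (C4^k\beta_k^2)^{1/(2\beta_k)}\bigl(\|u\|_{L^{2\beta_k}(Q_k)}+\|f\|_{L^{p,q}(Q)}\bigr)
\]
from $\beta_0=1$, the prefactors multiply to a finite constant because $\sum k/\kappa^k<\infty$; passing $k\to\infty$ yields $\|u\|_{L^\infty(B_1\times(1/2,1))}\leq C(\|u\|_{L^2(Q)}+\|f\|_{L^{p,q}(Q)})$, which is (i). For (ii), the same argument with $s_k\equiv 0$ applies because $u|_{t=0}=0$ kills the boundary term when integrating $u_t|u|^{2\beta-2}u\zeta_k^2$ in time, so the bound holds on $B_1\times(0,1)$.

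The main technical obstacle is quantifying the room provided by the strict inequality $\tfrac{n}{2p}+\tfrac{1}{q}<1$: one must select the Hölder exponent $r<\kappa$ and the interpolation parameters so that the $f$-contribution carries a strictly subcritical power of $u$ uniformly in $\beta$, ensuring that the iteration telescopes to a linear dependence on $\|f\|_{L^{p,q}}$ rather than accumulating diverging constants. The strict inequality is precisely the scaling margin that makes this absorption step closable.
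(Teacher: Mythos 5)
The paper does not prove Lemma \ref{Lem3.4}; it is quoted verbatim from the cited references (Lieberman, Exercise 6.5; Ladyzhenskaya--Solonnikov--Ural'tseva, Theorem III.8.1), so there is no internal proof to compare against. Your Moser iteration argument is the standard proof of exactly this statement and is essentially what those references carry out: the Caccioppoli inequality from testing with $|u|^{2\beta-2}u\zeta_k^2$, the parabolic Gagliardo--Nirenberg gain $\kappa=1+\tfrac{2}{n}$, the mixed-norm H\"older/Young absorption of the forcing using $\tfrac{n}{2p'}+\tfrac{1}{q'}>\tfrac{n}{2}$, and the telescoping product of constants. The structure is sound, including the observation that part (ii) follows by letting the temporal cutoff reach $t=0$ since $u|_{t=0}=0$ kills the initial boundary term. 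Two points would need care in a complete write-up: first, for a mere weak solution the test function $|u|^{2\beta-2}u\zeta_k^2$ is not a priori admissible, so one must run the iteration on truncations $u_N=\min(|u|,N)$ (or bounded convex approximations of $s\mapsto|s|^{2\beta}$) and pass to the limit; second, the absorption step should be phrased so that the H\"older exponent used to dominate $\||u|^{2\beta-1}\zeta^2\|_{L^{p',q'}}$ by the $L^\infty_tL^2_x\cap L^2_tH^1_x$ norm of $|u|^\beta\zeta$ works uniformly in $\beta\geq1$ --- which it does, precisely because $\tfrac{n}{2p'}+\tfrac{1}{q'}>\tfrac{n}{2}\geq\tfrac{n}{4}\bigl(2-\tfrac{1}{\beta}\bigr)$ for all $\beta\geq1$, the margin you correctly attribute to the strict inequality in the hypothesis.
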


 \begin{lem}[Theorem 4.8 p. 56 \cite{Lieberman}, Theorem 11.1 p. 211 \cite{Ladyzenskaja}]
 \label{Lem3.5}
%%%%%%%%%%%%%%%%%%%%%%%%%%%%%%%%%%%%%%%%%%%%%%%%%%%%%%%%%%%%%%%%
 Let $p,q\in(1,\infty]$ satisfy $\frac{n}{p}+\frac{2}{q}<1$ and assume {\rm(A1)}.
 There exists $c>0$ depending on $p$, $q$, $n$ and $M$ such that
 \begin{enumerate}[{\rm (i)}]
 \item
 if $u(x,t)\in C^{2,1}(Q)$ is a solution of \eqref{eq_sec_3.2},
 then
 \[
 \|\nabla u\|_{L^\infty(B_1\times(\frac{1}{2},1))}
 <
 c\left( \|u\|_{L^\infty(Q)}+\|f\|_{L^{p,q}(Q)} \right),
 \]
 \item
 if $u(x,t)\in C^{2,1}(Q)\cap C^{2,1}(B_2\times[0,1))$ is a solution of \eqref{eq_sec_3.2}
 with $u(x,t)|_{t=0}=0$,
 then
 \[
 \|\nabla u\|_{L^\infty(B_1\times(0,1))}
 <
 c\left( \|u\|_{L^\infty(Q)}+\|f\|_{L^{p,q}(Q)} \right),
 \]
 \item
 if $u(x,t)\in C^{2,1}(\bar Q)$ is a solution of \eqref{eq_sec_3.2} with
 $u(x,t)|_{t=0}=0$ and $u(x,t)|_{\pa B_2}=0$,
 then
 \[
 \|\nabla u\|_{L^\infty(Q)}
 <
 c\left(
 \|u\|_{L^\infty(Q)}+\|f\|_{L^{p,q}(Q)}
 \right).
 \]
 \end{enumerate}
%%%%%%%%%%%%%%%%%%%%%%%%%%%%%%%%%%%%%%%%%%%%%%%%%%%%%%%%%%%%%%%%
 \end{lem}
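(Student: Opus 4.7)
Since Lemma 3.5 is cited directly from \cite{Lieberman} and \cite{Ladyzenskaja}, my plan is a sketch of the standard strategy rather than an original argument. The result is a parabolic gradient estimate under the subcritical integrability condition $\frac{n}{p}+\frac{2}{q}<1$, which is precisely the parabolic Sobolev embedding threshold for $W^{2,1}_{p,q} \hookrightarrow C^{1,\alpha}$ with some $\alpha>0$.

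The plan is to rewrite \eqref{eq_sec_3.2} as $u_t - \Delta u = g$ with $g := f - \mathbf{b}\cdot\nabla u - V u$ and apply the interior $W^{2,1}_{p,q}$ regularity for the constant-coefficient heat operator on a cylinder $B_{r'}\times(t',1)$ slightly smaller than $Q$. This yields
\[
\|u\|_{W^{2,1}_{p,q}(B_{r'}\times(t',1))}
\leq
C\bigl( \|u\|_{L^\infty(Q)} + \|g\|_{L^{p,q}(Q)} \bigr).
\]
Under $\frac{n}{p}+\frac{2}{q}<1$, the parabolic Sobolev embedding then controls $\|\nabla u\|_{L^\infty}$ on a still smaller cylinder in terms of the above $W^{2,1}_{p,q}$ norm, which gives (i) once the gradient term on the right-hand side is handled.

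The one nontrivial step is absorbing the first-order term $\mathbf{b}\cdot\nabla u$ back into the source. Since $\mathbf{b}\in L^\infty$ is not assumed small, I would iterate on shrinking cylinders of radii $r_k=r_0(1-2^{-k})$, combined with the parabolic interpolation inequality
\[
\|\nabla u\|_{L^{p,q}} \leq \varepsilon\,\|u\|_{W^{2,1}_{p,q}} + C_\varepsilon\,\|u\|_{L^\infty},
\]
choosing $\varepsilon$ small depending on $M$ to close the bound. This yields a constant depending only on $p,q,n,M$, as claimed. Statements (ii) and (iii) then follow from the same interior argument combined with $W^{2,1}_{p,q}$ regularity up to the parabolic initial boundary (using $u|_{t=0}=0$) and up to the lateral boundary (using $u|_{\partial B_2}=0$) respectively, both of which are standard boundary analogues of the interior estimate.

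The main obstacle I anticipate is this absorption step, because the first-order coefficient is merely bounded; once the Campanato-type iteration on nested cylinders is set up correctly, the remainder is routine bookkeeping. Since the paper invokes this lemma only as a black box from the cited references, no further detail is needed beyond the sketch above.
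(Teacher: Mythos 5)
The paper gives no proof of Lemma \ref{Lem3.5} at all --- it is invoked as a black box from Lieberman (Theorem 4.8) and Ladyzenskaja--Solonnikov--Uralceva (Theorem 11.1) --- so there is no argument in the paper to compare against. Your sketch (rewrite the equation as $u_t-\Delta u=f-\mathbf{b}\cdot\nabla u-Vu$, apply interior/boundary $W^{2,1}_{p,q}$ maximal regularity, use the anisotropic embedding valid precisely when $\tfrac{n}{p}+\tfrac{2}{q}<1$, and absorb the first-order term by interpolation on nested cylinders) is a correct standard route to the stated estimates, and correctly identifies where the conditions $u|_{t=0}=0$ and $u|_{\pa B_2}=0$ enter in parts (ii) and (iii).
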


\section{Formal derivation of blowup speed}
\label{sec_4}
%%%%%%%%%%%%%%%%%%%%%%%%%%%%%%%%%%%%%%%%%%%%%%%%%%%%%%%%%%%%%%%%
 In this section,
 we derive the asymptotic behavior of solutions described in Theorem \ref{Thm1}
 by using a matched asymptotic expansion technique.
 Since our blowup solution has four characteristic lengths,
 we denote them by
\begin{itemize}
\item inner region \quad $|x|\sim\lambda(t)$,
\item semiinner region \quad $|x|\sim\eta(t)$,
\item selfsimilar region \quad $|x|\sim\sqrt{T-t}$,
\item outer region \quad $|x|\sim1$,
\end{itemize}
 where $\lambda(t)$ and $\eta(t)$ are unknown functions satisfying 
 \[
 \lambda(t)\ll\eta(t)\ll\sqrt{T-t}.
 \]
%%%%%%%%%%%%%%%%%%%%%%%%%%%%%%%%%%%%%%%%%%%%%%%%%%%%%%%%%%%%%%%%

\subsection{Inner region $|x|\sim\lambda(t)$}
\label{sec_4.1}
%%%%%%%%%%%%%%%%%%%%%%%%%%%%%%%%%%%%%%%%%%%%%%%%%%%%%%%%%%%%%%%%
 We first investigate the asymptotic behavior of solutions in the inner region.
 We assume that
 the solution $u(x,t)$ behaves like
 \[
 u(x,t)
 =
 \lambda(t)^{-\frac{n-2}{2}}
 {\sf Q}(y)
 +
 o(\lambda^{-\frac{n-2}{2}})
 \qquad
 \text{in the inner region},
 \]
 where $x=\lambda(t)y$
 and
 $\lambda(t)$ is an unknown function satisfying $\lambda(t)\to0$ as $t\to T$.
 Under this assumption,
 this solution blows up at $t=T$.
 Therefore
 we can expect that
 the solution approximately solves $u_t=\Delta_xu+|u|^{p-1}u$ in this region.
 Following the argument in Section 4 of \cite{Harada_2}
 (the original idea is in \cite{Filippas}),
 we put
 \[
 u(x,t)
 =
 \lambda^{-\frac{n-2}{2}}{\sf Q}(y)
 +
 \lambda^{-\frac{n-2}{2}}\sigma(t)T_1(y)
 +
 \lambda^{-\frac{n-2}{2}}\epsilon_1(y,t),
 \]
 where $0<\sigma(t)\ll1$.
 The function $\epsilon_1(y,t)$ solves
 \[
 \lambda^2\pa_t\epsilon_1
 +
 \lambda^2\dot\sigma T_1
 =
 H_y\epsilon_1
 +
 \lambda\dot\lambda
 (\Lambda_y{\sf Q})
 +
 \sigma
 H_yT_1
 +
 \lambda\dot\lambda
 \sigma
 \Lambda_yT_1
 +
 \lambda\dot\lambda
 \Lambda_y\epsilon_1
 +
 N,
 \]
 where
 \begin{itemize}
 \item 
 $H_y=\Delta_y+f'({\sf Q}(y))$,
 \item
 $\Lambda_y=\frac{n-2}{2}+y\cdot\nabla_y$,
 \item
 $N=f(u)-f(\lambda^{-\frac{n-2}{2}}{\sf Q})
 -f'(\lambda^{-\frac{n-2}{2}}{\sf Q})(u-\lambda^{-\frac{n-2}{2}}{\sf Q})$.
 \end{itemize}
 We choose
 $\sigma=\lambda\dot\lambda$
 and
 $T_1$ as a solution of $H_yT_1+\Lambda_y{\sf Q}=0$
 to cancel the lower order terms.
 Then
 $\epsilon_1(y,t)$ solves
 \[
 \lambda^2\pa_t\epsilon_1
 +
 \lambda^2\dot\sigma T_1
 =
 H_y\epsilon_1
 +
 \lambda\dot\lambda
 \sigma
 \Lambda_yT_1
 +
 \lambda\dot\lambda
 \Lambda_y\epsilon_1
 +
 N.
 \]
 From this equation,
 we can expect $|\epsilon_1(y,t)|\ll|\sigma|$.
 This implies
 \[
 u(x,t)
 =
 \lambda^{-\frac{n-2}{2}}{\sf Q}(y)
 +
 \lambda^{-\frac{n-2}{2}}\sigma
 T_1(y)
 (1+o)
 \qquad
 \text{as }
 t\to T.
 \]
 We recall that
 there exits ${\sf A}_1>0$ such that
 (see p. 12 - p. 13 in \cite{Harada_2})
 \begin{align}
 \label{eq4.1}
 T_1(y)
 &=
 {\sf A}_1
 +
 O(|y|^{-(n-4)})
 +
 O(|y|^{-2})
 \qquad
 \text{as }
 |y|\to\infty
 \qquad
 (n\geq5),
 \\
 \label{eq4.2}
 |\nabla_yT_1(y)|
 &=
 O(|y|^{-(n-3)})
 +
 O(|y|^{-3})
 \qquad
 \text{as }
 |y|\to\infty
 \qquad
 (n\geq5).
 \end{align}
 Therefore
 the asymptotic behavior of the solution is given by
 \begin{equation}\label{eq4.3}
 u(x,t)
 =
 \lambda^{-\frac{n-2}{2}}{\sf Q}(y)
 +
 \lambda^{-\frac{n-2}{2}}\sigma
 {\sf A}_1
 (1+o)
 \qquad
 \text{as }
 |y|\to\infty.
 \end{equation}
%%%%%%%%%%%%%%%%%%%%%%%%%%%%%%%%%%%%%%%%%%%%%%%%%%%%%%%%%%%%%%%%

\subsection{Semiinner region $|x|\sim\eta(t)$ and Selfsimilar region $|x|\sim\sqrt{T-t}$}
\label{sec_4.2}
%%%%%%%%%%%%%%%%%%%%%%%%%%%%%%%%%%%%%%%%%%%%%%%%%%%%%%%%%%%%%%%%
 Let $\eta(t)$ be an unknown function which represents the characteristic length
 of the semiinner region.
 To describe the asymptotic behavior of solutions,
 we introduce two new variables
 \[
 \xi=\eta(t)^{-1}x,
 \qquad
 z=(T-t)^{-\frac{1}{2}}x.
 \]
 We now assume
 \begin{equation}\label{eq4.4}
 \lim_{t\to T}
 u(x,t)
 =
 0
 \qquad
 \text{in } k^{-1}\eta(t)<|x|<k\sqrt{T-t}
 \end{equation}
 for arbitrary $k>1$.
 Under this assumption,
 the equation \eqref{eq1.1} is approximated by
 \begin{equation}\label{eq4.5}
 u_t=\Delta_xu-|u|^{q-1}u.
 \end{equation}
 It is known that
 there are two types of solutions of \eqref{eq4.5} satisfying \eqref{eq4.4}.
 \begin{enumerate}[(I)]
 \item One is given by $u(x,t)=\pm(1-q)^\frac{1}{1-q}(T-t)^\frac{1}{1-q}$.
 \item The other solution behaves like
 \begin{align}
 \label{eq4.6}
 u(x,t)
 &=
 \pm\eta(t)^\frac{2}{1-q}{\sf U}(\xi)+o(\eta(t)^\frac{2}{1-q})
 \qquad \text{for } |x|\sim\eta(t),
 \\
 \label{eq4.7}
 u(x,t)
 &=
 \pm{\sf U}_\infty(x)+\Theta_J(x,t)+o(\Theta_J(x,t))
 \qquad \text{for } |x|\sim\sqrt{T-t}
 \qquad
 (J\in\N).
 \end{align}
 \end{enumerate}
 We explain the case (II) along \cite{Seki}
 and 
 give definition of
 ${\sf U}(\xi)$, ${\sf U}_\infty(x)$ and $\Theta_J(x,t)$.
 Let ${\sf U}(\xi)$ be a unique solution of
 \[
 \begin{cases}
 \Delta_\xi{\sf U}-f_2({\sf U})=0
 \qquad
 \text{in }
 \R^n,
 \\
 {\sf U}(\xi)|_{\xi=0}=1,
 \\
 {\sf U}(\xi) \text{ is radially symmetric}.
 \end{cases}
 \]
 As in \cite{Seki},
 we assume
 that
 the solution
 in the semiinner region
 moves along the scale of the steady state ${\sf U}(\xi)$ like
 \begin{equation*}
 u(x,t)
 =
 \pm
 \eta(t)^\frac{2}{1-q}
 {\sf U}(\xi)^\frac{2}{1-q}
 (1+o)
 \qquad
 \text{in }
 |x|\sim\eta(t).
 \end{equation*}
 This implies \eqref{eq4.6}.
 To investigate the asymptotic behavior of solutions in the selfsimilar region,
 we introduce a singular solution ${\sf U}_\infty(x)=L_1|x|^\frac{2}{1-q}$.
 The constant $L_1$ is given by
 \begin{align}\label{eq4.8}
 L_1^{q-1}
 =
 \beta_0
 (
 \beta_0+n-2
 ),
 \quad
 \beta_0
 =
 \tfrac{2}{1-q}.
 \end{align}
 This gives a unique nonnegative radial solution of
 $\Delta_x{\sf U}-{\sf U}^q=0$ with ${\sf U}|_{x=0}=0$.
 Furthermore
 let $\gamma\in(0,\frac{2}{1-q})$ be a unique real number satisfying
 \[
 (\Delta_x-qL_1^{q-1}|x|^{-2})|x|^\gamma=0.
 \]
 It is known that
 (see Remark 1.2 (i) in \cite{Seki} p. 4)
 \begin{equation}\label{eq4.9}
 \tfrac{2}{1-q}-2<\gamma<\tfrac{2}{1-q}.
 \end{equation}
 To investigate the behavior of the solution
 on the border
 between the semiinner region and the selfsimilar region,
 we need the following property of ${\sf U}(\xi)$.
 \begin{equation}\label{eq4.10}
 {\sf U}(\xi)
 =
 {\sf U}_\infty(\xi)
 +
 {\sf B}_1
 |\xi|^\gamma
 +
 O(|\xi|^{\gamma-{\sf k}_1})
 \qquad
 \text{as }
 |\xi|\to\infty,
 \end{equation}
 where ${\sf B}_1>0$ and ${\sf k}_1>0$ (see (2.3) in \cite{Seki}).
%%%%%%%%%%%%%%%%%%%%%%%%%%%%%%%%%%%%%%%%%%%%%%%%%%%%%%%%%%%%%%%
% We next derive the asymptotic behavior of solutions in
% the selfsimilar region $|z|\sim\sqrt{T-t}$.
 From \eqref{eq4.10},
 we expect that
 the solution $u(x,t)$ behaves like
 \begin{align*}
 u(x,t)
 &=
 \pm
 \eta(t)^\frac{2}{1-q}
 {\sf U}(\xi)
 (1+o)
 =
 \pm
 \eta(t)^\frac{2}{1-q}
 {\sf U}_\infty(\xi)
 (1+o)
 \\
 &=
 \pm
 {\sf U}_\infty(x)
 (1+o)
 \qquad
 \text{as }
 |\xi|\to\infty.
 \end{align*}
 From this relation,
 we assume that
 the solution in the selfsimilar region behaves like
 \[
 u(x,t)
 =
 \pm
 {\sf U}_\infty(x)
 (1+o)
 \qquad
 \text{for }
 |x|\sim\sqrt{T-t}.
 \]
 To obtain more precise asymptotic behavior of solutions,
% we introduce the selfsimilar variable $z=\frac{x}{\sqrt{T-t}}$
% and
 we consider a linearized problem around $\pm{\sf U}_\infty(x)$.
 \[
 w_t
 =
 \Delta_x
 w
 -
 f_2'(\pm{\sf U}_\infty)
 w
 =
 \Delta_x
 w
 -
 q|{\sf U}_\infty|^{q-1}
 w
 =
 \Delta_x
 w
 -
 qL_1^{q-1}
 |x|^{-2}
 w.
 \]
 Here the nonlinear term $f(u)$ is neglected,
 since $u(x,t)\to0$ for $|x|\sim\sqrt{T-t}$.
 To describe a local behavior of $w(x,t)$,
 we perform a change of variable:
 $z=(T-t)^{-\frac{1}{2}}x$,
 $T-t=e^{-\tau}$.
 The linearized equation can be written in the new variable as
% A new function $\Theta(z,\tau)$ satisfies
 \begin{equation*}
 w_\tau
 =
 \Delta_zw
 -
 \tfrac{z}{2}\cdot\nabla_zw
 -
 qL_1^{q-1}
 |z|^{-2}
 w
 \qquad
 \text{for }
 z\in\R^n,\
 \tau\in(-\log T,\infty).
 \end{equation*}
 We define a weighted $L^2$ space.
 \begin{align*}
 L_\rho^2(\R^n)
 :=
 \{f\in L_\text{loc}^2(\R^n);\ \|f\|_\rho<\infty\},
 \qquad
 \|f\|_\rho^2=\int_{\R^n}f(z)^2\rho(z)dz
 \quad
 \text{with }
 \rho(z)=e^{-\frac{|z|^2}{4}}.
 \end{align*}
 The inner product is defined by
 \[
 (f_1,f_2)_\rho=\int_{\R^n}f_1(z)f_2(z)\rho(z)dz.
 \]
%Under a radial setting,
%the asymptotic behavior of solutions to \eqref{eq_sec_3.2} is described by
 A corresponding eigenvalue problem is given by
 \begin{equation*}
 -
 (
 \Delta_z-\tfrac{z}{2}\cdot\nabla_z-qL_1^{q-1}|z|^{-2}
 )
 e_j=\mu_ie_j
 \qquad\text{in } L_{\rho,\text{rad}}^2(\R^n).
 \end{equation*}
 It is known that
 the eigenvalue $\mu_j$ and the eigenfunction $e_j(z)$
 are explicitly given by
 \begin{itemize}
 \item $\mu_j=\frac{\gamma}{2}+j$ \quad ($j=0,1,2,\cdots$),
 \item $e_j(z)\in L_{\rho,\text{rad}}^2(\R^n)$ \quad ($j=0,1,2,\cdots$),
 \item $e_0(z)={\sf D}_0|z|^\gamma$,
 \item $e_j(z)={\sf D}_j|z|^\gamma+O(|z|^{\gamma+2})$ \quad as $|z|\to0$
 \quad ($j=1,2,\cdots$),
 \item $e_j(z)={\sf E}_j|z|^{2j+\gamma}+O(|z|^{2j-2+\gamma})$ \quad as $|z|\to\infty$
 \quad ($j=1,2,\cdots$),
 \end{itemize}
 and
 $L_{\rho,\text{rad}}^2(\R^n)$ is spanned by
 $\{e_j(z)\}_{j=0}^\infty$
 (see Lemma 2.2 p. 8 in \cite{Seki}, Corollary 2.3 p. 10 in \cite{Seki}).
 This eigenfunction is normalized as $\|e_j\|_\rho=1$ ($j=0,1,2,\cdots$).
 From this fact,
 we choose
 \[
 w(x,t)
 =
 \Theta_J(z,\tau)
 =
 K
 e^{-\mu_j\tau}e_J(z)
 =
 K
 (T-t)^{\frac{\gamma}{2}+J}
 e_J(z)
 \qquad (J=0,1,2,\cdots),
 \]
 where $K$ is an arbitrary constant.
 This implies
 \begin{align*}
 u(x,t)
 &=
 \pm
 {\sf U}_\infty(x)
 +
 \Theta_J(x,t)
 \nonumber
 \\
 &=
 \pm
 {\sf U}_\infty(x)
 +
 K(T-t)^{\frac{\gamma}{2}+J}
 e_J(z)
 \qquad
 \text{for }
 |x|\sim\sqrt{T-t}.
 \end{align*}
 This gives the formula \eqref{eq4.7}.
 The asymptotic behavior of this solution is given by
 \begin{align}\label{eq4.11}
 \nonumber
 u(x,t)
 &=
 \pm
 {\sf U}_\infty(x)
 +
 K{\sf D}_J
 (T-t)^{\frac{\gamma}{2}+J}
 |z|^\gamma
 (1+o(1))
 \\
 &=
 \pm
 {\sf U}_\infty(x)
 +
 K{\sf D}_J
 (T-t)^J
 \eta^\gamma
 |\xi|^\gamma
 (1+o(1))
 \qquad
 \text{as }
 |z|\to0.
 \end{align}
%%%%%%%%%%%%%%%%%%%%%%%%%%%%%%%%%%%%%%%%%%%%%%%%%%%%%%%%%%%%%%%%

\subsection{Matching condition for the case (I)}
 \label{sec_4.3}
%%%%%%%%%%%%%%%%%%%%%%%%%%%%%%%%%%%%%%%%%%%%%%%%%%%%%%%%%%%%%%%%
 We firsts consider the case (I) in Section \ref{sec_4.2}.
 Since ${\sf Q}(y)\to0$ as $|y|\to\infty$,
 \eqref{eq4.3} implies
 \begin{align*}
 u(x,t)
 &=
 \lambda^{-\frac{n-2}{2}}
 {\sf Q}(y)
 +
 \lambda(t)^{-\frac{n-2}{2}}
 \sigma
 {\sf A}_1
 (1+o)
 =
 \lambda^{-\frac{n-2}{2}}
 \sigma
 {\sf A}_1
 (1+o)
 \qquad
 \text{as } |y|\to\infty.
 \end{align*}
 This relation
 and
 the asymptotic formula (I)
 give the following matching condition.
 \[
 \lambda^{-\frac{n-2}{2}}
 \sigma
 {\sf A}_1
 =
 \lambda^{-\frac{n-2}{2}}
 \lambda
 \dot\lambda
 {\sf A}_1
 =
 \pm
 (1-q)^\frac{1}{1-q}
 (T-t)^\frac{1}{1-q}
 \qquad
 \text{as } |y|\to\infty.
 \]
 Since ${\sf A}_1>0$ (see \eqref{eq4.1}) and $\dot\lambda<0$,
 we choose a minus sign.
 Therefore
 we obtain
 \[
 \lambda(t)
 =
 (\tfrac{6-n}{2(2-q){\sf A}_1})^{\frac{2}{6-n}}
 (1-q)^{\frac{2-q}{1-q}\frac{2}{6-n}}
 (T-t)^{\frac{2-q}{1-q}\frac{2}{6-n}}.
 \]
%%%%%%%%%%%%%%%%%%%%%%%%%%%%%%%%%%%%%%%%%%%%%%%%%%%%%%%%%%%%%%%%

\subsection{Matching condition for the case (II)}
 \label{sec_4.4}
%%%%%%%%%%%%%%%%%%%%%%%%%%%%%%%%%%%%%%%%%%%%%%%%%%%%%%%%%%%%%%%%
 We next consider the case (II).
 We determine $\lambda(t)$ and $\eta(t)$ by the matching procedure.
 From \eqref{eq4.3} and \eqref{eq4.6}-\eqref{eq4.7},
 we obtain matching conditions ($n=5$).
 \begin{align*}
 \lambda^{-\frac{n-2}{2}}{\sf Q}(y)
 +
 \lambda^{-\frac{n-2}{2}}\sigma
 {\sf A}_1
 &=
 \pm
 \eta^\frac{2}{1-q}{\sf U}(\xi)
 \qquad
 \text{for } |y|\to\infty,\ |\xi|\to0,
 \\
 \pm
 \eta^\frac{2}{1-q}{\sf U}(\xi)
 &=
 \pm
 {\sf U}_\infty(x)
 +
 \Theta_J(x,t)
 \qquad
 \text{for } |\xi|\to\infty,\ |z|\to0.
 \end{align*}
 Due to \eqref{eq4.10} and \eqref{eq4.11},
 these conditions can be written as
 \begin{align*}
 \lambda^{-\frac{n-2}{2}}
 \sigma
 {\sf A}_1
 &=
 \pm
 \eta^\frac{2}{1-q}
 \qquad
 \text{for } |y|\to\infty,\ |\xi|\to0,
 \\
%%%%%%%%%%%%%%%%%%%%%%%%%%%%%%%%%%%%%%%%%%%%%%%%%%%%%%%%%%%%%%%%
 \pm
 \eta^\frac{2}{1-q}
 (
 {\sf U}_\infty(\xi)
 +
 \eta^\frac{2}{1-q}
 {\sf B}_1|\xi|^\gamma
 )
 &=
 \pm
 (
 {\sf U}_\infty(x)
 +
 \eta^\frac{2}{1-q}
 {\sf B}_1|\xi|^\gamma
 )
% \\
% &=
% \pm
% {\sf U}_\infty(x)
% +
% K
% (T-t)^{\frac{\gamma}{2}+J}
% e_J(z)
% \qquad
% (\text{see properties of } e_J(z))
% \\
% &=
% \pm
% {\sf U}_\infty(x)
% +
% K
% (T-t)^{\frac{\gamma}{2}+J}
% {\sf D}_J
% |z|^\gamma
 \\
 &=
 \pm
 {\sf U}_\infty(x)
 +
 K{\sf D}_J
 (T-t)^J
 \eta^\gamma
 |\xi|^\gamma
 \qquad
 \text{for } |\xi|\to\infty,\ |z|\to0.
 \end{align*}
 We now take a minus sign and obtain
 \begin{align*}
 \lambda^{-\frac{n-2}{2}}
 \sigma
 {\sf A}_1
 &=
 -
 \eta^\frac{2}{1-q}
 \qquad
 \text{for } |y|\to\infty,\ |\xi|\to0,
 \\
 -
 \eta^\frac{2}{1-q}
 {\sf B}_1
 &=
 K{\sf D}_J
 (T-t)^J
 \eta^\gamma
 \qquad
 \text{for } |\xi|\to\infty,\ |z|\to0.
 \end{align*}
 From these relations,
 we can derive
 \begin{itemize}
 \item
 $K=-{\sf D}_J^{-1}{\sf B}_1$,
 \item
 $\eta(t)=(T-t)^{\gamma_J}$
 \quad
 with
 $\gamma_J=J(\frac{2}{1-q}-\gamma)^{-1}$,
 \item
 $\lambda(t)
 =
 (\frac{6-n}{2{\sf A}_1\Gamma_J})^\frac{2}{6-n}
 (T-t)^{\frac{2}{6-n}\Gamma_J}$
 \quad
 with
 $\Gamma_J
 =(\frac{2J}{1-q}+\frac{2}{1-q}-\gamma)
 (\frac{2}{1-q}-\gamma)^{-1}$.
 \end{itemize}
 Since $\eta(t)\to0$ as $t\to T$,
 $J$ must be $J\in\N$.
 The blowup rats of the solution is given by
 \begin{align*}
 \|u(t)\|_\infty
 =
 \lambda(t)^{-\frac{n-2}{2}}
 =
 (T-t)^{-\frac{n-2}{6-n}\Gamma_J}
 \qquad
 (J\in\N).
 \end{align*}
%%%%%%%%%%%%%%%%%%%%%%%%%%%%%%%%%%%%%%%%%%%%%%%%%%%%%%%%%%%%%%%%

\subsection{Corrections in the selfsimilar region $|x|\sim\sqrt{T-t}$}
\label{sec_4.5}
%%%%%%%%%%%%%%%%%%%%%%%%%%%%%%%%%%%%%%%%%%%%%%%%%%%%%%%%%%%%%%%%
 Unfortunately
 solutions obtained in Section \ref{sec_4.4} does not give
 appropriate approximate solutions of \eqref{eq1.1},
 since the contribution of $f(u)$ is not negligible.
 We now reconstruct approximate solutions along Section \ref{sec_4.4}.
 As in Section \ref{sec_4.4},
 we assume
 \[
 u(x,t)
 =
 -{\sf U}_\infty(x)
 +
 o({\sf U}_\infty(x))
 \qquad
 \text{for }
 |x|\sim\sqrt{T-t}.
 \]
%%%%%%%%%%%%%%%%%%%%%%%%%%%%%%%%%%%%%%%%%%%%%%%%%%%%%%%%%%%%%%%%
 To take in the effect of $f(u)$,
 we construct a solution of the form
 \begin{align}
 \label{eq4.12}
 u
 =
 -{\sf U}_\infty(x)-\theta
 =
 -{\sf U}_\infty(x)-(\theta_0+\theta_1+\cdots+\theta_L)
 \qquad
 (L\gg1)
 \end{align}
%%%%%%%%%%%%%%%%%%%%%%%%%%%%%%%%%%%%%%%%%%%%%%%%%%%%%%%%%%%%%%%%
 satisfying
 the following conditions.
% \vspace{2mm}
 \begin{enumerate}[(i) ]
 \item
 $\theta_0(x)=a_0{\sf U}_\infty^{p+1-q}
 ={a}_0L_1^{p+1-q}|x|^{\frac{2p}{1-q}+2}$
 \qquad (${a}_0\not=0$),
 
 \item $\dis\theta_i(x)={a}_i|x|^{\frac{2(p-q)i}{1-q}}\theta_0(x)
 +|x|^{\frac{2(p-q)i}{1-q}}\theta_0(x)\sum_{l=1}^Mb_i|x|^\frac{2(p-q)i}{1-q}$
 \qquad (${a}_i\not=0$)
 \qquad ($i=1,2,\cdots,L$),
 \item
 $|\Delta({\sf U}_\infty+\theta)+f({\sf U}_\infty+\theta)
 -f_2({\sf U}_\infty+\theta)|
 \ll\Theta_J(x,t)
 =(T-t)^{\frac{\gamma}{2}+J}e_J(z)$
 \qquad
 in $|x|\sim\sqrt{T-t}$.
 \end{enumerate}
% \vspace{2mm}
 Once
 such a function
 $\theta(x)=\theta_0(x)+\theta_1(x)+\cdots+\theta_L(x)$
 is constructed,
 we can verify that
 \begin{align*}
 u(x,t)
 =
 -
 {\sf U}_\infty(x)
 -
 \theta(x)
 -
 \Theta_J(x)
 \end{align*}
 gives a new approximate solution in the selfsimilar region
 instead of \eqref{eq4.7}.
 From conditions (i)-(ii),
 the matching condition derived in Section \ref{sec_4.4} does not change.
 Therefore
 we obtain the the same $\lambda(t)$ and $\eta(t)$ as in Section \ref{sec_4.4}.
%%%%%%%%%%%%%%%%%%%%%%%%%%%%%%%%%%%%%%%%%%%%%%%%%%%%%%%%%%%%%%%%
 To construct functions
 $\theta(x)=\theta_0(x)+\theta_1(x)+\cdots+\theta_L(x)$,
 we substitute \eqref{eq4.12} to \eqref{eq1.1} and
 apply the Taylor expansion.
 \begin{align*}
 0
 &=
 \Delta({\sf U}_\infty+\theta)
 +
 f({\sf U}_\infty+\theta)
 -
 f_2({\sf U}_\infty+\theta)
 \\
 &=
 \Delta{\sf U}_\infty
 +
 \Delta \theta
 +
 f({\sf U}_\infty)
 +
 \sum_{i=1}^N
 \tfrac{f^{(i)}({\sf U}_\infty)}{i!}
 \theta^i
% \\
% & \quad
 -
 f_2({\sf U}_\infty)
 -
 f_2'({\sf U}_\infty)
 \theta
 -
 \sum_{i=2}^N
 \tfrac{f_2^{(i)}({\sf U}_\infty)}{i!}
 \theta^i
 \\
 &=
 \Delta(\theta_0+\cdots+\theta_L)
 +
 f({\sf U}_\infty)
 +
 \sum_{i=1}^N
 \tfrac{f^{(i)}({\sf U}_\infty)}{i!}
 (\theta_0+\cdots+\theta_L)^i
 \\
 & \quad
 -
 f_2'({\sf U}_\infty)
 (\theta_0+\cdots+\theta_L)
 -
 \sum_{i=2}^N
 \tfrac{f_2^{(i)}({\sf U}_\infty)}{i!}
 (\theta_0+\cdots+\theta_L)^i.
 \end{align*}
 We determine $\theta_0(x)$, $\theta_1(x)$ and $\theta_2(x)$
 as follows.
 \begin{align}
 &
 \label{eq4.13}
 (\Delta -f_2'({\sf U}_\infty))
 \theta_0
 +
 f({\sf U}_\infty)
 =
 0,
 \\
 &
 \label{eq4.14}
 (\Delta -f_2'({\sf U}_\infty))
 \theta_1
 +
 \sum_{i=1}^N
 \tfrac{f^{(i)}({\sf U}_\infty)}{i!}
 \theta_0^i
 -
 \sum_{i=2}^N
 \tfrac{f_2^{(i)}({\sf U}_\infty)}{i!}
 \theta_0^i
 =
 0,
 \\
 &
 \label{eq4.15}
 (\Delta -f_2'({\sf U}_\infty))
 \theta_2
 +
 \sum_{i=1}^N
 \tfrac{f^{(i)}({\sf U}_\infty)}{i!}
 ((\theta_0+\theta_1)^i-\theta_0^i)
 -
 \sum_{i=2}^N
 \tfrac{f_2^{(i)}({\sf U}_\infty)}{i!}
 ((\theta_0+\theta_1)^i-\theta_0^i)
 =
 0.
 \end{align}
 In the same manner,
 we define $\theta_k(x)$ ($k=3,\cdots,L$) by
 \begin{align*}
 (\Delta -f_2'({\sf U}_\infty))
 \theta_k
 &+
 \sum_{i=1}^N
 \tfrac{f^{(i)}({\sf U}_\infty)}{i!}
 ((\theta_0+\cdots+\theta_{k-1})^i-(\theta_0+\cdots+\theta_{k-2})^i)
 \\
 &
 -
 \sum_{i=2}^N
 \tfrac{f_2^{(i)}({\sf U}_\infty)}{i!}
 ((\theta_0+\cdots+\theta_{k-1})^i-(\theta_0+\cdots+\theta_{k-2})^i)
 =0.
 \end{align*}
 From \eqref{eq4.13} - \eqref{eq4.15},
 $\theta(x)=\theta_0(x)+\theta_1(x)+\cdots+\theta_L(x)$ satisfies
 \begin{align*}
 (\Delta-f_2'({\sf U}_\infty))\theta
 &=
 -f({\sf U}_\infty)
 -
 \sum_{i=0}^N
 \tfrac{f^{(i)}({\sf U}_\infty)}{i!}
 (\theta_0+\cdots+\theta_{L-1})^i
% \\
% & \quad
 +
 \sum_{i=2}^N
 \tfrac{f_2^{(i)}({\sf U}_\infty)}{i!}
 (\theta_0+\cdots+\theta_{L-1})^i.
 \end{align*}
 Since $\Delta{\sf U}_\infty=f_2({\sf U}_\infty)$,
 this relation implies
 \begin{align*}
 \Delta({\sf U}_\infty+\theta)
 &=
 -
 f({\sf U}_\infty)
 -
 \sum_{i=1}^L
 \tfrac{f^{(i)}({\sf U}_\infty)}{i!}
 (\theta-\theta_L)^i
 +
 f_2({\sf U}_\infty)
 +
 f_2'({\sf U}_\infty)
 \theta
% \\
% & \quad
 +
 \sum_{i=2}^L
 \tfrac{f_2^{(i)}({\sf U}_\infty)}{i!}
 (\theta-\theta_L)^i
 \\
 &=
 -
 \sum_{i=0}^L
 \tfrac{f^{(i)}({\sf U}_\infty)}{i!}
 (\theta-\theta_L)^i
 +
 \sum_{i=0}^L
 \tfrac{f_2^{(i)}({\sf U}_\infty)}{i!}
 (\theta-\theta_L)^i.
 \end{align*}
 Therefore
 we get
 \begin{align}\label{eq4.16}
 \nonumber
 |
 \Delta
 ({\sf U}_\infty+\theta)
 &
 +
 f
 ({\sf U}_\infty+\theta)
 -
 f_2({\sf U}_\infty+\theta)
 |
 \\
 \nonumber
 &<
 |
 f({\sf U}_\infty+\theta)
 -
 \sum_{i=0}^L
 \tfrac{f^{(i)}({\sf U}_\infty)}{i!}
 \theta^i
 |
 +
 |
 f_2({\sf U}_\infty+\theta)
 -
 \sum_{i=0}^L
 \tfrac{f_2^{(i)}({\sf U}_\infty)}{i!}
 \theta^i
 |
 \\
 & \quad
 +
 \sum_{i=0}^L
 \tfrac{f^{(i)}({\sf U}_\infty)}{i!}
 |(\theta-\theta_L)^i-\theta^i|
 +
 \sum_{i=0}^L
 \tfrac{f_2^{(i)}({\sf U}_\infty)}{i!}
 |(\theta-\theta_L)^i-\theta^i|.
 \end{align}
 We note that
 the conditions (i) - (ii) imply
 $\theta(x)=O(|x|^\frac{2(p-q)}{1-q}{\sf U}_\infty(x))$
 and
 $\theta_L(x)=O(|x|^{\frac{2(p-q)}{1-q}(L+1)}{\sf U}_\infty(x))$.
 Therefore
 if the conditions  (i)-(ii) are verified,
 the condition (iii) follows from \eqref{eq4.16}.
%%%%%%%%%%%%%%%%%%%%%%%%%%%%%%%%%%%%%%%%%%%%%%%%%%%%%%%%%%%%%%%%
 We now construct $\theta(x)=\theta_0(x)+\cdots+\theta_L(x)$.
 Equation \eqref{eq4.13} can be written as
 \begin{equation}\label{eq4.17}
 0
 =
 (
 \Delta
 -
 f_2'({\sf U}_\infty)
 )
 \theta_0
 +
 f({\sf U}_\infty)
 =
 (
 \Delta
 -
 qL_1^{q-1}
 |x|^{-2}
 )
 \theta_0
 +
 L_1^p|x|^{\frac{2p}{1-q}}.
 \end{equation}
 We easily check that
 \begin{equation}\label{eq4.18}
 \theta_0(x)
 =
 {a}_0
 {\sf U}_\infty^{p+1-q}
 =
 {a}_0
 L_1^{p+1-q}|x|^{\frac{2p}{1-q}+2}
 \end{equation}
 gives a solution of \eqref{eq4.17}.
 We now claim that
 \begin{equation}\label{eq4.19}
 -(1-q)^{-1}<{a}_0<0.
 \end{equation}
 We write
 $\theta_0(x)={a}_0L_1^{p+1-q}|x|^\beta$
 with
 $\beta=\frac{2p}{1-q}+2$.
 From \eqref{eq4.17},
 it holds that
 \begin{align*}
 {a}_0
 L_1^{p+1-q}
 \{
 \beta(\beta+n-2)
 -
 qL_1^{q-1}
 \}
 &=
 -L_1^p,
 \\
 {a}_0
 L_1^{1-q}
 \{
 \beta(\beta+n-2)
 -
 qL_1^{q-1}
 \}
 &=
 -1.
 \end{align*}
 This implies
 $-
 {a}_0^{-1}
 =
 L_1^{1-q}
 \beta(\beta+n-2)
 -
 q
 $.
 Since $L_1^{q-1}=\beta_0(\beta_0+n-2)$
 with $\beta_0=\frac{2}{1-q}$
 (see \eqref{eq4.8})
 and $\beta>\beta_0$,
 we get
 \begin{align*}
 -{a}_0^{-1}
 =
 L_1^{1-q}
 \beta(\beta+n-2)
 -
 q
 =
 \tfrac{
 \beta(\beta+n-2)
 }{\beta_0(\beta_0+n-2)}
 -q
 >
 1-q.
 \end{align*}
 This concludes \eqref{eq4.19}.
%%%%%%%%%%%%%%%%%%%%%%%%%%%%%%%%%%%%%%%%%%%%%%%%%%%%%%%%%%%%%%%%
 We next construct $\theta_1(x)$ in the same way.
 We note from \eqref{eq4.18} that
 \begin{equation}\label{eq4.20}
 {\sf U}_\infty^q
 \theta_0
 =
 {a}_0
 {\sf U}_\infty^{p+1}
 \quad
 \Leftrightarrow
 \quad
 {\sf U}_\infty^{-1}
 \theta_0
 =
 {a}_0
 {\sf U}_\infty^{p-q}.
 \end{equation}
 By using this relation,
 we compute the second and the third terms of \eqref{eq4.14}. 
 \begin{align*}
 \sum_{i=1}^N
 \tfrac{f^{(i)}({\sf U}_\infty)}{i!}
 \theta_0^i
 &=
 f'({\sf U}_\infty)
 \theta_0
 +
 \sum_{i=2}^N
 \tfrac{f^{(i)}({\sf U}_\infty)}{i!}
 \theta_0^i
 \\
 &=
 p
 {\sf U}_\infty^{p-1}
 \theta_0
 +
 \sum_{i=2}^N
 c_i
 {\sf U}_\infty^{p-i}
 \theta_0^i
 \\
 &=
 p
 {\sf U}_\infty^{p-1}
 \theta_0
 +
 {\sf U}_\infty^{p-1}
 \theta_0
 \sum_{i=2}^N
 c_i
 ({\sf U}_\infty^{-1}\theta_0)^{i-1}
 \\
 &=
 p
 {\sf U}_\infty^{p-1}
 \theta_0
 +
 {\sf U}_\infty^{p-1}
 \theta_0
 \sum_{i=2}^N
 c_i
 |x|^\frac{2(p-q)(i-1)}{1-q},
 \\
%%%%%%%%%%%%%%%%%%%%%%%%%%%%%%%%%%%%%%%%%%%%%%%%%%%%%%%%%%%%%%%%
 \sum_{i=2}^N
 \tfrac{f_2^{(i)}({\sf U}_\infty)}{i!}
 \theta_0^i
 &=
 \tfrac{1}{2}
 f_2''({\sf U}_\infty)
 \theta_0^2
 +
 \sum_{i=3}^N
 \tfrac{f_2^{(i)}({\sf U}_\infty)}{i!}
 \theta_0^i
 \\
 &=
 \tfrac{q(q-1)}{2}
 {\sf U}_\infty^{q-2}
 \theta_0^2
 +
 \sum_{i=3}^N
 d_i
 {\sf U}_\infty^{q-i}
 \theta_0^i
 \\
 &=
 \tfrac{q(q-1)}{2}
 {a}_0
 {\sf U}_\infty^{p-1}
 \theta_0
 +
 {\sf U}_\infty^{q-2}
 \theta_0^2
 \sum_{i=3}^N
 d_i
 (
 {\sf U}_\infty^{-1}
 \theta_0
 )^{i-2}
 \\
 &=
 \tfrac{q(q-1)}{2}
 {a}_0
 {\sf U}_\infty^{p-1}
 \theta_0
 +
 {\sf U}_\infty^{p-1}
 \theta_0
 \sum_{i=3}^N
 d_i
 |x|^\frac{2(p-q)(i-2)}{1-q}
 \\
 &=
 \tfrac{q(q-1)}{2}
 {a}_0
 {\sf U}_\infty^{p-1}
 \theta_0
 +
 {\sf U}_\infty^{p-1}
 \theta_0
 \sum_{i=2}^{N-1}
 d_i
 |x|^\frac{2(p-q)(i-1)}{1-q}.
 \end{align*}
%%%%%%%%%%%%%%%%%%%%%%%%%%%%%%%%%%%%%%%%%%%%%%%%%%%%%%%%%%%%%%%%
 From \eqref{eq4.19},
 we obtain
 \begin{align*}
 \sum_{i=1}^N
 \tfrac{f^{(i)}({\sf U}_\infty)}{i!}
 \theta_0^i
 -
 \sum_{i=2}^N
 \tfrac{f_2^{(i)}({\sf U}_\infty)}{i!}
 \theta_0^i
 &=
 p
 {\sf U}_\infty^{p-1}
 \theta_0
 -
 \tfrac{q(q-1)}{2}
 {a}_0
 {\sf U}_\infty^{p-1}
 \theta_0
 +
 {\sf U}_\infty^{p-1}
 \theta_0
 \sum_{i=2}^{N}
 C_i
 |x|^{\frac{2(p-q)(i-1)}{1-q}}
 \\
 &=
 (
 \underbrace{
 p+\tfrac{q(1-q)}{2}
 {a}_0
 }_{\not=0}
 )
 {\sf U}_\infty^{p-1}
 \theta_0
 +
 {\sf U}_\infty^{p-1}
 \theta_0
 \sum_{i=1}^{N-1}
 C_i
 |x|^\frac{2(p-q)i}{1-q}.
 \end{align*}
%%%%%%%%%%%%%%%%%%%%%%%%%%%%%%%%%%%%%%%%%%%%%%%%%%%%%%%%%%%%%%%%
 Therefore
 we obtain a solution $\theta_1(x)$ of \eqref{eq4.14} satisfying
 \begin{align}
 \label{eq4.21}
 \theta_1(x)
 &=
 {\sf c}_1|x|^2{\sf U}_\infty^{p-1}\theta_0
 +
 {\sf h}_1(x)
% \\
% &=
 =
 {a}_1
 |x|^\frac{2(p-q)}{1-q}
 \theta_0
 +
 {\sf h}_1(x)
 \qquad
 ({a}_1\not=0),
 \\
 \nonumber
 {\sf h}_1(x)
 &=
 |x|^\frac{2(p-q)}{1-q}
 \theta_0
 \sum_{i=1}^{N-1}
 b_i|x|^\frac{2(p-q)i}{1-q}.
 \end{align}
%%%%%%%%%%%%%%%%%%%%%%%%%%%%%%%%%%%%%%%%%%%%%%%%%%%%%%%%%%%%%%%%
 A solution $\theta_2(x)$ of \eqref{eq4.15} is constructed in the same manner.
% The second and the third terms in the equation of $\theta_2(x)$
% are rewritten as
 We write
 \begin{align*}
 \sum_{i=1}^N
 \tfrac{f^{(i)}({\sf U}_\infty)}{i!}
 ((\theta_0+\theta_1)^i-\theta_0^i)
 &=
 f'({\sf U}_\infty)
 \theta_1
 +
 \sum_{i=2}^N
 \tfrac{f^{(i)}({\sf U}_\infty)}{i!}
 ((\theta_0+\theta_1)^i-\theta_0^i),
 \\
 \sum_{i=2}^N
 \tfrac{f_2^{(i)}({\sf U}_\infty)}{i!}
 ((\theta_0+\theta_1)^i-\theta_0^i)
 &=
 \tfrac{f''({\sf U}_\infty)}{2}
 (2\theta_0\theta_1+\theta_1^2)
 +
 \sum_{i=3}^N
 d_i
 {\sf U}_\infty^{q-i}
 ((\theta_0+\theta_1)^i-\theta_0^i).
 \end{align*}
%%%%%%%%%%%%%%%%%%%%%%%%%%%%%%%%%%%%%%%%%%%%%%%%%%%%%%%%%%%%%%%%
 From \eqref{eq4.20} and \eqref{eq4.21},
 we see that
 \begin{align*}
 f'({\sf U}_\infty)
 \theta_1
 &=
 p{\sf U}_\infty^{p-1}
 \theta_1
 \\
 &=
 p{\sf U}_\infty^{p-1}
 {a}_1
 |x|^\frac{2(p-q)}{1-q}
 \theta_0
 +
 p{\sf U}_\infty^{p-1}
 {\sf h}_1,
 \\
%%%%%%%%%%%%%%%%%%%%%%%%%%%%%%%%%%%%%%%%%%%%%%%%%%%%%%%%%%%%%%%%
 \sum_{i=2}^N
 \tfrac{f^{(i)}({\sf U}_\infty)}{i!}
 ((\theta_0+\theta_1)^i-\theta_0^i)
 &=
 \sum_{i=2}^N
 \sum_{l=1}^i
 c_{i,l}
 f^{(i)}({\sf U}_\infty)
 \theta_0^{i-l}\theta_1^l
 \\
 &=
 \sum_{i=2}^N
 \sum_{l=1}^i
 c_{i,l}
 {\sf U}_\infty^{p-i}
 \theta_0^{i-l}\theta_1^l
 \\
 &=
 ({\sf U}_\infty^{p-1}\theta_1)
 ({\sf U}_\infty^{-1}\theta_0)
 \sum_{i=2}^N
 \sum_{l=1}^i
 c_{i,l}
 ({\sf U}_\infty^{-1}\theta_0)^{i-2}
 (\theta_0^{-1}\theta_1)^{l-1},
 \\
%%%%%%%%%%%%%%%%%%%%%%%%%%%%%%%%%%%%%%%%%%%%%%%%%%%%%%%%%%%%%%%%
 \tfrac{f''({\sf U}_\infty)}{2}
 (2\theta_0\theta_1+\theta_1^2)
 &=
 q(q-1)
 {\sf U}_\infty^{q-2}
 \theta_0
 \theta_1
 +
 c{\sf U}_\infty^{q-2}
 \theta_1^2
 \\
 &=
 q(q-1)
 {a}_0
 {\sf U}_\infty^{p-1}
 \theta_1
 +
 c
 {\sf U}_\infty^{p-1}
 \theta_0^{-1}
 \theta_1^2
 \\
 &=
 q(q-1)
 {a}_0
 {\sf U}_\infty^{p-1}
 {a}_1
 |x|^\frac{2(p-q)}{1-q}
 \theta_0
 +
 q(q-1)
 {a}_0
 {\sf U}_\infty^{p-1}
 {\sf h}_1
 +
 c
 {\sf U}_\infty^{p-1}
 \theta_1
 (\theta_0^{-1}\theta_1),
 \\
%%%%%%%%%%%%%%%%%%%%%%%%%%%%%%%%%%%%%%%%%%%%%%%%%%%%%%%%%%%%%%%%
 \sum_{i=3}^N
 d_i
 {\sf U}_\infty^{q-i}
 ((\theta_0+\theta_1)^i-\theta_0^i)
 &=
 \sum_{i=3}^N
 \sum_{l=1}^i
 d_{i,l}
 {\sf U}_\infty^{q-i}
 \theta_0^{i-l}\theta_1^l
 \\
 &=
 {\sf U}_\infty^{q-2}
 \theta_0
 \theta_1
 \sum_{i=3}^N
 \sum_{l=1}^i
 d_{i,l}
 {\sf U}_\infty^{2-i}
 \theta_0^{i-l-1}\theta_1^{l-1}
 \\
 &=
 {\sf U}_\infty^{p-1}
 \theta_1
 \sum_{i=3}^N
 \sum_{l=1}^i
 d_{i,l}
 ({\sf U}_\infty^{-1}\theta_0)^{i-2}
 (\theta_0^{-1}\theta_1)^{l-1}
 \\
 &=
 ({\sf U}_\infty^{p-1}\theta_1)
 ({\sf U}_\infty^{-1}\theta_0)
 \sum_{i=3}^N
 \sum_{l=1}^i
 d_{i,l}
 ({\sf U}_\infty^{-1}\theta_0)^{i-3}
 (\theta_0^{-1}\theta_1)^{l-1}.
 \end{align*}
%%%%%%%%%%%%%%%%%%%%%%%%%%%%%%%%%%%%%%%%%%%%%%%%%%%%%%%%%%%%%%%%
 Furthermore
 we note from
 \eqref{eq4.18} and \eqref{eq4.21}
 that
 $\theta_0^{-1}\theta_1
 =
 |x|^\frac{2(p-q)}{1-q}
 (a_1
 +
 \sum_{i=1}^{N-1}
 b_i|x|^\frac{2(p-q)i}{1-q})$.
 % \begin{align*}
% \theta_0^{-1}\theta_1
% &=
% {a}_1|x|^\frac{2(p-q)}{1-q}
% +
% |x|^\frac{2(p-q)}{1-q}
% \sum_{i=1}^{N-1}
% b_i|x|^\frac{2(p-q)i}{1-q}.
% \end{align*}
%%%%%%%%%%%%%%%%%%%%%%%%%%%%%%%%%%%%%%%%%%%%%%%%%%%%%%%%%%%%%%%%
 This relation together with \eqref{eq4.20} implies
 \begin{align*}
 \sum_{i=2}^N
 \sum_{l=1}^i
 c_{i,l}
 ({\sf U}_\infty^{-1}\theta_0)^{i-2}
 (\theta_0^{-1}\theta_1)^{l-1}
 &=
 \sum_{i=0}^{N'}
 c_i
 |x|^\frac{2(p-q)i}{1-q},
 \\
%%%%%%%%%%%%%%%%%%%%%%%%%%%%%%%%%%%%%%%%%%%%%%%%%%%%%%%%%%%%%%%%
 \sum_{i=3}^N
 \sum_{l=1}^i
 d_{i,l}
 ({\sf U}_\infty^{-1}\theta_0)^{i-3}
 (\theta_0^{-1}\theta_1)^{l-1}
 &=
 \sum_{i=0}^{N'}
 d_i
 |x|^\frac{2(p-q)i}{1-q}.
 \end{align*}
%%%%%%%%%%%%%%%%%%%%%%%%%%%%%%%%%%%%%%%%%%%%%%%%%%%%%%%%%%%%%%%%
 Combining all computations above,
 we obtain
 \begin{align*}
 \sum_{i=1}^N
 \tfrac{f^{(i)}({\sf U}_\infty)}{i!}
 &
 ((\theta_0+\theta_1)^i-\theta_0^i)
 -
 \sum_{i=2}^N
 \tfrac{f_2^{(i)}({\sf U}_\infty)}{i!}
 ((\theta_0+\theta_1)^i-\theta_0^i)
 \\
 &=
 {a}_1
 (
 p-q(q-1){a}_0
 )
 {\sf U}_\infty^{p-1}
 |x|^\frac{2(p-q)}{1-q}
 \theta_0
% -
% q(q-1)
% {a}_0
% {\sf U}_\infty^{p-1}
% |x|^\frac{2(p-q)}{1-q}
% \theta_0
% \\
% & \quad
 +
 {\sf U}_\infty^{p-1}
 |x|^\frac{2(p-q)}{1-q}
 \theta_0
 \sum_{i=1}^{N'}
 c_i|x|^\frac{2(p-q)}{1-q}.
 \end{align*}
%%%%%%%%%%%%%%%%%%%%%%%%%%%%%%%%%%%%%%%%%%%%%%%%%%%%%%%%%%%%%%%%
 Therefore
 since $p-q(q-1){a}_0>0$ (see \eqref{eq4.19}),
 we conclude
 \begin{align*}
 \theta_2(x)
 &=
 {\sf c}_2
 |x|^2
 {\sf U}_\infty^{p-1}
 |x|^\frac{2(p-q)}{1-q}
 \theta_0
 +
 {\sf h}_2(x)
 =
 {a}_2
 |x|^\frac{4(p-q)}{1-q}
 \theta_0
 +
 {\sf h}_2(x)
 \qquad
 ({a}_2\not=0),
 \\
 {\sf h}_2(x)
 &=
 |x|^\frac{4(p-q)}{1-q}
 \theta_0
 \sum_{i=1}^{N'}
 b_i
 |x|^\frac{2(p-q)i}{1-q}.
 \end{align*}
 In the same manner,
 we can construct solutions $\theta_k(x)$ ($k=3,4,\cdots,L$) satisfying
 \begin{align*}
 \theta_k(x)
 &=
 {a}_k
 |x|^\frac{2k(p-q)}{1-q}
 \theta_0
 +
 {\sf h}_k(x)
 \qquad
 ({a}_k\not=0),
 \\
 {\sf h}_k(x)
 &=
 |x|^\frac{2k(p-q)}{1-q}
 \theta_0
 \sum_{i=1}^{N'}
 b_i
 |x|^\frac{2(p-q)i}{1-q}.
 \end{align*}
% The claim is proved.
%%%%%%%%%%%%%%%%%%%%%%%%%%%%%%%%%%%%%%%%%%%%%%%%%%%%%%%%%%%%%%%%

\section{Formulation}
\label{sec_5}
%%%%%%%%%%%%%%%%%%%%%%%%%%%%%%%%%%%%%%%%%%%%%%%%%%%%%%%%%%%%%%%%
 In this section,
 we set up our problem.
 We recall that
 two types of blowup solutions are predicted 
 in Section \ref{sec_4.2} - Section \ref{sec_4.4}.
 However,
 we here discuss the case (II) only,
 since the case (I) is easier.
 Our strategy in this paper is basically the same as in \cite{Harada_2}
 (the original idea comes from \cite{Cortazar,del_Pino}).
 Let ${\sf M}(t)$ be a unique solution of
 \[
 \begin{cases}
 \frac{d}{dt}{\sf M}=f({\sf M})-f_2({\sf M})
 \qquad
 \text{for }
 t>0,
 \\
 {\sf M}(t)|_{t=0}
 =
 {\sf M}_0,
 \qquad
 \text{where }
 {\sf M}_0={\sf U}_\infty(x)|_{|x|=1}.
 \end{cases}
 \]
%%%%%%%%%%%%%%%%%%%%%%%%%%%%%%%%%%%%%%%%%%%%%%%%%%%%%%%%%%%%%%%%

 \subsection{Setting}
 \label{sec_5.1}
%%%%%%%%%%%%%%%%%%%%%%%%%%%%%%%%%%%%%%%%%%%%%%%%%%%%%%%%%%%%%%%%
 From the observation in Section \ref{sec_4},
 we look for solutions of the form
 \begin{align}
 \label{eq5.1}
 u(x,t)
 &=
 \dis
 \lambda^{-\frac{n-2}{2}}
 {\sf Q}(y)
 \chi_2
 +
 \lambda^{-\frac{n-2}{2}}\sigma
 T_1(y)
 \chi_1
 -
 {\sf U}_{\sf c}(x,t)
 (1-\chi_1)
 \\
 & \quad
 -
 (
 \theta(x)
 +
 \Theta_J(x,t)
 )
 (1-\chi_2)
 \chi_3
 +
 u_1(x,t),
 \nonumber
 \end{align}
 where
%%%%%%%%%%%%%%%%%%%%%%%%%%%%%%%%%%%%%%%%%%%%%%%%%%%%%%%%%%%%%%%%
 \begin{align*}
 {\sf U}_{\sf c}(x,t)
 &=
 \eta^\frac{2}{1-q}{\sf U}(\xi)
 \chi_2
 +
 {\sf U}_\infty(x)
 (1-\chi_2)
 \chi_4
 +
 {\sf M}(t)(1-\chi_4),
 \\
 \theta(x)
 &=
 \theta_0(x)
 +
 \cdots
 +
 \theta_L(x)
 \qquad
 (\text{see Section \ref{sec_4.5}}),
 \\
 \Theta_J(x,t)
 &=
 {\sf D}_J^{-1}
 {\sf B}_1
 (T-t)^{\frac{\gamma}{2}+J}
 e_J(z)
 \qquad
 (\text{see Section \ref{sec_4.2} and Section \ref{sec_4.4}}),
 \\
 u_1(x,t)
 &=
 \lambda(t)^{-\frac{n-2}{2}}
 \epsilon(y,t)
 \chi_{\sf in}
 +
 \eta^\frac{2}{1-q}
 v(\xi,t)
 \chi_{\sf mid}
 +
 w(x,t).
 \end{align*}
%%%%%%%%%%%%%%%%%%%%%%%%%%%%%%%%%%%%%%%%%%%%%%%%%%%%%%%%%%%%%%%%
 The function $\lambda(t)$ is unknown at this moment which is determined
 in Section \ref{sec_6.1}.
 Other parameters $\eta(t)$ and $\sigma(t)$ are defined by
 \begin{itemize}
 \item
 $\eta(t)=(T-t)^{\gamma_J}$
 \quad
 ($J\in\N$)
 \quad
 with
 $\gamma_J=J(\frac{2}{1-q}-\gamma)^{-1}$,
 \item
 $\sigma(t)=
 -{\sf A}_1^{-1}
 \eta(t)^\frac{2}{1-q}
 \lambda(t)^{\frac{n-2}{2}}$.
 \end{itemize}
 New variables $y$, $\xi$ and $z$ are defined by
 \begin{itemize}
 \item $y=\lambda(t)^{-1}x$,
 \item $\xi=\eta(t)^{-1}x$,
 \item $z=(T-t)^{-\frac{1}{2}}x$.
 \end{itemize}
 Let $\chi(r)$ be a smooth cut off function satisfying
 $\chi(r)
 =
 \begin{cases}
 1 & \text{for } 0\leq r<1 \\
 0 & \text{for } r>2
 \end{cases}$
 and put
 \begin{itemize}
 \item
 $\dis\chi_{\sf in}=\chi(|y|/{\sf R}_{\sf in})$,
 where ${\sf R}_{\sf in}>0$ is a large constant,
 \item
 $\chi_{\sf mid}=\chi(|\xi|/{\sf R}_{\sf mid})$,
 where ${\sf R}_{\sf mid}>0$ is a large constant,
 \item
 $\dis\chi_0=\chi(|z|/{\sf r}_0)$,
 where ${\sf r}_0>0$ is a small constant,
 \item
 $\dis\chi_1=\chi(|y|/{\sf l}_1(t))$,
 where ${\sf l}_1(t)=|\sigma(t)|^{-\frac{1}{n-2}}$,
 \item
 $\dis\chi_2=\chi(|\xi|/{\sf l}_2(t))$,
 where ${\sf l}_2(t)=(T-t)^{-{\sf b}}$
 and
 ${\sf b}>0$ is a small constant,
 \item
 $\chi_3=\chi(|x|/{\sf r}_3)$,
 where ${\sf r}_3>0$ is a small constant,
 \item
 $\chi_4=\chi(|x|)$.
 \end{itemize}
 Constants
 ${\sf R}_{\sf in},{\sf R}_{\sf mid},{\sf r}_0,{\sf r}_3,{\sf b}$
 will be chosen
 in Section \ref{sec_6} - Section \ref{sec_8},
 which depend only on $q,n,J$.
 We remark that
 we cut off $\theta(x)$, $\Theta(x,t)$ and ${\sf U}_\infty(x)$
 by using $\chi_3,\chi_4$
 in \eqref{eq5.1},
 since they are not bounded for $|x|\to\infty$.
%%%%%%%%%%%%%%%%%%%%%%%%%%%%%%%%%%%%%%%%%%%%%%%%%%%%%%%%%%%%%%%%

 \subsection{Derivation of equations for $\epsilon(y,t)$, $v(\xi,t)$, $w(x,t)$}
 \label{sec_5.2}
%%%%%%%%%%%%%%%%%%%%%%%%%%%%%%%%%%%%%%%%%%%%%%%%%%%%%%%%%%%%%%%%
 We here derive
 equations for $\epsilon(y,t)$, $v(\xi,t)$ and $w(x,t)$.
 To do that,
 we substitute \eqref{eq5.1} into \eqref{eq1.1}
 and compute them.
 We first give computations for $u_t$.
 \begin{align*}
 \pa_t
 (
 \lambda^{-\frac{n-2}{2}}
 {\sf Q}
 \chi_2
 )
 &=
 -\lambda^{-\frac{n+2}{2}}
 \lambda
 \dot\lambda
 (\Lambda_y{\sf Q})
 \chi_2
 +
 \lambda^{-\frac{n-2}{2}}
 {\sf Q}
 \dot\chi_2
 \\
 &=
 -\lambda^{-\frac{n+2}{2}}
 \lambda
 \dot\lambda
 (\Lambda_y{\sf Q})
 \chi_1
 -
 \underbrace{
 \lambda^{-\frac{n+2}{2}}
 \lambda
 \dot\lambda
 (\Lambda_y{\sf Q})
 (\chi_2-\chi_1)
 }_{=g_0}
 +
 \underbrace{
 \lambda^{-\frac{n-2}{2}}
 {\sf Q}
 \dot\chi_2
 }_{=g_1},
 \\
%%%%%%%%%%%%%%%%%%%%%%%%%%%%%%%%%%%%%%%%%%%%%%%%%%%%%%%%%%%%%%%%
 \pa_t
 (
 \lambda^{-\frac{n-2}{2}}\sigma
 T_1
 \chi_1
 )
 &=
 \underbrace{
 (
 \pa_t(
 \lambda^{-\frac{n-2}{2}}\sigma
 T_1
 )
 )
 \chi_1
 }_{=g_2}
 +
 \lambda^{-\frac{n-2}{2}}\sigma
 T_1
 \dot\chi_1,
 \\
%%%%%%%%%%%%%%%%%%%%%%%%%%%%%%%%%%%%%%%%%%%%%%%%%%%%%%%%%%%%%%%%
 \pa_t
 ({\sf U}_{\sf c}(x,t)(1-\chi_1))
 &=
 (
 \pa_t{\sf U}_{\sf c}
 )
 (1-\chi_1)
 -
 {\sf U}_{\sf c}
 \dot\chi_1
 \\
 &=
 \{
 \underbrace{
 (
 \pa_t
 (
 \eta^\frac{2}{1-q}
 {\sf U}
 )
 )
 \chi_2
 }_{=g_3}
 +
 \eta^\frac{2}{1-q}
 {\sf U}
 \dot\chi_2
 -
 {\sf U}_\infty
 \dot\chi_2
 \chi_4
 \}
 (1-\chi_1)
 \\
 & \quad
 +
 \dot {\sf M}(t)
 (1-\chi_4)
 (1-\chi_1)
 -
 {\sf U}_{\sf c}
 \dot\chi_1
 \\
 &=
 g_3
 +
 \eta^\frac{2}{1-q}
 {\sf U}
 \dot\chi_2
 -
 {\sf U}_\infty
 \dot\chi_2
 +
 \dot {\sf M}(t)
 (1-\chi_4)
 -
 {\sf U}_{\sf c}
 \dot\chi_1,
%%%%%%%%%%%%%%%%%%%%%%%%%%%%%%%%%%%%%%%%%%%%%%%%%%%%%%%%%%%%%%%%
 \\
 \pa_t(\theta(1-\chi_2)\chi_3)
 &=
 -
 \underbrace{
 \theta
 \dot\chi_2
 \chi_3
 }_{=g_4},
%%%%%%%%%%%%%%%%%%%%%%%%%%%%%%%%%%%%%%%%%%%%%%%%%%%%%%%%%%%%%%%%
 \\
 \pa_t(\Theta_J(1-\chi_2)\chi_3)
 &=
 \dot\Theta_J
 (1-\chi_2)
 \chi_3
 -
 \Theta_J
 \dot\chi_2
 \chi_3,
%%%%%%%%%%%%%%%%%%%%%%%%%%%%%%%%%%%%%%%%%%%%%%%%%%%%%%%%%%%%%%%%
 \\
 \pa_tu_1
 &=
 \lambda^{-\frac{n-2}{2}}
 \epsilon_t\chi_{\sf in}
 -
 \underbrace{
 \lambda^{-\frac{n+2}{2}}
 \lambda\dot\lambda
 (\Lambda_y\epsilon)\chi_{\sf in}
 +
 \lambda^{-\frac{n-2}{2}}
 \epsilon
 \dot\chi_{\sf in}
 }_{=h_1[\epsilon]}
 +
 \eta^\frac{2}{1-q}
 v_t
 \chi_{\sf mid}
 \\
 & \qquad
 +
 \eta^{\frac{2}{1-q}-1}
 \dot\eta
 (\Lambda_\xi v)
 \chi_{\sf mid}
 +
 \underbrace{
 \eta^\frac{2}{1-q}
 v
 \dot\chi_{\sf mid}
 }_{=k_1[v]}
 +
 w_t,
 \end{align*}
 where $\Lambda_\xi=\frac{2}{1-q}-\xi\cdot\nabla_\xi$.
 Therefore
 we get
 \begin{align*}
 u_t
 &=
 -
 \lambda^{-\frac{n+2}{2}}
 \lambda\dot\lambda
 (\Lambda_y{\sf Q})
 \chi_1
 +
 \underbrace{
 (
 \lambda^{-\frac{n-2}{2}}\sigma T_1
 +
 {\sf U}_{\sf c}
 )
 \dot\chi_1
 }_{=g_5}
 +
 \underbrace{
 (
 -
 \eta^\frac{2}{1-q}{\sf U}
 +
 {\sf U}_\infty
 \chi_4
 +
 \Theta_J
 \chi_3
 )
 \dot\chi_2
 }_{=g_6}
 \\
 &\quad
 -
 \dot{\sf M}
 (1-\chi_4)
 -
 \dot\Theta_J
 (1-\chi_2)\chi_3
 +
 \lambda^{-\frac{n-2}{2}}
 \epsilon_t\chi_{\sf in}
 +
 \eta^\frac{2}{1-q}
 v_t
 \chi_{\sf mid}
 \\
 & \quad
 +
 \eta^{\frac{2}{1-q}-1}
 \dot\eta
 (\Lambda_\xi v)
 \chi_{\sf mid}
 +
 w_t
 +
 g_0+g_1+g_2+g_3+g_4
 +
 h_1+k_1
 \\
 &=
 \lambda^{-\frac{n-2}{2}}
 \epsilon_t\chi_{\sf in}
 +
 \eta^\frac{2}{1-q}
 v_t
 \chi_{\sf mid}
 +
 \eta^{\frac{2}{1-q}-1}
 \dot\eta
 (\Lambda_\xi v)
 \chi_{\sf mid}
 +
 w_t
 \\
 & \quad
 -
 \lambda^{-\frac{n+2}{2}}
 \lambda\dot\lambda
 (\Lambda_y{\sf Q})
 \chi_1
 -
 \dot\Theta_J
 (1-\chi_2)
 \chi_3
 -
 \dot{\sf M}
 (1-\chi_4)
 +
 g_{0\sim6}
 +
 h_1
 +
 k_1.
 \end{align*}
%%%%%%%%%%%%%%%%%%%%%%%%%%%%%%%%%%%%%%%%%%%%%%%%%%%%%%%%%%%%%%%%
 Next
 we compute $\Delta_xu$.
 The first two terms in \eqref{eq5.1} are estimated as
%%%%%%%%%%%%%%%%%%%%%%%%%%%%%%%%%%%%%%%%%%%%%%%%%%%%%%%%%%%%%%%%
 \begin{align*}
 \Delta_x
 &(
 \lambda^{-\frac{n-2}{2}}{\sf Q}(y)
 \chi_2
 +
 \lambda^{-\frac{n-2}{2}}\sigma
 T_1(y)\chi_1
 )
 \\
 &=
 \lambda^{-\frac{n+2}{2}}
 (\Delta_y{\sf Q})
 \chi_2
 +
 \underbrace{
 \eta^{-1}
 \lambda^{-\frac{n}{2}}
 (\nabla_y{\sf Q}\cdot\nabla_\xi\chi_2)
 +
 \eta^{-2}
 \lambda^{-\frac{n-2}{2}}
 {\sf Q}
 (\Delta_\xi\chi_2)
 }_{=g_0'}
 \\
 & \quad
 +
 \lambda^{-\frac{n+2}{2}}
 \sigma
 (\Delta_yT_1)
 \chi_1
 +
 \underbrace{
 2\lambda^{-\frac{n+2}{2}}
 \sigma
 \nabla_yT_1
 \cdot\nabla_y\chi_1
 }_{=g_1'}
 +
 \lambda^{-\frac{n+2}{2}}
 \sigma
 T_1(\Delta_y\chi_1).
 \end{align*}
%%%%%%%%%%%%%%%%%%%%%%%%%%%%%%%%%%%%%%%%%%%%%%%%%%%%%%%%%%%%%%%%
 The rest of terms are computed as
 \begin{align*}
 \Delta_x
 {\sf U}_{\sf c}
 &=
 \Delta_x
 \{
 \eta^\frac{2}{1-q}{\sf U}
 \chi_2
 +
 {\sf U}_\infty
 (1-\chi_2)
 \chi_4
 +
 {\sf M}(t)(1-\chi_4)
 \}
 \\
 &=
 \eta^\frac{2q}{1-q}
 (\Delta_\xi{\sf U})
 \chi_2
 +
 (\Delta_x{\sf U}_\infty)
 (1-\chi_2)
 \chi_4
 \\
 & \quad
 +
 2\eta^{-1}
 \nabla_x(
 \eta^\frac{2}{1-q}{\sf U}
 -
 {\sf U}_\infty
 \chi_4
 )
 \cdot\nabla_\xi\chi_2
 \\
 & \quad
 +
 \eta^{-2}
 (
 \eta^\frac{2}{1-q}
 {\sf U}
 -
 {\sf U}_\infty
 \chi_4
 )
 (\Delta_\xi\chi_2)
 +
 \underbrace{
 2
 (\nabla_x{\sf U}_\infty\cdot\nabla_x\chi_4)
 (1-\chi_2)
 }_{=g_{{\sf out},1}'}
 \\
 & \quad
 -
 \underbrace{
 2{\sf U}_\infty
 (\nabla_x\chi_2\cdot\nabla_x\chi_4)
 }_{=0}
 +
 \underbrace{
 {\sf U}_\infty
 (1-\chi_2)
 (\Delta_x\chi_4)
 }_{=g_{{\sf out},2}'}
 -
 \underbrace{
 {\sf M}(t)
 \Delta_x\chi_4
 }_{=g_{{\sf out},3}'},
 \end{align*}
%%%%%%%%%%%%%%%%%%%%%%%%%%%%%%%%%%%%%%%%%%%%%%%%%%%%%%%%%%%%%%%%
 \begin{align*}
 \Delta_x
 \{
 {\sf U}_{\sf c}
 (1-\chi_1)
 \}
 &=
 (\Delta_x{\sf U}_{\sf c})
 (1-\chi_1)
 -
 \underbrace{
 2\lambda^{-1}
 \nabla_x{\sf U}_{\sf c}\cdot\nabla_y\chi_1
 }_{=g_2'}
 -
 \lambda^{-2}
 {\sf U}_{\sf c}
 (\Delta_y\chi_1)
 \\
 &=
 \eta^\frac{2q}{1-q}
 (\Delta_\xi{\sf U})
 (1-\chi_1)
 \chi_2
 +
 \underbrace{
 (\Delta_x{\sf U}_\infty)
 (1-\chi_1)
 (1-\chi_2)
 \chi_4
 }_{=(\Delta_x{\sf U}_\infty)(1-\chi_2)\chi_4}
 \\
 & \quad
 +
 2\eta^{-1}
 \nabla_x(
 \eta^\frac{2}{1-q}{\sf U}
 -
 {\sf U}_\infty
 \chi_4
 )
 \cdot\nabla_\xi\chi_2
 +
 \eta^{-2}
 (
 \eta^\frac{2}{1-q}
 {\sf U}
 -
 {\sf U}_\infty
 \chi_4
 )
 (\Delta_\xi\chi_2)
 \\
 & \quad
 -
 \lambda^{-2}
 {\sf U}_{\sf c}
 (\Delta_y\chi_1)
 +
 g_2'
 +
 g_{{\sf out},1}'+g_{{\sf out},2}'+g_{{\sf out},3}',
 \\
%%%%%%%%%%%%%%%%%%%%%%%%%%%%%%%%%%%%%%%%%%%%%%%%%%%%%%%%%%%%%%%%
 \Delta_x
 \{
 \theta
 (1-\chi_2)
 \chi_3
 \}
 &=
 (\Delta_x\theta)
 (1-\chi_2)
 \chi_3
 -
 \underbrace{
 2
 (\nabla_x\theta\cdot\nabla_x\chi_2)
 \chi_3
 }_{=g_3'}
 -
 \underbrace{
 \theta(\Delta_x\chi_2)
 \chi_3
 }_{=g_4'}
 \\
 & \quad
 +
 \underbrace{
 2
 (\nabla_x\theta\cdot\nabla_x\chi_3)
 (1-\chi_2)
 }_{=g_{{\sf out},4}'}
 -
 \underbrace{
 2\theta
 (\nabla_x\chi_2\cdot\nabla_x\chi_3)
 }_{=0}
 +
 \underbrace{
 \theta
 (1-\chi_2)
 (\Delta_x\chi_3)
 }_{=g_{{\sf out},5}'},
 \\
%%%%%%%%%%%%%%%%%%%%%%%%%%%%%%%%%%%%%%%%%%%%%%%%%%%%%%%%%%%%%%%%
 \Delta_x
 \{
 \Theta_J
 (1-\chi_2)
 \chi_3
 \}
 &=
 (\Delta_x\Theta_J)
 (1-\chi_2)
 \chi_3
 -
 \underbrace{
 2\eta^{-1}
 (\nabla_x\Theta_J\cdot\nabla_\xi\chi_2)
 \chi_3
 }_{=
 2\eta^{-1}
 \nabla_x(\Theta_J\chi_3)\cdot\nabla_\xi\chi_2
 }
 -
 \eta^{-2}
 \Theta_J(\Delta_\xi\chi_2)
 \chi_3
 \\
 & \quad
 +
 \underbrace{
 2
 (\nabla_x\Theta_J\cdot\nabla_x\chi_3)
 (1-\chi_2)
 }_{=g_{{\sf out},6}'}
 -
 \underbrace{
 2\Theta_J
 (\nabla_x\chi_2\cdot\nabla_x\chi_3)
 }_{=0}
 +
 \underbrace{
 \Theta_J
 (1-\chi_2)
 (\Delta_x\chi_3)
 }_{=g_{{\sf out},7}'}.
 \end{align*}
%%%%%%%%%%%%%%%%%%%%%%%%%%%%%%%%%%%%%%%%%%%%%%%%%%%%%%%%%%%%%%%%
 Combining these estimates,
 we get
 \begin{align*}
 \Delta_x
 &
 (u-u_1)
 =
 \Delta_x
 (
 \lambda^{-\frac{n-2}{2}}{\sf Q}(y)
 \chi_2
 +
 \lambda^{-\frac{n-2}{2}}\sigma
 T_1(y)\chi_1
 -
 {\sf U}_{\sf c}
 (1-\chi_1)
 -
 (\Theta+\theta_J)
 (1-\chi_2)
 \chi_3
 )
 \\
 &=
 \lambda^{-\frac{n+2}{2}}
 (\Delta_y{\sf Q})
 \chi_2
 +
 \lambda^{-\frac{n+2}{2}}
 \sigma
 (\Delta_yT_1)
 \chi_1
 +
 \underbrace{
 \lambda^{-2}
 (
 \lambda^{-\frac{n-2}{2}}
 \sigma T_1
 +
 {\sf U}_{\sf c}
 )
 (\Delta_y\chi_1)
 }_{=g_5'}
 \\
 & \quad
 -\eta^\frac{2q}{1-q}
 (\Delta_\xi{\sf U})
 (1-\chi_1)
 \chi_2
 -
 (\Delta_x{\sf U}_\infty)
 (1-\chi_2)
 \chi_4
 -
 (\Delta_x\theta)
 (1-\chi_2)
 \chi_3
 -
 (\Delta_x\Theta_J)
 (1-\chi_2)
 \chi_3
 \\
 & \quad
 -
 \underbrace{
 2\eta^{-1}
 \nabla_x(
 \eta^\frac{2}{1-q}{\sf U}
 -
 {\sf U}_\infty
 \chi_4
 -
 \Theta_J
 \chi_3
 )
 \cdot\nabla_\xi\chi_2
 }_{=g_6'}
 -
 \underbrace{
 \eta^{-2}
 (
 \eta^\frac{2}{1-q}
 {\sf U}
 -
 {\sf U}_\infty
 \chi_4
 -
 \Theta_J
 \chi_3
 )
 (\Delta_\xi\chi_2)
 }_{=g_7'}
 \\
 & \quad
 +
 g_0'+\cdots+g_4'
 +
 g_{{\sf out},1}'+\cdots+g_{{\sf out},7}'
 \\
%%%%%%%%%%%%%%%%%%%%%%%%%%%%%%%%%%%%%%%%%%%%%%%%%%%%%%%%%%%%%%%%
 &=
 -
 \underbrace{
 f(\lambda^{-\frac{n-2}{2}}{\sf Q})
 \chi_2
 }_{={\sf t}_1}
 -
 \lambda^{-\frac{n+2}{2}}
 \sigma
 (VT_1)
 \chi_1
 +
 \lambda^{-\frac{n+2}{2}}
 \sigma
 (H_yT_1)
 \chi_1
 -
 \underbrace{
 f_2(\eta^\frac{2}{1-q}{\sf U})
 (1-\chi_1)
 \chi_2
 }_{={\sf t}_2}
 \\
 & \quad
 -
 \underbrace{
 \{
 f_2
 ({\sf U}_\infty)
 +
 q{\sf U}_\infty^{q-1}
 (\theta+\Theta_J)
 \chi_3
 +
 (\Delta_x\theta-q{\sf U}_\infty^{q-1}\theta)
 \chi_3
 \}
 (1-\chi_2)
 \chi_4
 }_{={\sf t}_3}
 \\
 & \quad
 -
 (\Delta_x\Theta_J-q{\sf U}_\infty^{q-1}\Theta_J)
 (1-\chi_2)
 \chi_3
 +
 g_0'+\cdots+g_7'
 +
 g_{{\sf out},1}'+\cdots+g_{{\sf out},7}'.
 \end{align*}
%%%%%%%%%%%%%%%%%%%%%%%%%%%%%%%%%%%%%%%%%%%%%%%%%%%%%%%%%%%%%%%%
 We write ${\sf t}_i$ ($i=1,2,3$) as
 \begin{align*}
 {\sf t}_1
 &=
 f(\lambda^{-\frac{n-2}{2}}{\sf Q})
 \chi_2
 \\
 &=
 -
 \underbrace{
 \{
 f(u)
 -
 f(\lambda^{-\frac{n-2}{2}}{\sf Q})
 -
 \lambda^{-2}
 V
 (
 u
 -
 \lambda^{-\frac{n-2}{2}}
 {\sf Q}
 )
 \}
 \chi_2
 }_{={\sf N}_1[\epsilon,v,w]}
 +
 f(u)
 \chi_{2}
 -
 \lambda^{-2}
 V
 (
 u
 -
 \lambda^{-\frac{n-2}{2}}
 {\sf Q}
 )
 \chi_{2}
 \\
 &=
 f(u)
 \chi_{2}
 -
 \lambda^{-2}
 V
 (
 \lambda^{-\frac{n-2}{2}}
 \sigma
 T_1
 \chi_1
 +
 u_1 
 )
 \chi_{2}
 -
 \underbrace{
 \lambda^{-2}
 V
 (
 {\sf U}_{\sf c}
 (1-\chi_1)
 +
 (\theta+\Theta_J)
 (1-\chi_2)
 )
 \chi_{2}
 }_{=g_8'}
 +
 N_1,
 \\
%%%%%%%%%%%%%%%%%%%%%%%%%%%%%%%%%%%%%%%%%%%%%%%%%%%%%%%%%%%%%%%%
%%%%%%%%%%%%%%%%%%%%%%%%%%%%%%%%%%%%%%%%%%%%%%%%%%%%%%%%%%%%%%%%
 {\sf t}_2
 &=
 f_2
 (\eta^\frac{2}{1-q}{\sf U})
 (1-\chi_1)
 \chi_2
 \\
 &=
 \underbrace{
 \{
 f_2(u)
 +
 f_2(\eta^\frac{2}{1-q}{\sf U})
 -
 q
 \eta^{-2}
 {\sf U}^{q-1}
 (u+\eta^\frac{2}{1-q}{\sf U})
 \}
 (1-\chi_{1})
 \chi_{2}
 }_{={\sf N}_2[\epsilon,v,w]}
 \\
 & \quad
 -
 f_2(u)
 (1-\chi_{1})
 \chi_{2}
 +
 \underbrace{
 q
 \eta^{-2}
 {\sf U}^{q-1}
 (u+\eta^\frac{2}{1-q}{\sf U}-u_1)
 (1-\chi_{1})
 \chi_{2}
 }_{=g_9'}
 +
 q
 \eta^{-2}
 {\sf U}^{q-1}
 u_1
 (1-\chi_{1})
 \chi_{2},
 \\
%%%%%%%%%%%%%%%%%%%%%%%%%%%%%%%%%%%%%%%%%%%%%%%%%%%%%%%%%%%%%%%%
%%%%%%%%%%%%%%%%%%%%%%%%%%%%%%%%%%%%%%%%%%%%%%%%%%%%%%%%%%%%%%%%
 {\sf t}_3
 &=
 \{
 f_2
 ({\sf U}_\infty)
 +
 q{\sf U}_\infty^{q-1}
 (\theta+\Theta_J)
 \chi_3
 +
 (\Delta_x\theta-q{\sf U}_\infty^{q-1}\theta)
 \chi_3
 \}
 (1-\chi_2)
 \chi_4
 \\
 &=
 -
 \underbrace{
 \{
 f(u)
 -
 f_2(u)
 -
 f_2({\sf U}_\infty)
 +
 q{\sf U}_\infty^{q-1}
 (u+{\sf U}_\infty)
 -
 (\Delta_x\theta-q{\sf U}_\infty^{q-1}\theta)
 \chi_3
 \}
 (1-\chi_{2})
 \chi_4
 }_{=N_3[\epsilon,v,w]}
 \\
 & \quad
 +
 \{
 f(u)
 -
 f_2(u)
 \}
 (1-\chi_{2})
 \chi_4
 +
 \underbrace{
 q
 {\sf U}_\infty^{q-1}
 (u+{\sf U}_\infty+(\theta+\Theta_J)\chi_3-u_1)
 (1-\chi_{2})
 \chi_4
 }_{=g_{10}'}
 \\
 & \quad
 +
 q{\sf U}_\infty^{q-1}
 u_1
 (1-\chi_{2})
 \chi_4.
 \end{align*}
%%%%%%%%%%%%%%%%%%%%%%%%%%%%%%%%%%%%%%%%%%%%%%%%%%%%%%%%%%%%%%%%
%%%%%%%%%%%%%%%%%%%%%%%%%%%%%%%%%%%%%%%%%%%%%%%%%%%%%%%%%%%%%%%%
%%%%%%%%%%%%%%%%%%%%%%%%%%%%%%%%%%%%%%%%%%%%%%%%%%%%%%%%%%%%%%%%
%%%%%%%%%%%%%%%%%%%%%%%%%%%%%%%%%%%%%%%%%%%%%%%%%%%%%%%%%%%%%%%%
%%%%%%%%%%%%%%%%%%%%%%%%%%%%%%%%%%%%%%%%%%%%%%%%%%%%%%%%%%%%%%%%
 This implies
 \begin{align*}
 {\sf t}_1
 +
 {\sf t}_2
 +
 {\sf t}_3
 &=
 f(u)
 \chi_{2}
 -
 \lambda^{-2}
 V
 (
 \lambda^{-\frac{n-2}{2}}
 \sigma
 T_1
 \chi_1
 +
 u_1 
 )
 \chi_{2}
 \\
 & \quad
 -
 f_2(u)
 (1-\chi_{1})
 \chi_{2}
 +
 q
 \eta^{-2}
 {\sf U}^{q-1}
 u_1
 (1-\chi_{1})
 \chi_{2},
 \\
 & \quad
 +
 \{
 f(u)
 -
 f_2(u)
 \}
 (1-\chi_{2})
 \chi_4
 +
 q{\sf U}_\infty^{q-1}
 u_1
 (1-\chi_{2})
 \chi_4
 \\
 & \quad
 +
 g_8'+g_9'+g_{10}'
 +
 N_1+N_2+N_3
 \\
%%%%%%%%%%%%%%%%%%%%%%%%%%%%%%%%%%%%%%%%%%%%%%%%%%%%%%%%%%%%%%%%
 &=
 f(u)
 \chi_4
 -
 f_2(u)
 \chi_4
 +
 \underbrace{
 f_2(u)
 \chi_1
 }_{=N_4[\epsilon,v,w]}
 -
 \lambda^{-2}
 V
 \lambda^{-\frac{n-2}{2}}
 \sigma
 T_1
 \chi_1
 \\
 & \quad
 -
 \lambda^{-2}
 V
 u_1
 \chi_2
 +
 q
 \eta^{-2}
 {\sf U}^{q-1}
 u_1
 (1-\chi_{1})
 \chi_{2}
 +
 q{\sf U}_\infty^{q-1}
 u_1
 (1-\chi_{2})
 \chi_4
 \\
 & \quad
 +
 g_8'+g_9'+g_{10}'
 +
 N_1
 +
 N_2
 +
 N_3.
 \end{align*}
%%%%%%%%%%%%%%%%%%%%%%%%%%%%%%%%%%%%%%%%%%%%%%%%%%%%%%%%%%%%%%%%
%%%%%%%%%%%%%%%%%%%%%%%%%%%%%%%%%%%%%%%%%%%%%%%%%%%%%%%%%%%%%%%%
 Therefore
 it follows that
 \begin{align*}
 \Delta_x
 (u-u_1)
 &=
 -{\sf t}_1
 -
 \lambda^{-\frac{n+2}{2}}
 \sigma
 (VT_1)
 \chi_1
 +
 \lambda^{-\frac{n+2}{2}}
 \sigma
 (H_yT_1)
 \chi_1
 -
 {\sf t}_2
 -
 {\sf t}_3
 \\
 & \quad
 -
 (\Delta_x\Theta_J-q{\sf U}_\infty^{q-1}\Theta_J)
 (1-\chi_2)
 \chi_3
 +
 g_{0\sim7}'
 +
 g_{{\sf out},1\sim7}'
 \\
%%%%%%%%%%%%%%%%%%%%%%%%%%%%%%%%%%%%%%%%%%%%%%%%%%%%%%%%%%%%%%%%
 &=
 \lambda^{-\frac{n+2}{2}}
 \sigma
 (H_yT_1)
 \chi_1
 -
 (\Delta_x\Theta_J-q{\sf U}_\infty^{q-1}\Theta_J)
 (1-\chi_2)
 \chi_3
 \\
 & \quad
 +
 \lambda^{-2}
 V
 u_1
 \chi_2
 -
 q
 \eta^{-2}
 {\sf U}^{q-1}
 u_1
 (1-\chi_{1})
 \chi_{2}
 -
 q{\sf U}_\infty^{q-1}
 u_1
 (1-\chi_{2})
 \chi_4
 \\
 & \quad
 -
 f(u)
 \chi_4
 +
 f_2(u)
 \chi_4
 +
 g_{0\sim10}'
 +
 g_{{\sf out},1\sim7}'
 +
 N_{1\sim4}.
 \end{align*}
%%%%%%%%%%%%%%%%%%%%%%%%%%%%%%%%%%%%%%%%%%%%%%%%%%%%%%%%%%%%%%%%
 Furthermore
 we verify that
 \begin{align*}
 \Delta_xu_1
 &=
 \lambda^{-\frac{n+2}{2}}(\Delta_y\epsilon)
 \chi_{\sf in}
 +
 \underbrace{
 2\lambda^{-\frac{n+2}{2}}
 \nabla_y\epsilon\cdot\nabla_y\chi_{\sf in}
 +
 \lambda^{-\frac{n+2}{2}}
 \epsilon\Delta_y\chi_{\sf in}}_{=h_1'[\epsilon]}
 \\
 & \quad
 +
 \eta^\frac{2q}{1-q}
 (\Delta_\xi v)
 \chi_{\sf mid}
 +
 \underbrace{
 2\eta^\frac{2q}{1-q}
 (\nabla_\xi v\cdot\nabla_\xi\chi_{\sf mid})
 +
 \eta^\frac{2q}{1-q}
 v(\Delta_\xi\chi_{\sf mid})
 }_{=k_1'[v]}
 +
 \Delta_xw
 \\
%%%%%%%%%%%%%%%%%%%%%%%%%%%%%%%%%%%%%%%%%%%%%%%%%%%%%%%%%%%%%%%%
 &=
 \lambda^{-\frac{n+2}{2}}(H_y\epsilon)
 \chi_{\sf in}
 +
 \eta^\frac{2q}{1-q}
 (\Delta_\xi v-q{\sf U}^{q-1}v)
 \chi_{\sf mid}
 +
 (\Delta_xw-q{\sf U}_\infty^{q-1}w)
 \\
 & 
 \quad
 -
 \lambda^{-\frac{n+2}{2}}V\epsilon
 +
 \eta^\frac{2q}{1-q}
 q{\sf U}^{q-1}
 v
 \chi_{\sf mid}
 +
 q{\sf U}_\infty^{q-1}
 w
 +
 h_1'[v]
 +
 k_1'[v].
 \end{align*}
%%%%%%%%%%%%%%%%%%%%%%%%%%%%%%%%%%%%%%%%%%%%%%%%%%%%%%%%%%%%%%%%
 As a consequence,
 we obtain
 \begin{align*}
 \Delta_xu
 &=
 \Delta_x(u-u_1)
 +
 \Delta_xu_1
 \\
%%%%%%%%%%%%%%%%%%%%%%%%%%%%%%%%%%%%%%%%%%%%%%%%%%%%%%%%%%%%%%%%
 &=
 \lambda^{-\frac{n+2}{2}}
 \sigma
 (H_yT_1)
 \chi_1
 -
 (\Delta_x\Theta_J-q{\sf U}_\infty^{q-1}\Theta_J)
 (1-\chi_2)
 \chi_3
 \\
%%%%%%%%%%%%%%%%%%%%%%%%%%%%%%%%%%%%%%%%%%%%%%%%%%%%%%%%%%%%%%%%
 & \quad
 +
 \lambda^{-\frac{n+2}{2}}(H_y\epsilon)\chi_{\sf in}
 +
 \eta^\frac{2q}{1-q}
 (\Delta_\xi v-q{\sf U}^{q-1}(1-\chi_1)v)
 \chi_{\sf mid}
 +
 (\Delta_xw-q{\sf U}_\infty^{q-1}w)
 \\
%%%%%%%%%%%%%%%%%%%%%%%%%%%%%%%%%%%%%%%%%%%%%%%%%%%%%%%%%%%%%%%%
 & \quad
 +
 \lambda^{-2}
 V(\eta^\frac{2}{1-q}v\chi_{{\sf mid}}+w)\chi_2
 -
 q
 (
 \eta^{-2}
 {\sf U}^{q-1}
 (1-\chi_1)
 -
 {\sf U}_\infty^{q-1}
 )
 w
 \chi_2
 \\
%%%%%%%%%%%%%%%%%%%%%%%%%%%%%%%%%%%%%%%%%%%%%%%%%%%%%%%%%%%%%%%%
 & \quad
 +
 \underbrace{
 q{\sf U}_\infty^{q-1}
 w
 (1-\chi_4)
 }_{=N_5[w]}
 -
 f(u)
 \chi_4
 +
 f_2(u)
 \chi_4
 +
 g_{0\sim10}'
 +
 g_{{\sf out},1\sim7}'
 +
 N_{1\sim4}.
 \end{align*}
%%%%%%%%%%%%%%%%%%%%%%%%%%%%%%%%%%%%%%%%%%%%%%%%%%%%%%%%%%%%%%%%
%%%%%%%%%%%%%%%%%%%%%%%%%%%%%%%%%%%%%%%%%%%%%%%%%%%%%%%%%%%%%%%%
 Since $HT_1=-\Lambda_y{\sf Q}$,
 we conclude
 \begin{align*}
 \lambda^{-\frac{n-2}{2}}
 &
 \epsilon_t
 \chi_{\sf in}
 +
 \eta^\frac{2}{1-q}
 v_t
 \chi_{\sf mid}
 +
 w_t
 =
 \lambda^{-\frac{n+2}{2}}
 (\lambda\dot\lambda-\sigma)
 (\Lambda_y{\sf Q})
 \chi_1
 +
 \lambda^{-\frac{n+2}{2}}
 (H_y\epsilon)
 \chi_{\sf in}
 \\
 &
 +
 \eta^\frac{2q}{1-q}
 (\Delta_\xi v-q{\sf U}^{q-1}(1-\chi_1)v)
 \chi_{\sf mid}
 -
 \eta^{\frac{2}{1-q}-1}
 \dot\eta
 (\Lambda_\xi v)
 \chi_{\sf mid}
 +
 (\Delta_xw-q{\sf U}_\infty^{q-1}w)
 \\
 &
 +
 \underbrace{
 \lambda^{-2}
 V(\eta^\frac{2}{1-q}v\chi_{{\sf mid}}+w)
 \chi_2
 }_{=F_1[v,w]}
 -
 \underbrace{
 q
 (
 \eta^{-2}
 {\sf U}^{q-1}
 (1-\chi_1)
 -
 {\sf U}_\infty^{q-1}
 )
 w
 \chi_2
 }_{=F_2[w]}
 \\
 &
 +
 \underbrace{
 \{
 \dot{\sf M}
 +
 f(u)
 -
 f_2(u)
 \}
 (1-\chi_4)
 }_{=N_6[w]}
 -
 g_{0\sim6}
 +
 g_{0\sim10}'
 +
 g_{{\sf out},1\sim7}'
 +
 N_{1\sim5}
 -
 h_1
 +
 h_1'
 +
 k_1
 -
 k_1'.
 \end{align*}
%%%%%%%%%%%%%%%%%%%%%%%%%%%%%%%%%%%%%%%%%%%%%%%%%%%%%%%%%%%%%%%%
 For simplicity,
 we write
 \begin{itemize}
 \item
 $g=
 g_{0\sim6}
 +
 g_{0\sim10}'
 +
 g_{{\sf out},1\sim7}'$,
 \item
 $N=N_{1\sim6}$,
 \item
 $h[\epsilon]=-h_1[\epsilon]+h_1'[\epsilon]$,
 \item
 $k[v]=-k_1[v]+k_1'[v]$.
 \end{itemize}
 We introduce the other cut off function.
 \[
 \chi_{{\sf sq}}
 =
 \chi(|\xi|/\sqrt{{\sf R}_{\sf  mid}}).
 \]
%%%%%%%%%%%%%%%%%%%%%%%%%%%%%%%%%%%%%%%%%%%%%%%%%%%%%%%%%%%%%%%%
 We now decompose this equation into three equations.
 \begin{align}\label{eq5.2}
 \begin{cases}
 \dis
 \lambda^2\epsilon_t
 =
 H_y\epsilon
 +
 (
 \lambda\dot\lambda
 -
 \sigma
 )
 \Lambda_y{\sf Q}
 +
 \lambda^\frac{n+2}{2}
 F_1[v,w]
 &
 \text{in } |y|<{\sf R}_{\sf in},\
 t\in(0,T),
 \\[2mm]
%%%%%%%%%%%%%%%%%%%%%%%%%%%%%%%%%%%%%%%%%%%%%%%%%%%%%%%%%%%%%%%%
 \eta^2v_t
 =
 \Delta_\xi v
 -
 q{\sf U}^{q-1}
 (1-\chi_1)
 v
 -
 \eta
 \dot\eta
 (\Lambda_\xi v)
 \chi_{\sf mid}
 \\
 \qquad
 \quad
 +
 \eta^{-\frac{2q}{1-q}}
 \lambda^{-\frac{n+2}{2}}
 (\lambda\dot\lambda-\sigma)
 (\Lambda_y{\sf Q})
 \chi_1(1-\chi_{\sf in})
 \\
 \qquad
 \quad
 +
 \eta^{-\frac{2q}{1-q}}
 (
 F_1[v,w]
 (1-\chi_{{\sf in}})
 \chi_{{\sf sq}}
 -
 F_2[w]
 \chi_{{\sf sq}}
 +
 h[\epsilon]
 )
 \\
 \qquad
 \quad
 +
 \eta^{-\frac{2q}{1-q}}
 (
 g
 +
 N[\epsilon,v,w]
 )
 {\bf 1}_{|\xi|<1}
 &
 \text{in } |\xi|<{\sf R}_{\sf mid},\
 t\in(0,T),
 \\[2mm]
%%%%%%%%%%%%%%%%%%%%%%%%%%%%%%%%%%%%%%%%%%%%%%%%%%%%%%%%%%%%%%%%
 w_t
 =
 \Delta_xw
 -
 q
 {\sf U}_\infty^{q-1}
 w
 +
 (
 F_1[v,w]
 -
 F_2[w]
 )
 (1-\chi_{{\sf sq}})
 \\
 \qquad
 \quad
 +
 k[v]
 +
 (
 g
 +
 N[\epsilon,v,w]
 )
 {\bf 1}_{|\xi|>1}
 &
 \text{in } x\in\R^n,\
 t\in(0,T).
 \end{cases}
 \end{align}
%%%%%%%%%%%%%%%%%%%%%%%%%%%%%%%%%%%%%%%%%%%%%%%%%%%%%%%%%%%%%%%%
 For convenience,
 we here write the explicit expression of $F_i$ ($i=1,2$).
 \begin{itemize}
 \item 
 $F_1[v,w]=\lambda^{-2}
 V(\eta^\frac{2}{1-q}v\chi_{{\sf mid}}+w)
 \chi_2$,
 \item
 $F_2[w]=
 q
 (
 \eta^{-2}
 {\sf U}^{q-1}
 (1-\chi_1)
 -
 {\sf U}_\infty^{q-1}
 )
 w
 \chi_2$.
 \end{itemize}
%%%%%%%%%%%%%%%%%%%%%%%%%%%%%%%%%%%%%%%%%%%%%%%%%%%%%%%%%%%%%%%%
%%%%%%%%%%%%%%%%%%%%%%%%%%%%%%%%%%%%%%%%%%%%%%%%%%%%%%%%%%%%%%%%
 Once a pair of solutions
 $(\lambda(t),\epsilon(y,t),v(\xi,t),w(x,t))$ of \eqref{eq5.2}
 is obtained,
 $u(x,t)$ described in \eqref{eq5.1} gives a solution of \eqref{eq1.1}.
 This decomposition is crucial in this procedure,
 which is the same spirit as in \cite{Cortazar,del_Pino}.
 We appropriately choose the initial data and the boundary condition
 in each equation of \eqref{eq5.2}
 so that solutions of \eqref{eq5.2} decay enough as $t\to T$.
%%%%%%%%%%%%%%%%%%%%%%%%%%%%%%%%%%%%%%%%%%%%%%%%%%%%%%%%%%%%%%%%

\subsection{Fixed point argument}
\label{sec_5.3}
%%%%%%%%%%%%%%%%%%%%%%%%%%%%%%%%%%%%%%%%%%%%%%%%%%%%%%%%%%%%%%%%
% To construct solutions of \eqref{eq5.2},
% we apply a fixed point argument.
%%%%%%%%%%%%%%%%%%%%%%%%%%%%%%%%%%%%%%%%%%%%%%%%%%%%%%%%%%%%%%%%
 Let ${\sf d}_1\in(0,1)$ be a small constant,
 and define
 \[
 {\sf l}_{\sf out}
 =
 L_2
 (T-t)^{-\frac{1}{2}+{\sf b}_{\sf out}}
 \qquad
 \text{with }
 {\sf b}_{\sf out}
 =
 \tfrac{{\sf d}_1}{2(\gamma+2J-\frac{2}{1-q}+3{\sf d}_1)}.
 \]
 The constant $L_2$ is chosen so that
 \[
 (T-t)^{{\sf d}_1}
 (T-t)^{\frac{\gamma}{2}+J}
 |z|^{\gamma+2J+3{\sf d}_1}
 =
 {\sf U}_\infty(x)
 \qquad
 \text{on }
 |z|={\sf l}_{\sf out}.
 \]
%%%%%%%%%%%%%%%%%%%%%%%%%%%%%%%%%%%%%%%%%%%%%%%%%%%%%%%%%%%%%%%%
 Furthermore
 we define
 \begin{align*}
 {\cal W}(x,t)
 &=
 \begin{cases}
 (T-t)^{{\sf d}_1}
 (T-t)^{\frac{\gamma}{2}+J}
 |z|^\gamma
 & \text{for } |z|<1,
 \\
 (T-t)^{{\sf d}_1}
 (T-t)^{\frac{\gamma}{2}+J}
 |z|^{\gamma+2J+3{\sf d}_1}
 & \text{for } 1<|z|<{\sf l}_{\sf out}(t),
 \\
 {\sf U}_\infty(x)
 & \text{for }
 {\sf l}_{\sf out}(t)\sqrt{T-t}<|x|<1,
 \\
 {\sf M}_0|x|^{-1}
 & \text{for } |x|>1,
 \end{cases}
 \end{align*}
 where ${\sf M}_0={\sf U}_\infty(x)|_{|x|=1}$,
 and
 \begin{align*}
 {\cal V}(\xi,t)
 &=
 (T-t)^{{\sf d}_1}
 (1+|\xi|^2)^\frac{\gamma}{2},
 \qquad
 \text{where }
 \xi=\eta(t)^{-1}x.
 \end{align*}
 From the definition of ${\sf l}_{\sf out}$,
 the function ${\cal W}(x,t)$ is continuous on $\R^n\times[0,T]$.
 We now introduce two inequalities
 to define the functional spaces used in a fixed point argument.
 \begin{align}
 \label{eq5.3}
 |w_1(x,t)|
 &\leq
 {\sf R}_1^{-1}
 {\cal W}(x,t)
 \qquad
 \text{for }
 (x,t)\in\R^n\times[0,T-\delta],
 \\
 \label{eq5.4}
 |v_1(x,t)|
 &\leq
 {\cal V}(x,t)
 \qquad
 \text{for }
 (x,t)\in\overline{B}_{{\sf R}_{\sf mid}}\times[0,T-\delta],
 \end{align}
 where ${\sf R}_1>0$ is a large constant.
 \begin{enumerate}[(i)]
 \item
 Let $X_1^{(\delta)}$ be the set of all continuous functions
 on $\R^n\times[0,T-\delta]$
 satisfying \eqref{eq5.3},

 \item
 let $X_2^{(\delta)}$ be the set of all continuous functions
 on $\overline{B}_{{\sf R}_{\sf mid}}\times[0,T-\delta]$
 satisfying
 \eqref{eq5.4},
 and

 \item
 set
 $X^{(\delta)}=X_1^{(\delta)}\times X_2^{(\delta)}$.

 \item
 For the special case,
 we write
 $X_1=X_1^{(\delta)}|_{\delta=0}$,
 $X_2=X_2^{(\delta)}|_{\delta=0}$,
 $X=X^{(\delta)}|_{\delta=0}$.
 \end{enumerate}
%%%%%%%%%%%%%%%%%%%%%%%%%%%%%%%%%%%%%%%%%%%%%%%%%%%%%%%%%%%%%%%%
 Since a function $w_1(x,t)\in X_1^{(\delta)}$ is not defined in
 $t\in(T-\delta,T)$,
 we extend it a continuous function on $\R^n\times[0,T]$.
 \[
 w_1^{\sf ext}(x,t)=
 \begin{cases}
 w_1(x,t)
 & \text{if } (x,t)\in\R^n\times[0,T-\delta],
 \\
 w_1(x,T-\delta)
 & \text{if } (x,t)\in\R^n\times[T-\delta,T].
 \end{cases}
 \]
 Similarly
 we define
 \[
 v_1^{\sf ext}(\xi,t)
 =
 \begin{cases}
 v_1(\xi,t)
 & \text{if }
 (\xi,t)\in
 \overline{B}_{{\sf R}_{\sf mid}}\times[0,T-\delta],
 \\
 v_1(\xi,T-\delta)
 & \text{if }
 (\xi,t)\in
 \overline{B}_{{\sf R}_{\sf mid}}\times[T-\delta,T],
 \\
 v_1(B_{{\sf R}_{\sf mid}}\frac{\xi}{|\xi|},t)
 & \text{if }
 (\xi,t)\in(\R^n\setminus\overline{B}_{{\sf R}_{\sf mid}})\times[T-\delta,T],
 \\
 v_1(B_{{\sf R}_{\sf mid}}\frac{\xi}{|\xi|},T-\delta)
 & \text{if }
 (\xi,t)\in(\R^n\setminus\overline{B}_{{\sf R}_{\sf mid}})\times[T-\delta,T].
 \end{cases}
 \]
 For our purpose,
 these extended functions must be
 $v_1^{\sf ext}(\xi,t)=w_1^{\sf ext}(x,t)=0$ for $t=T$.
 Hence we additionally define
 \begin{align*}
 \bar{w}_1(x,t)
 &=
 \begin{cases}
 w_1^{\sf ext}(x,t) & \text{if } x\in\R^n,\ t\in[0,T-\delta],
 \\
 {\cal W}(x,t) & \text{if }x\in\R^n,\ t\in[T-\delta,T]
 \text{\ \ and\ \ }
 w_1^{\sf ext}(x,t)>{\cal W}(x,t),
 \\
 w_1^{\sf ext}(x,t) & \text{if } x\in\R^n,\ t\in[T-\delta,T]
 \text{\ \ and\  \ }
 |w_1^{\sf ext}(x,t)|\leq{\cal W}(x,t),
 \\
 -{\cal W}(x,t) & \text{if } x\in\R^n,\ t\in[T-\delta,T]
 \text{\ \ and\ \ }
 w_1^{\sf ext}(x,t)<-{\cal W}(x,t),
 \end{cases}
 \\
 \bar{v}_1(\xi,t)
 &=
 \begin{cases}
 v_1^{\sf ext}(\xi,t) & \text{if } x\in\R^n,\ t\in[0,T],
 \\
 {\cal V}(\xi,t) & \text{if }x\in\R^n,\ t\in[T-\delta,T]
 \text{\ \ and\ \ }
 v_1^{\sf ext}(\xi,t)>{\cal V}(\xi,t),
 \\
 v_1^{\sf ext}(\xi,t) & \text{if } x\in\R^n,\ t\in[T-\delta,T]
 \text{\ \ and\  \ }
 |v_1^{\sf ext}(\xi,t)|\leq{\cal V}(\xi,t),
 \\
 -{\cal V}(\xi,t) & \text{if } x\in\R^n,\ t\in[T-\delta,T]
 \text{\ \ and\ \ }
 v_1^{\sf ext}(\xi,t)<-{\cal V}(\xi,t)
 \end{cases}
 \end{align*}
 and put
 \[
 \tilde{w}_1(x,t)
 =
 \bar{w}_1(x,t)
 \chi_\delta(t),
 \qquad
 \tilde{v}_1(\xi,t)
 =
 \bar{v}_1(\xi,t)
 \chi_\delta(t).
% \qquad
% \chi_{\delta}(t)
% =
% \begin{cases}
% 1 & \text{for } |t|<T-\delta, \\
% 0 & \text{for } |t|>T-\frac{1}{2}\delta.
% \end{cases}
 \]
 The cut off function $\chi_\delta(t)$ is defined by
 $\chi_{\delta}(t)=1$ for $t<T-\delta$
 and
 $\chi_{\delta}(t)=0$ for $t>T-\frac{1}{2}\delta$.
%%%%%%%%%%%%%%%%%%%%%%%%%%%%%%%%%%%%%%%%%%%%%%%%%%%%%%%%%%%%%%%%
 From this definition,
 we see that
 $\tilde{w}_1(x,t)\in C(\R^n\times[0,T])$ if $w_1\in X_1^{(\delta)}$
 and
 \begin{align}
 \nonumber
 \tilde{w}_1(x,t)
 &=
 w_1(x,t)
 \qquad
 \text{for } x\in\R^n\times[0,T-\delta],
 \\
 \nonumber
 |\tilde{w}_1(x,t)|
 &\leq
 {\sf R}_1^{-1}
 \begin{cases}
 (T-t)^{{\sf d}_1}
 (T-t)^{\frac{\gamma}{2}+J}
 |z|^\gamma
 & \text{for } |z|<1,\ t\in[0,T],
 \\
 (T-t)^{{\sf d}_1}
 (T-t)^{\frac{\gamma}{2}+J}
 |z|^{\gamma+2J+3{\sf d}_1}
 & \text{for } 1<|z|<{\sf l}_{\sf out}(t),\ t\in[0,T],
 \\
 {\sf U}_\infty(x)
 & \text{for }
 \sqrt{T-t}\cdot{\sf l}_{\sf out}(t)<|x|<1,\ t\in[0,T],
 \\
 {\sf M}_0|x|^{-1}
 & \text{for } |x|>1,\ t\in[0,T],
 \end{cases}
 \\
 \label{eq5.5}
 \tilde w_1(x,t)
 &=
 0
 \qquad \text{for } x\in\R^n,\ t\in[T-\tfrac{1}{2}\delta,T].
 \end{align}
 Similarly
 $\tilde{v}_1(\xi,t)\in C(\R^n\times[0,T])$ if $v_1\in X_2^{(\delta)}$
 and
 \begin{align}
 \nonumber
 \tilde{v}_1(\xi,t)
 &=
 v_1(\xi,t)
 \qquad
 \text{for } x\in\overline{B}_{{\sf R}_{\sf mid}}\times[0,T-\delta],
 \\
 \nonumber
 |\tilde{v}_1(x,t)|
 &\leq
 (T-t)^{{\sf d}_1}(1+|\xi|^2)^\frac{\gamma}{2}
 \qquad
 \text{for } (x,t)\in\R^n\times[0,T],
 \\
 \label{eq5.6}
 \tilde{v}_1(x,t)
 &=
 0 \qquad
 \text{for } (x,t)\in\R^n\times[T-\tfrac{1}{2}\delta,T].
 \end{align}
%%%%%%%%%%%%%%%%%%%%%%%%%%%%%%%%%%%%%%%%%%%%%%%%%%%%%%%%%%%%%%%%
% From definition of $X_1^{(\delta)}$ and $X_2^{(\delta)}$,
% we can define
% $X_1=X_1^{(\delta)}|_{\delta=0}$ and $X_2=X_2^{(\delta)}|_{\delta=0}$,
% and put $X=X_1\times X_2$.
 We introduce the metrics on $X^{(\delta )}$ and $X$.
 \begin{align*}
 d_{X^{(\delta)}}((w_1,v_1),(w_2,v_2))
 &=
 \sup_{(x,t)\in\R^n\times[0,T-\delta]}
 |w_1-w_2|
 +
 \sup_{(\xi,t)\in\overline{B}_{{\sf R}_{\sf mid}}\times[0,T-\delta]}
 |v_1-v_2|,
 \\
 d_{X}((\tilde w_1,\tilde v_1),(\tilde w_2,\tilde v_2))
 &=
 \sup_{(x,t)\in\R^n\times[0,T]}
 |\tilde w_1-\tilde w_2|
 +
 \sup_{(\xi,t)\in\overline{B}_{{\sf R}_{\sf mid}}\times[0,T]}
 |\tilde v_1-\tilde v_2|.
 \end{align*}
%%%%%%%%%%%%%%%%%%%%%%%%%%%%%%%%%%%%%%%%%%%%%%%%%%%%%%%%%%%%%%%%
 From this construction,
 we easily see that
 the mapping
 ${\mathcal T}_1(w_1,v_1)=(\tilde w_1,\tilde v_1)$ is continuous
 from
 $(X^{(\delta)},d_{X^{(\delta)}})$
 to
 $(X,d_{X})$.
 To formulate our problem as a fixed point problem,
 we now define the mapping ${\mathcal T}_2:X\to X\subset X^{(\delta )}$.
 For given $(w_1,v_1)\in X^{(\delta)}$,
 we solve the first equation of \eqref{eq5.2}.
 \[
 \lambda^2\epsilon_t
 =
 H_y\epsilon
 +
 (
 \lambda\dot\lambda
 -
 \sigma
 )
 \Lambda_y{\sf Q}
 +
 \lambda^\frac{n+2}{2}
 F_1[\tilde v_1,\tilde w_1]
 \qquad
 \text{in } |y|<{\sf R}_{\sf in},\
 t\in(0,T),
 \]
 where
 $(\tilde w_1,\tilde v_1)={\mathcal T}_1(w_1,v_1)$.
 In this step,
 we determine $\lambda(t)$ and
 obtain a solution $\epsilon(y,t)$ satisfying $\epsilon(y,t)\to0$ as $t\to T$.
 Next
 we construct $v(\xi,t)$ as a solution of the second equation of \eqref{eq5.2}.
 \begin{align*}
 \eta^2v_t
 &=
 \Delta_\xi v
 -
 q{\sf U}^{q-1}
 (1-\chi_1)
 v
 -
 \eta
 \dot\eta
 (\Lambda_\xi v)
 +
 \eta^{-\frac{2q}{1-q}}
 \lambda^{-\frac{n+2}{2}}
 (\lambda\dot\lambda-\sigma)
 (\Lambda_y{\sf Q})
 \chi_1(1-\chi_{\sf in})
 \\
 & \quad
 +
 \eta^{-\frac{2q}{1-q}}
 (
 F_1[\tilde v_1,\tilde w_1]
 (1-\chi_{{\sf in}})
 \chi_{{\sf sq}}
 -
 F_2[\tilde w_1]
 \chi_{{\sf sq}}
 +
 h[\epsilon]
 )
 \\
 & \quad
 +
 \eta^{-\frac{2q}{1-q}}
 (
 g
 +
 N[\epsilon,\tilde v_1,\tilde w_1]
 )
 {\bf 1}_{|\xi|<1}
 \qquad
 \text{in } |\xi|<{\sf R}_{\sf mid},\
 t\in(0,T).
 \end{align*}
 Here
 $\lambda(t)$ and $\epsilon(y,t)$ are functions obtained in the first step,
 and
 $(\tilde w_1,\tilde v_1)={\mathcal T}_1(w_1,v_1)$ are given functions.
 We will see that $v\in X_2^{(\delta)}$.
 We finally consider
 \begin{align*}
 w_t
 &=
 \Delta_xw
 -
 q
 {\sf U}_\infty^{q-1}
 w
 +
 (
 F_1[\tilde v_1,\tilde w_1]
 -
 F_2[\tilde w_1]
 )
 (1-\chi_{{\sf sq}})
 \\
 & \quad
 +
 k[v]
 +
 (
 g
 +
 N[\epsilon,\tilde v_1,\tilde w_1]
 )
 {\bf 1}_{|\xi|>1}
 \qquad
 \text{in } x\in\R^n,\
 t\in(0,T).
 \end{align*}
 As in the second step,
 $\lambda(t)$ and $\epsilon(y,t)$ are functions obtained in the first step,
 $v(\xi,t)$ is the function constructed in the second step
 and $(\tilde w_1,\tilde v_1)={\mathcal T}_1(w_1,v_1)$.
 We can construct a solution $w(x,t)$ of this equation
 such that $w\in X_1^{(\delta)}$.
 By using this $v(\xi,t)$ and $w(x,t)$,
 we define
 \[
 {\cal T}_2
 (\tilde w_1,\tilde v_1)
 =
 (w,v).
 \]
 This mapping ${\mathcal T}_2:X\to X\subset X^{(\delta)}$ is continuous.
 Hence
 the mapping
 ${\mathcal T}={\mathcal T}_2\circ{\mathcal T}_1:X^{(\delta)}\to X^{(\delta)}$
 is also continuous.
 Furthermore
 we can show that
 the mapping ${\mathcal T}:X^{(\delta)}\to X^{(\delta)}$ is compact
 for any $\delta>0$.
 Therefore
 the Schauder fixed point theorem
 shows that
 ${\mathcal T}:X^{(\delta)}\to X^{(\delta)}$ has a fixed point
 $(w_1,v_1)\in X^{(\delta)}$.
 This gives a solution $u^{(\delta)}(x,t)$ of \eqref{eq1.1}
 with the desired properties.
 However this $u^{(\delta)}(x,t)$ is defined only on $\R^n\times[0,T-\delta]$.
 Finally
 we take $\delta\to0$ and
 obtain a solution $u(x,t)=\lim_{\delta\to0}u^{(\delta)}(x,t)$,
 which proves Theorem \ref{Thm1}.
 For the rest of paper,
 we investigate the mapping ${\cal T}_2$ and prove (i) - (iii).
 \begin{enumerate}[(i)]
 \item ${\cal T}_2:X\to X^{(\delta)}$ is well defined,
 \item ${\cal T}_2:(X,d_X)\to(X^{(\delta)},d_{X^{(\delta)}})$ is continuous and
 \item ${\cal T}_2:(X,d_X)\to(X^{(\delta)},d_{X^{(\delta)}})$ is compact.
 \end{enumerate}
 Section \ref{sec_6} - Section \ref{sec_8} are devoted to the proof of (i).
 We do not give precise proofs of (ii) - (iii),
 since those are standard.
 In fact,
 since our construction $(\tilde w_1,\tilde v_1)\mapsto(w,v)$
 is unique at each step, (ii) follows.
 Furthermore
 since
 the H\"older continuity
 is bounded by the $L^\infty$ bound
 from standard parabolic estimates (see Theorem 6.29 in \cite{Lieberman} p. 131),
 (iii) holds.
%%%%%%%%%%%%%%%%%%%%%%%%%%%%%%%%%%%%%%%%%%%%%%%%%%%%%%%%%%%%%%%%

 \subsection{Notations}
%%%%%%%%%%%%%%%%%%%%%%%%%%%%%%%%%%%%%%%%%%%%%%%%%%%%%%%%%%%%%%%%
 We fix
 \begin{itemize}
 \item ${\sf R}_{\sf in}={\sf R}_{\sf mid}=-\log T$,
 \item ${\sf R}_1=\log {\sf R}_{\sf in}$.
 \end{itemize}
 Throughout this paper,
 the symbol $C$ denotes a generic positive constant
 depending only on $q,n,J$
 (independent of
 $\delta$, ${\sf b}$, ${\sf d}_1$
 ${\sf R}_{\sf in}$, ${\sf R}_{\sf mid}$, ${\sf R}_1$, ${\sf r}_0$, ${\sf r}_3$).
 For real numbers $X$ and $Y$,
 we write $X\lesssim Y$
 if there exists a generic positive constant $C$
 such that $|X|\leq C|Y|$.
%%%%%%%%%%%%%%%%%%%%%%%%%%%%%%%%%%%%%%%%%%%%%%%%%%%%%%%%%%%%%%%%

% \newpage
 \section{In the inner region}
 \label{sec_6}
 %%%%%%%%%%%%%%%%%%%%%%%%%%%%%%%%%%%%%%%%%%%%%%%%%%%%%%%%%%%%%%%%
 In this section,
 we construct a pair of solutions $(\lambda(t),\epsilon(y,t))$
 in the same procedure as in the proof of Proposition 7.1 \cite{Cortazar}
 (see also Lemma 4.1 in \cite{del_Pino}).
 Here we follow a simplified version of
 their method obtained in \cite{Harada,Harada_2}
 (see Section 6 in \cite{Harada_2}).
 By virtue of Lemma \ref{Lem3.3},
 we can skip one procedure in the proof of Proposition 7.1 in \cite{Cortazar},
 which provides a more direct approach.
% We recall that
% ${\sf R}_{\sf in}$, ${\sf R}_{\sf mid}$, ${\sf r}_0$, ${\sf r}_3$, ${\sf b}$
% are constants defined in beginning of Section \ref{sec_5.2},
% and
% $\delta$, ${\sf R}_1$, ${\sf d}_1$ in Section \ref{sec_5.3}.
 Let $\mu_i^{({\sf R}_{\sf in})}$ be the $i$th eigenfunction of
 \[
 \begin{cases}
 -H_ye=\mu e & \text{in } |y|<4{\sf R}_{\sf in},
 \\
 e=0 & \text{on } |y|=4{\sf R}_{\sf in}
 \end{cases}
 \]
 and
 $\psi_i^{({\sf R}_{\sf in})}$ be the associated eigenfunctions ($i\in\N$).
 Here we take $\|\psi_i^{({\sf R}_{\sf in})}\|_{L_y^2(B_{4{\sf R}_{\sf in}})}=1$.
 For simplicity,
 we write
 \begin{itemize}
 \item 
 $\mu_i=\mu_i^{({\sf R}_{\sf in})}$,
 \item
 $\psi_i=\psi_i^{({\sf R}_{\sf in})}$.
 \end{itemize}
%%%%%%%%%%%%%%%%%%%%%%%%%%%%%%%%%%%%%%%%%%%%%%%%%%%%%%%%%%%%%%%%

 \subsection{Choice of $\lambda(t)$}
 \label{sec_6.1}
%%%%%%%%%%%%%%%%%%%%%%%%%%%%%%%%%%%%%%%%%%%%%%%%%%%%%%%%%%%%%%%%
 We fix $(v_1,w_1)\in X^{(\delta)}$,
 and write ${\mathcal T}_1(v_1,w_1)=(\tilde v_1,\tilde w_1)\in X$
 (see Section \ref{sec_5.3}).
 In this subsection,
 we determine $\lambda(t)$
 such that
 $\lambda(t)\to0$ as $t\to T$.
 As is explained in Section \ref{sec_5.3},
 we consider
 \begin{align}\label{eq6.1}
 &
 \begin{cases}
 \dis
 \lambda^2\epsilon_t
 =
 H_y\epsilon
 +
 (
 \lambda\dot\lambda
 -
 \sigma
 )
 \Lambda_y{\sf Q}
 +
 \lambda^\frac{n-2}{2}
 V
 (\eta^\frac{2}{1-q}\tilde v_1+\tilde w_1)
 &
 \text{in } |y|<4{\sf R}_{\sf in},\ t\in(0,T),
 \\
 \epsilon=0
 &
 \text{on } |y|=4{\sf R}_{\sf in},\ t\in(0,T),
 \\
 \epsilon=\epsilon_0
 &
 \text{on } |y|<4{\sf R}_{\sf in},\ t=0.
 \end{cases}
 \end{align}
% where
% $F_1[\tilde v_1,\tilde w_1]
% =
% \lambda^\frac{n-2}{2}
% V
% (\eta^\frac{2}{1-q}\tilde v_1+\tilde w_1)$.
 Initial data $\epsilon_0$ will be chosen in Section \ref{sec_6.2}
 so that $\epsilon(y,t)\to0$ as $t\to T$.
 We define $\lambda(t)$ as a solution of 
 \begin{equation}\label{eq6.2}
 \begin{cases}
 (\lambda\dot\lambda-\sigma)
 (
 \Lambda_y{\sf Q},
 \psi_2
 )_{L_y^2(B_{4{\sf R}_{\sf in}})}
 +
 \lambda^\frac{n-2}{2}
 (
 V
 (\eta^\frac{2}{1-q}\tilde v_1+\tilde w_1),
 \psi_2
 )_{L_y^2(B_{4{\sf R}_{\sf in}})}
 =
 0
 \qquad
 \text{for } t\in(0,T),
 \\
 \lambda(t)=0
 \qquad
 \text{for } t=T.
 \end{cases}
 \end{equation}
 We note that
 the inner product
 $(V
 (\eta^\frac{2}{1-q}\tilde v_1+\tilde w_1),
 \psi_2
 )_{L_y^2(B_{4{\sf R}_{\sf in}})}$ in \eqref{eq6.2}
 is nonlinear as a function of $\lambda$,
 since it is expressed by
 \[
 (
 V(\eta^\frac{2}{1-q}\tilde v_1+\tilde w_1),
 \psi_2
 )_{L_y^2(B_{4{\sf R}_{\sf in}})}
 =
 \int_{B_{4{\sf R}_{\sf in}}}
 V(y)
 \{
 \eta^\frac{2}{1-q}\tilde v_1(\lambda\eta^{-1}y,t)+\tilde w_1(\lambda y,t)
 \}
 \psi_2(y)
 dy.
 \]
 We put
 ${\sf a}=\lambda^\frac{6-n}{2}$.
 Then \eqref{eq6.2} is written as 
 \begin{equation}\label{eq6.3}
 \begin{cases}
% ,
% \\
 \frac{2}{6-n}
 (\dot{\sf a}+\frac{6-n}{2{\sf A}_1}\eta^\frac{2}{1-q})
 (
 \Lambda_y{\sf Q},
 \psi_2
 )_{L_y^2(B_{4{\sf R}_{\sf in}})}
 +
% \frac{6-n}{2}
 (
 V
 \eta^\frac{2}{1-q}\tilde v_1(\eta^{-1}{\sf a}^{\frac{2}{6-n}}y,t),
 \psi_2
 )_{L_y^2(B_{4{\sf R}_{\sf in}})}
 \\[2mm]
 \hspace{45mm}
 +
% \frac{6-n}{2}
 (
 V
 \tilde w_1({\sf a}^{\frac{2}{6-n}}y,t),
 \psi_2
 )_{L_y^2(B_{4{\sf R}_{\sf in}})}
 =
 0
 \qquad
 \text{for } t\in(0,T),
 \\
 {\sf a}=0
 \qquad
 \text{for } t=T.
 \end{cases}
 \end{equation}
%%%%%%%%%%%%%%%%%%%%%%%%%%%%%%%%%%%%%%%%%%%%%%%%%%%%%%%%%%%%%%%%
 To construct a solution of \eqref{eq6.3},
 we introduce
 \begin{align*}
 {\cal A}
 &=
 \{{\sf a}\in C([0,T]),\
 |{\sf a}(t)-{\sf a}_0(t)|\leq(T-t)^{\frac{{\sf d}_1}{2}}{\sf a}_0(t)\},
 \\
 & \quad
 \text{where }
 {\sf a}_0(t)
 =
 \frac{6-n}{2{\sf A}_1}
 \int_t^T
 \eta(t_1)^\frac{2}{1-q}dt_1
 =
 \kappa_1
 (T-t)
 \eta^\frac{2}{1-q}.
 \end{align*}
 We now show that
 for any $(v_1,w_1)\in X^{(\delta)}$,
 there exists the unique solution ${\sf a}(t)\in{\cal A}$ of \eqref{eq6.3}.
 The existence is proved by a fixed point argument.
 In fact,
 for any ${\sf a}_1\in {\cal A}$,
 we define ${\sf a}(t)$ as a unique solution of
 \eqref{eq6.3} with replaced
 $\tilde v_1(\eta^{-1}{\sf a}^\frac{2}{6-n}y,t)$,
 $\tilde w_1({\sf a}^\frac{2}{6-n}y,t)$
 by
 $\tilde v_1(\eta^{-1}{\sf a}_1^\frac{2}{6-n}y,t)$,
 $\tilde w_1({\sf a}_1^\frac{2}{6-n}y,t)$
 respectively.
 From Lemma \ref{Lem3.1},
 we easily see that
 \begin{align*}
 |
 (
 V
 \eta^\frac{2}{1-q}
 \tilde v_1
 (\eta^{-1}{\sf a}_1^{\frac{2}{6-n}}y,t),
 &
 \psi_2
 )_{L_y^2(B_{4{\sf R}_{\sf in}})}
 +
 (
 V
 \tilde w_1({\sf a}_1^{\frac{2}{6-n}}y,t),
 \psi_2
 )_{L_y^2(B_{4{\sf R}_{\sf in}})}
 |
 \\
 &\lesssim
 (T-t)^{{\sf d}_1}
 \eta^\frac{2}{1-q}
 |
 (V,\psi_2
 )_{L_y^2(B_{4{\sf R}_{\sf in}})}
 |
 \\
 &\lesssim
 (T-t)^{{\sf d}_1}
 \eta^\frac{2}{1-q}
 \qquad
 \text{for }
 (\tilde v_1,\tilde w_1)\in X,\
 {\sf a}_1\in{\cal A}.
 \end{align*}
 Hence
 the solution ${\sf a}(t)$ satisfies
 \[
 |{\sf a}(t)-{\sf a}_0(t)|
 \lesssim
 (T-t)^{{\sf d}_1+1}
 \eta^\frac{2}{1-q}
 \lesssim
 (T-t)^{{\sf d}_1}
 {\sf a}_0.
 \]
 The Schauder fixed point theorem shows that
 the mapping ${\sf a}_1\mapsto{\sf a}$ has a fixed point in ${\cal A}$,
 which gives a solution of \eqref{eq6.3}.
 We next discuss the uniqueness of solutions to \eqref{eq6.3}.
 Let ${\sf a}_1(t),{\sf a}_2(t)\in{\cal A}$ be solutions of \eqref{eq6.3}.
 For simplicity,
 we write $\lambda_i(t)={\sf a}_i(t)^\frac{6-n}{2}$
 ($i=1,2$).
 A direct computation shows that
 \begin{align*}
 (
 V\tilde v_1(\eta^{-1}\lambda_iy,t),
 \psi_2
 )_{L_y^2(B_{4{\sf R}_{\sf in}})}
 &=
 \int_{|y|<4{\sf R}_{\sf in}}
 V(y)
 \tilde v_1(\eta^{-1}\lambda_iy,t)
 \psi_2(y)
 dy
 \\
 &=
 \eta^n\lambda_i^{-n}
 \int_{|\xi|<4{\sf R}_{\sf in}\eta\lambda_i^{-1}}
 V(\eta\lambda_i^{-1}\xi)
 \tilde v_1(\xi,t)
 \psi_2(\eta\lambda_i^{-1}\xi)
 d\xi.
 \end{align*}
 By change of variables,
 we get
 \begin{align*}
 (
 V\tilde v_1
 &
 (\eta^{-1}\lambda_2y,t),
 \psi_2
 )_{L_y^2(B_{4{\sf R}_{\sf in}})}
 -
 (
 V\tilde v_1(\eta^{-1}\lambda_2y,t)),
 \psi_2
 )_{L_y^2(B_{4{\sf R}_{\sf in}})}
 \\
 &=
 \eta^n\lambda_1^{-n}
 \int_{|\xi|<4{\sf R}_{\sf in}\eta\lambda_1^{-1}}
 |
 V(\eta\lambda_1^{-1}\xi)
 \tilde v_1(\xi,t)
 \psi_2(\eta\lambda_1^{-1}\xi)
 d\xi
 \\
 & \quad
 -
 \eta^n\lambda_2^{-n}
 \int_{|\xi|<4{\sf R}_{\sf in}\eta\lambda_2^{-1}}
 V(\eta\lambda_2^{-1}\xi)
 \tilde v_1(\xi,t)
 \psi_2(\eta\lambda_2^{-1}\xi)
 d\xi
 \\
 &=
 \eta^n
 (\lambda_1^{-n}-\lambda_2^{-n})
 \int_{|\xi|<4{\sf R}_{\sf in}\eta\lambda_1^{-1}}
 V(\eta\lambda_1^{-1}\xi)
 \tilde v_1(\xi,t)
 \psi_2(\eta\lambda_1^{-1}\xi)
 d\xi
 \\
 & \quad
 +
 \eta^n\lambda_2^{-n}
 \int_{|\xi|<4{\sf R}_{\sf in}\eta\lambda_1^{-1}}
 (
 V(\eta\lambda_1^{-1}\xi)
 \psi_2(\eta\lambda_1^{-1}\xi)
 -
 V(\eta\lambda_2^{-1}\xi)
 \psi_2(\eta\lambda_2^{-1}\xi)
 )
 \tilde v_1(\xi,t)
 d\xi
 \\
 & \quad
 +
 \eta^n\lambda_2^{-n}
 \left(
 \int_{|\xi|<4{\sf R}_{\sf in}\eta\lambda_1^{-1}}
 d\xi
 -
 \int_{|\xi|<4{\sf R}_{\sf in}\eta\lambda_2^{-1}}
 d\xi
 \right)
 V(\eta\lambda_2^{-1}\xi)
 \psi_2(\eta\lambda_2^{-1}\xi)
 \tilde v_1(\xi,t)
 d\xi.
 \end{align*}
 Since $\lambda_1^{-1}\lambda_2=1$ as $t\to T$,
 it follows from Lemma \ref{Lem3.1} that 
 \begin{align*}
 |
 (
 V\tilde v_1
 &
 (\eta^{-1}\lambda_1y,t),
 \psi_2
 )_{L_y^2(B_{4{\sf R}_{\sf in}})}
 -
 (
 V\tilde v_1(\eta^{-1}\lambda_2y,t)),
 \psi_2
 )_{L_y^2(B_{4{\sf R}_{\sf in}})}
 |
 \\
 &\lesssim
 (T-t)^{{\sf d}_1}
 \eta^\frac{2}{1-q}
 \lambda_1^n
 |\lambda_1^{-n}-\lambda_2^{-n}|
 \int_{|y|<4{\sf R}_{\sf in}}
 |
 V(y)
 \psi_2(y)
 |
 dy
 \\
 & \quad
 +
 (T-t)^{{\sf d}_1}
 \eta^\frac{2}{1-q}
 \lambda_1^n\lambda_2^{-n}
 \int_{|y|<4{\sf R}_{\sf in}}
 |
 V(y)
 \psi_2(y)
 -
 V(y_1)
 \psi_2(y_1)
 |
 dy
 \qquad
 (y_1=\lambda_1\lambda_2^{-1}y)
 \\
 & \quad
 +
 (T-t)^{{\sf d}_1}
 \eta^\frac{2}{1-q}
 \lambda_1^n
 \lambda_2^{-n}
 \left|
 \int_{|y|<4{\sf R}_{\sf in}}
 dy
 -
 \int_{|\xi|<4{\sf R}_{\sf in}\lambda_1\lambda_2^{-1}}
 dy
 \right|
 V(\lambda_1\lambda_2^{-1}y)
 \psi_2(\lambda_1\lambda_2^{-1}y)
 dy
 \\
 &\lesssim
 (T-t)^{{\sf d}_1}
 \eta^\frac{2}{1-q}
 \lambda_1^n
 |\lambda_1^{-n}-\lambda_2^{-n}|
 +
 (T-t)^{{\sf d}_1}
 \eta^\frac{2}{1-q}
 \lambda_1^n\lambda_2^{-n}
 |1-\lambda_1\lambda_2^{-1}|
 \\
 & \quad
 +
 (T-t)^{{\sf d}_1}
 \eta^\frac{2}{1-q}
 \lambda_1^n
 \lambda_2^{-n}
 {\sf R}_{\sf in}^{-2}
 |1-\lambda_1\lambda_2^{-1}|
 \\
 &\lesssim
 (T-t)^{{\sf d}_1}
 \eta^\frac{2}{1-q}
 \lambda_2^{-1}
 |\lambda_1-\lambda_2|
 \\
 &\lesssim
 (T-t)^{{\sf d}_1}
 \eta^\frac{2}{1-q}
 \lambda_2^{-1}
 {\sf a}_1^\frac{4-n}{2}
 |{\sf a}_1-{\sf a}_2|.
 \end{align*}
%%%%%%%%%%%%%%%%%%%%%%%%%%%%%%%%%%%%%%%%%%%%%%%%%%%%%%%%%%%%%%%%
 The same computation shows
 \begin{align*}
 |
 (
 V\tilde w_1
 (\lambda_1y,t)
 ,
 \psi_2
 )_{L_y^2(B_{4{\sf R}_{\sf in}})}
 -
 (
 V
 &
 \tilde w_1
 (\lambda_2y,t)),
 \psi_2
 )_{L_y^2(B_{4{\sf R}_{\sf in}})}
 |
% \\
% &\lesssim
 \lesssim
 (T-t)^{{\sf d}_1}
 \eta^\frac{2}{1-q}
 \lambda_2^{-1}
 {\sf a}_1^\frac{4-n}{2}
 |{\sf a}_1-{\sf a}_2|.
 \end{align*}
 We recall that
 $\tilde v_1=\tilde w_1=0$ for $t\in[T-\frac{\delta}{2},T]$
 (see \eqref{eq5.5}, \eqref{eq5.6}).
 Hence
 ${\sf a}_1(t)={\sf a}_2(t)={\sf a}_0(t)$ for $t\in[T-\frac{\delta}{2},T]$.
 Therefore
 since ${\sf a}_1,{\sf a}_2\in{\cal A}$ are solutions of \eqref{eq6.3},
 there exits $B>0$ such that
 \begin{align*}
 |
 \tfrac{d}{dt}
 ({\sf a}_1-{\sf a}_2)
 |
 &\lesssim
 \begin{cases}
 (T-t)^{{\sf d}_1}
 \eta^\frac{2}{1-q}
 \lambda_2^{-1}
 {\sf a}_1^\frac{4-n}{2}
 |{\sf a}_1-{\sf a}_2|
 & \text{for }
 t\in[0,T-\frac{\delta}{2}]
 \\
 0
 & \text{for }
 t\in[T-\frac{\delta}{2},T] 
 \end{cases}
 \\
 &\lesssim
 (T-t)^{{\sf d}_1}
 \eta^\frac{2}{1-q}
 \lambda_2^{-1}
 {\sf a}_1^\frac{4-n}{2}
 |_{t=T-\frac{\delta}{2}}
 \cdot
 |{\sf a}_1-{\sf a}_2|
 \qquad
 \text{for }
 t\in[0,T]
 \\
 &\lesssim
 \delta^{-B}
 |{\sf a}_1-{\sf a}_2|
 \qquad
 \text{for }
 t\in[0,T].
 \end{align*}
 Since ${\sf a}_1(t)={\sf a}_2(t)={\sf a}_0(t)$ for $t\in[T-\frac{\delta}{2},T]$,
 this proves the uniqueness of solutions to \eqref{eq6.3} in ${\cal A}$.
%%%%%%%%%%%%%%%%%%%%%%%%%%%%%%%%%%%%%%%%%%%%%%%%%%%%%%%%%%%%%%%%

 \subsection{Construction of $\epsilon(y,t)$}
 \label{sec_6.2}
%%%%%%%%%%%%%%%%%%%%%%%%%%%%%%%%%%%%%%%%%%%%%%%%%%%%%%%%%%%%%%%%
 We next construct a solution $\epsilon(y,t)$  of \eqref{eq6.1}.
 Let $\lambda(t)$ be a solution of \eqref{eq6.2} constructed
 in Section \ref{sec_6.1} from $(v_1,w_1)\in X^{(\delta)}$.
 As is stated in the beginning of Section \ref{sec_6},
 we directly solve \eqref{eq6.1}
 (the approach in \cite{Cortazar} need to introduce additional equations).
%%%%%%%%%%%%%%%%%%%%%%%%%%%%%%%%%%%%%%%%%%%%%%%%%%%%%%%%%%%%%%%%
 We choose ${\sf m}_1>1$ such that
 ($n\geq5$)
 \begin{align*}
 -H_y|y|^{-2}
 &>
 -
 \tfrac{1}{2}
 \Delta_y|y|^{-2}
 =
 (n-4)
 |y|^{-4}
 \qquad
 \text{for } |y|>{\sf m}_1,
 \\
 -H_y
 |y|^{-(n-4)}
 &>
 -
 \tfrac{1}{2}
 \Delta_y|y|^{-(n-4)}
 =
 (n-4)
 |y|^{-(n-2)}
 \qquad
 \text{for }
 |y|>{\sf m}_1.
 \end{align*}
 The constant ${\sf m}_1$ depends only on $n$.
 We here introduce three equations.
 \begin{align}\label{eq6.4}
 \nonumber
 &
 \begin{cases}
 \dis
 \lambda^2
 \pa_t\epsilon_1
 =
 H_y\epsilon_1
 +
 (
 \lambda\dot\lambda
 -
 \sigma
 )
 \Lambda_y{\sf Q}
 &
 \text{in } {\sf m}_1<|y|<4{\sf R}_{\sf in},\ t\in(0,T),
 \\
 \epsilon_1=0
 &
 \text{on } |y|={\sf m}_1,\ |y|=4{\sf R}_{\sf in},\ t\in(0,T),
 \\
 \epsilon_1=0
 &
 \text{in } {\sf m}_1<|y|<4{\sf R}_{\sf in},\ t=0,
 \end{cases}
 \\
 \nonumber
 &
 \begin{cases}
 \dis
 \lambda^2
 \pa_t\epsilon_2
 =
 H_y\epsilon_2
 +
 \lambda^\frac{n-2}{2}
 V
 (\eta^\frac{2}{1-q}\tilde v_1+\tilde w_1)
 &
 \text{in } {\sf m}_1<|y|<4{\sf R}_{\sf in},\ t\in(0,T),
 \\
 \epsilon_2=0
 &
 \text{on } |y|={\sf m}_1,\ |y|=4{\sf R}_{\sf in},\ t\in(0,T),
 \\
 \epsilon_2=0
 &
 \text{in } {\sf m}_1<|y|<4{\sf R}_{\sf in},\ t=0,
 \end{cases}
 \\
 &
 \begin{cases}
 \dis
 \lambda^2
 \pa_t\epsilon_3
 =
 H_y\epsilon_3
 +
 \underbrace{
 (
 \lambda\dot\lambda
 -
 \sigma
 )
 (\Lambda_y{\sf Q})
 \chi_{{\sf m}_1}
 }_{=\mathcal{R}_1}
 \\
 \qquad
 \qquad
 +
 \underbrace{
 \lambda^\frac{n-2}{2}
 V
 (\eta^\frac{2}{1-q}\tilde v_1+\tilde w_1)
 \chi_{{\sf m}_1}
 +
 l[\epsilon_1,\epsilon_2]
 }_{=\mathcal{R}_2}
 &
 \text{in } |y|<4{\sf R}_{\sf in},\ t\in(0,T),
 \\
 \epsilon_3=0
 &
 \text{on } |y|=4{\sf R}_{\sf in},\ t\in(0,T),
 \\
 \epsilon_3=\epsilon_0
 &
 \text{in } |y|<4{\sf R}_{\sf in},\ t=0,
 \end{cases}
 \end{align}
 where
 $l[\epsilon_1,\epsilon_2]=
 -
 2\nabla_y
 (\epsilon_1+\epsilon_2)
 \cdot
 \nabla_y\chi_{{\sf m}_1}
 -
 (\epsilon_1+\epsilon_2)
 (\Delta_y\chi_{{\sf m}_1})$.
%%%%%%%%%%%%%%%%%%%%%%%%%%%%%%%%%%%%%%%%%%%%%%%%%%%%%%%%%%%%%%%%
 We can verify that
 \[
 \epsilon
 =
 \epsilon_1
 (1-\chi_{{\sf m}_1})
 +
 \epsilon_2
 (1-\chi_{{\sf m}_1})
 +
 \epsilon_3
 \qquad
 \text{with }
 \chi_{{\sf m}_1}
 =
 \begin{cases}
 1 & |y|<{\sf m}_1 \\
 0 & |y|>2{\sf m}_1
 \end{cases}
 \]
 gives a solution of \eqref{eq6.1}.
%%%%%%%%%%%%%%%%%%%%%%%%%%%%%%%%%%%%%%%%%%%%%%%%%%%%%%%%%%%%%%%%
 We now construct $\epsilon_1(y,t)$.
 We recall that
 $(\lambda\dot\lambda-\sigma)\Lambda_y{\sf Q}
 \lesssim(T-t)^{{\sf d}_1}\sigma|y|^{-(n-2)}$
 from \eqref{eq6.2}.
 Therefore
 from the choice of ${\sf m}_1$,
 a comparison argument shows
 \begin{align}\label{eq6.5}
 |\epsilon_1(y,t)|
 &\lesssim
 {\sf m}_1^{n-4}
 (T-t)^{{\sf d}_1}
 \sigma
 |y|^{-(n-4)}
 \qquad
 \text{for } {\sf m}_1<|y|<4{\sf R}_{\sf in},\ t\in(0,T),
 \\
 \label{eq6.6}
 |\nabla_y\epsilon_1(y,t)|
 &\lesssim
 {\sf m}_1^{n-3}
 (T-t)^{{\sf d}_1}
 \sigma
 |y|^{-(n-3)}
 \qquad
 \text{for } {\sf m}_1<|y|<4{\sf R}_{\sf in},\ t\in(0,T).
 \end{align}
 Estimate \eqref{eq6.6} is obtained from
 a comparison argument to the equation of $\pa_r\epsilon_1(y,t)$.
 From the definition of $X^{(\delta)}$ (see Section \ref{sec_5.3}),
 we easily see that
 \[
 \lambda^\frac{n-2}{2}
 V(\eta^\frac{2}{1-q}\tilde v_1+\tilde w_1)
 \lesssim
 (T-t)^{{\sf d}_1}
 \sigma
 |y|^{-4}
 \]
 for
 ${\sf m}_1<|y|<4{\sf R}_{\sf in}$,
 $t\in(0,T)$.
 Hence
 by a comparison argument,
 we can show that
 \begin{align}\label{eq6.7}
 |\epsilon_2(y,t)|
 \lesssim
 {\sf m}_1^2
 (T-t)^{{\sf d}_1}
 \sigma
 |y|^{-2}
 \qquad
 \text{for } {\sf m}_1<|y|<4{\sf R}_{\sf in},\ t\in(0,T).
 \end{align}
 This together with Lemma \ref{Lem3.5} implies
 \begin{align}\label{eq6.8}
 |\nabla_y\epsilon_2(y,t)|
 \lesssim
 {\sf m}_1^2
 (T-t)^{{\sf d}_1}\sigma
 |y|^{-2}
 \qquad
 \text{for } {\sf m}_1<|y|<4{\sf R}_{\sf in},\ t\in(0,T).
 \end{align}
%%%%%%%%%%%%%%%%%%%%%%%%%%%%%%%%%%%%%%%%%%%%%%%%%%%%%%%%%%%%%%%%
 To construct $\epsilon_3(y,t)$,
 we first investigate the unstable mode of \eqref{eq6.4}.
 By change of variables
 $s=\int_0^t\tfrac{dt}{\lambda^2}$,
 it holds that
 \begin{align*}
 \pa_s
 (\epsilon_3,\psi_1)_{L_y^2(B_{4{\sf R}_{\sf in}})}
 &=
 -
 \mu_1
 (\epsilon_3,\psi_1)_{L_y^2(B_{4{\sf R}_{\sf in}})}
 +
 (\mathcal{R}_1+\mathcal{R}_2,\psi_1)_{L_y^2(B_{4{\sf R}_{\sf in}})}.
 \end{align*}
 Integrating this equation,
 we get
 \begin{align*}
 e^{\mu_1s}
 (\epsilon_3,\psi_1)_{L_y^2(B_{4{\sf R}_{\sf in}})}
 -
 (\epsilon_0,\psi_1)_{L_y^2(B_{4{\sf R}_{\sf in}})}
 &=
 \int_0^s
 e^{\mu_1s_1}
 (\mathcal{R}_1+\mathcal{R}_2,\psi_1)_{L_y^2(B_{4{\sf R}_{\sf in}})}
 ds_1.
 \end{align*}
%%%%%%%%%%%%%%%%%%%%%%%%%%%%%%%%%%%%%%%%%%%%%%%%%%%%%%%%%%%%%%%%
 From this observation,
 we now take the initial data $\epsilon_0$ such that
 \begin{align*}
 \epsilon_0
 =
 \alpha_1
 \psi_1
 \qquad
 \text{with }
 \alpha_1
 =
 -
 \int_0^\infty
 e^{\mu_1s_1}
 (\mathcal{R}_1+\mathcal{R}_2,\psi_1)_{L_y^2(B_{4{\sf R}_{\sf in}})}
 ds_1.
 \end{align*}
 This integral is finite since $\mu_1$ is negative
 (see Lemma \ref{Lem3.2}).
 Then
 it follows from Lemma \ref{Lem3.1}
 and \eqref{eq6.5} - \eqref{eq6.8}
 that
 \begin{align*}
 \alpha_1
 &\lesssim
 \int_0^\infty
 e^{\mu_1s_1}
 \{
 (\lambda\dot\lambda-\sigma)
 +
 \lambda^\frac{n-2}{2}
 \eta^\frac{2}{1-q}
 (T-t)^{{\sf d}_1}
 +
 (T-t)^{{\sf d}_1}
 \sigma
 \}
 ds_1
 \\
 &\lesssim
 (T-t)^{{\sf d}_1}
 \sigma
 |_{t=0}
 \int_0^\infty
 e^{\mu_1s_1}
 ds_1
 \\
 &\lesssim
 (T-t)^{{\sf d}_1}
 \sigma
 |_{t=0}.
 \end{align*}
 In the last lien,
 we use $\lim_{R\to\infty}\mu_1=\exists\bar\mu_1<0$
 (see Lemma \ref{Lem3.2}).
 The same computation shows
 \begin{align*}
 (\epsilon_3,\psi_1)_{L_y^2(B_{4{\sf R}_{\sf in}})}
 &=
 -
 e^{-\mu_1s}
 \int_s^\infty
 e^{\mu_1s_1}
 (\mathcal{R}_1+\mathcal{R}_2,\psi_1)_{L_y^2(B_{4{\sf R}_{\sf in}})}
 ds_1
 \\
 \nonumber
 &
 \lesssim
 e^{-\mu_1s}
 \int_s^\infty
 e^{\mu_1s_1}
 \{
 (\lambda\dot\lambda-\sigma)
 +
 (T-t)^{{\sf d}_1}
 \lambda^{\frac{n-2}{2}}
 \eta^\frac{2}{1-q}
 +
 (T-t)^{{\sf d}_1}
 \sigma
 \}
 ds_1
 \\
 &
 \lesssim
 (T-t)^{{\sf d}_1}
 \sigma.
 \end{align*}
 We next derive estimates for the stable mode $\epsilon_3^\bot$
 defined by
 \[
 \epsilon_{3}^\bot
 =
 \epsilon_3
 -
 (\epsilon_3,\psi_1)_{L_y^2(B_{4{\sf R}_{\sf in}})}
 \psi_1
 -
 (\epsilon_3,\psi_2)_{L_y^2(B_{4{\sf R}_{\sf in}})}
 \psi_2.
 \]
 The choice of $\lambda(t)$ immediately implies
 $(\epsilon(t),\psi_2)_{L_y^2(B_{4{\sf R}_{\sf in}})}=0$
 for $t\in(0,T)$.
 Hence it follows from \eqref{eq6.5} and \eqref{eq6.7} that
 \begin{align}\label{eq6.9}
 \nonumber
 (\epsilon_3,\psi_2)_{L_y^2(B_{4{\sf R}_{\sf in}})}
 &=
 (
 -(\epsilon_1+\epsilon_2)(1-\chi_{{\sf m}_1}),\psi_2
 )_{L_y^2(B_{4{\sf R}_{\sf in}})}
 \\
 \nonumber
 &\lesssim
 {\sf m}_1^2
 (T-t)^{{\sf d}_1}
 \sigma
 \left(
 1+
 \int_{1<|y|<4{\sf R}_{\sf in}}
 (
 |y|^{-(n-4)}+|y|^{-2}
 )
 |y|^{-(n-2)}
 dy
 \right)
 \\
 &\lesssim
 {\sf m}_1^2
 {\sf R}_{\sf in}^{6-n}
 (T-t)^{{\sf d}_1}
 \sigma
 \qquad
 \text{for }
 t\in(0,T).
 \end{align}
 We multiply \eqref{eq6.4} by $\epsilon_3^\bot$ and integrate by parts.
 Then we get from \eqref{eq6.5} - \eqref{eq6.8} that
 \begin{align*}
 \tfrac{\lambda^2}{2}
 \tfrac{d}{dt}
 \|\epsilon_{3}^{\bot}\|_{L_y^2(B_{4{\sf R}_{\sf in}})}^2
 &=
 (\epsilon_{3}^{\bot},H_y\epsilon_{3})_{L_y^2(B_{4{\sf R}_{\sf in}})}^2
 +
 (
 \mathcal{R}_1+\mathcal{R}_2,
 \epsilon_{3}^{\bot}
 )_{L_y^2(B_{4{\sf R}_{\sf in}})}
 \\
 &<
 -
 \tfrac{\mu_3}{2}
 \|\epsilon_{3}^{\bot}\|_{L_y^2(B_{4{\sf R}_{\sf in}})}^2
 +
 \tfrac{2}{\mu_3}
 \|\mathcal{R}_1+\mathcal{R}_2\|_{L_y^2(B_{4{\sf R}_{\sf in}})}^2
 \\
 &<
 -
 \tfrac{\mu_3}{2}
 \|\epsilon_{3}^{\bot}(s)\|_{L_y^2(B_{4{\sf R}_{\sf in}})}^2
 +
 \tfrac{C}{\mu_3}
 (T-t)^{2{\sf d}_1}
 \sigma^2.
 \end{align*}
 We note that $\epsilon_0^\bot=0$.
 Hence
 by change of variables $s=\int_0^t\frac{dt}{\lambda^2}$,
 we get
 \begin{align*}
 \|\epsilon_{3}^{\bot}\|_{L_y^2(B_{4{\sf R}_{\sf in}})}^2
 \lesssim
 \tfrac{1}{\mu_3}
 e^{-\mu_3s}
 \int_0^s
 e^{\mu_3s_1}
 (T-t)^{2{\sf d}_1}
 \sigma^2
 ds_1.
 \end{align*}
 From integration by parts,
 we observe that
 \begin{align*}
 \int_0^s
 e^{\mu_3s_1}
 (T-t)^{2{\sf d}_1}
 \sigma^2
 ds_1
 &=
 \left[
 \tfrac{1}{\mu_3}
 e^{\mu_3s_1}
 (T-t)^{2{\sf d}_1}
 \sigma^2
 \right]_{s_1=0}^{s_1=s}
 -
 \tfrac{1}{\mu_3}
 \int_0^s
 e^{\mu_3s_1}
 \frac{dt}{ds}
 \frac{d}{dt}
 \{
 (T-t)^{2{\sf d}_1}
 \sigma^2
 \}
 ds_1
 \\
 &<
 \left[
 \tfrac{1}{\mu_3}
 e^{\mu_3s_1}
 (T-t)^{2{\sf d}_1}
 \sigma^2
 \right]_{s_1=0}^{s_1=s}
 +
 C
 \int_0^s
 \tfrac{\lambda^2}{\mu_3(T-t)}
 e^{\mu_3s_1}
 (T-t)^{2{\sf d}_1}
 \sigma^2
 ds_1.
 \end{align*}
 Since $\tfrac{\lambda^2}{\mu_3(T-t)}\ll1$,
 it holds that
 \begin{align*}
 \int_0^s
 e^{\mu_3s_1}
 (T-t)^{2{\sf d}_1}
 \sigma^2
 ds_1
 <
 C
 \left[
 \tfrac{1}{\mu_3}
 e^{\mu_3s_1}
 (T-t)^{2{\sf d}_1}
 \sigma^2
 \right]_{s_1=0}^{s_1=s}
 \lesssim
 \tfrac{1}{\mu_3}
 e^{\mu_3s}
 (T-t)^{2{\sf d}_1}
 \sigma^2.
 \end{align*}
 Therefore
 from Lemma \ref{Lem3.3},
 we obtain
 \begin{align}\label{eq6.10}
 \|\epsilon_{3}^{\bot}\|_{L_y^2(B_{4{\sf R}_{\sf in}})}
 \lesssim
 \tfrac{1}{\mu_3}
 (T-t)^{{\sf d}_1}
 \sigma
 \lesssim
 {\sf R}_{\sf in}^\frac{n}{2}
 (T-t)^{{\sf d}_1}
 \sigma
 \qquad
 \text{for }
 t\in(0,T).
 \end{align}
 We now go back to \eqref{eq6.4}.
 We note from \eqref{eq6.9} - \eqref{eq6.10} and  Lemma \ref{Lem3.5} that
 \begin{align*}
 |\epsilon_3(y,t)|
 +
 |\nabla_y\epsilon_3(y,t)|
 &\lesssim
 ({\sf R}_{\sf in}^{6-n}+{\sf R}_{\sf in}^\frac{n}{2})(T-t)^{{\sf d}_1}
 \qquad
 \text{for }
 |y|<4{\sf R}_{\sf in},\ t\in(0,T).
 \end{align*}
 Furthermore
 we recall that
 (see Lemma \ref{Lem3.1})
 \[
 \epsilon_0
 =
 \alpha_1
 \psi_1
 \lesssim
 (T-t)^{{\sf d}_1}
 \sigma
 |_{t=0}
 |y|^{-\frac{n-1}{2}}
 e^{-\sqrt{|\mu_1|}\cdot|y|}
 \qquad
 \text{for }
 |y|>1.
 \]
 Since
 $\lambda^2\pa_t\epsilon_3=H_y\epsilon_3$ for $|y|>2{\sf m}_1$,
 a comparison argument shows
 \begin{equation}\label{eq6.11}
 |\epsilon_3(y,t)|
 \lesssim
 {\sf m}_1^{n-\frac{9}{4}}
 {\sf R}_{\sf in}^\frac{n}{2}
 |y|^{-(n-\frac{9}{4})}
 \qquad
 \text{for }
 2{\sf m}_1<|y|<4{\sf R}_{\sf in},\
 t\in(0,T).
 \end{equation}
 Furthermore
 we note that
 $\epsilon_{3,r}=\pa_r\epsilon_3$ satisfies
 $\pa_s\epsilon_{3,r}
 =H_y\epsilon_{3,r}-(n-1)|y|^{-2}\epsilon_{3,r}+(\pa_rV)\epsilon_3$
 for $|y|>2{\sf m}_1$.
 Due to \eqref{eq6.11},
 a comparison argument shows
 \begin{equation*}
 |\nabla_y\epsilon_3(y,t)|
 \lesssim
 {\sf m}_1^{n-\frac{5}{4}}
 {\sf R}_{\sf in}^\frac{n}{2}
 |y|^{-(n-\frac{5}{4})}
 \qquad
 \text{for }
 2{\sf m}_1<|y|<4{\sf R}_{\sf in},\
 t\in(0,T).
 \end{equation*}
 Therefore
 since ${\sf m}_1$ depends only on $n$,
 we conclude
 \begin{align}
 \label{eq6.12}
 |\epsilon(y,t)|
 &\lesssim
 {\sf R}_{\sf in}^\frac{n}{2}
 (T-t)^{{\sf d}_1}
 \sigma
 (1+|y|^2)^{-\frac{1}{2}(n-\frac{9}{4})}
 \qquad
 \text{for }
 (y,t)\in\overline{B}_{4{\sf R}_{\sf in}}\times[0,T],
 \\
 \label{eq6.13}
 |\nabla_y\epsilon(y,t)|
 &\lesssim
 {\sf R}_{\sf in}^\frac{n}{2}
 (T-t)^{{\sf d}_1}
 \sigma
 (1+|y|^2)^{-\frac{1}{2}(n-\frac{5}{4})}
 \qquad
 \text{for }
 (y,t)\in\overline{B}_{4{\sf R}_{\sf in}}\times[0,T].
 \end{align}
%%%%%%%%%%%%%%%%%%%%%%%%%%%%%%%%%%%%%%%%%%%%%%%%%%%%%%%%%%%%%%%%

 \section{In the semiinner region}
 \label{sec_7}
%%%%%%%%%%%%%%%%%%%%%%%%%%%%%%%%%%%%%%%%%%%%%%%%%%%%%%%%%%%%%%%%
 For simplicity,
 we define
 \begin{align*}
 L_{\xi}v
 =
 \Delta_\xi v
 -
 q{\sf U}^{q-1}
 (1-\chi_1)
 v
 -
 \eta\dot\eta
 (\Lambda_\xi v),
 \qquad
 \text{where }
 \Lambda_\xi=\tfrac{2}{1-q}-\xi\cdot\nabla_\xi.
 \end{align*}
 In order to construct a solution $v(\xi,t)$ of
 the second equation in \eqref{eq5.2},
 we here consider
 \begin{align}\label{eq7.1}
 \begin{cases}
 \eta^2v_t
 =
 L_\xi v
 +
 \eta^{-\frac{2q}{1-q}}
 \lambda^{-\frac{n+2}{2}}
 (\lambda\dot\lambda-\sigma)
 (\Lambda_y{\sf Q})
 \chi_1(1-\chi_{\sf in})
 \\
 \qquad
 \quad
 +
 \eta^{-\frac{2q}{1-q}}
 (
 F_1[\tilde v_1,\tilde w_1]
 (1-\chi_{{\sf in}})
 \chi_{{\sf sq}}
 -
 F_2[\tilde w_1]
 \chi_{{\sf sq}}
 +
 h[\epsilon]
 )
 \\
 \qquad
 \quad
 +
 \eta^{-\frac{2q}{1-q}}
 (
 g
 +
 N[\epsilon,\tilde v_1,\tilde w_1]
 )
 {\bf 1}_{|\xi|<1}
 &
 \text{in } |\xi|<4{\sf R}_{\sf mid},\ t\in(0,T),
 \\
 v=0
 &
 \text{on } |\xi|=4{\sf R}_{\sf mid},\ t\in(0,T),
 \\
 v=0
 &
 \text{in } |\xi|<4{\sf R}_{\sf mid},\ t=0.
 \end{cases}
 \end{align}
 We remark that
 a construction of solutions to \eqref{eq7.1}
 satisfying $v(\xi,t)\to0$ as $t\to T$
 is simpler than that of $\epsilon(y,t)$,
 thanks to the stability of the trivial solution of
 $v_t=\Delta_\xi v-q{\sf U}^{q-1}v$.
%%%%%%%%%%%%%%%%%%%%%%%%%%%%%%%%%%%%%%%%%%%%%%%%%%%%%%%%%%%%%%%%
% We remark that
% a construction of solutions to \eqref{eq7.1}
% satisfying $v(\xi,t)\to0$ as $t\to T$
% is simpler than that of $\epsilon(y,t)$
% from the stability property of the trivial solution $v=0$
% of $v_t=\Delta_\xi v-q{\sf U}^{q-1}v$.
%%%%%%%%%%%%%%%%%%%%%%%%%%%%%%%%%%%%%%%%%%%%%%%%%%%%%%%%%%%%%%%%
 A goal of this section is to construct a solution of \eqref{eq7.1}
 satisfying $v\in X_2\subset X_2^{(\delta)}$.
 To do that,
 we need to compute all terms on the right-hand side of \eqref{eq7.1}.
 We collect them in  Lemma \ref{Lem7.1} - Lemma \ref{Lem7.3}.
 Their proofs are postponed to Appendix.
 Throughout this section,
 \begin{itemize}
 \item
 $(\lambda(t),\epsilon(y,t))$ are solutions obtained
 in Section {\rm\ref{sec_6}} from $(w_1,v_1)\in X^{(\delta)}$,
 \item
 $(\tilde w_1,\tilde v_1)\in X$ are extensions of $(w_1,v_1)\in X^{(\delta)}$,
 namely $(\tilde w_1,\tilde v_1)=\mathcal{T}_1(w_1,v_1)$.
 \end{itemize}
 \begin{lem}\label{Lem7.1}
% Let $\lambda(t)$ be obtained in Section {\rm\ref{sec_6}}
% from $(w_1,v_1)\in X^{(\delta)}$.
 There exist positive constants
 ${\sf b}_1,{\sf c}_0,{\sf c}_1,{\sf c}_2$
 depending only on $q,n,J$
 {\rm(}independent of
 $\delta$,
 ${\sf d}_1$,
 ${\sf b}$,
 ${\sf R}_{\sf in}$,
 ${\sf R}_{\sf mid}$,
 ${\sf R}_1$,
 ${\sf r}_0$,
 ${\sf r}_3${\rm)}
 such that
 if
 ${\sf b}\in(0,{\sf b}_1)$,
 then it holds that
 \begin{align*}
 \sum_{i=0}^6
 |g_i|
 {\bf 1}_{|\xi|<1}
 +
 \sum_{i=0}^{10}
 |g_i'|
 {\bf 1}_{|\xi|<1}
 &\leq
 {\sf c}_0
 (T-t)^{{\sf c}_1}
 \lambda^{-\frac{n+2}{2}}
 \sigma
 (1+|y|^2)^{-(1+{\sf c}_2)}
 {\bf 1}_{|\xi|<1}.
 \end{align*}
 \end{lem}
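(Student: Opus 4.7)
The plan is to estimate each of the seventeen terms individually and to exploit the fact that the region $\{|\xi|<1\}$ sits deep inside the supports of $\chi_2,\chi_3,\chi_4$. First I would verify that for $T$ small and ${\sf b}$ chosen small enough (depending only on $n,q,J$), on $\{|\xi|<1\}$ one has $|\xi|<1<(T-t)^{-{\sf b}}={\sf l}_2$, forcing $\chi_2\equiv 1$, while $|x|<\eta(t)<\min(1,{\sf r}_3)$ forces $\chi_3\equiv 1$ and $\chi_4\equiv 1$. Consequently all derivatives of these three cutoffs vanish there, eliminating $g_1,g_4,g_6$ and $g_0',g_3',g_4',g_6',g_7'$ identically; moreover the $(1-\chi_2)\chi_4$-piece of $g_{10}'$ vanishes, and so does the $(\theta+\Theta_J)(1-\chi_2)$ part of $g_8'$.

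Next I would group the surviving contributions by source. The terms $g_0,g_2,g_5$ and $g_1',g_2',g_5'$, the $(1-\chi_1){\sf U}_{\sf c}$-piece of $g_8'$, and $g_9'$ are all concentrated on $\{|y|\ge {\sf l}_1\}$ with ${\sf l}_1=|\sigma|^{-1/(n-2)}\to\infty$. On this set I would combine the explicit prefactors (powers of $\lambda,\sigma$) with the asymptotics $\Lambda_y{\sf Q}=O(|y|^{-(n-2)})$, $T_1={\sf A}_1+O(|y|^{-2})$, $\nabla T_1=O(|y|^{-3})$ from \eqref{eq4.1}--\eqref{eq4.2}, the pointwise bounds on ${\sf U}_{\sf c}$, and $V(y)=p{\sf Q}(y)^{p-1}=O(|y|^{-4})$, to arrive at a bound of the form $\lambda^{-(n+2)/2}\sigma\cdot |y|^{-\alpha}$ with some $\alpha>2$. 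Writing $|y|^{-\alpha}=|y|^{-(2+2{\sf c}_2)}\cdot |y|^{-(\alpha-2-2{\sf c}_2)}$ and bounding the second factor by $|\sigma|^{(\alpha-2-2{\sf c}_2)/(n-2)}$ via $|y|\ge {\sf l}_1$ then delivers the factor $(T-t)^{{\sf c}_1}$ because $|\sigma|={\sf A}_1^{-1}\eta^{2/(1-q)}\lambda^{(n-2)/2}$ is a positive power of $T-t$; the remaining $|y|^{-(2+2{\sf c}_2)}\lesssim (1+|y|^2)^{-(1+{\sf c}_2)}$ supplies the desired spatial decay. For $g_9'$ I would first expand $u+\eta^{2/(1-q)}{\sf U}-u_1$ via the ansatz \eqref{eq5.1} and cancel the leading pieces, so that the nontrivial remainder is controlled by the same asymptotic inputs after noting that the prefactor $q\eta^{-2}{\sf U}^{q-1}$ is bounded on $\{|\xi|<1,\ |y|\ge {\sf l}_1\}$ by a constant times $\eta^{-2}$, which is absorbed through the definitions of $\sigma$ and $\eta$.

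The only remaining term is $g_3=(\partial_t(\eta^{2/(1-q)}{\sf U}))\chi_2$. I would compute the time derivative directly: it either falls on $\eta^{2/(1-q)}$, producing $(T-t)^{-1}\eta^{2/(1-q)}{\sf U}(\xi)$, or on ${\sf U}(\xi)$, giving $-\dot\eta\,\eta^{2/(1-q)-1}\,\xi\cdot\nabla{\sf U}(\xi)$, and both are bounded on $\{|\xi|<1\}$ by a constant times $(T-t)^{-1}\eta^{2/(1-q)}$. Comparing with the target $\lambda^{-(n+2)/2}\sigma\sim \lambda^{-2}\eta^{2/(1-q)}$ and using $\lambda\sim (T-t)^{2\Gamma_J/(6-n)}$ with $\Gamma_J>1$, so that $\lambda^{-2}$ is a much larger negative power of $T-t$ than $(T-t)^{-1}$, yields the requisite gain $(T-t)^{{\sf c}_1}$.

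The main obstacle is the careful bookkeeping and the verification that all implicit constants depend only on $n,q,J$, not on the thresholds ${\sf R}_{\sf in},{\sf R}_{\sf mid},{\sf r}_0,{\sf r}_3,{\sf R}_1,{\sf d}_1$. One must bound every $\chi$-derivative by a universal constant independent of those thresholds, and choose ${\sf b}_1$ small enough that the positive powers of $T-t$ generated by all of the estimates above admit a uniform lower bound ${\sf c}_1>0$ depending only on $n,q,J$; this is possible because the algebraic identities relating $\sigma,\lambda,\eta,{\sf l}_1$ involve only the exponents $2/(1-q),\gamma,\gamma_J,\Gamma_J$ and $n$.
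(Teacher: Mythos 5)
Your overall strategy --- observing that on $\{|\xi|<1\}$ the cutoffs $\chi_2,\chi_3,\chi_4$ are identically $1$, so that $g_1,g_4,g_6$, $g_0',g_3',g_4',g_6',g_7'$, all of $g_{10}'$ and the $(\theta+\Theta_J)$-piece of $g_8'$ drop out, and then estimating the surviving terms one by one using the asymptotics of $\Lambda_y{\sf Q}$, $T_1$, $V$, ${\sf U}$ and the scaling identities among $\sigma,\eta,\lambda,{\sf l}_1$ --- is exactly the route the paper takes; the term-by-term bounds are carried out in Lemma \ref{Lem9.1} of the Appendix, from which Lemma \ref{Lem7.1} is read off.

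There is, however, one concrete misstep. You place $g_2=(\pa_t(\lambda^{-\frac{n-2}{2}}\sigma T_1))\chi_1$ in the group of terms ``concentrated on $\{|y|\ge{\sf l}_1\}$'' and propose to bound it by $\lambda^{-\frac{n+2}{2}}\sigma|y|^{-\alpha}$ with $\alpha>2$, converting the surplus decay into a power of $|\sigma|$ via $|y|\ge{\sf l}_1$. Neither ingredient is available for $g_2$: because of the factor $\chi_1$ it is supported on $\{|y|<2{\sf l}_1\}$, and since $T_1(y)\to{\sf A}_1>0$ the source has no spatial decay, so no exponent $\alpha>2$ can be extracted. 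The correct mechanism is different: since $\lambda^{-\frac{n-2}{2}}\sigma=-{\sf A}_1^{-1}\eta^{\frac{2}{1-q}}$, the time derivative costs only a factor $(T-t)^{-1}$, giving $g_2\lesssim(T-t)^{-1}\lambda^2\cdot\lambda^{-\frac{n+2}{2}}\sigma\approx\lambda^{-\frac{n+2}{2}}\sigma^2$; one then pays for the weight via $(1+|y|^2)^{1+{\sf c}_2}\lesssim{\sf l}_1^{2+2{\sf c}_2}=|\sigma|^{-\frac{2+2{\sf c}_2}{n-2}}$ on the support, leaving a net gain $|\sigma|^{\frac{n-4-2{\sf c}_2}{n-2}}$, which is a positive power of $T-t$ only because $n\ge5$ and ${\sf c}_2$ is chosen small. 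A related caveat applies to $g_3$: your comparison $(T-t)^{-1}\lambda^2\ll1$ produces the gain in $T-t$, but the weight $(1+|y|^2)^{-(1+{\sf c}_2)}$ must still be manufactured on all of $\{|\xi|<1\}$, where $|y|$ can be as large as $\lambda^{-1}\eta$; this costs $(\lambda^{-1}\eta)^{2+2{\sf c}_2}$ and is absorbed because $(T-t)^{-1}\eta^{2}=(T-t)^{2\gamma_J-1}\to0$ (using $\gamma_J>\tfrac12$, which follows from \eqref{eq4.9}). With these repairs the argument goes through and coincides with the paper's.
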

%%%%%%%%%%%%%%%%%%%%%%%%%%%%%%%%%%%%%%%%%%%%%%%%%%%%%%%%%%%%%%%%

%%%%%%%%%%%%%%%%%%%%%%%%%%%%%%%%%%%%%%%%%%%%%%%%%%%%%%%%%%%%%%%%
 \begin{lem}\label{Lem7.2}
% Let $(\lambda(t),\epsilon(y,t))$ be obtained in Section {\rm\ref{sec_6}}
% from $(w_1,v_1)\in X^{(\delta)}$,
% and let $(\tilde w_1,\tilde v_1)\in X$ be an extension
% of $(w_1,v_1)\in X^{(\delta)}$.
 There exist positive constants
 ${\sf b}_1,{\sf c}_0,{\sf c}_1,{\sf c}_2$
 depending only on $q,n,J$
% {\rm(}independent of
% $\delta$,
% ${\sf d}_1$,
% ${\sf b}$,
% ${\sf R}_{\sf in}$,
% ${\sf R}_{\sf mid}$,
% ${\sf R}_1$,
% ${\sf r}_0$,
% ${\sf r}_3${\rm)}
 such that
 if
 ${\sf b}\in(0,{\sf b}_1)$,
 then
 \begin{align*}
 N_1[\epsilon,\tilde v_1,\tilde w_1]
 {\bf 1}_{|y|<{\sf l}_1}
 &\leq
 {\sf c}_0
 (T-t)^{{\sf c}_1}
 \lambda^{-\frac{n+2}{2}}
 \sigma
 (1+|y|^2)^{-(1+{\sf c}_2)}
 {\bf 1}_{|y|<{\sf l}_1},
 \\
%%%%%%%%%%%%%%%%%%%%%%%%%%%%%%%%%%%%%%%%%%%%%%%%%%%%%%%%%%%%%%%%
 N_1[\epsilon,\tilde v_1,\tilde w_1]
 {\bf 1}_{|y|>{\sf l}_1}
 {\bf 1}_{|\xi|<1}
 &\leq
 {\sf c}_0
 (T-t)^{{\sf c}_1}
 \eta^\frac{2q}{1-q}
 {\bf 1}_{|y|>{\sf l}_1}
 {\bf 1}_{|\xi|<1},
 \\
%%%%%%%%%%%%%%%%%%%%%%%%%%%%%%%%%%%%%%%%%%%%%%%%%%%%%%%%%%%%%%%%
 N_2[\epsilon,v,w]
 {\bf 1}_{|\xi|<(T-t)^{{\sf q}_1}}
 &\leq
 {\sf c}_0
 (T-t)^{{\sf c}_1}
 \lambda^{-\frac{n+2}{2}}
 \sigma
 |y|^{-2(1+{\sf c}_2)}
 {\bf 1}_{|y|>{\sf l}_1}
 {\bf 1}_{|\xi|<(T-t)^{{\sf q}_1}},
 \\
%%%%%%%%%%%%%%%%%%%%%%%%%%%%%%%%%%%%%%%%%%%%%%%%%%%%%%%%%%%%%%%%
 N_2[\epsilon,v,w]
 {\bf 1}_{(T-t)^{{\sf q}_1}<|\xi|<1}
 &\leq
 {\sf c}_0
 (T-t)^{2{\sf d}_1}
 \eta^\frac{2q}{1-q}
 {\bf 1}_{(T-t)^{{\sf q}_1}<|\xi|<1},
 \\
%%%%%%%%%%%%%%%%%%%%%%%%%%%%%%%%%%%%%%%%%%%%%%%%%%%%%%%%%%%%%%%%
 N_3[\epsilon,\tilde v_1,\tilde w_1]
 {\bf 1}_{|\xi|<1}
 &=
 0,
 \\
%%%%%%%%%%%%%%%%%%%%%%%%%%%%%%%%%%%%%%%%%%%%%%%%%%%%%%%%%%%%%%%%
 N_4[\epsilon,\tilde v_1,\tilde w_1]
 &\leq
 {\sf c}_0
 (T-t)^{{\sf c}_1}
 \lambda^{-\frac{n+2}{2}}
 \sigma
 (1+|y|^2)^{-(1+{\sf c}_2)}
 {\bf 1}_{|y|<2{\sf l}_2},
 \\
%%%%%%%%%%%%%%%%%%%%%%%%%%%%%%%%%%%%%%%%%%%%%%%%%%%%%%%%%%%%%%%%
 N_5[\tilde w_1]
 {\bf 1}_{|\xi|<1}
 &=
 0,
 \\
%%%%%%%%%%%%%%%%%%%%%%%%%%%%%%%%%%%%%%%%%%%%%%%%%%%%%%%%%%%%%%%%
 N_6[\tilde w_1]
 {\bf 1}_{|\xi|<1}
 &=
 0.
 \end{align*}
 \end{lem}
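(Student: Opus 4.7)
The plan is to establish each of the seven bounds in turn, with the three vanishing identities (for $N_3,N_5,N_6$ on $\{|\xi|<1\}$) following from support considerations on the cut-off functions, and the four nontrivial estimates reducing to Taylor expansions of $f(u)=|u|^{p-1}u$ and $f_2(u)=|u|^{q-1}u$ around suitable reference profiles, combined with the pointwise bounds on $\epsilon,\tilde v_1,\tilde w_1$ supplied by \eqref{eq6.12}, \eqref{eq6.13}, and the definition of $X$.

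First I would dispose of the three vanishing statements. On $\{|\xi|<1\}$ we have $|\xi|\le 1\ll{\sf l}_2(t)=(T-t)^{-{\sf b}}$ once $t$ is close to $T$, so $\chi_2\equiv 1$ and $(1-\chi_2)=0$; this annihilates $N_3$. For $N_5$ and $N_6$ the common factor $(1-\chi_4)$ vanishes for $|x|<1$, and on $\{|\xi|<1\}$ we have $|x|=\eta(t)|\xi|<\eta(t)\to 0$, giving the desired identities.

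For $N_1$, I would exploit the critical relation $\tfrac{n-2}{2}(p-1)=2$ which gives $\lambda^{-2}V=f'(\lambda^{-(n-2)/2}{\sf Q})$, so $N_1$ is exactly the second-order Taylor remainder $[f(u)-f(Q_\lambda)-f'(Q_\lambda)(u-Q_\lambda)]\chi_2$ with $Q_\lambda=\lambda^{-(n-2)/2}{\sf Q}(y)$. Since $p=7/3>2$, the integral form of the remainder gives the pointwise bound $|N_1|\lesssim(|Q_\lambda|^{p-2}+|u-Q_\lambda|^{p-2})|u-Q_\lambda|^{2}\chi_2$, after which I would split $u-Q_\lambda$ according to \eqref{eq5.1} into the correction $\lambda^{-(n-2)/2}\sigma T_1\chi_1$, the ${\sf U}_{\sf c}$-term, the $\theta+\Theta_J$ piece, and $u_1$. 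On $\{|y|<{\sf l}_1\}$ the non-$u_1$ perturbation is dominated by $\lambda^{-(n-2)/2}\sigma T_1(y)$, which together with the asymptotics \eqref{eq4.1}--\eqref{eq4.2} for $T_1$ and the bound \eqref{eq6.12} for $\epsilon$ produces the decay $(1+|y|^2)^{-(1+{\sf c}_2)}$ needed. On $\{|y|>{\sf l}_1\}\cap\{|\xi|<1\}$ the correction $\eta^{2/(1-q)}{\sf U}(\xi)\chi_2$ becomes dominant and the $\eta^{2q/(1-q)}$ factor arises from the size of $|u-Q_\lambda|^2$ times $|u|^{p-2}$ after using the definitions of $\sigma$ and $\eta$.

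The estimate for $N_2$ is the most delicate, because $f_2(u)=|u|^{q-1}u$ with $q\in(0,1)$ has an unbounded second derivative near $u=0$. Writing $U_*=\eta^{2/(1-q)}{\sf U}(\xi)$, on the support of $(1-\chi_1)\chi_2$ the leading part of $u$ is $-U_*$, and oddness of $f_2$ rewrites $N_2$ as $-[f_2(U_*-r)-f_2(U_*)+f_2'(U_*)(-r)]$ where $r=u+U_*$ is the residual. In the interior region $\{|\xi|<(T-t)^{{\sf q}_1}\}$, with ${\sf q}_1>0$ chosen so that $|r|\le\tfrac12 U_*$, the function $f_2$ is smooth on the relevant interval and a second-order Taylor expansion yields $|N_2|\lesssim U_*^{q-2}r^{2}$; the bounds on $u_1$ from \eqref{eq6.12} and from the definition of $X$ then close the estimate. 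In the complementary annulus $\{(T-t)^{{\sf q}_1}<|\xi|<1\}$, where $|r|$ may be comparable to $U_*$, I would fall back on the crude triangle-inequality bound $|f_2(u)|+|f_2(U_*)|+|qU_*^{q-1}(u+U_*)|\lesssim\eta^{2q/(1-q)}$; the extra factor $(T-t)^{2{\sf d}_1}$ is picked up from the $X$-bound on $\tilde v_1,\tilde w_1$. The exponent ${\sf q}_1$ is fixed by the crossover between these two regimes. For $N_4=f_2(u)\chi_1$ on $\{|y|<2{\sf l}_1\}$, one simply uses $|u|\sim\lambda^{-(n-2)/2}{\sf Q}(y)$ to get $|f_2(u)|\lesssim\lambda^{-(n-2)q/2}(1+|y|^2)^{-(n-2)q/2}$, and verifies, using $\sigma=-{\sf A}_1^{-1}\eta^{2/(1-q)}\lambda^{(n-2)/2}$ and $n=5$, that the exponents of $\lambda$ and $T-t$ close with a positive margin ${\sf c}_1$ to match the target.

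The main obstacle will be $N_2$: balancing the singularity of $f_2''$ against the two distinct scales $\eta$ and $\sqrt{T-t}$ via the ${\sf U}$ vs.\ ${\sf U}_\infty$ asymptotics \eqref{eq4.10}, and choosing the threshold ${\sf q}_1$ and the constants ${\sf c}_1,{\sf c}_2$ uniformly in the parameters $\delta,{\sf b},{\sf d}_1,{\sf R}_{\sf in},{\sf R}_{\sf mid},{\sf r}_0$. The computations themselves are mechanical once the region decomposition is fixed, but notationally heavy, which presumably explains why the verification is postponed to the appendix.
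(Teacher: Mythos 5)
Your treatment of the vanishing identities, of $N_1$, and of $N_4$ follows the paper's route (support of $(1-\chi_2)$ resp.\ $(1-\chi_4)$, Taylor remainder around $\lambda^{-\frac{n-2}{2}}{\sf Q}$, crude bound $|f_2(u)|\lesssim(\lambda^{-\frac{n-2}{2}}{\sf Q})^q$) and is fine in outline. The genuine gap is in $N_2$ on the annulus $(T-t)^{{\sf q}_1}<|\xi|<1$. There you propose to ``fall back on the crude triangle-inequality bound $|f_2(u)|+|f_2(U_*)|+|qU_*^{q-1}(u+U_*)|\lesssim\eta^{2q/(1-q)}$'' and then to ``pick up'' the factor $(T-t)^{2{\sf d}_1}$ from the $X$-bound on $\tilde v_1,\tilde w_1$. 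This cannot work: the single term $|f_2(\eta^{\frac{2}{1-q}}{\sf U})|=\eta^{\frac{2q}{1-q}}{\sf U}^q$ already saturates $\eta^{2q/(1-q)}$ with no small prefactor and does not involve $\tilde v_1,\tilde w_1$ at all, so the triangle inequality can never produce the extra $(T-t)^{2{\sf d}_1}$. That factor is not cosmetic — it is exactly what Lemma \ref{Lem7.4} uses (via $T^{{\sf d}_1}<{\sf R}_1^{-1}$) to obtain the ${\sf R}_1^{-1}$ gain needed to close the fixed-point scheme — and it can only come from the cancellation built into $N_2$, i.e.\ from keeping the second-order Taylor remainder $\tfrac12 f_2''(u_\kappa)(u+\eta^{\frac{2}{1-q}}{\sf U})^2$.

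You also have the two regimes reversed. By \eqref{eq9.1} the annulus $(T-t)^{{\sf q}_1}<|\xi|<1$ corresponds to $|y|>(T-t)^{-{\sf q}_2}{\sf l}_1$, where $\sigma^{-1}{\sf Q}\lesssim(T-t)^{(n-2){\sf q}_2}$ and $u_1\lesssim(T-t)^{{\sf d}_1}\eta^{\frac{2}{1-q}}$; hence the residual $u+\eta^{\frac{2}{1-q}}{\sf U}=\lambda^{-\frac{n-2}{2}}{\sf Q}+u_1$ is \emph{small} compared with $\eta^{\frac{2}{1-q}}{\sf U}$ there, the Taylor expansion with $f_2''(u_\kappa)\lesssim(\eta^{\frac{2}{1-q}}{\sf U})^{q-2}$ is legitimate, and it yields $\eta^{\frac{2q}{1-q}}\{(T-t)^{2(n-2){\sf q}_2}+(T-t)^{2{\sf d}_1}\}\lesssim(T-t)^{2{\sf d}_1}\eta^{\frac{2q}{1-q}}$ (using ${\sf d}_1<(n-2){\sf q}_2$). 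It is rather in the inner part $|y|\sim{\sf l}_1$, $|\xi|<(T-t)^{{\sf q}_1}$ that the residual is comparable to $\eta^{\frac{2}{1-q}}{\sf U}$; there one must invoke the matching cancellation ({\sf Ev}), namely $|\lambda^{-\frac{n-2}{2}}\sigma T_1+\eta^{\frac{2}{1-q}}{\sf U}|\lesssim\eta^{\frac{2}{1-q}}(|y|^{-1}+|\xi|^2)$ coming from $\sigma=-{\sf A}_1^{-1}\eta^{\frac{2}{1-q}}\lambda^{\frac{n-2}{2}}$ and $T_1\to{\sf A}_1$, together with the gain $|\xi|^2\le(T-t)^{2{\sf q}_1}$, to reach the stated bound. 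Your proposal does not mention this cancellation at all, and without it the third displayed estimate of the lemma is also out of reach.
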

%%%%%%%%%%%%%%%%%%%%%%%%%%%%%%%%%%%%%%%%%%%%%%%%%%%%%%%%%%%%%%%%
%%%%%%%%%%%%%%%%%%%%%%%%%%%%%%%%%%%%%%%%%%%%%%%%%%%%%%%%%%%%%%%%
% We recall that
% \begin{itemize}
% \item
% $F_{\sf mid}[v,w]
% =F_1[v,w](1-\chi_{\sf in})\chi_{\sf sq}+F_2[w]\chi_{\sf sq}$,
%
% \item
% $F_1[v,w]=
% \lambda^{-2}
% V
% (\eta^\frac{2}{1-q}v\chi_{\sf mid}+w)\chi_{2,{\sf a}}$,
% 
% \item
% $F_2[w]
% =
% q
% {\sf U}_\infty^{q-1}
% w
% \chi_{1,{\sf a}}
% +
% q
% ({\sf U}_\infty^{q-1}-\eta^{-2}{\sf U}^{q-1})
% w
% (1-\chi_{1,{\sf a}})
% \chi_{2,{\sf a}}$.
% \end{itemize}
%%%%%%%%%%%%%%%%%%%%%%%%%%%%%%%%%%%%%%%%%%%%%%%%%%%%%%%%%%%%%%%%
 \begin{lem}\label{Lem7.3}
 There exist positive constants
 ${\sf b}_1,{\sf c}_0,{\sf c}_1,{\sf c}_2$
 depending only on $q,n,J$
 such that
 if
 ${\sf b}\in(0,{\sf b}_1)$,
 then
%%%%%%%%%%%%%%%%%%%%%%%%%%%%%%%%%%%%%%%%%%%%%%%%%%%%%%%%%%%%%%%%
 \begin{align*}
 F_1[\tilde v_1,\tilde w_1]
 (1-\chi_{\sf in})
 \chi_{\sf sq}
 {\bf 1}_{|\xi|<1}
 &\leq
 {\sf c}_0
 (T-t)^{{\sf c}_1}
 \lambda^{-\frac{n+2}{2}}
 \sigma
 |y|^{-2(1+{\sf c}_2)}
 {\bf 1}_{|y|>{\sf R}_{\sf in}}
 {\bf 1}_{|\xi|<1},
 \\
%%%%%%%%%%%%%%%%%%%%%%%%%%%%%%%%%%%%%%%%%%%%%%%%%%%%%%%%%%%%%%%%
 F_1[\tilde v_1,\tilde w_1]
 (1-\chi_{\sf in})
 \chi_{\sf sq}
 {\bf 1}_{|\xi|>1}
 &\leq
 {\sf c}_0
 (T-t)^{{\sf c}_1}
 \eta^\frac{2q}{1-q}
 |\xi|^{\gamma-2-2{\sf c}_2}
 {\bf 1}_{1<|\xi|<2\sqrt{{\sf R}_{\sf mid}}},
 \\
%%%%%%%%%%%%%%%%%%%%%%%%%%%%%%%%%%%%%%%%%%%%%%%%%%%%%%%%%%%%%%%%
 F_2[\tilde w_1]
 \chi_{\sf sq}
 {\bf 1}_{|\xi|<1}
 &\leq
 {\sf R}_1^{-1}
 (T-t)^{{\sf d}_1}
 \eta^\frac{2q}{1-q}
 |\xi|^{\gamma-2}
 {\bf 1}_{|\xi|<1},
 \\
%%%%%%%%%%%%%%%%%%%%%%%%%%%%%%%%%%%%%%%%%%%%%%%%%%%%%%%%%%%%%%%%
 F_2[\tilde w_1]
 \chi_{\sf sq}
 {\bf 1}_{|\xi|>1}
 &\leq
 {\sf R}_1^{-1}
 (T-t)^{{\sf d}_1}
 \eta^\frac{2q}{1-q}
 |\xi|^{\gamma-2-2{\sf c}_2}
 {\bf 1}_{1<|\xi|<2\sqrt{{\sf R}_{\sf mid}}}.
 \end{align*}
 \end{lem}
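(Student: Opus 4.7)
The plan is to treat the four pointwise inequalities in turn, in each case writing out $F_i$ explicitly, using the decay of $V$ and ${\sf U}$, and then feeding in the a priori $X$-bounds on $(\tilde w_1,\tilde v_1)$ from Section \ref{sec_5.3}. Three recurring ingredients will drive the proof. First, $V(y)=p{\sf Q}(y)^{p-1}\lesssim(1+|y|^2)^{-2}$, since $(n-2)(p-1)=4$ and ${\sf Q}(y)\lesssim(1+|y|^2)^{-(n-2)/2}$. Second, from Section \ref{sec_5.1} the identity $\sigma=-{\sf A}_1^{-1}\eta^{2/(1-q)}\lambda^{(n-2)/2}$ gives $\lambda^{-(n+2)/2}\sigma\sim\eta^{2/(1-q)}/\lambda^2$, while $\gamma_J=J/(\tfrac{2}{1-q}-\gamma)$ produces the matching relation $\eta^{2/(1-q)}\sim(T-t)^J\eta^\gamma$. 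Third, the expansion \eqref{eq4.10} controls ${\sf U}^{q-1}(\xi)-{\sf U}_\infty^{q-1}(\xi)$ as $|\xi|\to\infty$.

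For the first $F_1$-inequality on $\{|\xi|<1\}$, I would note that $(1-\chi_{\sf in})$ forces $|y|>{\sf R}_{\sf in}$, so $V(y)\lesssim|y|^{-4}$. On this set $|z|=|x|(T-t)^{-1/2}\ll1$, hence $|\tilde w_1|\leq{\sf R}_1^{-1}(T-t)^{{\sf d}_1+\gamma/2+J}|z|^\gamma$ and $|\tilde v_1|\lesssim(T-t)^{{\sf d}_1}$; converting $|z|^\gamma=(\eta/\sqrt{T-t})^\gamma|\xi|^\gamma$ and invoking the matching relation gives $\eta^{2/(1-q)}|\tilde v_1|+|\tilde w_1|\lesssim(T-t)^{{\sf d}_1}\eta^{2/(1-q)}$, whence $F_1\lesssim(T-t)^{{\sf d}_1}\lambda^{-2}\eta^{2/(1-q)}|y|^{-4}$, absorbed into the target $\lambda^{-(n+2)/2}\sigma|y|^{-2(1+{\sf c}_2)}$ for any ${\sf c}_2\leq 1$. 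For the second $F_1$-inequality on $\{1<|\xi|<2\sqrt{{\sf R}_{\sf mid}}\}$ one instead has $V(y)\lesssim\lambda^4\eta^{-4}|\xi|^{-4}$, $|\tilde v_1|\lesssim(T-t)^{{\sf d}_1}|\xi|^\gamma$, and $|\tilde w_1|\lesssim{\sf R}_1^{-1}(T-t)^{{\sf d}_1}\eta^{2/(1-q)}|\xi|^\gamma$; multiplying out yields a bound smaller than the target by a factor $\lambda^2/\eta^2$, which is negligible because $\lambda\ll\eta$, with the surplus time-decay producing the exponent ${\sf c}_1<{\sf d}_1$.

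For the two $F_2$-inequalities I will exploit ingredient three. On $\{|\xi|<1\}$ the function ${\sf U}^{q-1}$ is bounded and $(1-\chi_1)$ removes it inside $\{|y|<{\sf l}_1\}$; on the support of $\chi_2$ the coefficient is dominated by ${\sf U}_\infty^{q-1}=L_1^{q-1}\eta^{-2}|\xi|^{-2}$, so multiplying by the bound on $|\tilde w_1|$ valid for $|z|<1$ and invoking the matching relation yields ${\sf R}_1^{-1}(T-t)^{{\sf d}_1}\eta^{2q/(1-q)}|\xi|^{\gamma-2}$. On $\{1<|\xi|<2\sqrt{{\sf R}_{\sf mid}}\}$ I would expand $({\sf U}_\infty+{\sf B}_1|\xi|^\gamma+\cdots)^{q-1}$ from \eqref{eq4.10} to extract the cancellation
\[
\eta^{-2}{\sf U}^{q-1}(\xi)-{\sf U}_\infty^{q-1}(x)=O\bigl(\eta^{-2}|\xi|^{-2-2/(1-q)+\gamma}\bigr).
\]
Combining with $|\tilde w_1|\lesssim{\sf R}_1^{-1}(T-t)^{{\sf d}_1}\eta^{2/(1-q)}|\xi|^\gamma$ and using $\gamma<\tfrac{2}{1-q}$ from \eqref{eq4.9}, the residual power $|\xi|^{\gamma-2/(1-q)+2{\sf c}_2}$ becomes negative for sufficiently small ${\sf c}_2$, hence uniformly bounded on $|\xi|>1$, delivering the stated inequality.

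The hard part will not be any single analytic step but the exponent bookkeeping: one must simultaneously juggle the three coordinate systems $y=x/\lambda$, $\xi=x/\eta$, $z=x/\sqrt{T-t}$ together with the piecewise-defined bounds on ${\cal W}$ in Section \ref{sec_5.3}, converting each bound into the correct coordinate and applying the matching identity $\eta^{2/(1-q)}\sim(T-t)^J\eta^\gamma$ to trade time-decay for $\eta$-decay so as to match each of the four targets. The only genuinely analytic ingredient is the expansion \eqref{eq4.10}, whose cancellation is essential for the $F_2$ bound on $|\xi|>1$; without it only $\eta^{-2}{\sf U}^{q-1}-{\sf U}_\infty^{q-1}=O(\eta^{-2}|\xi|^{-2})$ would hold and the crucial $|\xi|^{\gamma-2/(1-q)}$ gain would be lost.
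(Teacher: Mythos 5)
Your proposal is correct and follows essentially the same route as the paper: the two $F_1$ bounds are obtained exactly as you describe (the paper in fact declares them ``obvious'' in Lemma \ref{Lem_A.3} and omits them), and for $F_2$ on $|\xi|>1$ the paper likewise writes $\eta^{-2}{\sf U}^{q-1}-{\sf U}_\infty^{q-1}\lesssim\eta^{-2}{\sf U}_\infty^{q-2}|{\sf U}-{\sf U}_\infty|$ and invokes \eqref{eq4.10}, which is precisely your cancellation. The only (harmless) deviation is that your treatment of $F_2$ on $|\xi|<1$ dominates both terms of the coefficient by $\eta^{-2}|\xi|^{-2}$ rather than carrying the spurious factor $(1-\chi_1)$ as the paper does.
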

% We now assume
% \begin{itemize}
% \item ${\sf d}_1<\frac{1}{2}{\sf c}_1$,
% where ${\sf c}_1$ is a constant obtained
% in Lemma \ref{Lem7.1} - Lemma \ref{Lem7.3},
% \item
% $T^{\frac{1}{2}{\sf d}_1}<{\sf R}_1^{-1}$.
% \end{itemize}
% The constant ${\sf d}_1$ will be determined in Section \ref{sec_8}.
% We will see that ${\sf d}_1$ depends only on $q,n$.
% Hence
% the choice of ${\sf d}_1$ satisfying two assumptions above is possible.
 To clarify the role of each term on the right-hand side of \eqref{eq7.1},
 we divide them into four parts and introduce
%%%%%%%%%%%%%%%%%%%%%%%%%%%%%%%%%%%%%%%%%%%%%%%%%%%%%%%%%%%%%%%%
 \begin{align}
 \label{eq7.2}
 &
 \begin{cases}
 \eta^2
 \pa_t
 v_1
 =
 L_\xi
 v_1
 +
 \eta^{-\frac{2q}{1-q}}
 G_1
 &
 \text{in } |\xi|<4{\sf R}_{\sf mid},\ t\in(0,T),
 \\
 v_1
 =
 0
 &
 \text{on } |\xi|=4{\sf R}_{\sf mid},\ t\in(0,T),
 \\
 v_1
 =
 0
 &
 \text{in } |\xi|<4{\sf R}_{\sf mid},\ t=0,
 \end{cases}
 \\
%%%%%%%%%%%%%%%%%%%%%%%%%%%%%%%%%%%%%%%%%%%%%%%%%%%%%%%%%%%%%%%%
 &
 \label{eq7.3}
 \begin{cases}
 \eta^2
 \pa_t
 v_2
 =
 L_\xi
 v_2
 +
 \eta^{-\frac{2q}{1-q}}
 G_2
 &
 \text{in } |\xi|<2,\ t\in(0,T),
 \\
 v_2=0
 &
 \text{on } |\xi|=2,\ t\in(0,T),
 \\
 v_2
 =
 0
 &
 \text{in } |\xi|<2,\ t=0,
 \end{cases}
 \\
%%%%%%%%%%%%%%%%%%%%%%%%%%%%%%%%%%%%%%%%%%%%%%%%%%%%%%%%%%%%%%%%
 &
 \label{eq7.4}
 \begin{cases}
 \eta^2
 \pa_t
 v_3
 =
 L_\xi
 v_3
 +
 \eta^{-\frac{2q}{1-q}}
 G_3
 &
 \text{in } |\xi|<4{\sf R}_{\sf mid},\ t\in(0,T),
 \\
 v_3=0
 &
 \text{on } |\xi|=4{\sf R}_{\sf mid},\ t\in(0,T),
 \\
 v_3
 =
 0
 &
 \text{in } |\xi|<4{\sf R}_{\sf mid},\ t=0,
 \end{cases}
 \\
%%%%%%%%%%%%%%%%%%%%%%%%%%%%%%%%%%%%%%%%%%%%%%%%%%%%%%%%%%%%%%%%
 &
 \label{eq7.5}
 \begin{cases}
 \eta^2
 \pa_t
 v_4
 =
 L_\xi
 v_4
 +
 \eta^{-\frac{2q}{1-q}}
 G_4
 &
 \text{in } \frac{1}{2}{\sf m}_2<|\xi|<4{\sf R}_{\sf mid},\
 t\in(0,T),
 \\
 v_4=0
 &
 \text{on } |\xi|=\frac{1}{2}{\sf m}_2,\ |\xi|=4{\sf R}_{\sf mid},\
 t\in(0,T),
 \\
 v_4
 =
 0
 &
 \text{in } \frac{1}{2}{\sf m}_2<|\xi|<4{\sf R}_{\sf mid},\
 t=0.
 \end{cases}
 \end{align}
%%%%%%%%%%%%%%%%%%%%%%%%%%%%%%%%%%%%%%%%%%%%%%%%%%%%%%%%%%%%%%%%
 The constant ${\sf m}_2>1$ will be chosen later
 such that $L_\xi\approx\Delta_\xi-q{\sf U}_\infty(\xi)^{q-1}$
 for $|\xi|>{\sf m}_2$,
 and
 $G_i$ ($i=1,2,3,4$) are
 \begin{align*}
 G_1
 &=
 \lambda^{-\frac{n+2}{2}}
 (\lambda\dot\lambda-\sigma)
 (\Lambda_y{\sf Q})
 \chi_1(1-\chi_{\sf in})
 +
 F_1[\tilde v_1,\tilde w_1](1-\chi_{\sf in})
 \chi_{\sf sq}
 {\bf 1}_{|\xi|<1}
 +
 h[\epsilon]
 \\
 & \quad
 +
 g
 {\bf 1}_{|\xi|<1}
 +
 N_1[\epsilon,\tilde v_1,\tilde w_1]
 {\bf 1}_{|y|<{\sf l}_1}
 +
 N_2[\epsilon,\tilde v_1,\tilde w_1]
 {\bf 1}_{|\xi|<(T-t)^{{\sf q}_1}}
 +
 N_4[\epsilon,\tilde v_1,\tilde w_1],
 \\
 G_2
 &=
 -
 F_2[\tilde w_1]
 \chi_{\sf sq}
 {\bf 1}_{|\xi|<1},
 \\
 G_3
 &=
 (
 F_1[\tilde v_1,\tilde w_1]
 (1-\chi_{{\sf in}})
 \chi_{{\sf sq}}
 -
 F_2[\tilde w_1]
 \chi_{{\sf sq}}
 )
 {\bf 1}_{1<|\xi|<{\sf m}_2}
 \\
 & \quad
 +
 N_1[\epsilon,\tilde v_1,\tilde w_1]
 {\bf 1}_{|y|>{\sf l}_1}
 {\bf 1}_{|\xi|<1}
 +
 N_2[\epsilon,\tilde v_1,\tilde w_1]
 {\bf 1}_{(T-t)^{{\sf q}_1}<|\xi|<1},
 \\
 G_4
 &=
 (
 F_1[\tilde v_1,\tilde w_1]
 (1-\chi_{{\sf in}})
 \chi_{{\sf sq}}
 -
 F_2[\tilde w_1]
 \chi_{{\sf sq}}
 )
 {\bf 1}_{|\xi|>{\sf m}_2}.
 \end{align*}
%%%%%%%%%%%%%%%%%%%%%%%%%%%%%%%%%%%%%%%%%%%%%%%%%%%%%%%%%%%%%%%%
 By using solutions $v_1$, $v_2$, $v_3$ and $v_4$,
 we define $v_5$ as
 \[
 v
 =
 v_1+v_2\chi_A+v_3+v_4\chi_B+v_5,
 \qquad
 \chi_A=
 \begin{cases}
 1 & |\xi|<1 \\
 0 & |\xi|>2,
 \end{cases}
 \qquad
 \chi_B=
 \begin{cases}
 0 & |\xi|<\frac{1}{2}{\sf m}_2 \\
 1 & |\xi|>{\sf m}_2.
 \end{cases}
 \]
 From this definition,
 $v_5$ satisfies
 \begin{align}\label{eq7.6}
 \begin{cases}
 \eta^2
 \pa_t
 v_5
 =
 L_\xi
 v_5
 +
 G_5
 &
 \text{in } |\xi|<4{\sf R}_{\sf mid},\ t\in(0,T),
 \\
 v_5=0
 &
 \text{on } |\xi|=4{\sf R}_{\sf mid},\ t\in(0,T),
 \\
 v_5=0
 &
 \text{in } |\xi|<4{\sf R}_{\sf mid},\ t=0,
 \end{cases}
 \end{align}
 where
 $G_5=
 -
 2\nabla_\xi v_2\cdot\nabla_\xi\chi_A
 -
 v_2\Delta\chi_A
 -
 2\nabla_\xi v_4\cdot\nabla_\xi\chi_B
 -
 v_4\Delta\chi_B$.
%%%%%%%%%%%%%%%%%%%%%%%%%%%%%%%%%%%%%%%%%%%%%%%%%%%%%%%%%%%%%%%%
 Before constructing solutions $v_i$ ($i=1,2,3,4,5$),
 we prepare the following lemma.
 We now assume
 \begin{itemize}
 \item $T^{\frac{1}{8}{\sf d}_1}<{\sf R}_1^{-1}$.
 \end{itemize}
 Since ${\sf d}_1$ will be determined by constants depending only on $q,n,J$,
 this assumption holds if $T$ is sufficiently small.
 \begin{lem}\label{Lem7.4}
 There exist positive constants
 ${\sf b}_1,{\sf c}_0,{\sf c}_1,{\sf c}_2$
 depending only on $q,n,J$
 such that
 if
 ${\sf b}\in(0,{\sf b}_1)$,
 then
 \begin{align*}
 G_1
 &<
 {\sf c}_0
 {\sf R}_1^{-1}
 (T-t)^{{\sf d}_1}
 \lambda^{-2}
 \eta^\frac{2}{1-q}
 (1+|y|^2)^{-(1+{\sf c}_2)}
 {\bf 1}_{|\xi|<1},
 \\
 G_3
 &<
 {\sf c}_0
 {\sf m}_2^\gamma
 {\sf R}_1^{-1}
 (T-t)^{{\sf d}_1}
 \eta^\frac{2q}{1-q}
 (1+|\xi|^2)^{-(1+{\sf c}_2)}
 {\bf 1}_{|\xi|<{\sf m}_2},
 \\
 G_4
 &<
 {\sf c}_0
 {\sf R}_1^{-1}
 (T-t)^{{\sf d}_1}
 \eta^\frac{2q}{1-q}
 |\xi|^{\gamma-2-2{\sf c}_2}
 {\bf 1}_{|\xi|>{\sf m}_2}.
 \end{align*}
 \end{lem}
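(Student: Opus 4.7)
The plan is to establish Lemma \ref{Lem7.4} by applying Lemma \ref{Lem7.1}, Lemma \ref{Lem7.2}, and Lemma \ref{Lem7.3} term-by-term to each summand in the definitions of $G_1$, $G_3$, $G_4$, and then to convert the resulting bounds into the target form by two elementary devices: the identity $\sigma(t)=-{\sf A}_1^{-1}\eta^{2/(1-q)}\lambda^{(n-2)/2}$, which gives $\lambda^{-(n+2)/2}\sigma\sim\lambda^{-2}\eta^{2/(1-q)}$, and the standing smallness assumption $T^{{\sf d}_1/8}<{\sf R}_1^{-1}$, which allows one to convert a gain of the form $(T-t)^{{\sf c}_1-{\sf d}_1}$ into the required prefactor ${\sf R}_1^{-1}$ by a suitable choice of ${\sf d}_1$ relative to (a fixed, strictly larger) ${\sf c}_1$.

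For $G_1$, I would estimate each of its seven summands on its natural support. The first summand $\lambda^{-(n+2)/2}(\lambda\dot\lambda-\sigma)(\Lambda_y{\sf Q})\chi_1(1-\chi_{\sf in})$ is controlled using the modulation estimate $\lambda\dot\lambda-\sigma=O((T-t)^{{\sf d}_1}\sigma)$, which follows from the defining relation \eqref{eq6.2} and the $X^{(\delta)}$-bounds on $(\tilde v_1,\tilde w_1)$, together with the rapid decay of $\Lambda_y{\sf Q}$. The geometric $g$-terms restricted to $\{|\xi|<1\}$ are handled by Lemma \ref{Lem7.1}; the nonlinear terms $N_1{\bf 1}_{|y|<{\sf l}_1}$, $N_2{\bf 1}_{|\xi|<(T-t)^{{\sf q}_1}}$, $N_4$ by Lemma \ref{Lem7.2}; and the term $F_1[\tilde v_1,\tilde w_1](1-\chi_{\sf in})\chi_{\sf sq}{\bf 1}_{|\xi|<1}$ by the first inequality of Lemma \ref{Lem7.3} applied on $\{|y|>{\sf R}_{\sf in}\}$. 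The commutator $h[\epsilon]$ is supported on $\{|y|\sim{\sf R}_{\sf in}\}$ and is bounded via the pointwise estimates \eqref{eq6.12}-\eqref{eq6.13}. Summing these contributions, replacing $\sigma$ via the identity above, and extracting the prefactor ${\sf R}_1^{-1}(T-t)^{{\sf d}_1}$ as indicated yields the claim for $G_1$.

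For $G_3$, the situation on $\{1<|\xi|<{\sf m}_2\}$ is analogous: Lemma \ref{Lem7.3} supplies bounds of the form ${\sf c}_0(T-t)^{{\sf c}_1}\eta^{2q/(1-q)}|\xi|^{\gamma-2-2{\sf c}_2}$ for the $F_1,F_2$ pieces, and Lemma \ref{Lem7.2} handles the nonlinear remainders $N_1{\bf 1}_{|y|>{\sf l}_1}{\bf 1}_{|\xi|<1}$ and $N_2{\bf 1}_{(T-t)^{{\sf q}_1}<|\xi|<1}$. Because $\gamma<2$, on the compact annulus $\{1<|\xi|<{\sf m}_2\}$ one has $|\xi|^{\gamma-2-2{\sf c}_2}\le C{\sf m}_2^\gamma(1+|\xi|^2)^{-(1+{\sf c}_2)}$, which accounts for the explicit factor ${\sf m}_2^\gamma$ in the statement. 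For $G_4$ on $\{|\xi|>{\sf m}_2\}$ only the $F_1,F_2$ pieces are present; applying the remaining inequalities of Lemma \ref{Lem7.3} directly delivers the target profile $|\xi|^{\gamma-2-2{\sf c}_2}$, provided one checks that ${\sf m}_2<2\sqrt{{\sf R}_{\sf mid}}$ so that $\chi_{\sf sq}$ is nontrivial in the relevant annulus.

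The main obstacle is bookkeeping: one must verify that the triples $({\sf c}_0,{\sf c}_1,{\sf c}_2)$ supplied by Lemmas \ref{Lem7.1}-\ref{Lem7.3} can be consolidated into a single triple depending only on $q,n,J$, and, crucially, that one may arrange ${\sf c}_1-{\sf d}_1>{\sf d}_1/8$ so that the decomposition $(T-t)^{{\sf c}_1}=(T-t)^{{\sf d}_1}\cdot(T-t)^{{\sf c}_1-{\sf d}_1}$ combined with $T^{{\sf d}_1/8}<{\sf R}_1^{-1}$ supplies the small factor ${\sf R}_1^{-1}$ appearing on the right-hand side. A secondary technical point is reconciling the weights $(1+|y|^2)^{-(1+{\sf c}_2)}$ and $(1+|\xi|^2)^{-(1+{\sf c}_2)}$ on the overlap via $|\xi|=\lambda\eta^{-1}|y|$, but this is automatic because each bound is only claimed on the support of the relevant indicator function.
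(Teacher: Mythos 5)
Your proposal is correct and follows essentially the same route as the paper: term-by-term application of Lemmas \ref{Lem7.1}--\ref{Lem7.3} plus the bound on $h[\epsilon]$ from \eqref{eq6.12}--\eqref{eq6.13}, the identity $\lambda^{-\frac{n+2}{2}}\sigma\sim\lambda^{-2}\eta^{\frac{2}{1-q}}$, extraction of ${\sf R}_1^{-1}$ from the surplus decay $(T-t)^{{\sf c}_1-{\sf d}_1}$ (via $T^{{\sf d}_1}<{\sf R}_1^{-1}$, with $T=e^{-{\sf R}_{\sf in}}$), and absorption of the annulus weight into ${\sf m}_2^\gamma(1+|\xi|^2)^{-(1+{\sf c}_2)}$. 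Only trivial remark: the weight comparison on $\{1<|\xi|<{\sf m}_2\}$ uses $\gamma>0$ (so that $|\xi|^\gamma\le{\sf m}_2^\gamma$), not $\gamma<2$.
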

%%%%%%%%%%%%%%%%%%%%%%%%%%%%%%%%%%%%%%%%%%%%%%%%%%%%%%%%%%%%%%%%
 \begin{proof}
 From \eqref{eq6.12} - \eqref{eq6.13},
 we see that
 \begin{align*}
 h_1[\epsilon]
 &=
 -\lambda^{-\frac{n+2}{2}}
 \lambda\dot\lambda
 (\Lambda_y\epsilon)\chi_{\sf in}
 +
 \lambda^{-\frac{n+2}{2}}
 \lambda^2
 \epsilon
 \pa_t\chi_{\sf in}
 \\
 &\lesssim
 {\sf R}_{\sf in}^{\frac{n}{2}}
 \sigma
 \cdot
 (T-t)^{{\sf d}_1}
 \lambda^{-\frac{n+2}{2}}
 \sigma
 {\bf 1}_{|y|<2{\sf R}_{\sf in}}
 \\
 &\lesssim
 {\sf R}_{\sf in}^{\frac{n}{2}+4}
 \sigma
 \cdot
 (T-t)^{{\sf d}_1}
 \lambda^{-\frac{n+2}{2}}
 \sigma
 (1+|y|^2)^{-2}
 {\bf 1}_{|y|<2{\sf R}_{\sf in}},
 \\
%%%%%%%%%%%%%%%%%%%%%%%%%%%%%%%%%%%%%%%%%%%%%%%%%%%%%%%%%%%%%%%%
 h_1'[\epsilon]
 &=
 2\lambda^{-\frac{n+2}{2}}
 \nabla_y\epsilon\cdot\nabla_y\chi_{\sf in}
 +
 \lambda^{-\frac{n+2}{2}}
 \epsilon\Delta_y\chi_{\sf in}
 \\
 &\lesssim
 {\sf R}_{\sf in}^{\frac{n}{2}}
 (T-t)^{{\sf d}_1}
 \lambda^{-\frac{n+2}{2}}
 \sigma
 |y|^{-2-(n-\frac{9}{4})}
 {\bf 1}_{{\sf R}_{\sf in}<|y|<2{\sf R}_{\sf in}}.
 \end{align*}
 Since $T=e^{-{\sf R}_{\sf in}}$,
 this implies
 \begin{align*}
 h[\epsilon]
 &=
 -h_1[\epsilon]+h_1'[\epsilon]
 \\
 &\lesssim
 {\sf R}_{\sf in}^{-\frac{1}{8}(2n-9)}
 (T-t)^{{\sf d}_1}
 \lambda^{-\frac{n+2}{2}}
 \sigma
 (1+|y|^2)^{-1-\frac{1}{16}(2n-9)}
 {\bf 1}_{|y|<2{\sf R}_{\sf in}}.
 \end{align*}
 Due to this estimate
 and
 Lemma \ref{Lem7.1} - Lemma \ref{Lem7.3},
 it holds that
 \begin{align*}
 G_1
 &=
 \lambda^{-\frac{n+2}{2}}
 (\lambda\dot\lambda-\sigma)
 (\Lambda_y{\sf Q})
 \chi_1(1-\chi_{\sf in})
 +
 F_1[\tilde v_1,\tilde w_1](1-\chi_{\sf in})
 \chi_{\sf sq}
 {\bf 1}_{|\xi|<1}
 +
 h[\epsilon]
 \\
 & \quad
 +
 g
 {\bf 1}_{|\xi|<1}
 +
 N_1[\epsilon,\tilde v_1,\tilde w_1]
 {\bf 1}_{|y|<{\sf l}_1}
 +
 N_2[\epsilon,\tilde v_1,\tilde w_1]
 {\bf 1}_{|\xi|<(T-t)^{{\sf q}_1}}
 +
 N_4[\epsilon,\tilde v_1,\tilde w_1]
 \\
 &<
 {\sf c}_0
 {\sf R}_{\sf in}^{-{\sf c}_1}
 (T-t)^{{\sf d}_1}
 \lambda^{-\frac{n+2}{2}}
 \sigma
 (1+|y|^2)^{-1-{\sf c}_2}
 {\bf 1}_{|y|<2{\sf R}_{\sf in}}
 \end{align*}
 for some ${\sf c}_0,{\sf c}_1,{\sf c}_2>0$ depending only on $q,n,J$.
 This proves the estimate for $G_1$.
%%%%%%%%%%%%%%%%%%%%%%%%%%%%%%%%%%%%%%%%%%%%%%%%%%%%%%%%%%%%%%%%
%%%%%%%%%%%%%%%%%%%%%%%%%%%%%%%%%%%%%%%%%%%%%%%%%%%%%%%%%%%%%%%%
%%%%%%%%%%%%%%%%%%%%%%%%%%%%%%%%%%%%%%%%%%%%%%%%%%%%%%%%%%%%%%%%
 Furthermore
 we use Lemma \ref{Lem7.2} - Lemma \ref{Lem7.3} again
 to get
 \begin{align*}
 G_3
 &=
 (
 F_1[\tilde v_1,\tilde w_1]
 (1-\chi_{{\sf in}})
 \chi_{{\sf sq}}
 -
 F_2[\tilde w_1]
 \chi_{{\sf sq}}
 )
 {\bf 1}_{1<|\xi|<{\sf m}_2}
 \\
 & \quad
 +
 N_1[\epsilon,\tilde v_1,\tilde w_1]
 {\bf 1}_{|y|>{\sf l}_1}
 {\bf 1}_{|\xi|<1}
 +
 N_2[\epsilon,\tilde v_1,\tilde w_1]
 {\bf 1}_{(T-t)^{{\sf q}_1}<|\xi|<1}
 \\
 &\lesssim
 {\sf R}_1^{-1}
 (T-t)^{{\sf d}_1}
 \eta^\frac{2q}{1-q}
 (1+|\xi|^2)^{\frac{\gamma}{2}-1-{\sf c}_2}
 {\bf 1}_{|\xi|<{\sf m}_2}
 +
 (T-t)^{2{\sf d}_1}
 \eta^\frac{2q}{1-q}
 {\bf 1}_{(T-t)^{{\sf q}_1}<|\xi|<1}
 \\
 &\lesssim
 {\sf m}_2^\gamma
 {\sf R}_1^{-1}
 (T-t)^{{\sf d}_1}
 \eta^\frac{2q}{1-q}
 (1+|\xi|^2)^{-(1+{\sf c}_2)}
 {\bf 1}_{|\xi|<{\sf m}_2}
 +
 T^{{\sf d}_1}
 (T-t)^{{\sf d}_1}
 \eta^\frac{2q}{1-q}
 {\bf 1}_{(T-t)^{{\sf q}_1}<|\xi|<1}.
 \end{align*}
 Since $T^{{\sf d}_1}<{\sf R}_1^{-1}$,
 we obtain the estimate for $G_3$.
 The inequality for $G_4$ is obvious
 from Lemma \ref{Lem7.2} - Lemma \ref{Lem7.3}.
 \end{proof}
%%%%%%%%%%%%%%%%%%%%%%%%%%%%%%%%%%%%%%%%%%%%%%%%%%%%%%%%%%%%%%%%
 Most of solutions $v_i$ ($i=1,2,3,4,5$) are immediately constructed
 from this lemma.
 We put
 \[
 \bar v_1
 =
 \bar{\sf c}_0
 {\sf R}_1^{-1}
 (T-t)^{{\sf d}_1}
 (1+|y|^2)^{-{\sf c}_2}.
 \]
%%%%%%%%%%%%%%%%%%%%%%%%%%%%%%%%%%%%%%%%%%%%%%%%%%%%%%%%%%%%%%%%
 We now check that
 $\bar v_1$ gives a comparison function of \eqref{eq7.2}.
 \begin{align*}
 (
 \eta^2
 \pa_t
 -
 L_\xi
 )
 \bar v_1
 &=
 (
 \eta^2\pa_t
 -
 \Delta_\xi
 +
 q{\sf U}^{q-1}(1-\chi_1)
 +
 \eta\dot\eta\Lambda_\xi
 )
 \bar v_1
 \\
 &>
 (
 \eta^2\pa_t
 -
 \Delta_\xi
 +
 \eta\dot\eta\Lambda_\xi
 )
 \bar v_1
 \\
 &>
 (
 -
 C
 \eta^2
 (T-t)^{-1}
 +
 2{\sf c}_2(n-2-2{\sf c}_2)
 (\lambda^{-1}\eta)^2
 (1+|y|^2)^{-1}
 -
 C
 \eta\dot\eta
 )
 \bar v_1
 \\
 &>
 \eta^2
 (T-t)^{-1}
 \{
 2
 {\sf c}_2(n-2-2{\sf c}_2)
 (T-t)
 \lambda^{-2}
 (1+|y|^2)^{-1}
 -
 C
 \}
 \bar v_1.
 \end{align*}
%%%%%%%%%%%%%%%%%%%%%%%%%%%%%%%%%%%%%%%%%%%%%%%%%%%%%%%%%%%%%%%%
 From this estimate,
 we easily see that
 \begin{align*}
 (
 \pa_t
 -
 L_\xi
 )
 \bar v_1
 &>
 \tfrac{1}{2}
 {\sf c}_2(n-2-2{\sf c}_2)
 \eta^2
 \lambda^{-2}
 \bar v_1
 \qquad
 \text{for } |y|<1,
 \\
 (
 \pa_t
 -
 L_\xi
 )
 \bar v_1
 &>
 \eta^2
 (T-t)^{-1}
 \{
 2
 {\sf c}_2(n-2-2{\sf c}_2)
 |z|^{-2}
 -
 {\sf c}
 \}
 \bar v_1
 \\
 &>
 {\sf c}_2(n-2-2{\sf c}_2)
 \eta^2
 \lambda^{-2}
 |y|^{-2}
 \bar v_1
 \qquad
 \text{for }
 |y|>1,\ |\xi|<4{\sf R}_{\sf mid}.
 \end{align*}
%%%%%%%%%%%%%%%%%%%%%%%%%%%%%%%%%%%%%%%%%%%%%%%%%%%%%%%%%%%%%%%%
%%%%%%%%%%%%%%%%%%%%%%%%%%%%%%%%%%%%%%%%%%%%%%%%%%%%%%%%%%%%%%%%
%%%%%%%%%%%%%%%%%%%%%%%%%%%%%%%%%%%%%%%%%%%%%%%%%%%%%%%%%%%%%%%%
%%%%%%%%%%%%%%%%%%%%%%%%%%%%%%%%%%%%%%%%%%%%%%%%%%%%%%%%%%%%%%%%
 This together with Lemma \ref{Lem7.4} implies
 $(\pa_t-L_\xi)\bar v_1>\eta^{-\frac{2q}{1-q}}|G_1|$
 for $|\xi|<4{\sf R}_{\sf mid}$.
 Hence
 a comparison argument shows
 \begin{align*}
 |v_1(\xi,t)|
 <
 \bar v_1
 &\lesssim
 {\sf R}_1^{-1}
 (T-t)^{{\sf d}_1}
 (1+|y|^2)^{-{\sf c}_2}
 \qquad
 \text{for }
 |\xi|<4{\sf R}_{\sf mid},\
 t\in(0,T).
 \end{align*}
 This implies
 $|v_1(\xi,t)|
 \lesssim
 {\sf R}_1^{-1}(T-t)^{{\sf d}_1}$
 for $|\xi|<4{\sf R}_{\sf mid}$, $t\in(0,T)$.
 Since $G_1=0$ for $|\xi|>2$,
 by a comparison argument,
 we get
 \[
 |\nabla_\xi v_1(\xi,t)|
 \lesssim
 {\sf R}_1^{-1}
 (T-t)^{{\sf d}_1}
 |\xi|^{-(n-2)}
 \qquad
 \text{for }
 2<|\xi|<4{\sf R}_{\sf in},\
 t\in(0,T).
 \]
%%%%%%%%%%%%%%%%%%%%%%%%%%%%%%%%%%%%%%%%%%%%%%%%%%%%%%%%%%%%%%%%
 We next construct $v_2$.
 We multiply \eqref{eq7.3} by $v_2$ and integrate by parts.
 Then we get
 \begin{align}\label{eq7.7}
 \nonumber
 \tfrac{\eta^2}{2}
 \tfrac{d}{dt}
 \|v_2\|_{L_\xi^2(B_2)}^2
 &<
 -
 \|\nabla_\xi v_2\|_{L_\xi^2(B_2)}^2
 -
 \eta\dot\eta
 (\Lambda_\xi v_2,v_2)_{L_\xi^2(B_2)}
 +
 \eta^{-\frac{2q}{1-q}}
 (G_2,v_2)_{L_\xi^2(B_2)}
 \\
 &<
 -\tfrac{{\sf j}_1}{2}\|v_2(t)\|_{L_\xi^2(B_2)}^2
 +
 \eta^{-\frac{4q}{1-q}}
 \|G_2\|_{L_\xi^2(B_2)}^2,
 \end{align}
 where ${\sf j}_1>0$ is the first eigenvalue of
 $-\Delta_\xi e=je$ in $B_2$ with zero Dirichlet boundary condition.
 We remark that
 $G_2=-F_2[\tilde w_1]\approx q{\sf U}_\infty^{q-1}w$
 is a singular inhomogeneous term at the origin
 in \eqref{eq7.3}.
%%%%%%%%%%%%%%%%%%%%%%%%%%%%%%%%%%%%%%%%%%%%%%%%%%%%%%%%%%%%%%%%
 Since
 $G_2\lesssim
 {\sf R}_1^{-1}(T-t)^{{\sf d}_1}
 \eta^{\frac{2q}{1-q}}|\xi|^{-2+\gamma}{\bf 1}_{|\xi|<1}$
 (see Lemma \ref{Lem7.3})
 and
 $|\xi|^{-2}\in L_\xi^2(B_1)$ if $n\geq5$,
 \eqref{eq7.7} implies
 $\|v_2(t)\|_{L_\xi^2(B_2)}
 \lesssim
 {\sf R}_1^{-1}
 (T-t)^{{\sf d}_1}$
 for $t\in(0,T)$.
 Furthermore
 since $|\xi|^{-2+\gamma}\in L_\xi^p(B_1)$ for some $p>\frac{n}{2}$,
 Lemma \ref{Lem3.4} shows that
 $v_2(\xi,t)$ is bounded near the origin and satisfies
 \begin{align}\label{eq7.8}
 |v_2(\xi,t)|
 &\lesssim
 {\sf R}_1^{-1}
 (T-t)^{{\sf d}_1}
 \qquad
 \text{for }
 |\xi|<{2},\ t\in(0,T).
 \end{align}
 Estimates for $v_3$ can be derived in the same way as $v_1$.
 \begin{align*}
 |v_3(\xi,t)|
 &\lesssim
 {\sf m}_2^{\gamma}
 {\sf R}_1^{-1}
 (T-t)^{{\sf d}_1}
 (1+|\xi|^2)^{-{\sf c}_2}
 \qquad
 \text{for }
 |\xi|<4{\sf R}_{\sf mid},\
 t\in(0,T),
 \\
%%%%%%%%%%%%%%%%%%%%%%%%%%%%%%%%%%%%%%%%%%%%%%%%%%%%%%%%%%%%%%%%
% Since $G_3=0$ for $|\xi|>{\sf m}_2$,
% a comparison argument shows
% \begin{align*}
 |\nabla_\xi v_3(\xi,t)|
 &\lesssim
 {\sf m}_2^{\gamma}
 {\sf R}_1^{-1}
 (T-t)^{{\sf d}_1}
 {\sf m}_2^{-{\sf c}_2}
 {\sf m}_2^{n-2}
 |\xi|^{-(n-2)}
 \qquad
 \text{for }
 {\sf m}_2<|\xi|<4{\sf R}_{\sf mid},\
 t\in(0,T).
 \end{align*}
%%%%%%%%%%%%%%%%%%%%%%%%%%%%%%%%%%%%%%%%%%%%%%%%%%%%%%%%%%%%%%%%
 From the definition of $\gamma$ (see Section \ref{sec_4.2}),
 for any ${\sf k}\in(0,n-2)$,
 there exists ${\sf c}_A({\sf k})>0$ such that
 $(\Delta_\xi-q{\sf U}_\infty(\xi)^{q-1})|\xi|^{\gamma-{\sf k}}
 =-{\sf c}_A({\sf k})|\xi|^{\gamma-{\sf k}-2}$.
 Hence
 form \eqref{eq4.10},
 we can choose ${\sf m}_2>1$ such that
 $(\Delta_\xi-q{\sf U}^{q-1})|\xi|^{\gamma-{\sf k}}
 <-\tfrac{1}{2}{\sf c}_A({\sf k})|\xi|^{\gamma-{\sf k}-2}$
 for $|\xi|>\tfrac{1}{2}{\sf m}_2$.
 From this relation, 
 we observe that
 \begin{align*}
 (\eta^2\pa_t-L_\xi)
 \{
 (T-t)^{{\sf d}_1}
 |\xi|^{\gamma-2{\sf c}_2}
 \}
 &>
 \{
 -
 {\sf d}_1
 \eta^2
 (T-t)^{-1}
 +
 \tfrac{1}{2}
 {\sf c}_A({\sf k})
 \}
 (T-t)^{{\sf d}_1}|\xi|^{\gamma-2-2{\sf c}_1}
 \\
 &>
 \tfrac{1}{4}
 {\sf c}_A({\sf k})
 (T-t)^{{\sf d}_1}|\xi|^{\gamma-2-2{\sf c}_1}
 \qquad
 \text{for }
 \tfrac{1}{2}{\sf m}_2<|\xi|<4{\sf R}_{\sf mid},\
 t\in(0,T).
 \end{align*}
 Hence
 by using
 $\bar v_4=\bar c_0{\sf R}_1^{-1}(T-t)^{{\sf d}_1}|\xi|^{\gamma-2{\sf c}_2}$
 as a comparison function,
 we get from Lemma \ref{Lem7.4} that
 \begin{align}\label{eq7.9}
 |v_4(\xi,t)|
 &\lesssim
 {\sf R}_1^{-1}
 (T-t)^{{\sf d}_1}
 |\xi|^{\gamma-2{\sf c}_2}
 \qquad
 \text{for }
 \tfrac{1}{2}{\sf m}_2<|\xi|<4{\sf R}_{\sf mid},\
 t\in(0,T).
 \end{align}
 From this estimate,
 it holds that
 $|\nabla_yv_4(\xi,t)|
 \lesssim
 {\sf R}_1^{-1}
 (T-t)^{{\sf d}_1}
 {\sf R}_{\sf mid}^{\frac{1}{2}(\gamma-2{\sf c}_2)}$
 for
 $|\xi|=2\sqrt{{\sf R}_{\sf mid}}$,
 $t\in(0,T)$.
 We note that
 $G_4=0$
 for $|\xi|>2\sqrt{{\sf R}_{\sf mid}}$.
 Hence 
 by a comparison argument in $2\sqrt{{\sf R}_{\sf mid}}<|\xi|<4{\sf R}_{\sf mid}$,
 we get
 \begin{align*}
 |\nabla_yv_4(\xi,t)|
 &\lesssim
 {\sf R}_1^{-1}
 (T-t)^{{\sf d}_1}
 {\sf R}_{\sf mid}^{\frac{1}{2}(\gamma-2{\sf c}_2)}
 {\sf R}_{\sf mid}^{\frac{1}{2}(n-2)}
 |\xi|^{-(n-2)}
 \\
 &\lesssim
 {\sf R}_1^{-1}
 (T-t)^{{\sf d}_1}
 {\sf R}_{\sf mid}^{-{\sf c}_2}
 {\sf R}_{\sf mid}^{\frac{1}{2}(n-2)}
 |\xi|^{\gamma-(n-2)}
 \end{align*}
 for
 $2\sqrt{{\sf R}_{\sf mid}}<|\xi|<4{\sf R}_{\sf mid}$,
 $t\in(0,T)$.
%%%%%%%%%%%%%%%%%%%%%%%%%%%%%%%%%%%%%%%%%%%%%%%%%%%%%%%%%%%%%%%%
 We finally construct $v_5$.
 We easily see from \eqref{eq7.8} - \eqref{eq7.9} that
%and Lemma \ref{Lem3.5}
% that
 \begin{align*}
 G_5
 &=
 2\nabla_\xi v_2\cdot\nabla_\xi\chi_A
 +
 v_2\Delta\chi_A
 +
 2\nabla_\xi v_4\cdot\nabla_\xi\chi_B
 +
 v_4\Delta\chi_B
 \\
 &\lesssim
 {\sf R}_1^{-1}
 (T-t)^{{\sf d}_1}
 {\bf 1}_{1<|\xi|<2}
 +
 {\sf R}_1^{-1}
 (T-t)^{{\sf d}_1}
 {\sf m}_2^{\gamma-2{\sf c}_2-2}
 {\bf 1}_{\frac{1}{2}{\sf m}_2<|\xi|<{\sf m}_2}
 \\
 &\lesssim
 {\sf m}_2^{\gamma}
 {\sf R}_1^{-1}
 (T-t)^{{\sf d}_1}
 {\bf 1}_{1<|\xi|<{\sf m}_2}.
 \end{align*}
% The gradient estimates follows from Lemma \ref{Lem3.5}.
 Hence
 applying a comparison argument to \eqref{eq7.6},
 we obtain
 \begin{align*}
 |v_5(\xi,t)|
 &\lesssim
 {\sf m}_2^{\gamma}
 {\sf R}_1^{-1}
 (T-t)^{{\sf d}_1}
 (1+|\xi|^2)^{-\frac{1}{2}(n-3)}
 \qquad
 \text{for }
 |\xi|<4{\sf R}_{\sf mid},\
 t\in(0,T),
 \\
 |\nabla_\xi v_5(\xi,t)|
 &\lesssim
 {\sf m}_2^{\gamma}
 {\sf R}_1^{-1}
 (T-t)^{{\sf d}_1}
 |\xi|^{-(n-3)}
 \qquad
 \text{for }
 2{\sf m}_2<|\xi|<4{\sf R}_{\sf mid},\
 t\in(0,T).
 \end{align*}
 Therefore
 since ${\sf m}_2$ depends only on $n$,
% Therefore
% from
% the choice of ${\sf d}_1$
% and
% ${\sf R}_1=\log{\sf R}_{\sf in}$,
 we conclude
 \begin{align}
 \label{eq7.10}
 |v(\xi,t)|
 &\lesssim
 {\sf R}_1^{-1}
 (T-t)^{{\sf d}_1}
 (1+|\xi|^2)^{\frac{\gamma}{2}-{\sf c}_2}
 \qquad
 \text{for }
 |\xi|<4{\sf R}_{\sf mid},\
 t\in(0,T),
 \\
 \label{eq7.11}
 |\nabla_\xi v(\xi,t)|
 &\lesssim
 {\sf R}_1^{-1}
 (T-t)^{{\sf d}_1}
 {\sf R}_{\sf mid}^{-{\sf c}_2}
 {\sf R}_{\sf mid}^{\frac{1}{2}(n-2)}
 |\xi|^{\gamma-(n-2)}
 \qquad
 \text{for }
 2\sqrt{{\sf R}_{\sf in}}<|\xi|<4{\sf R}_{\sf mid},\
 t\in(0,T).
 \end{align}
 Estimate \eqref{eq7.10} implies $v\in X_2\subset X_2^{(\delta)}$,
 which is the desired result in this section.
 Gradient estimate \eqref{eq7.11} is used in Section \ref{sec_8}.
%%%%%%%%%%%%%%%%%%%%%%%%%%%%%%%%%%%%%%%%%%%%%%%%%%%%%%%%%%%%%%%%

 \section{In the selfsimilar region}
 \label{sec_8}
%%%%%%%%%%%%%%%%%%%%%%%%%%%%%%%%%%%%%%%%%%%%%%%%%%%%%%%%%%%%%%%%
 We study the asymptotic behavior of solutions in the selfsimilar region:
 $|x|\sim\sqrt{T-t}$.
 We recall that
 our solution $u(x,t)$ behaves like
 \[
 u(x,t)
 =
 {\sf U}_\infty(x)
 -
 \theta(x)
 -
 \Theta(x,t)
 +
 w(x,t)
 \qquad
 \text{in }
 |x|\sim\sqrt{T-t}.
 \]
 We here look for solutions of
 (see \eqref{eq5.2})
 \begin{align}\label{eq8.1}
 \begin{cases}
 w_t
 =
 \Delta_xw
 -
 q
 {\sf U}_\infty^{q-1}
 w
 +
 (
 F_1[\tilde v_1,\tilde w_1]
 -
 F_2[\tilde w_1]
 )
 (1-\chi_{{\sf sq}})
 \\
 \qquad
 \quad
 +
 k[v]
 +
 (
 g
 +
 N[\epsilon,\tilde v_1,\tilde w_1]
 )
 {\bf 1}_{|\xi|>1}
 &
 \text{in } x\in\R^n,\
 t\in(0,T),
 \\
 w=w_0
 &
 \text{in } x\in\R^n,\
 t=0.
 \end{cases}
 \end{align}
 The initial data $w_0(x)$ is chosen later such that $w(x,t)$
 decays enough as $t\to T$.
 We remark that
 our approach in this section is simpler than
 that of Section 5 in \cite{Seki}.
 In fact,
 we do not need the explicit expression of the heat kernel of
 $w_t=\Delta_xw-q{\sf U}_\infty^{q-1}w$.
 A goal of this section is to construct a solution
 $w(x,t)\in X_1\subset X_1^{(\delta)}$.
 Combining \eqref{eq7.10},
 we obtain $(w,v)\in X\subset X^{\delta}$.
 This proves Theorem \ref{Thm1}.
% Roughly speaking,
% it is sufficient to obtain the estimate $|w(x,t)|\ll\Theta(x,t)$.
 As in Section \ref{sec_7},
 we first provide estimates of each term on the right-hand side \eqref{eq8.1}.
 We postpone the proofs of Lemma \ref{Lem8.1} - Lemma \ref{Lem8.3}
 to Appendix.
 Throughout this section,
 \begin{itemize}
 \item
 $(\lambda(t),\epsilon(y,t))$ are solutions obtained
 in Section {\rm\ref{sec_6}} from $(w_1,v_1)\in X^{(\delta)}$,
 \item
 $v(\xi,t)$ is a solution of \eqref{eq7.1} obtained in Section {\rm\ref{sec_7}},
 \item
 $(\tilde w_1,\tilde v_1)\in X$ are extensions of $(w_1,v_1)\in X^{(\delta)}$,
 namely $(\tilde w_1,\tilde v_1)=\mathcal{T}_1(w_1,v_1)$.
 \end{itemize}
%%%%%%%%%%%%%%%%%%%%%%%%%%%%%%%%%%%%%%%%%%%%%%%%%%%%%%%%%%%%%%%%
 \begin{lem}\label{Lem8.1}
 There exist positive constants
 ${\sf b}_1,{\sf c}_0,{\sf c}_1,{\sf c}_2$
 depending only on $q,n,J$
 {\rm(}independent of
 $\delta$,
 ${\sf d}_1$,
 ${\sf b}$,
 ${\sf R}_{\sf in}$,
 ${\sf R}_{\sf mid}$,
 ${\sf R}_1$,
 ${\sf r}_0$,
 ${\sf r}_3${\rm)}
 such that
 if
 ${\sf b}\in(0,{\sf b}_1)$,
 then it holds that
 \begin{align*}
 \sum_{i=0}^6
 |g_i|
 {\bf 1}_{|\xi|>1}
 +
 \sum_{i=0}^{10}
 |g_i'|
 {\bf 1}_{|\xi|>1}
 &\leq
 {\sf c}_0
 (T-t)^{{\sf c}_1}
 \eta^{\frac{2q}{1-q}}
 |\xi|^{\gamma-2-2{\sf c}_2}
 {\bf 1}_{1<|\xi|<2{\sf l}_2},
 \\
%%%%%%%%%%%%%%%%%%%%%%%%%%%%%%%%%%%%%%%%%%%%%%%%%%%%%%%%%%%%%%%%
 g_{{\sf out},1}'
 +
 g_{{\sf out},2}'
 +
 g_{{\sf out},3}'
 &\lesssim
 {\bf 1}_{1<|x|<2},
 \\
%%%%%%%%%%%%%%%%%%%%%%%%%%%%%%%%%%%%%%%%%%%%%%%%%%%%%%%%%%%%%%%%
 g_{{\sf out},4}'
 +
 g_{{\sf out},5}'
 +
 g_{{\sf out},6}'
 +
 g_{{\sf out},7}'
 &\lesssim
 |x|^\gamma
 {\bf 1}_{{\sf r}_3<|x|<2{\sf r}_3}.
 \end{align*}
 \end{lem}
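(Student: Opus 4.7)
The plan is to prove each of the three displayed bounds by going through the explicit expressions of $g_i$, $g_i'$ and $g_{\sf out,i}'$ collected in Section~\ref{sec_5.2}, identifying the support of each cutoff-derivative factor, and then applying the pointwise asymptotic formulas from Section~\ref{sec_4} together with the explicit $t$-dependence of $\lambda,\eta,\sigma$ fixed in Section~\ref{sec_5.1}.

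First I would dispose of the third bound, which is essentially bookkeeping. The terms $g_{\sf out,1}',g_{\sf out,2}',g_{\sf out,3}'$ each contain a factor $\nabla_x\chi_4$ or $\Delta_x\chi_4$, hence live in $1<|x|<2$, and on this annulus ${\sf U}_\infty(x),\nabla_x{\sf U}_\infty(x),{\sf M}(t)$ are uniformly bounded; this yields the $O({\bf 1}_{1<|x|<2})$ estimate. Similarly $g_{\sf out,4}',\ldots,g_{\sf out,7}'$ carry a $\nabla_x\chi_3$ or $\Delta_x\chi_3$ and are thus supported in ${\sf r}_3<|x|<2{\sf r}_3$; using the explicit form $\theta(x)=O(|x|^{2p/(1-q)+2})$ from Section~\ref{sec_4.5} and $\Theta_J(x,t)=O((T-t)^{\gamma/2+J}|z|^\gamma)=O(|x|^\gamma)$ in this annulus (together with the corresponding gradient bounds), the factor $|x|^\gamma$ is the dominant scale, so the asserted $O(|x|^\gamma {\bf 1}_{{\sf r}_3<|x|<2{\sf r}_3})$ bound is immediate.

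For the first and main bound, I would organize the terms by which cutoff's derivative is at work. Terms with $\dot\chi_1,\nabla_y\chi_1,\Delta_y\chi_1$ (namely $g_2,g_5,g_1',g_5'$) are supported in $|y|\sim{\sf l}_1$, which lies well inside $|\xi|\ll 1$ since $\lambda\,{\sf l}_1/\eta\to 0$; they therefore contribute nothing to the indicator ${\bf 1}_{|\xi|>1}$. The remaining terms split into (a) those with $\dot\chi_2,\nabla_\xi\chi_2,\Delta_\xi\chi_2$, supported in $|\xi|\sim{\sf l}_2$, namely $g_1,g_4,g_6$ and $g_0',g_3',g_4',g_6',g_7'$, and (b) the ``background'' pieces $g_0,g_3$. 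For class (a), the crucial observation is that several of these terms naturally combine into the expression $\eta^{2/(1-q)}{\sf U}(\xi)-{\sf U}_\infty(x)-\Theta_J(x,t)\chi_3$ appearing in $g_6,g_6',g_7'$; by the asymptotic expansion \eqref{eq4.10} and the matching condition of Section~\ref{sec_4.4}, the leading $|\xi|^\gamma$ contributions cancel, leaving a remainder of order $\eta^{2/(1-q)}|\xi|^{\gamma-{\sf k}_1}$, which after accounting for the factor $|\xi|^{-2}$ produced by $\eta^{-2}\Delta_\xi\chi_2$ gives exactly $\eta^{2q/(1-q)}|\xi|^{\gamma-2-2{\sf c}_2}$ with ${\sf c}_2={\sf k}_1/2$. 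For class (b), I substitute the explicit forms $\lambda(t)\asymp(T-t)^{\frac{2}{6-n}\Gamma_J}$, $\eta(t)=(T-t)^{\gamma_J}$, $\sigma(t)\asymp\eta^{2/(1-q)}\lambda^{(n-2)/2}$ and use ${\sf Q}(y)\lesssim (1+|y|)^{-(n-2)}$ and the bounds \eqref{eq4.1}--\eqref{eq4.2} on $T_1$; the resulting decay in $|y|$ at the junction $|\xi|\sim 1$ (where $|y|\sim\eta/\lambda$ is large) produces the $|\xi|^{\gamma-2-2{\sf c}_2}$ profile after comparing exponents.

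The hard part will be extracting the gain $(T-t)^{{\sf c}_1}$ with ${\sf c}_1>0$ uniformly across all terms. For the $\dot\chi_2$-type pieces this factor comes from $\dot{\sf l}_2/{\sf l}_2\asymp{\sf b}(T-t)^{-1}$ combined with the positive power of $T-t$ absorbed by the mismatch between $\eta^{-2q/(1-q)}$ and the natural size on $|\xi|\sim{\sf l}_2=(T-t)^{-{\sf b}}$; this forces the restriction ${\sf b}\in(0,{\sf b}_1)$ with ${\sf b}_1=O({\sf k}_1)$. For the background terms I must check, using $\frac{2}{1-q}-\gamma\in(0,2)$ from \eqref{eq4.9} and $J\geq 1$, that the exponent $\frac{2}{6-n}\Gamma_J\cdot\frac{n-2}{2}+\gamma_J\cdot\frac{2q}{1-q}\cdot(\cdot)-\cdot$ coming from the chain $\lambda,\eta,\sigma$ leaves a strictly positive remainder after one subtracts the reference $\eta^{2q/(1-q)}$. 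This is a uniform bookkeeping argument in the ansatz exponents, and it is the step where every piece of the construction (the choice of $\gamma_J$, $\Gamma_J$, the relation $\sigma=\lambda\dot\lambda$, and the subcritical nature of ${\sf k}_1$) must cooperate simultaneously; verifying that no individual $g$-term is critical is the one place where an explicit case-by-case check seems unavoidable.
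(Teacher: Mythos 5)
Your overall strategy—term-by-term estimation organized by the support of the cutoff derivatives, with the key cancellation supplied by \eqref{eq4.10} and the matching conditions—is the same as the paper's (the paper proves a term-by-term catalogue, Lemma \ref{Lem9.1}, and deduces Lemma \ref{Lem8.1} from it via the identity $\lambda^{-\frac{n+2}{2}}\sigma|y|^{-2}=\eta^{\frac{2q}{1-q}}|\xi|^{-2}$ and \eqref{eq4.9}). However, there is a concrete gap: your classification into ``$\chi_1$-derivative terms'', ``$\chi_2$-derivative terms'', and ``background pieces $g_0,g_3$'' accounts for $g_0,\dots,g_6$ and $g_0',\dots,g_7'$, but the sum in the lemma runs up to $g_{10}'$, and the three terms $g_8'$, $g_9'$, $g_{10}'$ are omitted entirely. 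These are not cutoff-commutator terms; they are the linearization-mismatch terms
$g_8'=\lambda^{-2}V({\sf U}_{\sf c}(1-\chi_1)+(\theta+\Theta_J)(1-\chi_2))\chi_2$,
$g_9'=q\eta^{-2}{\sf U}^{q-1}(u+\eta^{\frac{2}{1-q}}{\sf U}-u_1)(1-\chi_1)\chi_2$ and
$g_{10}'=q{\sf U}_\infty^{q-1}(u+{\sf U}_\infty+(\theta+\Theta_J)\chi_3-u_1)(1-\chi_2)\chi_4$,
and they are supported throughout $|\xi|>1$ (indeed up to $|x|\sim 1$ for $g_{10}'$), not just near $|\xi|\sim{\sf l}_2$. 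In the paper's proof they require the longest computation: each must be split into several pieces (${\sf s}_3,\dots,{\sf s}_9$ in the appendix), and the smallness of each piece comes from a different source—the smallness of $\sigma^{-1}{\sf Q}$ on $|y|\gtrsim(T-t)^{-{\sf q}_2}{\sf l}_1$ (relation \eqref{eq9.1}--\eqref{eq9.2}), the cancellation ({\sf Evi}) again, and the extra factor $\eta^{\frac{2(p-q)}{1-q}}|\xi|^{\frac{2p}{1-q}-\gamma+2}$ carried by $\theta$, which is where the restriction ${\sf b}<{\sf b}_1$ is actually consumed. Without these terms the first displayed inequality of the lemma is not established.

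Two smaller points. First, for the $g_6,g_6',g_7'$ cluster the remainder after the $|\xi|^\gamma$ cancellation is of size $\eta^{\frac{2}{1-q}}|\xi|^{\gamma}(|\xi|^{-{\sf k}_1}+|z|^2)$, not just $|\xi|^{-{\sf k}_1}$; the $|z|^2$ piece (coming from $e_J(z)={\sf D}_J|z|^\gamma+O(|z|^{\gamma+2})$) must also be converted into a positive power of $T-t$ on $|\xi|\sim{\sf l}_2$, which is an additional constraint on ${\sf b}$ beyond the one you state. Second, in the third displayed bound you evaluate $\Theta_J$ on $|x|\sim{\sf r}_3$ using the small-$|z|$ asymptotics $e_J(z)\sim|z|^\gamma$, whereas in that region $|z|\to\infty$ and $e_J(z)\sim{\sf E}_J|z|^{\gamma+2J}$, giving $\Theta_J=O(|x|^{\gamma+2J})$; the conclusion $O(|x|^\gamma)$ on the fixed annulus survives because $J\geq1$, but the intermediate step as written is wrong.
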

%%%%%%%%%%%%%%%%%%%%%%%%%%%%%%%%%%%%%%%%%%%%%%%%%%%%%%%%%%%%%%%%

%%%%%%%%%%%%%%%%%%%%%%%%%%%%%%%%%%%%%%%%%%%%%%%%%%%%%%%%%%%%%%%%
 \begin{lem}\label{Lem8.2}
 There exist positive constants
 ${\sf b}_1,{\sf c}_0,{\sf c}_1,{\sf c}_2$
 depending only on $q,n,J$
 such that
 if
 ${\sf b}\in(0,{\sf b}_1)$,
 then
%%%%%%%%%%%%%%%%%%%%%%%%%%%%%%%%%%%%%%%%%%%%%%%%%%%%%%%%%%%%%%%%
 \begin{align*}
 N_1[\epsilon,\tilde v_1,\tilde w_1]
 {\bf 1}_{|\xi|>1}
 &\leq
 {\sf c}_0
 (T-t)^{{\sf c}_1}
 \eta^\frac{2q}{1-q}
 |\xi|^{\gamma-2-2{\sf c}_2}
 {\bf 1}_{1<|\xi|<2{\sf l}_2},
 \\
%%%%%%%%%%%%%%%%%%%%%%%%%%%%%%%%%%%%%%%%%%%%%%%%%%%%%%%%%%%%%%%%
 N_2[\epsilon,\tilde v_1,\tilde w_1]
 {\bf 1}_{|\xi|>1}
 &\leq
 {\sf c}_0
 (T-t)^{2{\sf d}_1}
 \eta^\frac{2q}{1-q}
 |\xi|^{\gamma-2-2{\sf c}_2}
 {\bf 1}_{1<|\xi|<2{\sf l}_2},
 \\
%%%%%%%%%%%%%%%%%%%%%%%%%%%%%%%%%%%%%%%%%%%%%%%%%%%%%%%%%%%%%%%%
 N_3[\epsilon,\tilde v_1,\tilde w_1]
 {\bf 1}_{|z|<1}
 &\leq
 {\sf c}_0
 (T-t)^{{\sf c}_1}
 \eta^\frac{2q}{1-q}
 |\xi|^{\gamma-2-2{\sf c}_2}
 {\bf 1}_{|\xi|>{\sf l}_2}
 {\bf 1}_{|z|<1},
 \\
%%%%%%%%%%%%%%%%%%%%%%%%%%%%%%%%%%%%%%%%%%%%%%%%%%%%%%%%%%%%%%%%
 N_3[\epsilon,\tilde v_1,\tilde w_1]
 {\bf 1}_{1<|z|<(T-t)^{-\frac{1}{3}}}
 &\leq
 {\sf c}_0
 (T-t)^{-1+\frac{\gamma}{2}+J+{\sf c}_1}
 |z|^{\gamma+2J}
 {\bf 1}_{1<|z|<(T-t)^{-\frac{1}{3}}},
 \\
%%%%%%%%%%%%%%%%%%%%%%%%%%%%%%%%%%%%%%%%%%%%%%%%%%%%%%%%%%%%%%%%
 N_3[\epsilon,\tilde v_1,\tilde w_1]
 {\bf 1}_{(T-t)^{-\frac{1}{3}}<|z|<{\sf l}_{\sf out}}
 &\leq
 {\sf c}_0
 (T-t)^{-1+\frac{\gamma}{2}+J+{\sf d}_1+{\sf c}_1}
 |z|^{\gamma+2J+3{\sf d}_1}
 {\bf 1}_{(T-t)^{-\frac{1}{3}}<|z|<{\sf l}_{\sf out}},
 \\
%%%%%%%%%%%%%%%%%%%%%%%%%%%%%%%%%%%%%%%%%%%%%%%%%%%%%%%%%%%%%%%%
 N_3[\epsilon,\tilde v_1,\tilde w_1]
 {\bf 1}_{|z|>{\sf l}_{\sf out}}
 &\leq
 {\sf c}_0
 {\sf U}_\infty^q
 {\bf 1}_{{\sf l}_{\sf out}\sqrt{T-t}<|x|<2},
 \\
%%%%%%%%%%%%%%%%%%%%%%%%%%%%%%%%%%%%%%%%%%%%%%%%%%%%%%%%%%%%%%%%
 N_4[\epsilon,\tilde v_1,\tilde w_1]
 {\bf 1}_{|\xi|>1}
 &=
 0,
 \\
%%%%%%%%%%%%%%%%%%%%%%%%%%%%%%%%%%%%%%%%%%%%%%%%%%%%%%%%%%%%%%%%
 N_5[\tilde w_1]
 &\leq
 {\sf c}_0
 {\sf R}^{-1}
 |x|^{-3}
 {\bf 1}_{|x|>1},
 \\
%%%%%%%%%%%%%%%%%%%%%%%%%%%%%%%%%%%%%%%%%%%%%%%%%%%%%%%%%%%%%%%%
 N_6[\tilde w_1]
 &\leq
 {\sf c}_0
 {\bf 1}_{1<|x|<4}
 +
 {\sf c}_0
 {\sf R}^{-1}
 |x|^{-1}
 {\bf 1}_{|x|>1}.
 \end{align*}
 \end{lem}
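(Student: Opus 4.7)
The plan is to treat each of the six terms $N_i$ separately on $|\xi|>1$, using the explicit form of the ansatz \eqref{eq5.1}, the bounds on $\epsilon$ from Section \ref{sec_6}, the bounds on $v$ from Section \ref{sec_7}, and the a priori control of $\tilde v_1,\tilde w_1$ built into Section \ref{sec_5.3}. I start with the trivial cases. For $N_4=f_2(u)\chi_1$, the cut-off $\chi_1$ vanishes outside $|y|<2{\sf l}_1$ with ${\sf l}_1=|\sigma|^{-1/(n-2)}$; substituting $|\sigma|\sim\eta^{2/(1-q)}\lambda^{(n-2)/2}$ together with $\lambda\sim(T-t)^{\frac{2}{6-n}\Gamma_J}$, $\eta=(T-t)^{\gamma_J}$ and the identity $\Gamma_J=\tfrac{2}{1-q}\gamma_J+1$, a direct exponent comparison (using $n=5$) shows $\lambda{\sf l}_1\ll\eta$ as $t\to T$, hence $\chi_1\cdot\mathbf{1}_{|\xi|>1}=0$ for $T$ small. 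For $N_5,N_6$ the factor $1-\chi_4$ localizes to $|x|>1$, where $(w_1,v_1)\in X^{(\delta)}$ forces $|\tilde w_1|\le {\sf R}_1^{-1}{\sf M}_0|x|^{-1}$; combined with ${\sf U}_\infty^{q-1}=L_1^{q-1}|x|^{-2}$ this gives the $N_5$ bound, while the identity $\dot{\sf M}=f({\sf M})-f_2({\sf M})$ and a Taylor expansion of $f-f_2$ about ${\sf M}$ produce the $N_6$ bound, with the boundary layer $1<|x|<4$ absorbing the transition generated by $\chi_4$.

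For $N_1,N_2$ I expand $f$ (respectively $f_2$) around $\lambda^{-(n-2)/2}{\sf Q}$ (respectively $\eta^{2/(1-q)}{\sf U}$) to second order, so that $N_1$ is essentially $\tfrac{1}{2}f''(\lambda^{-(n-2)/2}{\sf Q})(u-\lambda^{-(n-2)/2}{\sf Q})^2\chi_2$ plus cubic corrections, and analogously for $N_2$. On $\chi_2\neq 0$ one has $|\xi|<2{\sf l}_2$, and on $|\xi|>1$ the difference $u-\lambda^{-(n-2)/2}{\sf Q}$ is dominated by $-\eta^{2/(1-q)}{\sf U}(\xi)+O({\sf R}_1^{-1}{\sf U}_\infty(x))$; inserting the Taylor coefficient $f''\sim{\sf Q}^{p-2}\lambda^{-(n-2)(p-2)/2}$ and using \eqref{eq4.10} reduces both estimates to elementary polynomial bounds in $|\xi|$ that fit inside the claimed $|\xi|^{\gamma-2-2{\sf c}_2}$ weight once ${\sf c}_1,{\sf c}_2>0$ are chosen small enough in terms of $q,n,J$.

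The main obstacle is $N_3$, where the sub-region splitting in the statement is essential. Writing $N_3$ as the Taylor remainder of $f-f_2$ centered at $u=-{\sf U}_\infty$, together with the explicit term $-(\Delta_x\theta-q{\sf U}_\infty^{q-1}\theta)\chi_3$, I invoke the construction of $\theta$ from Section \ref{sec_4.5}: equations \eqref{eq4.13}--\eqref{eq4.15} were designed exactly so that the polynomial pieces of $\theta$ cancel the leading polynomial Taylor contributions of $f$ about $-{\sf U}_\infty$, leaving a remainder quadratic in $\Theta_J+\tilde w_1$ plus a tail of order $\theta_L$ controlled by \eqref{eq4.16}. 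In $|z|<1$ one has $|\theta|\lesssim|x|^{2p/(1-q)+2}$ and $|\Theta_J|\sim(T-t)^{\gamma/2+J}|z|^\gamma$, and the quadratic bound reproduces $\eta^{2q/(1-q)}|\xi|^{\gamma-2-2{\sf c}_2}$ once ${\sf l}_2$ lies past this zone. For $1<|z|<(T-t)^{-1/3}$ the singular weight $|{\sf U}_\infty|^{q-2}\sim|x|^{-2(2-q)/(1-q)}$ multiplied by the squared $\Theta_J$ produces exactly $(T-t)^{-1+\gamma/2+J}|z|^{\gamma+2J}$, matching the claim. In $(T-t)^{-1/3}<|z|<{\sf l}_{\sf out}$ the bound on $\tilde w_1$ from $X_1^{(\delta)}$ switches to the heavier weight $|z|^{\gamma+2J+3{\sf d}_1}$, producing the extra factor $|z|^{3{\sf d}_1}(T-t)^{{\sf d}_1}$. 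Finally in $|z|>{\sf l}_{\sf out}$ the definition of ${\sf l}_{\sf out}$ forces $|\tilde w_1|\sim{\sf U}_\infty(x)$, so the remainder attains the maximal size ${\sf U}_\infty^q$. The delicate point is verifying, in the intermediate zones, that the cubic errors from $f(u)$, the cubic errors from $f_2(u)$, and the algebraic residue $\Delta\theta-q{\sf U}_\infty^{q-1}\theta$ combine to yield the stated exponents in $(T-t)$ and $|z|$; this relies on the precise value $\gamma_J=J/(\tfrac{2}{1-q}-\gamma)$ and on the nonvanishing coefficient property \eqref{eq4.19} underlying the construction of $\theta$.
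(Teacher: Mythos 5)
Your overall architecture coincides with the paper's own proof (Lemma \ref{Lem9.2} in the Appendix): $N_4{\bf 1}_{|\xi|>1}=0$ because $\lambda{\sf l}_1\ll\eta$, which is exactly \eqref{eq9.1}; $N_5$, $N_6$ follow from $|\tilde w_1|\lesssim{\sf R}_1^{-1}|x|^{-1}$ on $|x|>1$ together with the ODE $\dot{\sf M}=f({\sf M})-f_2({\sf M})$; $N_2$ is handled by the second-order Taylor remainder of $f_2$ about $-\eta^{\frac{2}{1-q}}{\sf U}$, legitimate there because the increment $u+\eta^{\frac{2}{1-q}}{\sf U}$ is small relative to ${\sf U}_\infty$; and $N_3$ uses the cancellation built into \eqref{eq4.13}--\eqref{eq4.15} followed by the region-by-region bounds on $\Theta_J-\tilde w_1$ dictated by the weights of $X_1$ and the definition of ${\sf l}_{\sf out}$. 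One caveat on $N_3$: the cancellation removes only the pure powers of $\theta$, so besides the quadratic piece ${\sf U}_\infty^{q-2}(\Theta_J-\tilde w_1)^2$ a \emph{linear} piece $f'({\sf U}_\infty)(\Theta_J-\tilde w_1)$ survives (the term ${\sf z}_1$ in the paper); it is harmless because of the prefactor ${\sf U}_\infty^{p-1}=L_1^{p-1}|x|^{\frac{2(p-1)}{1-q}}$, but it is one of the two dominant contributions and must be carried through each sub-region.

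The step that does not work as written is $N_1{\bf 1}_{|\xi|>1}$. You propose to represent $N_1$ as $\tfrac12 f''(\lambda^{-\frac{n-2}{2}}{\sf Q})\,(u-\lambda^{-\frac{n-2}{2}}{\sf Q})^2$ ``plus cubic corrections.'' On $|\xi|>1$ the base point $\lambda^{-\frac{n-2}{2}}{\sf Q}(y)\sim\lambda^{\frac{n-2}{2}}|x|^{-(n-2)}$ is far smaller than the increment $|u-\lambda^{-\frac{n-2}{2}}{\sf Q}|\sim{\sf U}_\infty(x)$ (cf.\ \eqref{eq9.2}), so the expansion about the bubble is not asymptotic there: since $p>2$, the proposed quadratic term is of size $(\lambda^{-\frac{n-2}{2}}{\sf Q})^{p-2}{\sf U}_\infty^2\ll{\sf U}_\infty^{p-2}{\sf U}_\infty^2={\sf U}_\infty^p\sim|f(u)|$, i.e.\ it is strictly smaller than the quantity being bounded, and the ``cubic corrections'' are not corrections --- they dominate. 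An upper bound cannot be extracted from an underestimate. The estimate must instead be done as in the paper: bound the three terms of $N_1$ separately, $|f(u)|+|f(\lambda^{-\frac{n-2}{2}}{\sf Q})|+\lambda^{-2}V|u-\lambda^{-\frac{n-2}{2}}{\sf Q}|\lesssim{\sf U}_\infty^p+u_1^p\lesssim{\sf U}_\infty^p=\eta^{\frac{2(p-q)}{1-q}}|\xi|^{\frac{2p}{1-q}-\gamma+2}\cdot\eta^{\frac{2q}{1-q}}|\xi|^{\gamma-2}$, and then verify that the prefactor is $\leq(T-t)^{{\sf c}_1}|\xi|^{-2{\sf c}_2}$ on $1<|\xi|<2{\sf l}_2$ once ${\sf b}$ is small. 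The quadratic Taylor remainder at the bubble is the correct tool only in the inner region $|y|<{\sf l}_1$, which is the other $N_1$ estimate appearing in Lemma \ref{Lem7.2}.
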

%%%%%%%%%%%%%%%%%%%%%%%%%%%%%%%%%%%%%%%%%%%%%%%%%%%%%%%%%%%%%%%%

%%%%%%%%%%%%%%%%%%%%%%%%%%%%%%%%%%%%%%%%%%%%%%%%%%%%%%%%%%%%%%%%
 \begin{lem}\label{Lem8.3}
 There exist positive constants
 ${\sf b}_1,{\sf c}_0,{\sf c}_1,{\sf c}_2$
 depending only on $q,n,J$
 such that
 if
 ${\sf b}\in(0,{\sf b}_1)$,
 then
 \begin{align*}
 F_1[\tilde v_1,\tilde w_1]
 (1-\chi_{\sf sq})
 &\leq
 {\sf c}_0
 (T-t)^{{\sf c}_1}
 \eta^\frac{2q}{1-q}
 |\xi|^{\gamma-2-2{\sf c}_2}
 {\bf 1}_{\sqrt{{\sf R}_{\sf mid}}<|\xi|<2{\sf l}_2},
 \\
 F_2[\tilde w_1]
 (1-\chi_{\sf sq})
 &\leq
 {\sf c}_0
 {\sf R}_1^{-1}
 (T-t)^{{\sf d}_1}
 \eta^\frac{2q}{1-q}
 |\xi|^{\gamma-2-2{\sf c}_2}
 {\bf 1}_{\sqrt{{\sf R}_{\sf mid}}<|\xi|<2{\sf l}_2}.
 \end{align*}
 \end{lem}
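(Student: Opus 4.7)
\textbf{Proof plan for Lemma \ref{Lem8.3}.} The plan is to exploit two key structural facts on the support of $(1-\chi_{\sf sq})\chi_2$, namely $\sqrt{{\sf R}_{\sf mid}}<|\xi|<2{\sf l}_2$. First, on this annulus, $|y|=\lambda^{-1}\eta|\xi|$ is extremely large (since $\lambda\ll\eta$), so $\chi_1(y)=0$ and the potential decays like $V(y)=p{\sf Q}(y)^{p-1}\lesssim(1+|y|)^{-4}$; moreover, for a sufficiently small ${\sf b}_1>0$ and ${\sf b}\in(0,{\sf b}_1)$ we have $|z|=(T-t)^{\gamma_J-\frac{1}{2}}|\xi|<1$ throughout this annulus (this uses $\gamma_J>\tfrac12$, which follows from \eqref{eq4.9} since $\gamma_J=J(\frac{2}{1-q}-\gamma)^{-1}>\frac{J}{2}$).

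For the bound on $F_1[\tilde v_1,\tilde w_1](1-\chi_{\sf sq})$, I would substitute the definition $F_1=\lambda^{-2}V(y)(\eta^{\frac{2}{1-q}}\tilde v_1\chi_{\sf mid}+\tilde w_1)\chi_2$ and treat the two pieces separately. For the $\tilde v_1$ piece, use $|\tilde v_1|\leq(T-t)^{{\sf d}_1}(1+|\xi|^2)^{\gamma/2}$ from \eqref{eq5.4} together with $V(y)\lesssim\lambda^4\eta^{-4}|\xi|^{-4}$. For the $\tilde w_1$ piece, use the bound $|\tilde w_1|\leq{\sf R}_1^{-1}(T-t)^{{\sf d}_1+\frac{\gamma}{2}+J}|z|^\gamma$ valid on $|z|<1$, and rewrite $(T-t)^{\frac{\gamma}{2}+J}|z|^\gamma=(T-t)^J\eta^\gamma|\xi|^\gamma$. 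Both contributions should be compared to the target $(T-t)^{{\sf c}_1}\eta^{\frac{2q}{1-q}}|\xi|^{\gamma-2-2{\sf c}_2}$; the key algebraic identity is the matching relation $2\gamma_J/(1-q)=\Gamma_J-1$, which combined with $\lambda\sim(T-t)^{2\Gamma_J}$ collapses the extraneous time factors, leaving an extra positive power of $(T-t)$ coming from $\lambda^2/\eta^2$ and a spatial factor $|\xi|^{\gamma-\frac{2}{1-q}+2{\sf c}_2}$ that is small on $|\xi|>\sqrt{{\sf R}_{\sf mid}}$ as long as ${\sf c}_2<\frac{1}{2}(\frac{2}{1-q}-\gamma)$.

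For the bound on $F_2[\tilde w_1](1-\chi_{\sf sq})$, the main idea is a precise cancellation. Since $\chi_1=0$ on this region, $F_2=q(\eta^{-2}{\sf U}(\xi)^{q-1}-{\sf U}_\infty(x)^{q-1})\tilde w_1\chi_2$. Using the scaling identity $\eta^{-2}{\sf U}_\infty(\xi)^{q-1}={\sf U}_\infty(x)^{q-1}$ (homogeneity of ${\sf U}_\infty$) reduces this to $q\eta^{-2}({\sf U}(\xi)^{q-1}-{\sf U}_\infty(\xi)^{q-1})\tilde w_1\chi_2$. Inserting \eqref{eq4.10} gives ${\sf U}(\xi)^{q-1}={\sf U}_\infty(\xi)^{q-1}(1+O(|\xi|^{\gamma-\frac{2}{1-q}}))$, so the coefficient is $O(\eta^{-2}L_1^{q-1}|\xi|^{\gamma-2-\frac{2}{1-q}})$. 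Combined with the $\tilde w_1$ bound as above, together with the same matching identity, this yields the required estimate with an explicit ${\sf R}_1^{-1}$ factor inherited from the bound on $\tilde w_1$.

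The main obstacle I anticipate is the bookkeeping of time exponents: ensuring the cancellation coming from the matching identity $J+\gamma_J(\gamma-\frac{2}{1-q})=0$ is applied correctly, and simultaneously choosing ${\sf b}_1$ small enough so that $|z|<1$ everywhere on $\operatorname{supp}\chi_2\cap\operatorname{supp}(1-\chi_{\sf sq})$ while still leaving enough positive power of $(T-t)$ (i.e.\ ${\sf c}_1>0$) for the $F_1$ estimate. Both are routine once the matching relation is used, but must be verified to depend on $q,n,J$ only and not on ${\sf R}_{\sf mid},{\sf R}_1,\delta,{\sf d}_1$.
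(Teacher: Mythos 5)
Your proposal is correct and follows essentially the same route as the paper: the paper proves these bounds (Lemma A.3 in the Appendix) on the larger region $1<|\xi|<2{\sf l}_2$, dismissing the $F_1$ estimate as immediate from $V\lesssim |y|^{-4}$ with $|y|=\lambda^{-1}\eta|\xi|$, and handling $F_2$ by exactly the cancellation you describe — the homogeneity identity $\eta^{-2}{\sf U}_\infty(\xi)^{q-1}={\sf U}_\infty(x)^{q-1}$ combined with the expansion \eqref{eq4.10}, giving the loss $|\xi|^{-(\frac{2}{1-q}-\gamma)}$ which absorbs the $|\xi|^{-2{\sf c}_2}$ factor. Your additional bookkeeping (the matching identity $(T-t)^{J}\eta^{\gamma}=\eta^{\frac{2}{1-q}}$, the choice of ${\sf b}_1$ so that $|z|<1$ on the support, and $\lambda^2\eta^{-2}\lesssim(T-t)^{{\sf c}_1}$) is accurate and only makes explicit what the paper leaves implicit.
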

%%%%%%%%%%%%%%%%%%%%%%%%%%%%%%%%%%%%%%%%%%%%%%%%%%%%%%%%%%%%%%%%
 Let ${\sf c}_1^{\min}$ be the minimum value of
 ${\sf c}_1$ obtained in
 Lemma \ref{Lem7.1} - Lemma \ref{Lem7.3}
 and
 Lemma \ref{Lem8.1} - Lemma \ref{Lem8.3}.
 We take
 \begin{itemize}
 \item 
 ${\sf d}_1=\frac{1}{2}{\sf c}_1^{\min}$.
 \end{itemize}
 As mentioned before,
 this ${\sf d}_1$ depends only on $q,n,J$.
 From \eqref{eq7.10} - \eqref{eq7.11},
 we see that
 \begin{align*}
 k_1[v]
 &=
 \eta^\frac{2}{1-q}
 v
 \pa_t\chi_{\sf mid}
 \lesssim
 (T-t)^{-1}
 \eta^\frac{2}{1-q}
 v
 {\bf 1}_{{\sf R}_{\sf mid}<|\xi|<2{\sf R}_{\sf mid}}
 \\
 &=
 \eta^2
 (T-t)^{-1}
 |\xi|^2
 \cdot
 \eta^\frac{2q}{1-q}
 |\xi|^{-2}
 v
 {\bf 1}_{{\sf R}_{\sf mid}<|\xi|<2{\sf R}_{\sf mid}}
% \\
% &\lesssim
% (T-t)^{-1}
% \eta^\frac{2}{1-q}
% {\sf R}_1^{-1}
% (T-t)^{{\sf d}_1}
% \eta^\frac{2}{1-q}
% |\xi|^{\gamma-2{\sf c}_2}
% {\bf 1}_{{\sf R}_{\sf mid}<|\xi|<2{\sf R}_{\sf mid}}
 \\
 &=
 |z|^2
 \cdot
 {\sf R}_1^{-1}
 (T-t)^{{\sf d}_1}
 \eta^\frac{2q}{1-q}
 |\xi|^{\gamma-2-2{\sf c}_2}
 {\bf 1}_{{\sf R}_{\sf mid}<|\xi|<2{\sf R}_{\sf mid}},
 \\
 k_1'[v]
 &=
 2\eta^\frac{2q}{1-q}
 (\nabla_\xi v\cdot\nabla_\xi\chi_{\sf mid})
 +
 \eta^\frac{2q}{1-q}
 v(\Delta_\xi\chi_{\sf mid})
 \\
 &\lesssim
 {\sf R}_{\sf mid}^{-{\sf c}_2}
 {\sf R}_1^{-1}
 (T-t)^{{\sf d}_1}
 \eta^\frac{2q}{1-q}
 |\xi|^{\gamma-2-2{\sf c}_2}
 {\bf 1}_{{\sf R}_{\sf mid}<|\xi|<2{\sf R}_{\sf mid}}.
 \end{align*}
%%%%%%%%%%%%%%%%%%%%%%%%%%%%%%%%%%%%%%%%%%%%%%%%%%%%%%%%%%%%%%%%
 Hence
 it follows that
 \begin{align}\label{eq8.2}
 k[v]
 =
 -k_1[v]+k_1'[v]
 \lesssim
 {\sf R}_{\sf mid}^{-{\sf c}_2}
 {\sf R}_1^{-1}
 (T-t)^{{\sf d}_1}
 \eta^\frac{2q}{1-q}
 |\xi|^{\gamma-2-2{\sf c}_2}
 {\bf 1}_{{\sf R}_{\sf mid}<|\xi|<2{\sf R}_{\sf mid}}.
 \end{align}
%%%%%%%%%%%%%%%%%%%%%%%%%%%%%%%%%%%%%%%%%%%%%%%%%%%%%%%%%%%%%%%%
 As in Section \ref{sec_7},
 we divide \eqref{eq8.1} into two equations.
 The first equation is given by
 \begin{align}\label{eq8.3}
 \begin{cases}
 \pa_tw_1
 =
 \Delta_xw_1
 -
 q
 {\sf U}_\infty^{q-1}
 w_1
 +
 (
 F_1[\tilde v_1,\tilde w_1]
 -
 F_2[\tilde w_1]
 )
 (1-\chi_{{\sf sq}})
 \\
 \hspace{12mm}
 +
 k[v]
 +
 g
 {\bf 1}_{|\xi|>1}
 +
 (
 N_1[\epsilon,\tilde v_1,\tilde w_1]
 +
 N_2[\epsilon,\tilde v_1,\tilde w_1]
 )
 {\bf 1}_{|\xi|>1}
 \\
 \hspace{12mm}
 +
 N_3[\epsilon,\tilde v_1,\tilde w_1]
 {\bf 1}_{|z|<1}
 {\bf 1}_{|\xi|>1}
 &
 \text{in } |z|<2{\sf r}_0,\
 t\in(0,T),
 \\
 w=0
 & \text{on } |z|=2{\sf r}_0,\
 t\in(0,T),
 \\
 w=0
 &
 \text{in } |z|<2{\sf r}_0,\
 t=0.
 \end{cases}
 \end{align}
 From Lemma \ref{Lem8.1} - Lemma \ref{Lem8.3} and \eqref{eq8.2},
 the right-hand side of \eqref{eq8.3} can be computed as
 \begin{align*}
 (
 F_1[\tilde v_1,\tilde w_1]
 -
 F_2[\tilde w_1]
 )
 (1-\chi_{{\sf sq}})
 &+
 k[v]
 +
 (
 g
 + 
 \sum_{i=1}^2
 N_i
 [\epsilon,\tilde v_1,\tilde w_1]
 +
 N_3
 [\epsilon,\tilde v_1,\tilde w_1]
 {\bf 1}_{|z|<1}
 )
 {\bf 1}_{|\xi|>1}
 \\
 &\lesssim
 {\sf R}_{\sf mid}^{-{\sf c}_1}
 {\sf R}_1^{-1}
 (T-t)^{{\sf d}_1}
 \eta^\frac{2q}{1-q}
 |\xi|^{\gamma-2-2{\sf c}_2}
 {\bf 1}_{|\xi|>1}
 {\bf 1}_{|z|<1}
 \end{align*}
 for some ${\sf c}_1,{\sf c}_2>0$.
%%%%%%%%%%%%%%%%%%%%%%%%%%%%%%%%%%%%%%%%%%%%%%%%%%%%%%%%%%%%%%%%
%%%%%%%%%%%%%%%%%%%%%%%%%%%%%%%%%%%%%%%%%%%%%%%%%%%%%%%%%%%%%%%%
 We introduce a comparison function.
 \[
 \bar w_1
 =
 {\sf R}_{\sf mid}^{-{\sf c}_1}
 {\sf R}_1^{-1}
 (T-t)^{{\sf d}_1}
 \eta^\frac{2}{1-q}
 |\xi|^\gamma
 (1+|\xi|^2)^{-{\sf c}_2}.
 \]
%%%%%%%%%%%%%%%%%%%%%%%%%%%%%%%%%%%%%%%%%%%%%%%%%%%%%%%%%%%%%%%%
 From the definition of $\gamma$,
 for any ${\sf k}\in(0,\frac{n-2}{2})$,
 there exists ${\sf c}_B({\sf k})>0$ such that
 \[
 (\Delta_\xi-q{\sf U}_\infty(\xi)^{q-1})
 |\xi|^{\gamma}
 (1+|\xi|^2)^{-{\sf k}}
 <
 -
 {\sf c}_B({\sf k})
 |\xi|^{\gamma}
 (1+|\xi|^2)^{-(1+{\sf k})}.
 \]
%%%%%%%%%%%%%%%%%%%%%%%%%%%%%%%%%%%%%%%%%%%%%%%%%%%%%%%%%%%%%%%%
 Hence
 it follows that
 \begin{align*}
 (\pa_t-\Delta_x+q{\sf U}_\infty^{q-1})
 \bar w_1
 &=
 (\pa_t-\eta^{-2}\Delta_\xi+\eta^{-2}q{\sf U}_\infty(\xi)^{q-1})
 \bar w_1
 \\
 &>
 (-C(T-t)^{-1}
 +
 {\sf c}_B({\sf k})
 \eta^{-2}(1+|\xi|^2)^{-1})
 \bar w_1
 \\
 &=
 (T-t)^{-1}
 (-C
 +
 {\sf c}_B({\sf k})
 (T-t)
 \eta^{-2}
 (1+|\xi|^2)^{-1})
 \bar w_1.
 \end{align*}
 This immediately implies
 \begin{align*}
 (\pa_t-\Delta_x+q{\sf U}_\infty^{q-1})
 \bar w_1
 &>
 \tfrac{1}{4}
 {\sf c}_B({\sf k})
 \eta^{-2}
 \bar w_1
 \qquad
 \text{for } |\xi|<1,
 \\
 (\pa_t-\Delta_x+q{\sf U}_\infty^{q-1})
 \bar w_1
 &>
 (T-t)^{-1}
 (-C+{\sf c}_B({\sf k})|z|^{-2})
 \bar w_1
 \qquad
 \text{for }
 |\xi|>1.
 \end{align*}
 From the second inequality,
 there exists ${\sf r}_0'>0$ depending only on $q,n,J$
 such that
 \begin{align*}
 (\pa_t-\Delta_x+q{\sf U}_\infty^{q-1})
 \bar w_1
 &>
 \tfrac{1}{2}
 {\sf c}_B({\sf k})
 \eta^{-2}
 |\xi|^{-2}
 \bar w_1
 \qquad
 \text{for }
 |\xi|>1,\
 |z|<{\sf r}_0'.
 \end{align*}
%%%%%%%%%%%%%%%%%%%%%%%%%%%%%%%%%%%%%%%%%%%%%%%%%%%%%%%%%%%%%%%%
 We fix ${\sf r}_0\in(0,\frac{1}{2}{\sf r}_0')$.
 A comparison argument shows
 \begin{align}\label{eq8.4}
 |w_1|
 <
 \bar w_1
 &\lesssim
 {\sf R}_{\sf mid}^{-{\sf c}_1}
 {\sf R}_1^{-1}
 (T-t)^{{\sf d}_1}
 \eta^\frac{2}{1-q}
 |\xi|^{\gamma}
 (1+|\xi|^2)^{-{\sf c}_2}
 \end{align}
 for
 $|z|<2{\sf r}_0,\ t\in(0,T)$.
 Since all terms on the right-hand side of \eqref{eq8.3}
 are zero for $|\xi|>4{\sf l}_2$,
 we apply a comparison argument in $|\xi|>4{\sf l}_2$, $|z|<2{\sf r}_0$,
 then
 \begin{align}\label{eq8.5}
 |w_1|
 +
 |\nabla_\xi w_1|
 &\lesssim
 {\sf R}_{\sf mid}^{-{\sf c}_1}
 {\sf R}_1^{-1}
 (T-t)^{{\sf d}_1}
 \eta^\frac{2}{1-q}
 {\sf l}_2^{-{\sf c}_2+(n-3)}
 |\xi|^{\gamma-(n-3)}
 \end{align}
 for
 $|\xi|>4{\sf l}_2$,
 $|z|<2{\sf r}_0$,
 $t\in(0,T)$.
%%%%%%%%%%%%%%%%%%%%%%%%%%%%%%%%%%%%%%%%%%%%%%%%%%%%%%%%%%%%%%%%
 We next consider the second equation.
 \begin{align}\label{eq8.6}
 \begin{cases}
 \dis
 \pa_tw_2
 =
 \Delta_xw_2
 -
 q
 {\sf U}_\infty^{q-1}
 w_2
 +
 N_3[\epsilon,\tilde v_1,\tilde w_1]
 {\bf 1}_{|z|>1}
 \\
 \dis
 \qquad
 \quad
 +
 (
 N_4[\epsilon,\tilde v_1,\tilde w_1]
 +
 N_5[\tilde w_1]
 +
 N_6[\tilde w_1]
 )
 {\bf 1}_{|\xi|>1}
 \\
 \dis
 \qquad
 \quad
 -
 (T-t)^{-1}
 w_1
 (\Delta_z\chi_0)
 -
 2
 (T-t)^{-\frac{1}{2}}
 \eta^{-1}
 \nabla_\xi w_1
 \cdot
 \nabla_z\chi_0
 &
 \text{in } x\in\R^n,\
 t\in(0,T),
 \\
 w_2=w_0
 &
 \text{in } x\in\R^n,\
 t=0.
 \end{cases}
 \end{align}
 Once $w_2(x,t)$ is constructed,
 $w(x,t)=w_1\chi_0+w_2(x,t)$ gives a solution of \eqref{eq8.1},
 where $\chi_0=\chi(|z|/{\sf r}_0)$.
%%%%%%%%%%%%%%%%%%%%%%%%%%%%%%%%%%%%%%%%%%%%%%%%%%%%%%%%%%%%%%%%
 We introduce selfsimilar variables.
 \[
 x=z\sqrt{T-t},
 \qquad
 T-t=e^{-\tau}.
 \]
 Equation \eqref{eq8.6} is written in the selfsimilar variables.
 \begin{align}\label{eq8.7}
 \begin{cases}
 \dis
 \pa_\tau
 w_2
 =
 \Delta_z
 w_2
 -
 \tfrac{z}{2}
 \cdot
 \nabla_z
 w_2
 -
 q
 {\sf U}_\infty(z)^{q-1}
 w_2
 +
 (T-t)G
 &
 \text{in } z\in\R^n,\
 \tau\in(-\log T,\infty),
 \\
 w_2=w_0
 &
 \text{in } z\in\R^n,\
 \tau=-\log T,
 \end{cases}
 \end{align}
 where $G$ represents the right-hand side of \eqref{eq8.6}.
 From Lemma \ref{Lem8.1} - Lemma \ref{Lem8.2} and \eqref{eq8.5},
 there exists a positive constant ${\sf c}_1$ depending only on
 $q,n,J$ such that
 \begin{align}
 \nonumber
 G
 &\lesssim
 (T-t)^{-1+\frac{\gamma}{2}+J+{\sf c}_1}
 |z|^{\gamma}
 (1+|z|^2)^J
 {\bf 1}_{|z|<(T-t)^{-\frac{1}{3}}}
 \\
 \nonumber
 & \quad
 +
 (T-t)^{-1+\frac{\gamma}{2}+J+{\sf d}_1+{\sf c}_1}
 |z|^{\gamma+2J+3{\sf d}_1}
 {\bf 1}_{(T-t)^{-\frac{1}{3}}<|z|<{\sf l}_{\sf out}}
 \\
 \label{eq8.8}
 & \quad
 +
 |x|^\frac{2q}{1-q}
 {\bf 1}_{{\sf l}_{\sf out}\sqrt{T-t}<|x|<4}
 +
 {\sf R}_1^{-1}
 |x|^{-1}
 {\bf 1}_{|x|>1}.
 \end{align}
% Here
% we can assume ${\sf d}_1<\frac{{\sf c}_1}{2}$.
 From this estimate,
 we easily see that
 \begin{align}\label{eq8.9}
 \|G\|_\rho
 \lesssim
 (T-t)^{-1+\frac{\gamma}{2}+J+{\sf c}_1}.
 \end{align}
 Let $\rho(z)=e^{-\frac{|z|^2}{4}}$ and 
 $L_\rho^2(\R^n)$ be the weighted $L^2$ space defined in Section \ref{sec_4.2}.
 Furthermore
 let $\mu_j=\frac{\gamma}{2}+j$ be the $j$th eigenvalue of
 $-(\Delta_z-\frac{z}{2}\cdot\nabla_z-q{\sf U}_\infty(z)^{q-1})e=\mu e$,
 and $e_j(z)$ be the corresponding eigenfunction (see Section \ref{sec_4.2}).
 We now take
 \[
 w_0
 =
 \sum_{j=0}^J
 \alpha_j
 e_j(z)
 \chi_J
 \qquad
 \text{with }
 \chi_J
 =
 \chi(T^\frac{1}{3}|z|).
 \]
 We expand $e_j(z)\chi_J$ as
 \[
 e_j(z)
 \chi_J
 =
 \sum_{i=0}^J
 (e_j\chi_J,e_i)_\rho
 e_i
 +
 e_{j,J}^\bot.
 \]
 We write ${\sf A}_{ji}=(e_j\chi_J,e_i)_\rho$,
 then
 \begin{align}\label{eq8.10}
 w_0
 =
 \sum_{j=0}^J
 \sum_{i=0}^J
 \alpha_j
 {\sf A}_{ji}
 e_i(z)
 +
 \sum_{j=0}^J
 \alpha_j
 e_{j,J}^\bot.
 \end{align}
 We also expand $w_2$ as
 \begin{align*}
 w_2
 =
 \sum_{j=0}^Ja_j(\tau)e_j(z)
 +
 w_2^\bot
 \qquad
 \text{with }
 a_j(\tau)=(w_2(\cdot,\tau),e_j)_\rho.
 \end{align*}
 From \eqref{eq8.7},
 we verify that
 \begin{align*}
 \pa_\tau
 a_j
 =
 -\mu_j
 a_j
 +
 (T-t)
 (G,e_j)_\rho
 =
 -
 (
 \tfrac{\gamma}{2}+j
 )
 a_j
 +
 (T-t)
 (G,e_j)_\rho.
 \end{align*}
 Integrating both sides,
 we get
 \[
 e^{(\frac{\gamma}{2}+j)\tau}a_j(\tau)
 -
 \underbrace{
 e^{(\frac{\gamma}{2}+j)\tau}a_j|_{\tau=-\log T}
 }_{=T^{-(\frac{\gamma}{2}+j)}\sum_{i=0}^J\alpha_i{\sf A}_{ij}}
 =
 \int_{-\log T}^\tau
 e^{(\frac{\gamma}{2}+j)\tau_1}
 (T-t)
 (G,e_j)_\rho
 d\tau_1.
 \]
 We now chose $\alpha_j$ such that
 \[
 \sum_{i=0}^J
 \alpha_i
 {\sf A}_{ij}
 =
 -
 T^{\frac{\gamma}{2}+j}
 \int_{-\log T}^\infty
 e^{(\frac{\gamma}{2}+j)\tau_1}
 (T-t)
 (G,e_j)_\rho
 d\tau_1
 \qquad
 (j=0,1,2,\cdots,J).
 \]
 Since the matrix ${A}_{ij}$ is close to the identity matrix,
 $\alpha_j$ can be uniquely determined.
 Then
 $a_j(\tau)$ is represented as
 \[
 e^{(\frac{\gamma}{2}+j)\tau}
 a_j(\tau)
 =
 -
 \int_\tau^\infty
 e^{(\frac{\gamma}{2}+j)\tau_1}
 (T-t)
 (G,e_j)_\rho
 d\tau_1.
 \]
% From \eqref{eq8.8},
% we easily see that
% \begin{align}\label{eq8.9}
% \|G\|_\rho
% \lesssim
% (T-t)^{-1+\frac{\gamma}{2}+J+{\sf c}_1}.
% \end{align}
 Hence
 it follows from \eqref{eq8.9} that
 \begin{align}\label{eq8.11}
 a_j(\tau)
 \lesssim
 e^{-(\frac{\gamma}{2}+j)\tau}
 e^{(j-J-{\sf c}_1)\tau}
 =
 e^{-(\frac{\gamma}{2}+J+{\sf c}_1)\tau}
 =
 (T-t)^{\frac{\gamma}{2}+J+{\sf c}_1}.
 \end{align}
 Furthermore
 \eqref{eq8.9} shows
 \begin{align}\label{eq8.12}
 \nonumber
 \alpha_j
 &=
 -
 \sum_{i=0}^J
 {\sf A}_{ij}^{-1}
 T^{\frac{\gamma}{2}+j}
 \int_{-\log T}^\infty
 e^{(\frac{\gamma}{2}+j)\tau_1}
 (T-t)
 (G,e_i)_\rho
 d\tau_1
 \\
 &\lesssim
 T^{\frac{\gamma}{2}+j}
 \int_0^T
 (T-t)^{-1+J-j+{\sf c}_1}
 dt
 \lesssim
 T^{\frac{\gamma}{2}+J+{\sf c}_1}.
 \end{align}
%%%%%%%%%%%%%%%%%%%%%%%%%%%%%%%%%%%%%%%%%%%%%%%%%%%%%%%%%%%%%%%%
 We next provide estimates for $w_2^\bot$.
 Since $w_2$ solves \eqref{eq8.7},
 from \eqref{eq8.9},
 we get
 \begin{align*}
 \tfrac{1}{2}
 \pa_\tau
 \|w_2^\bot\|_\rho^2
 &=
 -\|\nabla_zw_2^\bot\|_\rho^2
 -q
 ({\sf U}_\infty(z)^{q-1}w_2^\bot,w_2^\bot)_\rho
 +
 (T-t)
 (G,w_2^\bot)
 \\
 &<
 -
 \mu_{J+1}
 \|w_2^\bot\|_\rho^2
 +
 C
 (T-t)^{\frac{\gamma}{2}+J+{\sf c}_1}
 \|w_2^\bot\|_\rho.
 \end{align*}
 We recall that
 $\mu_{J+1}=\frac{\gamma}{2}+J+1>\frac{\gamma}{2}+J+\frac{1}{2}{\sf c}_1$
 (see Section \ref{sec_4.2}).
 Hence
 it follows that
 \begin{align}\label{eq8.13}
 e^{2(\frac{\gamma}{2}+J+\frac{1}{2}{\sf c}_1)\tau}
 \|w_2^\bot\|_\rho^2
 -
 \underbrace{
 e^{2(\frac{\gamma}{2}+J+\frac{1}{2}{\sf c}_1)\tau}
 \|w_2^\bot\|_\rho^2
 |_{\tau=-\log T}
 }_{=T^{-2(\frac{\gamma}{2}+J+\frac{1}{2}{\sf c}_1)}
 \|w_0^\bot\|_\rho^2}
 \lesssim
 \int_0^t
 (T-t)^{-1+{\sf c}_1}
 dt
 \lesssim
 T^{{\sf c}_1}.
 \end{align}
%%%%%%%%%%%%%%%%%%%%%%%%%%%%%%%%%%%%%%%%%%%%%%%%%%%%%%%%%%%%%%%%
 We go back to \eqref{eq8.10} to estimate
 $T^{-2(\frac{\gamma}{2}+J+\frac{1}{2}{\sf c}_1)}
 \|w_0^\bot\|_\rho^2$.
 We note that
 \begin{align*}
 e_{j,J}^\bot
 &=
 e_j\chi_J
 -
 \sum_{i=0}^J
 (e_j\chi_J,e_i)e_i
 \\
 &=
 e_j\chi_J
 -
 (e_j\chi_J,e_j)e_j
 -
 \sum_{i\not=j}
 (e_j(\chi_J-1),e_i)e_i
 \\
 &=
 e_j(\chi_J-1)
 +
 (e_j(1-\chi_J),e_j)e_j
 -
 \sum_{i\not=j}
 (e_j(\chi_J-1),e_i)e_i.
 \end{align*}
 Since $1-\chi_J=0$ for $|z|<T^{-\frac{1}{3}}$
 and
 $e_j={\sf E}_j|z|^{\gamma+2j}$ as $|z|\to\infty$
 (see Section \ref{sec_4.2}),
 it holds that
 \[
 \|e_j(1-\chi_J)\|_\rho^2
 =
 \|e_j(1-\chi_J)\|_{L_\rho^2(|z|>T^{-\frac{1}{3})}}^2
 \lesssim
 e^{-{\sf p}}
 \qquad
 \text{with }
 {\sf p}=
 \tfrac{1}{8}T^{-\frac{2}{3}}.
 \]
 This together with \eqref{eq8.12} shows
 \[
 \|w_0^\bot\|_\rho
 =
 \|\sum_{j=0}^J
 \alpha_j
 e_{j,J}^\bot
 \|_\rho
 \lesssim
 \sum_{j=0}^J
 \alpha_j
 \|e_{j,J}^\bot\|_\rho
 \lesssim
 T^{\frac{\gamma}{2}+J+{\sf c}_1}
 e^{-{\sf p}}.
 \]
 Therefore
 \eqref{eq8.13} can be written as
 \begin{align*}
 \|w_2^\bot\|_\rho^2
 &\lesssim
 e^{-2(\frac{\gamma}{2}+J+\frac{1}{2}{\sf c}_1)\tau}
 (
 T^{-2(\frac{\gamma}{2}+J+\frac{1}{2}{\sf c}_1)}
 \|w_0^\bot\|_\rho^2
 +
 T^{{\sf c}_1}
 )
 \\
 &\lesssim
 T^{{\sf c}_1}
 e^{-2(\frac{\gamma}{2}+J+\frac{1}{2}{\sf c}_1)\tau}.
 \end{align*}
 Combining \eqref{eq8.11},
 we conclude
 \begin{align}\label{eq8.14}
 \|w_2\|_\rho
 &\lesssim
 T^{\frac{1}{2}{\sf c}_1}
 (T-t)^{\frac{\gamma}{2}+J+\frac{1}{2}{\sf c}_1}.
 \end{align}
 By using this estimate,
 we provide pointwise estimates for $w_2$ below.
 The proof is divided into four parts.
 \begin{enumerate}[(i)]
 \item $|z|<1$
 \item $1<|z|<{\sf l}_{\sf out}$
 \item ${\sf l}_{\sf out}\sqrt{T-t}<|x|<1$
 \item $|x|>1$
 \end{enumerate}
 We first consider the case (i).
 To verify $w_2\in X_1$,
 it is sufficient to obtain the spacial weight $|z|^\gamma$ as $|z|\to0$.
 Let $e_{J+1}(z)$ be the eigenfunction of
 $-(\Delta_z-\frac{z}{2}\cdot\nabla_z-q{\sf U}_\infty(z)^{q-1})e=\mu e$
 defined in Section \ref{sec_4.2}.
 We can choose ${\sf r}\in(0,1)$ such that $e_{J+1}(z)>0$ for $|z|<{\sf r}$.
 From \eqref{eq8.8}, \eqref{eq8.12} and \eqref{eq8.14},
 we easily check that
 $\bar w_2=
 T^{\frac{1}{2}{\sf c}_1}
 (T-t)^{\frac{\gamma}{2}+J+\frac{1}{2}{\sf c}_1}
 e_{J+1}(z)$
 gives a super solution of \eqref{eq8.7} in $|z|<{\sf r}$.
 Hence
 we have
 \begin{align}\label{eq8.15}
 \nonumber
 |w_2|
 &\lesssim
 \bar w_2
 =
 T^{\frac{1}{2}{\sf c}_1}
 (t-t)^{\frac{\gamma}{2}+J+\frac{1}{2}{\sf c}_1}
 e_{J+1}(z)
 \\
 &\lesssim
 T^{\frac{1}{2}{\sf c}_1}
 (t-t)^{\frac{\gamma}{2}+J+\frac{1}{2}{\sf c}_1}
 |z|^\gamma
 \qquad
 \text{for }
 |z|<{\sf r},\
 t\in(0,T).
 \end{align}
 In the last inequality,
 we use the property of $e_j(z)$ (see Section \ref{sec_4.2}).
 Since we can assume $2{\sf d}_1<{\sf c}_1$,
 \eqref{eq8.15} proves the case (i).
 We next consider the case (ii).
 From Section \ref{sec_5.3},
 we recall that
 ${\sf l}_{\sf out}=L_2(T-t)^{-\frac{1}{2}+{\sf b}_{\sf out}}$
 with 
 ${\sf b}_{\sf out}
 =
 \tfrac{{\sf d}_1}{2(\gamma+2J-\frac{2}{1-q}+3{\sf d}_1)}$
 and
 \begin{align}\label{eq8.16}
 (T-t)^{\frac{\gamma}{2}+J+{\sf d}_1}
 |z|^{\gamma+2J+3{\sf d}_1}
 >
 {\sf U}_\infty(x)
 \qquad
 \Leftrightarrow
 \qquad
 |z|>{\sf l}_{\sf out}.
 \end{align}
 We put
 ${\sf l}_{\sf x}=(T-t)^{{\sf b}_{\sf x}}$
 with ${\sf b}_{\sf x}=\frac{{\sf d}_1}{2(\gamma+2J+3{\sf d}_1)}$.
 A direct computation shows
 \begin{align}\label{eq8.17}
 (T-t)^{\frac{\gamma}{2}+J+{\sf d}_1}
 |z|^{\gamma+2J+3{\sf d}_1}
 >
 1
 \qquad
 \Leftrightarrow
 \qquad
 |x|>{\sf l}_{\sf x}.
 \end{align}
 From the definition of ${\sf l}_{\sf out}$ and ${\sf l}_x$,
%%%%%%%%%%%%%%%%%%%%%%%%%%%%%%%%%%%%%%%%%%%%%%%%%%%%%%%%%%%%%%%%
 we note that
 ${\sf l}_{\sf out}\sqrt{T-t}<{\sf l}_{\sf x}$.
 We here write \eqref{eq8.8} in the following form.
 \begin{align*}
 G
 &
 {\bf 1}_{|z|>1}
 {\bf 1}_{|x|<{\sf l}_{\sf x}}
 \lesssim
 (T-t)^{-1+\frac{\gamma}{2}+J+{\sf c}_1}
 |z|^{\gamma+2J}
 {\bf 1}_{1<|z|<(T-t)^{-\frac{1}{3}}}
 \\
 &\quad
 +
 (T-t)^{-1+\frac{\gamma}{2}+J+{\sf d}_1+{\sf c}_1}
 |z|^{\gamma+2J+3{\sf d}_1}
 {\bf 1}_{(T-t)^{-\frac{1}{3}}<|z|<{\sf l}_{\sf out}}
 +
 |x|^{\frac{2q}{1-q}}
 {\bf 1}_{{\sf l}_{\sf out}\sqrt{T-t}<|x|<{\sf l}_{\sf x}}
 \\
 &\lesssim
 (T-t)^{-1+\frac{\gamma}{2}+J+{\sf c}_1}
 |z|^{\gamma+2J+3{\sf d}_1}
 {\bf 1}_{1<|z|<{\sf l}_{\sf out}}
 +
 |x|^{-2}
 {\sf U}_\infty
 {\bf 1}_{{\sf l}_{\sf out}\sqrt{T-t}<|x|<{\sf l}_x}.
 \end{align*}
 We observe from \eqref{eq8.16} that
 \begin{align*}
 |x|^{-2}
 {\sf U}_\infty
 &\lesssim
 |x|^{-2}
 (T-t)^{\frac{\gamma}{2}+J+{\sf d}_1}
 |z|^{\gamma+2J+3{\sf d}_1}
 \\
 &\lesssim
 (T-t)^{-2{\sf b}_{\sf out}+\frac{\gamma}{2}+J+{\sf d}_1}
 |z|^{\gamma+2J+3{\sf d}_1}
 \qquad
 \text{for }
 {\sf l}_{\sf out}\sqrt{T-t}<|x|<{\sf l}_x.
 \end{align*}
 Hence
 it follows that
%%%%%%%%%%%%%%%%%%%%%%%%%%%%%%%%%%%%%%%%%%%%%%%%%%%%%%%%%%%%%%%%
 \begin{align*}
 G
 {\bf 1}_{|z|>1}
 {\bf 1}_{|x|<{\sf l}_{\sf x}}
 &\lesssim
 (T-t)^{-1+\frac{\gamma}{2}+J+{\sf c}_1}
 |z|^{\gamma+2J+3{\sf d}_1}
 {\bf 1}_{|z|>1}
 {\bf 1}_{|x|<{\sf l}_{\sf x}}.
 \end{align*}
%%%%%%%%%%%%%%%%%%%%%%%%%%%%%%%%%%%%%%%%%%%%%%%%%%%%%%%%%%%%%%%%
 Furthermore
 from \eqref{eq8.8} and \eqref{eq8.17},
 we immediately see that
 \begin{align*}
 G
 {\bf 1}_{|x|>{\sf l}_{\sf x}}
 &\lesssim
 (
 |x|^\frac{2q}{1-q}
 {\bf 1}_{{\sf l}_{\sf out}\sqrt{T-t}<|x|<4}
 +
 {\sf R}_1^{-1}
 |x|^{-1}
 {\bf 1}_{|x|>1}
 )
 {\bf 1}_{|x|>{\sf l}_{\sf x}}
 \\
 &\lesssim
 {\bf 1}_{|x|>{\sf l}_{\sf x}}
 \\
 &\lesssim
 (T-t)^{\frac{\gamma}{2}+J+{\sf d}_1}
 |z|^{\gamma+2J+3{\sf d}_1}
 {\bf 1}_{|x|>{\sf l}_{\sf x}}.
 \end{align*}
 Therefore
 we obtain
 \[
 G{\bf 1}_{|z|>1}
 \lesssim
 (T-t)^{-1+\frac{\gamma}{2}+J+{\sf c}_1}
 |z|^{\gamma+2J+3{\sf d}_1}
 {\bf 1}_{|z|>1}.
 \]
% Here
% we can assume
% $4{\sf d}_1<{\sf c}_1<1$.
 We put
 $\bar{w}_2
 =
 (T-t)^{\frac{\gamma}{2}+J+\frac{5}{4}{\sf d}_1}
 |z|^{\gamma+2J+3{\sf d}_1}$.
 A direct computation shows
 \begin{align*}
 (
 \pa_\tau-\Delta_z+\tfrac{z}{2}\cdot\nabla_z+q{\sf U}_\infty(z)^{q-1}
 )
 \bar{w}_2
 &=
 (
 -(
 \tfrac{\gamma}{2}+J+\tfrac{5}{4}{\sf d}_1
 )
 +
 \tfrac{1}{2}
 (
 \gamma+2J+3{\sf d}_1
 )
 -
 C
 |z|^{-2}
 )
 \bar{w}_2
 \\
 &=
 (
 \tfrac{1}{4}{\sf d}_1
 -
 C
 |z|^{-2}
 )
 \bar{w}_2.
 \end{align*}
 Here
 we choose ${\sf m}_3>0$ such that
 $\tfrac{1}{4}{\sf d}_1-C|z|^{-2}>\tfrac{1}{8}{\sf d}_1$
 for $|z|>{\sf m}_3$.
 Hence
 it follows that
 \begin{align*}
 (
 \pa_\tau-\Delta_z+\tfrac{z}{2}\cdot\nabla_z+q{\sf U}_\infty(z)^{q-1}
 )
 \bar{w}_2
 >
 \tfrac{1}{8}
 {\sf d}_1
 \bar{w}_2
 \qquad
 \text{for }
 |z|>{\sf m}_3.
 \end{align*}
 Furthermore
 from \eqref{eq8.12},
 it holds that
 \[
 w_0
 =
 \sum_{j=0}^J
 \alpha_j
 e_j
 \chi_J
 \lesssim
 T^{\frac{\gamma}{2}+J+{\sf c}_1}
 |z|^{\gamma+2J}
 {\bf 1}_{|z|<2T^{-\frac{1}{3}}}
 \qquad
 \text{for }
 |z|>1.
 \]
 This implies
 $|w_0|<\bar{w}_2|_{t=0}$ for $|z|>1$.
%%%%%%%%%%%%%%%%%%%%%%%%%%%%%%%%%%%%%%%%%%%%%%%%%%%%%%%%%%%%%%%%
 Therefore
 a comparison argument shows
 \begin{align}\label{eq8.18}
 |w_2|
 <
 \bar{w}_2
 =
 (T-t)^{\frac{\gamma}{2}+J+\frac{5}{4}{\sf d}_1}
 |z|^{\gamma+2J+3{\sf d}_1}
 \qquad
 \text{for }
 |z|>{\sf m}_3,\
 t\in(0,T).
 \end{align}
 Since $T^{\frac{1}{8}{\sf d}_1}<{\sf R}_1^{-1}$,
 the case (ii) is proved.
%%%%%%%%%%%%%%%%%%%%%%%%%%%%%%%%%%%%%%%%%%%%%%%%%%%%%%%%%%%%%%%%
 We investigate the case (iii).
 From \eqref{eq8.8}.
 we easily see that
 \begin{align}\label{eq8.19}
 \nonumber
 G
 {\bf 1}_{|z|>{\sf l}_{\sf out}}
 &\lesssim
 |x|^\frac{2q}{1-q}
 {\bf 1}_{{\sf l}_{\sf out}\sqrt{T-t}<|x|<4}
 {\bf 1}_{|z|>{\sf l}_{\sf out}}
 +
 {\sf R}_1^{-1}
 |x|^{-1}
 {\bf 1}_{|x|>1}
 {\bf 1}_{|z|>{\sf l}_{\sf out}}
 \\
 \nonumber
 &\lesssim
 |x|^{\frac{2q}{1-q}}
 {\bf 1}_{|z|>{\sf l}_{\sf out}}
 \\
 \nonumber
 &\lesssim
 ({\sf l}_{\sf out}\sqrt{T-t})^{-2}
 |x|^{\frac{2}{1-q}}
 {\bf 1}_{|z|>{\sf l}_{\sf out}}
 \\
 &\lesssim
 (T-t)^{-2{\sf b}_{\sf out}}
 |x|^{\frac{2}{1-q}}
 {\bf 1}_{|z|>{\sf l}_{\sf out}}.
 \end{align}
 From \eqref{eq8.18} and \eqref{eq8.16},
 we note that
 $|w_2|
 <
 (T-t)^{\frac{1}{4}{\sf d}_1}
 {\sf U}_\infty(x)$
 on
 $|z|={\sf l}_{\sf out}$.
 Furthermore
 since $\frac{1}{3}<\frac{1}{2}-{\sf b}_{\sf out}$,
 it holds that $w_2|_{t=0}=0$
 for $|z|>{\sf l}_{\sf out}|_{t=0}$.
 We choose a comparison function $\bar{w}_2$ as
 \[
 \bar{w}_2
 =
 2T^{\frac{1}{4}{\sf d}_1}
 e^{-2(T-t)^\frac{1}{2}}
 {\sf U}_\infty(x).
 \]
 We here go back to \eqref{eq8.6}.
 We compute
 \begin{align*}
 \left(
 \pa_t-\Delta_x+q{\sf U}_\infty(x)^{q-1}
 \right)
 \bar{w}_2
 &=
 (
 (T-t)^{-\frac{1}{2}}
 -
 (1-q)
 {\sf U}_\infty^{q-1}
 )
 \bar{w}_2
 \\
 &=
 (
 (T-t)^{-\frac{1}{2}}
 -
 (1-q)
 L_1^{q-1}
 |x|^{-2}
 )
 \bar{w}_2
 \\
 &\geq
 (
 (T-t)^{-\frac{1}{2}}
 -
 C(T-t)^{-2{\sf b}_{\sf out}}
 )
 \bar{w}_2
 \\
 &\geq
 \tfrac{1}{2}
 (T-t)^{-\frac{1}{2}}
 \bar{w}_2
 \qquad
 \text{for }
 |z|>{\sf l}_{\sf out}.
 \end{align*}
 Since
 ${\sf b}_{\sf out}<C{\sf d}_1$ (see the definition of ${\sf b}_{\sf out}$),
 we can assume ${\sf b}_{\sf out}<\frac{1}{8}$.
%%%%%%%%%%%%%%%%%%%%%%%%%%%%%%%%%%%%%%%%%%%%%%%%%%%%%%%%%%%%%%%%
 Hence
 \eqref{eq8.19} can be written as
%%%%%%%%%%%%%%%%%%%%%%%%%%%%%%%%%%%%%%%%%%%%%%%%%%%%%%%%%%%%%%%%
 \begin{align*}
 G
 {\bf 1}_{|z|>{\sf l}_{\sf out}}
 &<
 C
 (T-t)^{-2{\sf b}_{\sf out}}
 |x|^{\frac{2}{1-q}}
 {\bf 1}_{|z|>{\sf l}_{\sf out}}
 \\
 &<
 (T-t)^{-\frac{1}{4}}
 |x|^{\frac{2}{1-q}}
 {\bf 1}_{|z|>{\sf l}_{\sf out}}
 \\
 &<
 C
 (T-t)^{-\frac{1}{4}}
 T^{-\frac{1}{4}{\sf d}_1}
 \bar{w}_2
 {\bf 1}_{|z|>{\sf l}_{\sf out}}
 \\
 &=
 C
 (T-t)^{\frac{1}{4}}
 T^{-\frac{1}{4}{\sf d}_1}
 \cdot
 \tfrac{1}{2}
 (T-t)^{-\frac{1}{2}}
 \bar{w}_2
 {\bf 1}_{|z|>{\sf l}_{\sf out}}.
 \end{align*}
%%%%%%%%%%%%%%%%%%%%%%%%%%%%%%%%%%%%%%%%%%%%%%%%%%%%%%%%%%%%%%%%
 Therefore
 from a comparison argument,
 we obtain
 \begin{align}\label{eq8.20}
 |w_2|
 <
 \bar{w}_2
 =
 2T^{\frac{1}{4}{\sf d}_1}
 e^{-2(T-t)^\frac{1}{2}}
 {\sf U}_\infty(x)
 \qquad
 \text{for } |z|>{\sf l}_{\sf out},\
 t\in(0,T).
 \end{align}
 This gives the desired estimate for the case (iii).
%%%%%%%%%%%%%%%%%%%%%%%%%%%%%%%%%%%%%%%%%%%%%%%%%%%%%%%%%%%%%%%%
 We finally discuss the case (iv).
 It holds from \eqref{eq8.8} that
 \begin{align*}
 G
 {\bf 1}_{|x|>1}
 &\lesssim
 |x|^\frac{2q}{1-q}
 {\bf 1}_{1<|x|<4}
 +
 {\sf R}_1^{-1}
 |x|^{-1}
 {\bf 1}_{|x|>1}
 \lesssim
 |x|^{-1}
 {\bf 1}_{|x|>1}.
 \end{align*}
 From \eqref{eq8.20} and the definition of $w_0(x)$,
 we have
 \begin{align*}
 |w_2|
 &\lesssim
 T^{\frac{1}{4}{\sf d}_1}
 \qquad
 \text{on } |x|=1,
 \\
 w_2|_{t=0}
 &=
 0
 \qquad
 \text{for } |x|>1.
 \end{align*}
 Taking into account this relation,
 we define
 $
 \bar w_2
 =
 2
 T^{\frac{1}{4}{\sf d}_1}
 e^{-2(T-t)^\frac{1}{2}}
 |x|^{-1}$
 as a comparison function.
 Then
 we get
 \begin{align*}
 (
 \pa_t-\Delta_x+q{\sf U}_\infty(x)^{q-1}
 )
 \bar{w}_2
 >
 \pa_t
 \bar{w}_2
 =
 (T-t)^{-\frac{1}{2}}
 \bar{w}_2.
 \end{align*}
 This implies
 $|G|
 {\bf 1}_{|x|>1}
 <
 (
 \pa_t-\Delta_x+q{\sf U}_\infty(x)^{q-1}
 )
 \bar{w}_2$
 for $|x|>1$.
 A comparison argument shows
 \begin{align}\label{eq8.21}
 |w_2|
 <
 \bar{w}_2(x,t)
 \lesssim
 T^{\frac{1}{4}{\sf d}_1}
 |x|^{-1}
 \qquad
 \text{for }
 |x|>1.
 \end{align}
 Combining
 \eqref{eq8.4}, \eqref{eq8.15}, \eqref{eq8.18}, \eqref{eq8.20}
 and \eqref{eq8.21},
 we conclude
 $w(x,t)=w_1\chi_0+w_2\in X_1\subset X_1^{(\delta)}$.
 This completes the proof.
%%%%%%%%%%%%%%%%%%%%%%%%%%%%%%%%%%%%%%%%%%%%%%%%%%%%%%%%%%%%%%%%

 \appendix
 \def\thesection{\Alph{lem}}
 \renewcommand{\thesection}{\arabic{section}}
 \setcounter{section}{8}
 \section{Appendix}
 \label{sec_9}
%%%%%%%%%%%%%%%%%%%%%%%%%%%%%%%%%%%%%%%%%%%%%%%%%%%%%%%%%%%%%%%%
 In this section ,
 we provide details of computations $g$, $N$ and $F$.
 We here introduce new parameters related to ${\sf l}_1$.
% From the definition of ${\sf l}_1$,
% $|y|={\sf l}_1$
% Since
% $|\xi|=\lambda\eta^{-1}|y|$,
 We compute
 the position of
 $|y|={\sf l}_1$ (${\sf l}_1=\sigma^{-\frac{1}{n-2}}$)
 in the $\xi$ variable.
 \begin{align*}
%%%%%%%%%%%%%%%%%%%%%%%%%%%%%%%%%%%%%%%%%%%%%%%%%%%%%%%%%%%%%%%%
 |\xi|
 &=
 \lambda
 \eta^{-1}
 |y|
 =
 \lambda
 \eta^{-1}
 {\sf l}_1
 \\
 &=
 \lambda
 (\lambda^{-\frac{n-2}{2}}\sigma)^{-\frac{1-q}{2}}
 \sigma^{-\frac{1}{n-2}}
 \\
 &=
 \lambda^{1+\frac{(n-2)(1-q)}{4}}
 \sigma^{-\frac{1-q}{2}-\frac{1}{n-2}}
 \\
 &=
 \lambda^{1+\frac{(n-2)(1-q)}{4}}
 ((T-t)^{-1}\lambda^2)^{-\frac{1-q}{2}-\frac{1}{n-2}}
 \\
 &=
 (T-t)^{\frac{1-q}{2}+\frac{1}{n-2}}
 \lambda^{1+\frac{(n-2)(1-q)}{4}-(1-q)-\frac{2}{n-2}}
 \\
 &=
 (T-t)^{\frac{1-q}{2}+\frac{1}{n-2}}
 \lambda^{-\frac{(6-n)(1-q)}{4}+\frac{n-4}{n-2}}.
 \end{align*}
%%%%%%%%%%%%%%%%%%%%%%%%%%%%%%%%%%%%%%%%%%%%%%%%%%%%%%%%%%%%%%%%
 Therefore
 there exist ${\sf q}_1,{\sf q}_2\in(0,1)$ such that
 \begin{align}\label{eq9.1}
 \lambda^{-1}
 \eta
 (T-t)^{{\sf q}_1}
 =
 (T-t)^{-{\sf q}_2}
 {\sf l}_1.
 \end{align}
%%%%%%%%%%%%%%%%%%%%%%%%%%%%%%%%%%%%%%%%%%%%%%%%%%%%%%%%%%%%%%%%

 \subsection{List of definition of $g$, $N$, $F$, $h$, $k$}
%%%%%%%%%%%%%%%%%%%%%%%%%%%%%%%%%%%%%%%%%%%%%%%%%%%%%%%%%%%%%%%%
 We here collect definition of
 $g_i$, $g_i'$, $g_{{\sf out},i}'$, $N_i$, $F_i$,
 $h_1$, $h_1'$, $k_1$ and $k_1'$.
 Since they are error term,
 we omit their signs.
% \vspace{2mm}
 \begin{enumerate}
 \item
 $g_0=
 \lambda^{-\frac{n+2}{2}}
 \lambda
 \dot\lambda
 (\Lambda_y{\sf Q})
 (\chi_2-\chi_1)$

 \item
 $g_1=
 \lambda^{-\frac{n-2}{2}}
 {\sf Q}
 \dot\chi_2$

 \item
 $g_2=
 (
 \pa_t(
 \lambda^{-\frac{n-2}{2}}\sigma
 T_1
 )
 )
 \chi_1$
 
 \item
 $g_3=
 (
 \pa_t
 (
 \eta^\frac{2}{1-q}
 {\sf U}
 )
 )
 \chi_2
 (1-\chi_1)$

 \item
 $g_4=
 \theta
 \dot\chi_2
 \chi_3$
 
 \item
 $g_5=
 (
 \lambda^{-\frac{n-2}{2}}\sigma T_1
 +
 {\sf U}_{\sf c}
 )
 \dot\chi_1$

 \item
 $g_6=
 (
 -
 \eta^\frac{2}{1-q}{\sf U}
 +
 {\sf U}_\infty
 \chi_4
 +
 \Theta_J
 \chi_3
 )
 \dot\chi_2$
%%%%%%%%%%%%%%%%%%%%%%%%%%%%%%%%%%%%%%%%%%%%%%%%%%%%%%%%%%%%%%%%
 \item
 $g_0'=
 \eta^{-1}
 \lambda^{-\frac{n}{2}}
 (\nabla_y{\sf Q}\cdot\nabla_\xi\chi_2)
 +
 \eta^{-2}
 \lambda^{-\frac{n-2}{2}}
 {\sf Q}
 (\Delta_\xi\chi_2)$

 \item
 $g_1'=
 2\lambda^{-\frac{n+2}{2}}
 \sigma
 \nabla_yT_1
 \cdot\nabla_y\chi_1$

 \item
 $g_2'=
 2\lambda^{-1}
 \nabla_x{\sf U}_{\sf c}\cdot\nabla_y\chi_1$

 \item
 $g_3'=
 2(\nabla_x\theta\cdot\nabla_x\chi_2)
 \chi_3$

 \item
 $g_4'=
 \theta(\Delta_x\chi_2)
 \chi_3$

 \item
 $g_5'=
 \lambda^{-2}
 (
 \lambda^{-\frac{n-2}{2}}
 \sigma T_1
 +
 {\sf U}_{\sf c}
 )
 (\Delta_y\chi_1)$

 \item
 $g_6'=
 2\eta^{-1}
 \nabla_x(
 \eta^\frac{2}{1-q}{\sf U}
 -
 {\sf U}_\infty
 \chi_4
 -
 \Theta_J
 \chi_3)
 \cdot\nabla_\xi\chi_2$

 \item
 $g_7'=
 \eta^{-2}
 (
 \eta^\frac{2}{1-q}
 {\sf U}
 -
 {\sf U}_\infty
 \chi_4
 -
 \Theta_J
 \chi_3
 )
 (\Delta_\xi\chi_2)$
%%%%%%%%%%%%%%%%%%%%%%%%%%%%%%%%%%%%%%%%%%%%%%%%%%%%%%%%%%%%%%%%
 \item
 $g_8'=
 \lambda^{-2}
 V
 (
 {\sf U}_{\sf c}
 (1-\chi_1)
 +
 (\theta+\Theta_J)
 (1-\chi_2)
 )
 \chi_{2}$

 \item
 $g_9'=
 \eta^{-2}
 {\sf U}^{q-1}
 (u+\eta^\frac{2}{1-q}{\sf U}-u_1)
 (1-\chi_{1})
 \chi_{2}$

 \item
 $g_{10}'=
 {\sf U}_\infty^{q-1}
 (u+{\sf U}_\infty+(\theta+\Theta_J)\chi_3-u_1)
 (1-\chi_{2})
 \chi_4$
%%%%%%%%%%%%%%%%%%%%%%%%%%%%%%%%%%%%%%%%%%%%%%%%%%%%%%%%%%%%%%%%
 \item
 $g_{{\sf out},1}'=
 2(\nabla_x{\sf U}_\infty\cdot\nabla_x\chi_4)
 (1-\chi_2)$

 \item
 $g_{{\sf out},2}'=
 {\sf U}_\infty
 (1-\chi_2)
 (\Delta_x\chi_4)$

 \item
 $g_{{\sf out},3}'=
 {\sf M}(t)
 \Delta_x\chi_4$

 \item
 $g_{{\sf out},4}'=
 2(\nabla_x\theta\cdot\nabla_x\chi_3)
 (1-\chi_2)$

 \item
 $g_{{\sf out},5}'=
 \theta
 (1-\chi_2)
 (\Delta_x\chi_3)$

 \item
 $g_{{\sf out},6}'=
 2(\nabla_x\Theta_J\cdot\nabla_x\chi_3)
 (1-\chi_2)$

 \item
 $g_{{\sf out},7}'=
 \Theta_J
 (1-\chi_2)
 (\Delta_x\chi_3)$
%%%%%%%%%%%%%%%%%%%%%%%%%%%%%%%%%%%%%%%%%%%%%%%%%%%%%%%%%%%%%%%%
 \item
 $N_1[\epsilon,v,w]=
 \{
 f(u)-f(\lambda^{-\frac{n-2}{2}}{\sf Q})
 -
 \lambda^{-2}
 V
 (u-\lambda^{-\frac{n-2}{2}}{\sf Q})
 \}
 \chi_{2}$

 \item
 $N_2[\epsilon,v,w]=
 \{
 f_2(u)
 +
 f_2(\eta^\frac{2}{1-q}{\sf U})
 -
 q\eta^{-2}{\sf U}^{q-1}
 (u+\eta^\frac{2}{1-q}{\sf U})
 \}
 (1-\chi_{1})
 \chi_{2}$

 \item
 $N_3[\epsilon,v,w]=
 \{
 f(u)
 -
 f_2(u)
 -
 f_2({\sf U}_\infty)
 +
 q{\sf U}_\infty^{q-1}
 (u+{\sf U}_\infty)
 -
 (\Delta_x\theta-q{\sf U}_\infty^{q-1}\theta)
 \chi_3
 \}
 (1-\chi_{2})
 \chi_{4}$

 \item
 $N_4[\epsilon,v,w]=
 f_2(u)
 \chi_1$

 \item
 $N_5[w]=
 q{\sf U}_\infty^{q-1}
 w
 (1-\chi_4)$

 \item
 $N_6[w]=
 \{
 \dot{\sf M}
 +
 f(u)
 -
 f_2(u)
 \}
 (1-\chi_4)$
%%%%%%%%%%%%%%%%%%%%%%%%%%%%%%%%%%%%%%%%%%%%%%%%%%%%%%%%%%%%%%%%
%%%%%%%%%%%%%%%%%%%%%%%%%%%%%%%%%%%%%%%%%%%%%%%%%%%%%%%%%%%%%%%%
 \item
 $F_1[v,w]=
 \lambda^{-2}
 V
 (\eta^\frac{2}{1-q}v\chi_{\sf mid}+w)
 \chi_2$
 
 \item
 $F_2[w]
 =
 q
 (\eta^{-2}{\sf U}^{q-1}(1-\chi_1)-{\sf U}_\infty^{q-1})
 w
 \chi_2$
%%%%%%%%%%%%%%%%%%%%%%%%%%%%%%%%%%%%%%%%%%%%%%%%%%%%%%%%%%%%%%%%
%%%%%%%%%%%%%%%%%%%%%%%%%%%%%%%%%%%%%%%%%%%%%%%%%%%%%%%%%%%%%%%%
 \item
 $h_1[\epsilon]=
 -\lambda^{-\frac{n+2}{2}}
 \lambda\dot\lambda
 (\Lambda_y\epsilon)\chi_{\sf in}
 +
 \lambda^{-\frac{n+2}{2}}
 \lambda^2
 \epsilon
 \pa_t\chi_{\sf in}$

 \item
 $h_1'[\epsilon]
 =
 2\lambda^{-\frac{n+2}{2}}
 \nabla_y\epsilon\cdot\nabla_y\chi_{\sf in}
 +
 \lambda^{-\frac{n+2}{2}}
 \epsilon\Delta_y\chi_{\sf in}$

 \item
 $k_1[v]=
 \eta^\frac{2}{1-q}
 v
 \pa_t\chi_{\sf mid}$

 \item
 $k_1'[v]=
 2\eta^\frac{2q}{1-q}
 (\nabla_\xi v\cdot\nabla_\xi\chi_{\sf mid})
 +
 \eta^\frac{2q}{1-q}
 v(\Delta_\xi\chi_{\sf mid})$
 \end{enumerate}
%%%%%%%%%%%%%%%%%%%%%%%%%%%%%%%%%%%%%%%%%%%%%%%%%%%%%%%%%%%%%%%%

 \subsection{Assuptions}
 \label{sec_9.2}
%%%%%%%%%%%%%%%%%%%%%%%%%%%%%%%%%%%%%%%%%%%%%%%%%%%%%%%%%%%%%%%%
 Throughout Section \ref{sec_9.3} - Section \ref{sec_9.5},
 we assume
% \vspace{1mm}
 \begin{enumerate}[(\text{A}1)]
 \setlength{\leftskip}{5mm}
 \item
 $\lambda(t)\in C^1([0,T])$ and
 $|\lambda\dot\lambda-\sigma|<D_1(T-t)^{{\sf d}_1}\sigma$,
 \item
 $|\epsilon(y,t)|<\sigma$ for
 $(y,t)\in\overline{B}_{{\sf R}_{\sf in}}\times[0,T]$,
 \item
 $(v,w)\in X$.
 \end{enumerate}
%%%%%%%%%%%%%%%%%%%%%%%%%%%%%%%%%%%%%%%%%%%%%%%%%%%%%%%%%%%%%%%%
 We list several formulas often used in this section.
% \vspace{1mm}
 \begin{enumerate}[{\rm (}\sf Ei{\rm )}]
 \setlength{\leftskip}{2mm}
 \item $\lambda\dot\lambda=\sigma(1+o)$
 \quad
 (see (A1)),

 \item $\eta^\frac{2}{1-q}=-{\sf A}_1\lambda^{-\frac{n-2}{2}}\sigma$
 \quad
 (see the definition of $\sigma(t)$),

 \item ${\sf l}_1(t)=\sigma(t)^{-\frac{1}{n-2}}$
 \quad
 (see the beginning of Section \ref{sec_5}),

 \item $|T_1(y)|+|y|^2|\nabla_yT_1|\lesssim1$ for $y\in\R^n$
 \quad
 (see \eqref{eq4.1} - \eqref{eq4.2}),

 \item
 $|\lambda^{-\frac{n-2}{2}}\sigma T_1+\eta^\frac{2}{1-q}{\sf U}|
 \lesssim\eta^\frac{2}{1-q}(|y|^{-1}+|\xi|^2)$
 as $|y|\to\infty$, $|\xi|\to0$
 \\[1mm]
 \hspace{65mm}
 (see the definition of $\eta(t)$ and \eqref{eq4.2}),

 \item
 $|\eta^\frac{2}{1-q}{\sf U}(\xi)-{\sf U}_\infty(x)-\Theta_J(x,t)|
 \lesssim
 \eta^\frac{2}{1-q}|\xi|^\gamma(|\xi|^{-{\sf k}_1}+|z|^2)$
 as $|\xi|\to\infty$, $|z|\to0$
 \\[1mm]
 \hspace{90mm}
 (see \eqref{eq4.10} and Section \ref{sec_4.4}).
 \end{enumerate}
% \vspace{2mm}
 We here establish several estimates
 independent of
 $\delta$,
 ${\sf d}_1$,
 ${\sf b}$,
 ${\sf R}_{\sf in}$,
 ${\sf R}_{\sf mid}$,
 ${\sf R}_1$,
 ${\sf r}_0$,
 ${\sf r}_3$.
 For real numbers $X$ and $Y$,
 we write $X\lesssim Y$
 to denote $|X|\leq C|Y|$
 for some constant $C>0$.
 The constant $C$ depends only on $q,n,J$
 (independent of
 $\delta$, ${\sf b}$, ${\sf d}_1$
 ${\sf R}_{\sf in}$, ${\sf R}_{\sf mid}$, ${\sf R}_1$, ${\sf r}_0$, ${\sf r}_3$).
%%%%%%%%%%%%%%%%%%%%%%%%%%%%%%%%%%%%%%%%%%%%%%%%%%%%%%%%%%%%%%%%

 \subsection{Proof of Lemma \ref{Lem7.1}\ and Lemma \ref{Lem8.1}\
 (computations of $g$)}
 \label{sec_9.3}
%%%%%%%%%%%%%%%%%%%%%%%%%%%%%%%%%%%%%%%%%%%%%%%%%%%%%%%%%%%%%%%%
 Lemma \ref{Lem7.1} immediately follows  from Lemma \ref{Lem9.1}.
 To obtain Lemma \ref{Lem8.1} from Lemma \ref{Lem9.1},
 it is enough to apply
 two relations
 $\lambda^{-\frac{n+2}{2}}
 \sigma
 |y|^{-2}
 =
 \eta^\frac{2q}{1-q}
 |\xi|^{-2}$
 and
 $\tfrac{2}{1-q}-2<\gamma<\tfrac{2}{1-q}$
 (see \eqref{eq4.9}).
%%%%%%%%%%%%%%%%%%%%%%%%%%%%%%%%%%%%%%%%%%%%%%%%%%%%%%%%%%%%%%%%
 \begin{lem}\label{Lem9.1}
 Assume {\rm (A1)}.
 Then
 it holds that
 \begin{align*}
 g_0
 &\lesssim
 \lambda^{-\frac{n+2}{2}}
 \sigma
 |y|^{-(n-2)}
 {\bf 1}_{|y|>{\sf l}_1}
 {\bf 1}_{|\xi|<2{\sf l}_2},
 \\
%%%%%%%%%%%%%%%%%%%%%%%%%%%%%%%%%%%%%%%%%%%%%%%%%%%%%%%%%%%%%%%%
 g_1
 &\lesssim
 \lambda^{-\frac{n+2}{2}}
 \sigma
 |y|^{-(n-2)}
 {\bf 1}_{{\sf l}_2<|\xi|<2{\sf l}_2},
 \\
%%%%%%%%%%%%%%%%%%%%%%%%%%%%%%%%%%%%%%%%%%%%%%%%%%%%%%%%%%%%%%%%
 g_2
 & \lesssim
 |\sigma|^\frac{n-4}{n-2}
 \cdot
 \lambda^{-\frac{n+2}{2}}
 \sigma
 (1+|y|^2)^{-1}
 {\bf 1}_{|y|<2{\sf l}_1},
 \\
%%%%%%%%%%%%%%%%%%%%%%%%%%%%%%%%%%%%%%%%%%%%%%%%%%%%%%%%%%%%%%%%
 g_3
 {\bf 1}_{|\xi|<1}
 &\lesssim
 |z|^2
 \cdot
 \lambda^{-\frac{n+2}{2}}
 \sigma
 |y|^{-2}
 {\bf 1}_{|y|>{\sf l}_1}
 {\bf 1}_{|\xi|<1},
 \\
%%%%%%%%%%%%%%%%%%%%%%%%%%%%%%%%%%%%%%%%%%%%%%%%%%%%%%%%%%%%%%%%
 g_3
 {\bf 1}_{|\xi|>1}
 &\lesssim
 |z|^2
 \cdot
 \eta^{\frac{2q}{1-q}}
 |\xi|^{\gamma-2}
 {\bf 1}_{1<|\xi|<2{\sf l}_2},
 \\
%%%%%%%%%%%%%%%%%%%%%%%%%%%%%%%%%%%%%%%%%%%%%%%%%%%%%%%%%%%%%%%%
 g_4
 &\lesssim
 |z|^2
 \eta^{\frac{2(p-q)}{1-q}}
 |\xi|^{\frac{2p}{1-q}-\gamma+2}
 \cdot
 \eta^{\frac{2q}{1-q}}
 |\xi|^{\gamma-2}
 {\bf 1}_{{\sf l}_2<|\xi|<2{\sf l}_2},
 \\
%%%%%%%%%%%%%%%%%%%%%%%%%%%%%%%%%%%%%%%%%%%%%%%%%%%%%%%%%%%%%%%%
 g_5
 &\lesssim
 |z|^2
 (
 |y|^{-1}
 +
 |\xi|^2
 )
 \cdot
 \lambda^{-\frac{n+2}{2}}
 \sigma
 |y|^{-2}
 {\bf 1}_{{\sf l}_1<|y|<2{\sf l}_1},
 \\
%%%%%%%%%%%%%%%%%%%%%%%%%%%%%%%%%%%%%%%%%%%%%%%%%%%%%%%%%%%%%%%%
 g_6
 &\lesssim
 |z|^2
 (
 |\xi|^{-{\sf k}_1}
 +
 |z|^2
 )
 \cdot
 \eta^\frac{2q}{1-q}
 |\xi|^{\gamma-2}
 {\bf 1}_{{\sf l}_2<|\xi|<2{\sf l}_2},
 \\
%%%%%%%%%%%%%%%%%%%%%%%%%%%%%%%%%%%%%%%%%%%%%%%%%%%%%%%%%%%%%%%%
 g_0'
 &\lesssim
 (
 \sigma^{-1}
 {\sf Q})
 \cdot
 \lambda^{-\frac{n+2}{2}}
 \sigma
 |y|^{-2}
 {\bf 1}_{{\sf l}_2<|\xi|<2{\sf l}_2},
 \\
%%%%%%%%%%%%%%%%%%%%%%%%%%%%%%%%%%%%%%%%%%%%%%%%%%%%%%%%%%%%%%%%
 g_1'
 &\lesssim
 \lambda^{-\frac{n+2}{2}}
 \sigma
 |y|^{-3}
 {\bf 1}_{{\sf l}_1<|y|<2{\sf l}_1},
 \\
%%%%%%%%%%%%%%%%%%%%%%%%%%%%%%%%%%%%%%%%%%%%%%%%%%%%%%%%%%%%%%%%
 g_2'
 &\lesssim
 |\xi|^2
 \cdot
 \lambda^{-\frac{n+2}{2}}
 \sigma
 |y|^{-2}
 {\bf 1}_{{\sf l}_1<|y|<2{\sf l}_1},
 \\
%%%%%%%%%%%%%%%%%%%%%%%%%%%%%%%%%%%%%%%%%%%%%%%%%%%%%%%%%%%%%%%%
 g_3'
 &\lesssim
 \eta^\frac{2(p-q)}{1-q}
 |\xi|^{\frac{2p}{1-q}-\gamma+2}
 \cdot
 \eta^\frac{2q}{1-q}
 |\xi|^{\gamma-2}
 {\bf 1}_{{\sf l}_2<|\xi|<2{\sf l}_2},
 \\
%%%%%%%%%%%%%%%%%%%%%%%%%%%%%%%%%%%%%%%%%%%%%%%%%%%%%%%%%%%%%%%%
 g_4'
 &\lesssim
 \eta^\frac{2(p-q)}{1-q}
 |\xi|^{\frac{2p}{1-q}-\gamma+2}
 \cdot
 \eta^\frac{2q}{1-q}
 |\xi|^{\gamma-2}
 {\bf 1}_{{\sf l}_2<|\xi|<2{\sf l}_2},
\\
%%%%%%%%%%%%%%%%%%%%%%%%%%%%%%%%%%%%%%%%%%%%%%%%%%%%%%%%%%%%%%%%
 g_5'
 &\lesssim
 (
 |y|^{-1}
 +
 |\xi|^2
 )
 \cdot
 \lambda^{-\frac{n+2}{2}}
 \sigma
 |y|^{-2}
 {\bf 1}_{{\sf l}_1<|y|<2{\sf l}_1},
 \\
%%%%%%%%%%%%%%%%%%%%%%%%%%%%%%%%%%%%%%%%%%%%%%%%%%%%%%%%%%%%%%%%
 g_6'
 &\lesssim
 (
 |\xi|^{-{\sf k}_1}
 +
 |z|^2
 )
 \cdot
 \eta^\frac{2q}{1-q}
 |\xi|^{\gamma-2}
 {\bf 1}_{{\sf l}_2<|\xi|<2{\sf l}_2},
 \\
%%%%%%%%%%%%%%%%%%%%%%%%%%%%%%%%%%%%%%%%%%%%%%%%%%%%%%%%%%%%%%%%
 g_7'
 &\lesssim
 (
 |\xi|^{-{\sf k}_1}
 +
 |z|^2
 )
 \cdot
 \eta^\frac{2q}{1-q}
 |\xi|^{\gamma-2}
 {\bf 1}_{{\sf l}_2<|\xi|<2{\sf l}_2},
 \\
%%%%%%%%%%%%%%%%%%%%%%%%%%%%%%%%%%%%%%%%%%%%%%%%%%%%%%%%%%%%%%%%
 g_8'
 {\bf 1}_{|\xi|<1}
 &\lesssim
 \lambda^{-\frac{n+2}{2}}
 \sigma
 |y|^{-4}
 {\bf 1}_{|y|>{\sf l}_1}
 {\bf 1}_{|\xi|<1},
 \\
%%%%%%%%%%%%%%%%%%%%%%%%%%%%%%%%%%%%%%%%%%%%%%%%%%%%%%%%%%%%%%%%
 g_8'
 {\bf 1}_{|\xi|>1}
 &\lesssim
 \lambda^2
 \eta^{-2}
 \cdot
 \eta^\frac{2q}{1-q}
 |\xi|^{\gamma-2+(\frac{2}{1-q}-\gamma-2)}
 {\bf 1}_{1<|\xi|<2{\sf l}_2},
 \\
%%%%%%%%%%%%%%%%%%%%%%%%%%%%%%%%%%%%%%%%%%%%%%%%%%%%%%%%%%%%%%%%
 g_9'
 {\bf 1}_{|\xi|<1}
 &=
 {\sf s}_1+{\sf s}_2,
 \end{align*}
%%%%%%%%%%%%%%%%%%%%%%%%%%%%%%%%%%%%%%%%%%%%%%%%%%%%%%%%%%%%%%%%
%%%%%%%%%%%%%%%%%%%%%%%%%%%%%%%%%%%%%%%%%%%%%%%%%%%%%%%%%%%%%%%%
%%%%%%%%%%%%%%%%%%%%%%%%%%%%%%%%%%%%%%%%%%%%%%%%%%%%%%%%%%%%%%%%
 \begin{align*}
 {\sf s}_1
 {\bf 1}_{|\xi|<(T-t)^{{\sf q}_1}}
 &\lesssim
 (T-t)^{2{\sf q}_1}
 \cdot
 \lambda^{-\frac{n+2}{2}}
 \sigma
 |y|^{-2}
 {\bf 1}_{|y|>{\sf l}_1}
 {\bf 1}_{|\xi|<1},
 \\
%%%%%%%%%%%%%%%%%%%%%%%%%%%%%%%%%%%%%%%%%%%%%%%%%%%%%%%%%%%%%%%%
 {\sf s}_1
 {\bf 1}_{(T-t)^{{\sf q}_1}<|\xi|<1}
 &\lesssim
 (T-t)^{(n-2){\sf q}_2}
 \cdot
 \lambda^{-\frac{n+2}{2}}
 \sigma
 |y|^{-2}
 {\bf 1}_{|y|>{\sf l}_1}
 {\bf 1}_{|\xi|<1},
 \\
%%%%%%%%%%%%%%%%%%%%%%%%%%%%%%%%%%%%%%%%%%%%%%%%%%%%%%%%%%%%%%%%
 {\sf s}_2
 &\lesssim
 |\xi|
 \cdot
 \lambda^{-\frac{n+2}{2}}
 \sigma
 |y|^{-2}
 {\bf 1}_{{\sf l}_1<|y|<2{\sf l}_1},
 \\
%%%%%%%%%%%%%%%%%%%%%%%%%%%%%%%%%%%%%%%%%%%%%%%%%%%%%%%%%%%%%%%% 
 g_9'
 {\bf 1}_{|\xi|>1}
 &=
 {\sf s}_3+{\sf s}_4+{\sf s}_5,
 \\
%%%%%%%%%%%%%%%%%%%%%%%%%%%%%%%%%%%%%%%%%%%%%%%%%%%%%%%%%%%%%%%%
 {\sf s}_3
 &\lesssim
 (T-t)^{(n-2)({\sf q}_1+{\sf q}_2)}
 \cdot
 \lambda^{-\frac{n+2}{2}}
 \sigma
 |y|^{-2}
 {\bf 1}_{1<|\xi|<2{\sf l}_2},
 \\
%%%%%%%%%%%%%%%%%%%%%%%%%%%%%%%%%%%%%%%%%%%%%%%%%%%%%%%%%%%%%%%%
 {\sf s}_4
 &\lesssim
 (|\xi|^{-{\sf k}_1}+|z|^2)
 \cdot
 \eta^\frac{2q}{1-q}
 |\xi|^{\gamma-2}
 {\bf 1}_{{\sf l}_2<|\xi|<2{\sf l}_2},
 \\
%%%%%%%%%%%%%%%%%%%%%%%%%%%%%%%%%%%%%%%%%%%%%%%%%%%%%%%%%%%%%%%%
 {\sf s}_5
 &\lesssim
 \eta^{\frac{2(p-q)}{1-q}}
 |\xi|^{\frac{2p}{1-q}-\gamma+2}
 \cdot
 \eta^\frac{2q}{1-q}
 |\xi|^{\gamma-2}
 {\bf 1}_{{\sf l}_2<|\xi|<2{\sf l}_2},
 \\
%%%%%%%%%%%%%%%%%%%%%%%%%%%%%%%%%%%%%%%%%%%%%%%%%%%%%%%%%%%%%%%%
 g_{10}'
 &=
 {\sf s}_6+{\sf s}_7+{\sf s}_8+{\sf s}_8,
 \\
%%%%%%%%%%%%%%%%%%%%%%%%%%%%%%%%%%%%%%%%%%%%%%%%%%%%%%%%%%%%%%%%
 {\sf s}_6
 &\lesssim
 |x|^\frac{2q}{1-q}
 {\bf 1}_{1<|x|<2},
 \\
%%%%%%%%%%%%%%%%%%%%%%%%%%%%%%%%%%%%%%%%%%%%%%%%%%%%%%%%%%%%%%%%
 {\sf s}_7
 &\lesssim
 (T-t)^{(n-2)({\sf q}_1+{\sf q}_2)}
 \cdot
 \lambda^{-\frac{n+2}{2}}
 \sigma
 |y|^{-2}
 {\bf 1}_{{\sf l}_2<|\xi|<2{\sf l}_2},
 \\
%%%%%%%%%%%%%%%%%%%%%%%%%%%%%%%%%%%%%%%%%%%%%%%%%%%%%%%%%%%%%%%%
 {\sf s}_8
 &\lesssim
 \eta^{\frac{2(p-q)}{1-q}}
 |\xi|^{\frac{2p}{1-q}-\gamma+2}
 \cdot
 \eta^\frac{2q}{1-q}
 |\xi|^{\gamma-2}
 {\bf 1}_{{\sf l}_2<|\xi|<2{\sf l}_2},
 \\
%%%%%%%%%%%%%%%%%%%%%%%%%%%%%%%%%%%%%%%%%%%%%%%%%%%%%%%%%%%%%%%%
 {\sf s}_9
 &\lesssim
 (|\xi|^{-{\sf k}_1}+|z|^2)
 \cdot
 \eta^\frac{2q}{1-q}
 {\bf 1}_{{\sf l}_2<|\xi|<2{\sf l}_2},
 \\
%%%%%%%%%%%%%%%%%%%%%%%%%%%%%%%%%%%%%%%%%%%%%%%%%%%%%%%%%%%%%%%%
 g_{{\sf out},1}'
 +
 g_{{\sf out},2}'
 +
 g_{{\sf out},3}'
 &\lesssim
 {\bf 1}_{1<|x|<2},
 \\
%%%%%%%%%%%%%%%%%%%%%%%%%%%%%%%%%%%%%%%%%%%%%%%%%%%%%%%%%%%%%%%%
 g_{{\sf out},4}'
 +
 g_{{\sf out},5}'
 &\lesssim
 |x|^\frac{2p}{1-q}
 {\bf 1}_{{\sf r}_3<|x|<2{\sf r}_3},
 \\
%%%%%%%%%%%%%%%%%%%%%%%%%%%%%%%%%%%%%%%%%%%%%%%%%%%%%%%%%%%%%%%%
 g_{{\sf out},6}'
 +
 g_{{\sf out},7}'
 &\lesssim
 |x|^{\gamma+2J-2}
 {\bf 1}_{{\sf r}_3<|x|<2{\sf r}_3}.
 \end{align*}
 \end{lem}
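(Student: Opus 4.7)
The proof is a verification exercise: each term in the list is the product of (i) a derivative (in $t$ or $x$) of one of the cutoffs $\chi_1,\chi_2,\chi_3,\chi_4$, which localizes it to a narrow annulus, and (ii) a profile factor built from ${\sf Q},T_1,{\sf U},{\sf U}_\infty,\theta,\Theta_J,{\sf U}_{\sf c}$. My plan is to treat the terms in groups according to which cutoff they come from, evaluate the cutoff derivative on its annulus (each spatial derivative of $\chi(\cdot/{\sf l})$ costs a factor ${\sf l}^{-1}$; $\dot\chi_2$ costs $(T-t)^{-1}$ because ${\sf l}_2=(T-t)^{-{\sf b}}$, while $\dot\chi_1$ costs $|\sigma|^{-1}\dot\sigma$), and then apply the asymptotic formulas (Ei)--(Evi) from Section \ref{sec_9.2} to bound the profile factor. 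Throughout, I convert freely between the $y$-, $\xi$- and $x$-variables using $\eta^{2/(1-q)}=-{\sf A}_1\lambda^{-(n-2)/2}\sigma$ and the identity \eqref{eq9.1}.

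The straightforward group consists of $g_0,g_1,g_2$ and $g_0',g_1',g_2'$, where the cutoff derivative acts on a single known profile: I simply substitute the decay of ${\sf Q}$, $T_1$, $\Lambda_y{\sf Q}$ and read off the stated inequalities. The boundary terms $g_{{\sf out},1}',\ldots,g_{{\sf out},7}'$ are even easier since their cutoffs $\chi_3,\chi_4$ sit in bounded annuli $\{{\sf r}_3<|x|<2{\sf r}_3\}$ and $\{1<|x|<2\}$, on which the relevant profiles are explicit. The pieces $g_3,g_3',g_4,g_4'$ associated with $\theta$ are handled by substituting the form $\theta_k=O(|x|^{2k(p-q)/(1-q)}\theta_0)$ derived in Section \ref{sec_4.5} and converting to the $|\xi|$-scale, which produces the characteristic $\eta^{2(p-q)/(1-q)}|\xi|^{2p/(1-q)-\gamma+2}$ prefactor.

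The delicate group is $g_5,g_5',g_6,g_6',g_7'$, where the cutoff derivative acts on a sum that nearly cancels by matching. For $g_5$, $g_5'$, I will expand $\lambda^{-(n-2)/2}\sigma T_1+{\sf U}_{\sf c}$ via (Ev); the leading constants $\lambda^{-(n-2)/2}\sigma {\sf A}_1$ and $\eta^{2/(1-q)}{\sf U}(0)$ cancel exactly by the definition of $\sigma(t)$, and the remainder is $O(\eta^{2/(1-q)}(|y|^{-1}+|\xi|^2))$ on the annulus $|y|\sim{\sf l}_1$. For $g_6,g_6',g_7'$, the analogous cancellation on $|\xi|\sim{\sf l}_2$ uses (Evi), producing the factor $|\xi|^{-{\sf k}_1}+|z|^2$. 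Once this cancellation is performed, the remaining calculation is routine.

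The most laborious pieces are $g_8',g_9',g_{10}'$. I will substitute the full decomposition \eqref{eq5.1} of $u$ inside these expressions and expand so that the leading ${\sf U}_{\sf c}$-profile cancels, leaving only $u_1$ (small by assumption (A3)) and mismatch terms controlled by (Ev)--(Evi). The main obstacle, and the reason this lemma takes so long in full, is the bookkeeping needed to track the supports of the products: on the subregion $\{|y|>{\sf l}_1\}\cap\{|\xi|<1\}$ several cutoffs flip and the estimate must be broken further at $|\xi|=(T-t)^{{\sf q}_1}$, which is precisely where the relation \eqref{eq9.1} exchanges the $y$- and $\xi$-scales. This is what forces the splittings $g_9'{\bf 1}_{|\xi|<1}={\sf s}_1+{\sf s}_2$, $g_9'{\bf 1}_{|\xi|>1}={\sf s}_3+{\sf s}_4+{\sf s}_5$, and $g_{10}'={\sf s}_6+\cdots+{\sf s}_9$ that appear in the statement; each ${\sf s}_i$ corresponds to a single matching region, and once the region is fixed, the estimate follows from the appropriate asymptotic identity in Section \ref{sec_9.2} together with the size of $u_1$ in that region.
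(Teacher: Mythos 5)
Your proposal is correct and follows essentially the same route as the paper's proof: term-by-term verification in which each cutoff derivative is localized to its annulus and costs $(T-t)^{-1}$ (or ${\sf l}^{-1}$ per spatial derivative), the profile factors are bounded via the asymptotic identities ({\sf Ei})--({\sf Evi}), the matching cancellations ({\sf Ev})--({\sf Evi}) handle $g_5,g_6,g_5',g_6',g_7'$, and the relation \eqref{eq9.1} drives the region splittings ${\sf s}_1,\ldots,{\sf s}_9$ for $g_9'$ and $g_{10}'$. No gaps.
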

%%%%%%%%%%%%%%%%%%%%%%%%%%%%%%%%%%%%%%%%%%%%%%%%%%%%%%%%%%%%%%%%

%%%%%%%%%%%%%%%%%%%%%%%%%%%%%%%%%%%%%%%%%%%%%%%%%%%%%%%%%%%%%%%%
 \begin{proof}
 Formula ({\sf Ei}) implies
 \begin{align*}
 g_0
 &=
 \lambda^{-\frac{n+2}{2}}
 \lambda
 \dot\lambda
 (\Lambda_y{\sf Q})
 (\chi_2-\chi_1)
 \lesssim
 \lambda^{-\frac{n+2}{2}}
 \sigma
 |y|^{-(n-2)}
 {\bf 1}_{|y|>{\sf l}_1}
 {\bf 1}_{|\xi|<2{\sf l}_2},
 \\
%%%%%%%%%%%%%%%%%%%%%%%%%%%%%%%%%%%%%%%%%%%%%%%%%%%%%%%%%%%%%%%%
 g_1
 &=
 \lambda^{-\frac{n-2}{2}}
 {\sf Q}
 \dot\chi_2
 \lesssim
 \lambda^{-\frac{n+2}{2}}
 \sigma
 |y|^{-(n-2)}
 {\bf 1}_{{\sf l}_2<|\xi|<2{\sf l}_2}.
 \end{align*}
%%%%%%%%%%%%%%%%%%%%%%%%%%%%%%%%%%%%%%%%%%%%%%%%%%%%%%%%%%%%%%%%
 From ({\sf Eiii}) - ({\sf Eiv}),
 we see that
 \begin{align*}
 g_2
 &=
 (
 \pa_t(
 \lambda^{-\frac{n-2}{2}}\sigma
 T_1
 )
 )
 \chi_1
 \\
 &\lesssim
 (T-t)^{-1}
 \lambda^{-\frac{n-2}{2}}\sigma
 \chi_1
 \\
 &\lesssim
 \lambda^{-\frac{n+2}{2}}
 \sigma^2
 \chi_1
 \\
 &\lesssim
 \sigma
 (1+|y|^2)
 \cdot
 \lambda^{-\frac{n+2}{2}}
 \sigma
 (1+|y|^2)^{-1}
 {\bf 1}_{|y|<2{\sf l}_1}
 \\
 & \lesssim
 |\sigma|^\frac{n-4}{n-2}
 \cdot
 \lambda^{-\frac{n+2}{2}}
 \sigma
 (1+|y|^2)^{-1}
 {\bf 1}_{|y|<2{\sf l}_1}.
 \end{align*}
%%%%%%%%%%%%%%%%%%%%%%%%%%%%%%%%%%%%%%%%%%%%%%%%%%%%%%%%%%%%%%%%
 We write $g_3=g_3{\bf 1}_{|\xi|<1}+g_3{\bf 1}_{|\xi|>1}$.
 Then from ({\sf Eii}),
 we get
 \begin{align*}
 g_3
 {\bf 1}_{|\xi|<1}
 &=
 (
 \pa_t
 (
 \eta^\frac{2}{1-q}
 {\sf U}
 )
 )
 \chi_2
 (1-\chi_1)
 {\bf 1}_{|\xi|<1}
 \\
 &=
 \eta^{\frac{2}{1-q}-1}
 \dot\eta
 (\Lambda_\xi{\sf U})
 (1-\chi_1)
 {\bf 1}_{|\xi|<1}
 \\
 &\lesssim
 (T-t)^{-1}
 \lambda^2
 \lambda^{-2}
 \eta^{\frac{2}{1-q}}
 |y|^2
 |y|^{-2}
 {\bf 1}_{|y|>{\sf l}_1}
 {\bf 1}_{|\xi|<1}
 \\
 &\lesssim
 |z|^2
 \cdot
 \lambda^{-2}
 \eta^{\frac{2}{1-q}}
 |y|^{-2}
 {\bf 1}_{|y|>{\sf l}_1}
 {\bf 1}_{|\xi|<1}
 \\
 &=
 |z|^2
 \cdot
 \lambda^{-\frac{n+2}{2}}
 \sigma
 |y|^{-2}
 {\bf 1}_{|y|>{\sf l}_1}
 {\bf 1}_{|\xi|<1},
 \\
%%%%%%%%%%%%%%%%%%%%%%%%%%%%%%%%%%%%%%%%%%%%%%%%%%%%%%%%%%%%%%%%
 g_3
 {\bf 1}_{|\xi|>1}
 &=
 \eta^{\frac{2}{1-q}-1}
 \dot\eta
 (\Lambda_\xi{\sf U})
 \chi_2
 (1-\chi_1)
 {\bf 1}_{|\xi|>1}
 \\
 &\lesssim
 (T-t)^{-1}
 \eta^{\frac{2}{1-q}}
 |\xi|^\gamma
 {\bf 1}_{1<|\xi|<2{\sf l}_2}
 \\
 &=
 (T-t)^{-1}
 |\xi|^2|\xi|^{-2}
 \eta^2
 \eta^{-2}
 \eta^{\frac{2}{1-q}}
 |\xi|^\gamma
 {\bf 1}_{1<|\xi|<2{\sf l}_2}
 \\
 &\lesssim
 |z|^2
 \cdot
 \eta^{\frac{2q}{1-q}}
 |\xi|^{\gamma-2}
 {\bf 1}_{1<|\xi|<2{\sf l}_2}.
 \end{align*}
%%%%%%%%%%%%%%%%%%%%%%%%%%%%%%%%%%%%%%%%%%%%%%%%%%%%%%%%%%%%%%%%
 We recall that
 $\theta(x)=a_0L_1^{p+1-q}|x|^{\frac{2p}{1-q}+2}$
 (see Section \ref{sec_4.5}).
 Hence
 it holds that
 \begin{align*}
 g_4
 &=
 \theta
 \dot\chi_2
 \chi_3
 \\
 &\lesssim
 (T-t)^{-1}
 |x|^{\frac{2p}{1-q}+2}
 {\bf 1}_{{\sf l}_2<|\xi|<2{\sf l}_2}
 \\
 &=
 |z|^2
 |x|^{\frac{2p}{1-q}}
 {\bf 1}_{{\sf l}_2<|\xi|<2{\sf l}_2}
 \\
 &=
 |z|^2
 \eta^{\frac{2(p-q)}{1-q}}
 |\xi|^{\frac{2p}{1-q}-\gamma+2}
 \cdot
 \eta^{\frac{2q}{1-q}}
 |\xi|^{\gamma-2}
 {\bf 1}_{{\sf l}_2<|\xi|<2{\sf l}_2}.
 \end{align*}
% This estimate is ruduced to \eqref{eq9.1}.
%%%%%%%%%%%%%%%%%%%%%%%%%%%%%%%%%%%%%%%%%%%%%%%%%%%%%%%%%%%%%%%%
 From ({\sf Ev}) - ({\sf Evi}),
 we have
 \begin{align*}
 g_5
 &=
 (
 \lambda^{-\frac{n-2}{2}}\sigma T_1
 +
 {\sf U}_{\sf c}
 )
 \dot\chi_1
 \\
 &=
 (
 \lambda^{-\frac{n-2}{2}}\sigma T_1
 +
 \eta^\frac{2}{1-q}
 {\sf U}
 )
 \dot\chi_1
 \\
 &\lesssim
 (T-t)^{-1}
 \lambda^{-\frac{n-2}{2}}\sigma
 (
 |y|^{-1}
 +
 |\xi|^2
 )
 {\bf 1}_{{\sf l}_1<|y|<2{\sf l}_1}
 \\
 &=
 \lambda^2
 (T-t)^{-1}
 |y|^2
 (
 |y|^{-1}
 +
 |\xi|^2
 )
 \lambda^{-\frac{n+2}{2}}
 \sigma
 |y|^{-2}
 {\bf 1}_{{\sf l}_1<|y|<2{\sf l}_1}
 \\
 &=
 |z|^2
 (
 |y|^{-1}
 +
 |\xi|^2
 )
 \cdot
 \lambda^{-\frac{n+2}{2}}
 \sigma
 |y|^{-2}
 {\bf 1}_{{\sf l}_1<|y|<2{\sf l}_1},
 \\
%%%%%%%%%%%%%%%%%%%%%%%%%%%%%%%%%%%%%%%%%%%%%%%%%%%%%%%%%%%%%%%%
 g_6
 &=
 (
 -
 \eta^\frac{2}{1-q}{\sf U}
 +
 {\sf U}_\infty
 \chi_4
 +
 \Theta_J
 \chi_3
 )
 \dot\chi_2
 \\
 &\lesssim
 (T-t)^{-1}
 \eta^\frac{2}{1-q}
 |\xi|^{\gamma}
 (
 |\xi|^{-{\sf k}_1}
 +
 |z|^2
 )
 {\bf 1}_{{\sf l}_2<|\xi|<2{\sf l}_2}
 \\
 &\lesssim
 (T-t)^{-1}
 |\xi|^2
 \eta^2
 (
 |\xi|^{-{\sf k}_1}
 +
 |z|^2
 )
 \cdot
 \eta^\frac{2q}{1-q}
 |\xi|^{\gamma-2}
 {\bf 1}_{{\sf l}_2<|\xi|<2{\sf l}_2}
 \\
 &\lesssim
 |z|^2
 (
 |\xi|^{-{\sf k}_1}
 +
 |z|^2
 )
 \cdot
 \eta^\frac{2q}{1-q}
 |\xi|^{\gamma-2}
 {\bf 1}_{{\sf l}_2<|\xi|<2{\sf l}_2}.
 \end{align*}
 Therefore
 estimates for $g_0,\cdots,g_6$ are derived.
%%%%%%%%%%%%%%%%%%%%%%%%%%%%%%%%%%%%%%%%%%%%%%%%%%%%%%%%%%%%%%%%
 We apply ({\sf Eii}) and ({\sf Eiv}) again
 to get
 \begin{align*}
 g_0'
 &=
 \eta^{-1}
 \lambda^{-\frac{n}{2}}
 (\nabla_y{\sf Q}\cdot\nabla_\xi\chi_2)
 +
 \eta^{-2}
 \lambda^{-\frac{n-2}{2}}
 {\sf Q}
 (\Delta_\xi\chi_2)
 \\
 &\lesssim
 \eta^{-1}
 \lambda^{-\frac{n}{2}}
 {\sf Q}
 |y|^{-1}
 |\xi|^{-1}
 {\bf 1}_{{\sf l}_2<|\xi|<2{\sf l}_2}
 +
 \eta^{-2}
 \lambda^{-\frac{n-2}{2}}
 {\sf Q}
 |\xi|^{-2}
 {\bf 1}_{{\sf l}_2<|\xi|<2{\sf l}_2}
% \\
% &\lesssim
% \lambda^{-\frac{n+2}{2}}
% |y|^{-n}
% {\bf 1}_{{\sf l}_2<|\xi|<2{\sf l}_2}
 \\
 &\lesssim
 (
 \sigma^{-1}
 {\sf Q}
 )
 \cdot
 \lambda^{-\frac{n+2}{2}}
 \sigma
 |y|^{-2}
 {\bf 1}_{{\sf l}_2<|\xi|<2{\sf l}_2},
 \\
%%%%%%%%%%%%%%%%%%%%%%%%%%%%%%%%%%%%%%%%%%%%%%%%%%%%%%%%%%%%%%%%
 g_1'
 &=
 2\lambda^{-\frac{n+2}{2}}
 \sigma
 \nabla_yT_1
 \cdot\nabla_y\chi_1
 \\
 &\lesssim
 \lambda^{-\frac{n+2}{2}}
 \sigma
 |y|^{-3}
 {\bf 1}_{{\sf l}_1<|y|<2{\sf l}_1},
 \\
%%%%%%%%%%%%%%%%%%%%%%%%%%%%%%%%%%%%%%%%%%%%%%%%%%%%%%%%%%%%%%%%
 g_2'
 &=
 2\lambda^{-1}
 \nabla_x{\sf U}_{\sf c}\cdot\nabla_y\chi_1
 \\
 &=
 2\lambda^{-1}\eta^{-1}
 \nabla_\xi(\eta^\frac{2}{1-q}{\sf U})\cdot\nabla_y\chi_1
 \\
 &\lesssim
 \lambda^{-1}\eta^{-1}
 \eta^\frac{2}{1-q}
 |\xi|
 |y|^{-1}
 {\bf 1}_{{\sf l}_1<|y|<2{\sf l}_1}
 \\
 &\lesssim
 \lambda
 \eta^{-1}
 |\xi|
 |y|
 \cdot
 \lambda^{-2}
 \eta^\frac{2}{1-q}
 |y|^{-2}
 {\bf 1}_{{\sf l}_1<|y|<2{\sf l}_1}
 \\
 &\lesssim
 |\xi|^2
 \cdot
 \lambda^{-\frac{n+2}{2}}
 \sigma
 |y|^{-2}
 {\bf 1}_{{\sf l}_1<|y|<2{\sf l}_1}.
 \end{align*}
%%%%%%%%%%%%%%%%%%%%%%%%%%%%%%%%%%%%%%%%%%%%%%%%%%%%%%%%%%%%%%%%
 A direct computation shows that
 \begin{align*}
 g_3'+g_4'
 &=
 2(\nabla_x\theta\cdot\nabla_x\chi_2)
 \chi_3
 +
 \theta(\Delta_x\chi_2)
 \chi_3
 \\
 &\lesssim
 |x|^{\frac{2p}{1-q}}
 {\bf 1}_{{\sf l}_2<|\xi|<2{\sf l}_2}
 \\
 &=
 \eta^{\frac{2p}{1-q}}
 |\xi|^{\frac{2p}{1-q}}
 {\bf 1}_{{\sf l}_2<|\xi|<2{\sf l}_2}
 \\
 &=
 \eta^{\frac{2(p-q)}{1-q}}
 |\xi|^{\frac{2p}{1-q}-\gamma+2}
 \cdot
 \eta^\frac{2q}{1-q}
 |\xi|^{\gamma-2}
 {\bf 1}_{{\sf l}_2<|\xi|<2{\sf l}_2}.
 \end{align*}
%%%%%%%%%%%%%%%%%%%%%%%%%%%%%%%%%%%%%%%%%%%%%%%%%%%%%%%%%%%%%%%%
 From ({\sf Ev}) - ({\sf Evi}),
 we get
 \begin{align*}
 g_5'
 &=
 \lambda^{-2}
 (
 \lambda^{-\frac{n-2}{2}}
 \sigma T_1
 +
 {\sf U}_{\sf c}
 )
 (\Delta_y\chi_1)
 \\
 &=
 \lambda^{-2}
 (
 \lambda^{-\frac{n-2}{2}}
 \sigma T_1
 +
 \eta^\frac{2}{1-q}
 {\sf U}
 )
 (\Delta_y\chi_1)
 \\
 &\lesssim
 \lambda^{-2}
 \lambda^{-\frac{n-2}{2}}
 \sigma
 (
 |y|^{-1}
 +
 |\xi|^2
 )
 |y|^{-2}
 {\bf 1}_{{\sf l}_1<|y|<2{\sf l}_1}
 \\
 &\lesssim
 (
 |y|^{-1}
 +
 |\xi|^2
 )
 \cdot
 \lambda^{-\frac{n+2}{2}}
 \sigma
 |y|^{-2}
 {\bf 1}_{{\sf l}_1<|y|<2{\sf l}_1},
 \\
%%%%%%%%%%%%%%%%%%%%%%%%%%%%%%%%%%%%%%%%%%%%%%%%%%%%%%%%%%%%%%%%
 g_6'
 &=
 2\eta^{-1}
 \nabla_x(
 \eta^\frac{2}{1-q}{\sf U}
 -
 {\sf U}_\infty
 \chi_4
 -
 \Theta_J
 \chi_3)
 \cdot\nabla_\xi\chi_2
 \\
 &\lesssim
 \eta^{-1}
 \cdot
 \eta^\frac{2}{1-q}
 |\xi|^{\gamma}
 (
 \eta^{-1}
 |\xi|^{-{\sf k}_1-1}
 +
 (T-t)^{-\frac{1}{2}}
 |z|
 )
 \cdot
 |\xi|^{-1}
 {\bf 1}_{{\sf l}_2<|\xi|<2{\sf l}_2}
 \\
 &\lesssim
 (
 |\xi|^{-{\sf k}_1}
 +
 (T-t)^{-\frac{1}{2}}
 \eta
 |\xi|
 |z|
 )
 \cdot
 \eta^\frac{2q}{1-q}
 |\xi|^{\gamma-2}
 {\bf 1}_{{\sf l}_2<|\xi|<2{\sf l}_2}
 \\
 &\lesssim
 (
 |\xi|^{-{\sf k}_1}
 +
 |z|^2
 )
 \cdot
 \eta^\frac{2q}{1-q}
 |\xi|^{\gamma-2}
 {\bf 1}_{{\sf l}_2<|\xi|<2{\sf l}_2},
 \\
%%%%%%%%%%%%%%%%%%%%%%%%%%%%%%%%%%%%%%%%%%%%%%%%%%%%%%%%%%%%%%%%
 g_7'
 &=
 \eta^{-2}
 (
 \eta^\frac{2}{1-q}
 {\sf U}
 -
 {\sf U}_\infty
 \chi_4
 -
 \Theta_J
 \chi_3
 )
 (\Delta_\xi\chi_2)
 \\
 &=
 \eta^{-2}
 (
 \eta^\frac{2}{1-q}
 {\sf U}
 -
 {\sf U}_\infty
 -
 \Theta_J
 )
 (\Delta_\xi\chi_2)
 \\
 &\lesssim
 (
 |\xi|^{-{\sf k}_1}
 +
 |z|^2
 )
 \cdot
 \eta^\frac{2q}{1-q}
 |\xi|^{\gamma-2}
 {\bf 1}_{{\sf l}_2<|\xi|<2{\sf l}_2}.
 \end{align*}
%%%%%%%%%%%%%%%%%%%%%%%%%%%%%%%%%%%%%%%%%%%%%%%%%%%%%%%%%%%%%%%%
%%%%%%%%%%%%%%%%%%%%%%%%%%%%%%%%%%%%%%%%%%%%%%%%%%%%%%%%%%%%%%%%
 We recall that $V\lesssim|y|^{-4}$ for $|y|>1$.
 Hence
 it holds that
 \begin{align*}
 g_8'
 {\bf 1}_{|\xi|<1}
 &=
 \lambda^{-2}
 V
 (
 {\sf U}_{\sf c}
 (1-\chi_1)
 +
 (\theta+\Theta_J)
 (1-\chi_2)
 )
 \chi_{2}
 {\bf 1}_{|\xi|<1}
 \\
 &=
 \lambda^{-2}
 V
 {\sf U}_{\sf c}
 (1-\chi_1)
 {\bf 1}_{|\xi|<1}
 \\
 &\lesssim
 \lambda^{-2}
 \eta^\frac{2}{1-q}
 |y|^{-4}
 {\bf 1}_{|y|>{\sf l}_1}
 {\bf 1}_{|\xi|<1}
 \\
 &=
 \lambda^{-\frac{n+2}{2}}
 \sigma
 |y|^{-4}
 {\bf 1}_{|y|>{\sf l}_1}
 {\bf 1}_{|\xi|<1},
 \\
%%%%%%%%%%%%%%%%%%%%%%%%%%%%%%%%%%%%%%%%%%%%%%%%%%%%%%%%%%%%%%%%
 g_8'
 {\bf 1}_{|\xi|>1}
 &=
 \lambda^{-2}
 V
 ({\sf U}_{\sf c}(1-\chi_1)+(\Theta_J+\theta)(1-\chi_2))
 \chi_2
 {\bf 1}_{|\xi|>1}
 \\
 &\lesssim
 \lambda^{-2}
 V
 {\sf U}_\infty
 {\bf 1}_{1<|\xi|<2{\sf l}_2}
 \\
 &\lesssim
 \lambda^{-2}
 |y|^{-4}
 |x|^\frac{2}{1-q}
 {\bf 1}_{1<|\xi|<2{\sf l}_2}
 \\
 &\lesssim
 \lambda^{-2}
 |y|^{-4}
 \eta^\frac{2}{1-q}
 |\xi|^\frac{2}{1-q}
 {\bf 1}_{1<|\xi|<2{\sf l}_2}
 \\
 &=
 \lambda^2
 \eta^{-2}
 \cdot
 \eta^\frac{2q}{1-q}
 |\xi|^{\gamma-2+(\frac{2}{1-q}-\gamma-2)}
 {\bf 1}_{1<|\xi|<2{\sf l}_2}.
 \end{align*}
%%%%%%%%%%%%%%%%%%%%%%%%%%%%%%%%%%%%%%%%%%%%%%%%%%%%%%%%%%%%%%%%
%%%%%%%%%%%%%%%%%%%%%%%%%%%%%%%%%%%%%%%%%%%%%%%%%%%%%%%%%%%%%%%%
 We divide $g_9'{\bf 1}_{|\xi|<1}$ into two parts.
 \begin{align*}
 g_9'
 {\bf 1}_{|\xi|<1}
 &=
 \eta^{-2}
 {\sf U}^{q-1}
 (u+\eta^\frac{2}{1-q}{\sf U}-u_1)
 (1-\chi_{1})
 \chi_{2}
 {\bf 1}_{|\xi|<1}
 \\
 &=
 \eta^{-2}
 {\sf U}^{q-1}
 (
 \underbrace{
 \lambda^{-\frac{n-2}{2}}
 {\sf Q}
 }_{={\sf s}_1}
 +
 \underbrace{
 \lambda^{-\frac{n-2}{2}}
 \sigma
 T_1
 \chi_1
 -
 {\sf U}_{\sf c}
 (1-\chi_1)
 +
 \eta^\frac{2}{1-q}
 {\sf U}
 }_{={\sf s}_2}
 )
 (1-\chi_{1})
 {\bf 1}_{|\xi|<1}.
% \\
% &=
% \underbrace{
% \eta^{-2}
% {\sf U}^{q-1}
% \lambda^{-\frac{n-2}{2}}
% {\sf Q}
% (1-\chi_{1})
% {\bf 1}_{|\xi|<1}
% }_{={\sf s}_1}
% \\
% & \quad
% +
% \underbrace{
% \eta^{-2}
% {\sf U}^{q-1}
% (
% \lambda^{-\frac{n-2}{2}}
% \sigma
% T_1
% \chi_1
% -
% {\sf U}_{\sf c}
% (1-\chi_1)
% +
% \eta^\frac{2}{1-q}
% {\sf U}
% )
% (1-\chi_{1})
% {\bf 1}_{|\xi|<1}
% }_{={\sf s}_2}.
 \end{align*}
%%%%%%%%%%%%%%%%%%%%%%%%%%%%%%%%%%%%%%%%%%%%%%%%%%%%%%%%%%%%%%%%
%%%%%%%%%%%%%%%%%%%%%%%%%%%%%%%%%%%%%%%%%%%%%%%%%%%%%%%%%%%%%%%%
 From the definition of ${\sf q}_1,{\sf q}_2$ and ({\sf Eiii}),
 we get
 \begin{align*}
 {\sf s}_1
 {\bf 1}_{|\xi|<(T-t)^{{\sf q}_1}}
 &=
 \eta^{-2}
 {\sf U}^{q-1}
 \lambda^{-\frac{n-2}{2}}
 {\sf Q}
 (1-\chi_{1})
 {\bf 1}_{|\xi|<(T-t)^{{\sf q}_1}}
 \\
 &\lesssim
 \eta^{-2}
 \lambda^{-\frac{n-2}{2}}
 {\sf Q}
 (1-\chi_{1})
 {\bf 1}_{|\xi|<(T-t)^{{\sf q}_1}}
 \\
 &=
 (
 \lambda^2
 \eta^{-2}
 |y|^2
 )
 (
 \sigma^{-1}
 {\sf Q}
 )
 \cdot
 \lambda^{-\frac{n+2}{2}}
 \sigma
 |y|^{-2}
 {\bf 1}_{|y|>{\sf l}_1}
 {\bf 1}_{|\xi|<(T-t)^{{\sf q}_1}}
 \\
 &=
 |\xi|^2
 (
 \sigma^{-1}
 {\sf Q}
 )
 \cdot
 \lambda^{-\frac{n+2}{2}}
 \sigma
 |y|^{-2}
 {\bf 1}_{|y|>{\sf l}_1}
 {\bf 1}_{|\xi|<(T-t)^{{\sf q}_1}}
 \\
 &\lesssim
 (T-t)^{2{\sf q}_1}
 \cdot
 \lambda^{-\frac{n+2}{2}}
 \sigma
 |y|^{-2}
 {\bf 1}_{|y|>{\sf l}_1}
 {\bf 1}_{|\xi|<(T-t)^{{\sf q}_1}},
 \\
%%%%%%%%%%%%%%%%%%%%%%%%%%%%%%%%%%%%%%%%%%%%%%%%%%%%%%%%%%%%%%%%
 {\sf s}_1
 {\bf 1}_{|\xi|>(T-t)^{{\sf q}_1}}
 &=
 \eta^{-2}
 {\sf U}^{q-1}
 \lambda^{-\frac{n-2}{2}}
 {\sf Q}
 (1-\chi_{1})
 {\bf 1}_{(T-t)^{{\sf q}_1}<|\xi|<1}
 \\
 &\lesssim
 |\xi|^2
 (
 \sigma^{-1}
 {\sf Q}
 )
 \cdot
 \lambda^{-\frac{n+2}{2}}
 \sigma
 |y|^{-2}
 {\bf 1}_{|y|<(T-t)^{-{\sf q}_2}{\sf l}_1}
 {\bf 1}_{|\xi|<1}
 \\
 &\lesssim
 1\cdot
 \sigma^{-1}
 ((T-t)^{-{\sf q}_2}{\sf l}_1)^{-(n-2)}
 \cdot
 \lambda^{-\frac{n+2}{2}}
 \sigma
 |y|^{-2}
 {\bf 1}_{|y|<(T-t)^{-{\sf q}_2}{\sf l}_1}
 {\bf 1}_{|\xi|<1}
 \\
 &\lesssim
 (T-t)^{(n-2){\sf q}_2}
 \cdot
 \lambda^{-\frac{n+2}{2}}
 \sigma
 |y|^{-2}
 {\bf 1}_{(T-t)^{{\sf q}_1}<|\xi|<1}.
 \end{align*}
%%%%%%%%%%%%%%%%%%%%%%%%%%%%%%%%%%%%%%%%%%%%%%%%%%%%%%%%%%%%%%%%
 From ({\sf Ev}) and ({\sf Eii}),
 we see that
 \begin{align*}
 {\sf s}_2
 &=
 \eta^{-2}
 {\sf U}^{q-1}
 (
 \lambda^{-\frac{n-2}{2}}
 \sigma
 T_1
 \chi_1
 -
 {\sf U}_{\sf c}
 (1-\chi_1)
 +
 \eta^\frac{2}{1-q}
 {\sf U}
 )
 (1-\chi_{1})
 {\bf 1}_{|\xi|<1}
 \\
 &=
 \eta^{-2}
 {\sf U}^{q-1}
 (
 \lambda^{-\frac{n-2}{2}}
 \sigma
 T_1
 +
 \eta^\frac{2}{1-q}
 {\sf U}
 )
 \chi_1
 (1-\chi_{1})
 {\bf 1}_{|\xi|<1}
 \\
 &\lesssim
 \eta^{-2}
 \eta^\frac{2}{1-q}
 (
 |y|^{-1}
 +
 |\xi|^2
 )
 {\bf 1}_{{\sf l}_1<|y|<2{\sf l}_1}
 \\
 &=
 \eta^{-2}
 \lambda^2
 |y|
 \cdot
 \lambda^{-\frac{n+2}{2}}
 \sigma
 |y|^{-2}
 {\bf 1}_{{\sf l}_1<|y|<2{\sf l}_1}
 +
 \eta^{-2}
 \lambda^2
 |\xi|^2
 |y|^2
 \cdot
 \lambda^{-\frac{n+2}{2}}
 \sigma
 |y|^{-2}
 {\bf 1}_{{\sf l}_1<|y|<2{\sf l}_1}
 \\
 &=
 \lambda
 \eta^{-1}
 |\xi|
 \cdot
 \lambda^{-\frac{n+2}{2}}
 \sigma
 |y|^{-2}
 {\bf 1}_{{\sf l}_1<|y|<2{\sf l}_1}
 +
 |\xi|^4
 \cdot
 \lambda^{-\frac{n+2}{2}}
 \sigma
 |y|^{-2}
 {\bf 1}_{{\sf l}_1<|y|<2{\sf l}_1}
 \\
 &\lesssim
 |\xi|
 \cdot
 \lambda^{-\frac{n+2}{2}}
 \sigma
 |y|^{-2}
 {\bf 1}_{{\sf l}_1<|y|<2{\sf l}_1}.
 \end{align*}
%%%%%%%%%%%%%%%%%%%%%%%%%%%%%%%%%%%%%%%%%%%%%%%%%%%%%%%%%%%%%%%%
%%%%%%%%%%%%%%%%%%%%%%%%%%%%%%%%%%%%%%%%%%%%%%%%%%%%%%%%%%%%%%%%
 To compute $g_9'{\bf 1}_{|\xi|>1}$,
 we divide it into three parts.
 \begin{align*}
 g_9'
 {\bf 1}_{|\xi|>1}
 &=
 \eta^{-2}
 {\sf U}^{q-1}
 (u+\eta^\frac{2}{1-q}{\sf U}-u_1)
 (1-\chi_{1})
 \chi_{2}
 {\bf 1}_{|\xi|>1}
 \\
 &=
 \eta^{-2}
 {\sf U}^{q-1}
 (
 \lambda^{-\frac{n-2}{2}}
 {\sf Q}
 \chi_2
 -
 {\sf U}_{\sf c}
 -
 (\theta+\Theta_J)
 (1-\chi_2)
 +
 \eta^\frac{2}{1-q}
 {\sf U}
 )
 \chi_2
 {\bf 1}_{|\xi|>1}
 \\
 &=
 \underbrace{
 \eta^{-2}
 {\sf U}^{q-1}
 \lambda^{-\frac{n-2}{2}}
 {\sf Q}
 \chi_2^2
 {\bf 1}_{|\xi|>1}
 }_{={\sf s}_3}
 +
 \underbrace{
 \eta^{-2}
 {\sf U}^{q-1}
 \{
 \eta^\frac{2}{1-q}
 {\sf U}
 -
 {\sf U}_\infty
 -
 \Theta_J
 \}
 \chi_2
 (1-\chi_2)
 }_{={\sf s}_4}
 \\
 & \quad
 -
 \underbrace{
 \eta^{-2}
 {\sf U}^{q-1}
 \theta
 \chi_2
 (1-\chi_2)
 }_{={\sf s}_5}.
% \\
% &=
% \underbrace{
% \eta^{-2}
% {\sf U}^{q-1}
% \lambda^{-\frac{n-2}{2}}
% {\sf Q}
% \chi_2^2
% {\bf 1}_{|\xi|>1}
% }_{={\sf s}_3}
% -
% \underbrace{
% \eta^{-2}
% {\sf U}^{q-1}
% \theta
% \chi_2
% (1-\chi_2)
% }_{={\sf s}_4}
% \\
% & \quad
% +
% \underbrace{
% \eta^{-2}
% {\sf U}^{q-1}
% (
% \eta^\frac{2}{1-q}
% {\sf U}
% -
% {\sf U}_\infty
% -
% \Theta_J
% )
% \chi_2
% (1-\chi_2)
% }_{={\sf s}_5}.
 \end{align*}
%%%%%%%%%%%%%%%%%%%%%%%%%%%%%%%%%%%%%%%%%%%%%%%%%%%%%%%%%%%%%%%%
 A computation of ${\sf s}_3$ follows from that of
 ${\sf s}_1{\bf 1}_{|\xi|>(T-t)^{{\sf q}_1}}$.
 \begin{align*}
 {\sf s}_3
 &\lesssim
 (T-t)^{(n-2)({\sf q}_1+{\sf q}_2)}
 \cdot
 \lambda^{-\frac{n+2}{2}}
 \sigma
 |y|^{-2}
 {\bf 1}_{1<|\xi|<2{\sf l}_2}.
 \end{align*}
%%%%%%%%%%%%%%%%%%%%%%%%%%%%%%%%%%%%%%%%%%%%%%%%%%%%%%%%%%%%%%%%
 Due to ({\sf Evi}),
 we get
 \begin{align*}
 {\sf s}_4
 &=
 \eta^{-2}
 {\sf U}^{q-1}
 (
 \eta^\frac{2}{1-q}
 {\sf U}
 -
 {\sf U}_\infty
 -
 \Theta_J
 )
 \chi_2
 (1-\chi_2)
 \\
 &\lesssim
 \eta^{-2}
 |\xi|^{-2}
 \cdot
 \eta^\frac{2}{1-q}
 |\xi|^\gamma(|\xi|^{-{\sf k}_1}+|z|^2)
 {\bf 1}_{{\sf l}_2<|\xi|<2{\sf l}_2}
 \\
 &=
 (|\xi|^{-{\sf k}_1}+|z|^2)
 \cdot
 \eta^\frac{2q}{1-q}
 |\xi|^{\gamma-2}
 {\bf 1}_{{\sf l}_2<|\xi|<2{\sf l}_2}.
 \end{align*}
%%%%%%%%%%%%%%%%%%%%%%%%%%%%%%%%%%%%%%%%%%%%%%%%%%%%%%%%%%%%%%%%
 From the definition of $\theta$,
 we see that
 \begin{align*}
 {\sf s}_5
 &=
 \eta^{-2}
 {\sf U}^{q-1}
 \theta
 \chi_2
 (1-\chi_2)
 \\
 &\lesssim
 {\sf U}_\infty^{q-1}
 \theta
 \chi_2
 (1-\chi_2)
 \\
 &\lesssim
 |x|^\frac{2p}{1-q}
 {\bf 1}_{{\sf l}_2<|\xi|<2{\sf l}_2}
 \\
 &=
 \eta^{\frac{2(p-q)}{1-q}}
 |\xi|^{\frac{2p}{1-q}-\gamma+2}
 \cdot
 \eta^\frac{2q}{1-q}
 |\xi|^{\gamma-2}
 {\bf 1}_{{\sf l}_2<|\xi|<2{\sf l}_2}.
 \end{align*}
 This completes the estimate for $g_9'$.
 We finally discuss $g_{10}'$.
 By using a new cut off function $\chi_{2,{\sf a}}=\chi_2(\cdot/2)$,
 we divide $g_{10}'$ into four parts.
 \begin{align*}
 g_{10}'
 &=
 {\sf U}_\infty^{q-1}
 (u+{\sf U}_\infty+(\theta+\Theta_J)\chi_3-u_1)
 (1-\chi_{2})
 \chi_4
 \\
 &=
 {\sf U}_\infty^{q-1}
 (u+{\sf U}_\infty+(\theta+\Theta_J)\chi_3-u_1)
 (1-\chi_{2,{\sf a}})
 \chi_4
 \\
 &\quad
 +
 {\sf U}_\infty^{q-1}
 (u+{\sf U}_\infty+(\theta+\Theta_J)\chi_3-u_1)
 (\chi_{2,{\sf a}}-\chi_2)
 \chi_4
 \\
 &=
 {\sf U}_\infty^q
 (1-\chi_4)
 \chi_4
% \\
% & \quad
 +
 {\sf U}_\infty^{q-1}
 (
 \lambda^{-\frac{n-2}{2}}
 {\sf Q}
 -
 \eta^\frac{2}{1-q}
 {\sf U}
 +
 {\sf U}_\infty
 +
 (\theta+\Theta_J)
 )
 \chi_2
 (\chi_{2,{\sf a}}-\chi_2)
 \\
 &=
 \underbrace{
 {\sf U}_\infty^q
 (1-\chi_4)
 \chi_4
 }_{={\sf s}_6}
 +
 \underbrace{
 {\sf U}_\infty^{q-1}
 \lambda^{-\frac{n-2}{2}}
 {\sf Q}
 \chi_2
 (\chi_{2,{\sf a}}-\chi_2)
 }_{={\sf s}_7}
 +
 \underbrace{
 {\sf U}_\infty^{q-1}
 \theta
 \chi_2
 (\chi_{2,{\sf a}}-\chi_2)
 }_{={\sf s}_8}
 \\
 & \quad
 +
 \underbrace{
 {\sf U}_\infty^{q-1}
 (
 -
 \eta^\frac{2}{1-q}
 {\sf U}
 +
 {\sf U}_\infty
 +
 \Theta_J
 )
 \chi_2
 (\chi_{2,{\sf a}}-\chi_2)
 }_{={\sf s}_9}.
 \end{align*}
%%%%%%%%%%%%%%%%%%%%%%%%%%%%%%%%%%%%%%%%%%%%%%%%%%%%%%%%%%%%%%%%
 Estimate for ${\sf s}_6$ is obvious.
 From computations of ${\sf s}_3,{\sf s}_4,{\sf s}_5$,
 we can verify that
 \begin{align*}
 {\sf s}_7
 &\lesssim
 (T-t)^{(n-2)({\sf q}_1+{\sf q}_2)}
 \cdot
 \lambda^{-\frac{n+2}{2}}
 \sigma
 |y|^{-2}
 {\bf 1}_{{\sf l}_2<|\xi|<2{\sf l}_2},
 \\
%%%%%%%%%%%%%%%%%%%%%%%%%%%%%%%%%%%%%%%%%%%%%%%%%%%%%%%%%%%%%%%%
 {\sf s}_8
 &\lesssim
 \eta^{\frac{2(p-q)}{1-q}}
 |\xi|^{\frac{2p}{1-q}-\gamma+2}
 \cdot
 \eta^\frac{2q}{1-q}
 |\xi|^{\gamma-2}
 {\bf 1}_{{\sf l}_2<|\xi|<2{\sf l}_2},
 \\
%%%%%%%%%%%%%%%%%%%%%%%%%%%%%%%%%%%%%%%%%%%%%%%%%%%%%%%%%%%%%%%%
 {\sf s}_9
 &\lesssim
 (|\xi|^{-{\sf k}_1}+|z|^2)
 \cdot
 \eta^\frac{2q}{1-q}
 {\bf 1}_{{\sf l}_2<|\xi|<2{\sf l}_2}.
 \end{align*}
%%%%%%%%%%%%%%%%%%%%%%%%%%%%%%%%%%%%%%%%%%%%%%%%%%%%%%%%%%%%%%%%
%%%%%%%%%%%%%%%%%%%%%%%%%%%%%%%%%%%%%%%%%%%%%%%%%%%%%%%%%%%%%%%%
%%%%%%%%%%%%%%%%%%%%%%%%%%%%%%%%%%%%%%%%%%%%%%%%%%%%%%%%%%%%%%%%
%%%%%%%%%%%%%%%%%%%%%%%%%%%%%%%%%%%%%%%%%%%%%%%%%%%%%%%%%%%%%%%%
%%%%%%%%%%%%%%%%%%%%%%%%%%%%%%%%%%%%%%%%%%%%%%%%%%%%%%%%%%%%%%%%
%%%%%%%%%%%%%%%%%%%%%%%%%%%%%%%%%%%%%%%%%%%%%%%%%%%%%%%%%%%%%%%%
%%%%%%%%%%%%%%%%%%%%%%%%%%%%%%%%%%%%%%%%%%%%%%%%%%%%%%%%%%%%%%%%
 We next compute $g_{\sf out}'$.
 We recall that ${\sf M}(t)={\sf M}_0(1+o)$
 with ${\sf M}_0={\sf U}_\infty(x)|_{|x|=1}$.
 Then we easily see that
 \begin{align*}
 g_{{\sf out},1}'
 &=
 2(\nabla_x{\sf U}_\infty\cdot\nabla_x\chi_4)
 (1-\chi_2)
 \lesssim
 {\bf 1}_{1<|x|<2},
 \\
%%%%%%%%%%%%%%%%%%%%%%%%%%%%%%%%%%%%%%%%%%%%%%%%%%%%%%%%%%%%%%%%
 g_{{\sf out},2}'
 &=
 {\sf U}_\infty
 (1-\chi_2)
 (\Delta_x\chi_4)
 \lesssim
 {\bf 1}_{1<|x|<2},
 \\
%%%%%%%%%%%%%%%%%%%%%%%%%%%%%%%%%%%%%%%%%%%%%%%%%%%%%%%%%%%%%%%%
 g_{{\sf out},3}'
 &=
 {\sf M}(t)
 \Delta_x\chi_4
 \lesssim
 {\bf 1}_{1<|x|<2},
 \\
%%%%%%%%%%%%%%%%%%%%%%%%%%%%%%%%%%%%%%%%%%%%%%%%%%%%%%%%%%%%%%%%
 g_{{\sf out},4}'
 &=
 2(\nabla_x\theta\cdot\nabla_x\chi_3)
 (1-\chi_2)
 \lesssim
 |x|^\frac{2p}{1-q}
 {\bf 1}_{{\sf r}_3<|x|<2{\sf r}_3},
 \\
%%%%%%%%%%%%%%%%%%%%%%%%%%%%%%%%%%%%%%%%%%%%%%%%%%%%%%%%%%%%%%%%
 g_{{\sf out},5}'
 &=
 \theta
 (1-\chi_2)
 (\Delta_x\chi_3)
 \lesssim
 |x|^\frac{2p}{1-q}
 {\bf 1}_{{\sf r}_3<|x|<2{\sf r}_3},
 \\
%%%%%%%%%%%%%%%%%%%%%%%%%%%%%%%%%%%%%%%%%%%%%%%%%%%%%%%%%%%%%%%%
 g_{{\sf out},6}'
 &=
 2(\nabla_x\Theta_J\cdot\nabla_x\chi_3)
 (1-\chi_2)
 \lesssim
 |x|^{\gamma+2J-2}
 {\bf 1}_{{\sf r}_3<|x|<2{\sf r}_3},
 \\
%%%%%%%%%%%%%%%%%%%%%%%%%%%%%%%%%%%%%%%%%%%%%%%%%%%%%%%%%%%%%%%%
 g_{{\sf out},7}'
 &=
 \Theta_J
 (1-\chi_2)
 (\Delta_x\chi_3)
 \lesssim
 |x|^{\gamma+2J-2}
 {\bf 1}_{{\sf r}_3<|x|<2{\sf r}_3}.
 \end{align*}
 We completes the proof of Lemma \ref{Lem9.1}.
 \end{proof}
%%%%%%%%%%%%%%%%%%%%%%%%%%%%%%%%%%%%%%%%%%%%%%%%%%%%%%%%%%%%%%%%

 \subsection{Proof of Lemma \ref{Lem7.2}\ and Lemma \ref{Lem8.2}\
 (computations of $N$)}
 \label{sec_9.4}
%%%%%%%%%%%%%%%%%%%%%%%%%%%%%%%%%%%%%%%%%%%%%%%%%%%%%%%%%%%%%%%%
 Lemma \ref{Lem7.2} and Lemma \ref{Lem8.2} are obtained
 from Lemma \ref{Lem9.2}.
%%%%%%%%%%%%%%%%%%%%%%%%%%%%%%%%%%%%%%%%%%%%%%%%%%%%%%%%%%%%%%%%
 \begin{lem}\label{Lem9.2}
 Assume {\rm (A1)} {\rm -} {\rm (A3)}.
 Then
 it holds that
%%%%%%%%%%%%%%%%%%%%%%%%%%%%%%%%%%%%%%%%%%%%%%%%%%%%%%%%%%%%%%%%
 \begin{align*}
 N_1[\epsilon,v,w]
 {\bf 1}_{|y|<{\sf l}_1}
 &\lesssim
 \sigma^\frac{2}{n-2}
 \cdot
 \lambda^{-\frac{n+2}{2}}
 \sigma
 (1+|y|^2)^{-1}
 {\bf 1}_{|y|<{\sf l}_1},
 \\
%%%%%%%%%%%%%%%%%%%%%%%%%%%%%%%%%%%%%%%%%%%%%%%%%%%%%%%%%%%%%%%%
 N_1[\epsilon,v,w]
 {\bf 1}_{|y|>{\sf l}_1}
 {\bf 1}_{|\xi|<1}
 &\lesssim
 \eta^\frac{2(p-q)}{1-q}
 \cdot
 \eta^\frac{2q}{1-q}
 {\bf 1}_{|y|>{\sf l}_1}
 {\bf 1}_{|\xi|<1},
 \\
%%%%%%%%%%%%%%%%%%%%%%%%%%%%%%%%%%%%%%%%%%%%%%%%%%%%%%%%%%%%%%%%
 N_1[\epsilon,v,w]
 {\bf 1}_{|\xi|>1}
 &\lesssim
 \eta^\frac{2(p-q)}{1-q}
 |\xi|^{\frac{2p}{1-q}-\gamma+2}
 \cdot
 \eta^\frac{2q}{1-q}
 |\xi|^{\gamma-2}
 {\bf 1}_{1<|\xi|<2{\sf l}_2},
 \\
%%%%%%%%%%%%%%%%%%%%%%%%%%%%%%%%%%%%%%%%%%%%%%%%%%%%%%%%%%%%%%%%
 N_2[\epsilon,v,w]
 {\bf 1}_{|y|<2{\sf l}_1}
 &\lesssim
 |\xi|^2
 \cdot
 \lambda^{-\frac{n+2}{2}}
 \sigma
 |y|^{-2}
 {\bf 1}_{{\sf l}_1<|y|<2{\sf l}_1},
 \\
%%%%%%%%%%%%%%%%%%%%%%%%%%%%%%%%%%%%%%%%%%%%%%%%%%%%%%%%%%%%%%%%
 N_2[\epsilon,v,w]
 {\bf 1}_{|y|>2{\sf l}_1}
 {\bf 1}_{|\xi|<(T-t)^{{\sf q}_1}}
 &\lesssim
 (T-t)^{2{\sf q}_1}
 \cdot
 \lambda^{-\frac{n+2}{2}}
 \sigma
 |y|^{-2}
 {\bf 1}_{|y|>{\sf l}_1}
 {\bf 1}_{|\xi|<(T-t)^{{\sf q}_1}},
 \\
%%%%%%%%%%%%%%%%%%%%%%%%%%%%%%%%%%%%%%%%%%%%%%%%%%%%%%%%%%%%%%%%
 N_2[\epsilon,v,w]
 {\bf 1}_{(T-t)^{{\sf q}_1}<|\xi|<1}
 &\lesssim
 (T-t)^{2{\sf d}_1}
 \cdot
 \eta^\frac{2q}{1-q}
 {\bf 1}_{(T-t)^{{\sf q}_1}<|\xi|<1},
 \\
%%%%%%%%%%%%%%%%%%%%%%%%%%%%%%%%%%%%%%%%%%%%%%%%%%%%%%%%%%%%%%%%
 N_2[\epsilon,v,w]
 {\bf 1}_{|\xi|>1}
 &=
 {\sf t}_1
 +
 {\sf t}_2
 +
 {\sf t}_3
 +
 {\sf t}_4,
 \end{align*}
%%%%%%%%%%%%%%%%%%%%%%%%%%%%%%%%%%%%%%%%%%%%%%%%%%%%%%%%%%%%%%%%
%%%%%%%%%%%%%%%%%%%%%%%%%%%%%%%%%%%%%%%%%%%%%%%%%%%%%%%%%%%%%%%%
%%%%%%%%%%%%%%%%%%%%%%%%%%%%%%%%%%%%%%%%%%%%%%%%%%%%%%%%%%%%%%%%
%%%%%%%%%%%%%%%%%%%%%%%%%%%%%%%%%%%%%%%%%%%%%%%%%%%%%%%%%%%%%%%%
%%%%%%%%%%%%%%%%%%%%%%%%%%%%%%%%%%%%%%%%%%%%%%%%%%%%%%%%%%%%%%%%
 \begin{align*}
 {\sf t}_1
 &\lesssim
 (T-t)^{2({\sf q}_1+{\sf q}_2)(n-2)}
 \cdot
 \eta^\frac{2q}{1-q}
 |\xi|^{-\frac{2}{1-q}-2}
 {\bf 1}_{1<|\xi|<2{\sf l}_2},
 \\
%%%%%%%%%%%%%%%%%%%%%%%%%%%%%%%%%%%%%%%%%%%%%%%%%%%%%%%%%%%%%%%%
 {\sf t}_2
 &\lesssim
 (|\xi|^{-{\sf k}_1}+|z|^2)^2
 \cdot
 \eta^\frac{2q}{1-q}
 |\xi|^{\gamma-2-(\frac{2}{1-q}-\gamma)}
 {\bf 1}_{{\sf l}_2<|\xi|<4{\sf l}_2},
 \\
%%%%%%%%%%%%%%%%%%%%%%%%%%%%%%%%%%%%%%%%%%%%%%%%%%%%%%%%%%%%%%%%
 {\sf t}_3
 &\lesssim
 \eta^\frac{2(p-q)}{1-q}
 |\xi|^{\frac{2p}{1-q}-\gamma+2}
 \cdot
 \eta^\frac{2q}{1-q}
 |\xi|^{\gamma-2}
 {\bf 1}_{1<|\xi|<2{\sf l}_2},
 \\
%%%%%%%%%%%%%%%%%%%%%%%%%%%%%%%%%%%%%%%%%%%%%%%%%%%%%%%%%%%%%%%%
 {\sf t}_4
 &\lesssim
 (T-t)^{2{\sf d}_1}
 \cdot
 \eta^\frac{2q}{1-q}
 |\xi|^{\gamma-2-(\frac{2}{1-q}-\gamma)}
 {\bf 1}_{1<|\xi|<2{\sf l}_2},
 \\
%%%%%%%%%%%%%%%%%%%%%%%%%%%%%%%%%%%%%%%%%%%%%%%%%%%%%%%%%%%%%%%%
 N_3[\epsilon,v,w]
 &=
 {\sf t}_7+{\sf t}_8+{\sf t}_9,
 \\
%%%%%%%%%%%%%%%%%%%%%%%%%%%%%%%%%%%%%%%%%%%%%%%%%%%%%%%%%%%%%%%%
 {\sf t}_7
 &\lesssim
 \eta^\frac{2q}{1-q}
 |\xi|^{\gamma-2-(\frac{2}{1-q}-\gamma)}
 {\bf 1}_{{\sf l}_2<|\xi|<4{\sf l}_2},
 \\
%%%%%%%%%%%%%%%%%%%%%%%%%%%%%%%%%%%%%%%%%%%%%%%%%%%%%%%%%%%%%%%%
 {\sf t}_8
 &\lesssim
 {\sf U}_\infty^q
 {\bf 1}_{\frac{1}{2}{\sf r}_3<|x|<2},
 \\
%%%%%%%%%%%%%%%%%%%%%%%%%%%%%%%%%%%%%%%%%%%%%%%%%%%%%%%%%%%%%%%%
 {\sf t}_9
 {\bf 1}_{|z|<1}
 &\lesssim
 \eta^\frac{2q}{1-q}
 |\xi|^{\gamma-2-\frac{2}{1-q}+\gamma}
 {\bf 1}_{|\xi|>2{\sf l}_2}
 {\bf 1}_{|z|<1},
 \\
%%%%%%%%%%%%%%%%%%%%%%%%%%%%%%%%%%%%%%%%%%%%%%%%%%%%%%%%%%%%%%%%
 {\sf t}_9
 {\bf 1}_{1<|z|<(T-t)^{-\frac{1}{3}}}
 &\lesssim
 (T-t)^{-1+\frac{1}{6}(\gamma+2J-\frac{2}{1-q})}
 \cdot
 (T-t)^{\frac{\gamma}{2}+J}
 |z|^{\gamma+2J}
 {\bf 1}_{1<|z|<(T-t)^{-\frac{1}{3}}},
 \\
%%%%%%%%%%%%%%%%%%%%%%%%%%%%%%%%%%%%%%%%%%%%%%%%%%%%%%%%%%%%%%%%
 {\sf t}_9
 {\bf 1}_{(T-t)^{-\frac{1}{3}}<|z|<{\sf l}_{\sf out}}
 &\lesssim
 (T-t)^{-\frac{1}{3}-{\sf d}_1}
 \cdot
 (T-t)^{\frac{\gamma}{2}+J+\frac{3}{2}{\sf d}_1}
 |z|^{\gamma+2J+3{\sf d}_1}
 {\bf 1}_{(T-t)^{-\frac{1}{3}}<|z|<{\sf l}_{\sf out}},
 \\
%%%%%%%%%%%%%%%%%%%%%%%%%%%%%%%%%%%%%%%%%%%%%%%%%%%%%%%%%%%%%%%%
 {\sf t}_9
 {\bf 1}_{|x|>{\sf lout}\sqrt{T-t}}
 &\lesssim
 {\sf U}_\infty^q
 {\bf 1}_{{\sf lout}\sqrt{T-t}<|x|<{\sf r}_3},
 \\
%%%%%%%%%%%%%%%%%%%%%%%%%%%%%%%%%%%%%%%%%%%%%%%%%%%%%%%%%%%%%%%%
 N_4[\epsilon,v,w]
 {\bf 1}_{|y|<1}
 &\lesssim
 (T-t)
 \lambda^{\frac{(n-2)(1-q)}{2}}
 \cdot
 \lambda^{-\frac{n+2}{2}}
 \sigma
 {\bf 1}_{|y|<1},
 \\
%%%%%%%%%%%%%%%%%%%%%%%%%%%%%%%%%%%%%%%%%%%%%%%%%%%%%%%%%%%%%%%%
 N_4[\epsilon,v,w]
 {\bf 1}_{|y|>1}
 &\lesssim
 (T-t)
 \cdot
 \lambda^{-\frac{n+2}{2}}
 \sigma
 |y|^{-2}
 {\bf 1}_{1<|y|<2{\sf l}_1},
 \\
%%%%%%%%%%%%%%%%%%%%%%%%%%%%%%%%%%%%%%%%%%%%%%%%%%%%%%%%%%%%%%%%
 N_5[w]
 &\lesssim
 {\sf R}_1^{-1}
 |x|^{-3}
 {\bf 1}_{|x|>1},
 \\
%%%%%%%%%%%%%%%%%%%%%%%%%%%%%%%%%%%%%%%%%%%%%%%%%%%%%%%%%%%%%%%%
 N_6[w]
 &\lesssim
 {\bf 1}_{1<|x|<4}
 +
 {\sf R}^{-1}
 |x|^{-1}
 {\bf 1}_{|x|>2}.
 \end{align*}
 \end{lem}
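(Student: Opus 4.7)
The plan is to treat each $N_i$ separately, carrying out a term-by-term Taylor expansion around the appropriate reference state in each sub-region determined by the cut-off functions, and to apply the pointwise bounds (A1)--(A3) together with the size estimates ({\sf Ei})--({\sf Evi}) listed in Section \ref{sec_9.2}. In each sub-region the strategy is the same: identify which piece of the decomposition \eqref{eq5.1} dominates $u$, estimate $u$ minus that dominant piece, and then convert $|f^{(k)}|\cdot|\text{deviation}|^k$ into the standard template $\lambda^{-\frac{n+2}{2}}\sigma|y|^{-2(1+{\sf c}_2)}$ or $\eta^\frac{2q}{1-q}|\xi|^{\gamma-2-2{\sf c}_2}$ using the scaling relations ({\sf Eii})--({\sf Eiii}) and the inequality $\frac{2}{1-q}-2<\gamma<\frac{2}{1-q}$.

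For $N_1$, the definition makes it a second-order Taylor remainder of $f$ about $\lambda^{-\frac{n-2}{2}}{\sf Q}$, so $|N_1|\lesssim|f''(u_\ast)|\,|u-\lambda^{-\frac{n-2}{2}}{\sf Q}|^2$ on the support of $\chi_2$. For $|y|<{\sf l}_1$ the deviation is controlled by $\lambda^{-\frac{n-2}{2}}\sigma T_1$ (from (A2)--(A3) and ({\sf Eiv})); together with $p-2=\frac{6-n}{n-2}$ and ${\sf l}_1=\sigma^{-1/(n-2)}$ this gives the first quoted bound. For $|y|>{\sf l}_1$ one expands instead around the semi-inner piece $-\eta^\frac{2}{1-q}{\sf U}$ and applies ({\sf Evi}) to get the $\eta^\frac{2(p-q)}{1-q}$-factor in the transition to $|\xi|\sim 1$ and beyond. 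For $N_2$, $f_2$ is expanded about $-\eta^\frac{2}{1-q}{\sf U}$ so the stated inhomogeneity is precisely its second-order remainder; the region is split into $|y|<2{\sf l}_1$ (use ({\sf Ev})), $(T-t)^{{\sf q}_1}<|\xi|<1$ (use (A3)), and $|\xi|>1$, where the four subterms ${\sf t}_1,\ldots,{\sf t}_4$ correspond respectively to the contributions from $\lambda^{-\frac{n-2}{2}}{\sf Q}$, from $\eta^\frac{2}{1-q}{\sf U}-{\sf U}_\infty-\Theta_J$, from $\theta$, and from $u_1$.

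$N_3$ requires the extra observation that by the construction in Section \ref{sec_4.5} the quantity $\Delta_x\theta-q{\sf U}_\infty^{q-1}\theta$ cancels, to order $\theta^L$, the nonlinear mismatch $f({\sf U}_\infty+\theta)-f_2({\sf U}_\infty+\theta)-\Delta{\sf U}_\infty$; hence $N_3$ reduces to a Taylor remainder of order $L+1$ in $\theta$ plus the difference coming from $u-(-{\sf U}_\infty-\theta-\Theta_J\chi_3)$. On $|z|<1$ this gives the ${\sf l}_2$-bound via ({\sf Evi}); on $1<|z|<(T-t)^{-1/3}$ and on $(T-t)^{-1/3}<|z|<{\sf l}_{\sf out}$ the size is read off from the two piecewise parts of ${\cal W}$ in (A3); on $|z|>{\sf l}_{\sf out}$ one uses the defining property of ${\sf l}_{\sf out}$ to bound $u_1$ by $\frac{1}{8}{\sf U}_\infty$ and then estimates $f,f_2$ directly by ${\sf U}_\infty^q$. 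The remaining terms are simpler: $N_4=f_2(u)\chi_1$ is bounded using $u\approx\lambda^{-\frac{n-2}{2}}{\sf Q}$ on $|y|\lesssim1$; $N_5$ and $N_6$ are supported on $|x|>1$ where $u\approx{\sf M}(t)+w$ and use the ODE identity $\dot{\sf M}=f({\sf M})-f_2({\sf M})$ together with the outer part of (A3).

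The main obstacle will be bookkeeping: each of the roughly twenty estimates in the statement must be checked in its own sub-region, and the natural bounds one gets (with exponents expressed in terms of ${\sf Q}$, ${\sf U}$, ${\sf U}_\infty$, ${\sf M}$, $\theta$, $\Theta_J$, and the various $|y|$, $|\xi|$, $|z|$ powers) must each be converted to the stated templates. The most delicate conversions are in the transition annuli $|\xi|\sim 1$, $|y|\sim{\sf l}_1$, $|\xi|\sim{\sf l}_2$, and $|z|\sim{\sf l}_{\sf out}$, where two different asymptotic regimes compete and one has to exploit the definitions of ${\sf l}_1,{\sf l}_2,{\sf l}_{\sf out}$ together with \eqref{eq9.1} to show that the "wrong-regime" representation of the same quantity is still absorbed into the right-hand side. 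These are essentially the same bookkeeping arguments as in \cite{Seki,Harada_2}, adapted to the present four-region setup.
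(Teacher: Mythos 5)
Your proposal is correct and follows essentially the same route as the paper: $N_1$ and $N_2$ are handled as second-order Taylor remainders (of $f$ about $\lambda^{-\frac{n-2}{2}}{\sf Q}$ and of $f_2$ about $-\eta^{\frac{2}{1-q}}{\sf U}$, respectively), $N_3$ is reduced via the defining equations for $\theta_0,\dots,\theta_L$ from Section \ref{sec_4.5} to a high-order remainder in $\Theta_J-w$ and $\theta$, and everything is then converted region by region using ({\sf Ei})--({\sf Evi}), \eqref{eq9.1} and the definitions of ${\sf l}_1,{\sf l}_2,{\sf l}_{\sf out}$, exactly as in the paper. The only cosmetic difference is in the regime $|y|>{\sf l}_1$, $|\xi|<1$ for $N_1$, where the paper does not re-expand about $-\eta^{\frac{2}{1-q}}{\sf U}$ but simply bounds the remainder by the $p$-th power of the dominant profile $(\eta^{\frac{2}{1-q}}{\sf U})^p$ and factors it as $\eta^{\frac{2(p-q)}{1-q}}\cdot\eta^{\frac{2q}{1-q}}$ via ({\sf Eii}); this yields the same estimate you state.
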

%%%%%%%%%%%%%%%%%%%%%%%%%%%%%%%%%%%%%%%%%%%%%%%%%%%%%%%%%%%%%%%%
%%%%%%%%%%%%%%%%%%%%%%%%%%%%%%%%%%%%%%%%%%%%%%%%%%%%%%%%%%%%%%%%
%%%%%%%%%%%%%%%%%%%%%%%%%%%%%%%%%%%%%%%%%%%%%%%%%%%%%%%%%%%%%%%%
%%%%%%%%%%%%%%%%%%%%%%%%%%%%%%%%%%%%%%%%%%%%%%%%%%%%%%%%%%%%%%%%
 \begin{proof}
 From (A1) - (A3),
 we note that
 $u_1(x,t)=
 \lambda^{-\frac{n-2}{2}}
 \epsilon(y,t)
 \chi_{\sf in}
 +
 \eta(t)^\frac{2}{1-q}
 v(\xi,t)
 \chi_{\sf mid}
 +
 w(x,t)$
 satisfies
 \begin{align*}
 u_1(x,t)
 \lesssim
 \begin{cases}
 \lambda^{-\frac{n-2}{2}}
 \sigma
 =
 \eta^\frac{2}{1-q}
 &
 \text{for } |\xi|<1,
 \\
 (T-t)^{{\sf d}_1}
 \eta^\frac{2}{1-q}
 |\xi|^\gamma
 &
 \text{for } |\xi|>1,\ |z|<1,
 \\
 (T-t)^{\frac{\gamma}{2}+J+{\sf d}_1}
 |z|^{\gamma+2J+3{\sf d}_1}
 &
 \text{for } 1<|z|<{\sf l}_{\sf out},
 \\
 {\sf U}_\infty(x)
 &
 \text{for } \sqrt{T-t}\cdot{\sf l}_{\sf out}<|x|<2.
 \end{cases}
 \end{align*}
 We recall that
% $\lambda^{-\frac{n-2}{2}}\sigma\lesssim\lambda^{-\frac{n-2}{2}}{\sf Q}$
% for $|y|<{\sf l}_1$,
% and
 $\lambda^{-\frac{n-2}{2}}\sigma\lesssim\lambda^{-\frac{n-2}{2}}{\sf Q}$
 for $|y|>{\sf l}_1$.
 Therefore since $\lambda^{-\frac{n-2}{2}}\sigma=\eta^\frac{2}{1-q}$,
 it holds that
 \begin{align*}
 |
 \lambda^{-\frac{n-2}{2}}
 {\sf Q}(y)
 \chi_2
 +
 \lambda^{-\frac{n-2}{2}}
 &
 \sigma
 T_1(y)
 -
 {\sf U}_{\sf c}(x,t)
 (1-\chi_1)
 -
 (\theta(x)+\Theta_J(x,t))
 (1-\chi_2)
 \chi_3
 |
 \\
 &\lesssim
 \begin{cases}
 \lambda^{-\frac{n-2}{2}}
 {\sf Q}(y) & \text{for } |y|<{\sf l}_1,
 \\
 \eta^\frac{2}{1-q}
 {\sf U}(\xi)
 & \text{for } |y|>{\sf l}_1,\ |\xi|<1,
 \\
 {\sf U}_\infty(x)
 & \text{for } |\xi|>1,\ |x|<1,
 \\
 {\sf M}(t)
 & \text{for } |x|>1.
 \end{cases}
 \end{align*}
%%%%%%%%%%%%%%%%%%%%%%%%%%%%%%%%%%%%%%%%%%%%%%%%%%%%%%%%%%%%%%%%
 From this estimate and ({\sf Eiii}),
 we get
 \begin{align*}
 N_1[\epsilon,v,w]
 {\bf 1}_{|y|<{\sf l}_1}
 &=
 \{
 f(u)-f(\lambda^{-\frac{n-2}{2}}{\sf Q})
 -
 \lambda^{-2}
 V
 (u-\lambda^{-\frac{n-2}{2}}{\sf Q})
 \}
 \chi_2
 {\bf 1}_{|y|<{\sf l}_1}
 \\
 &\lesssim
 f''(\lambda^{-\frac{n-2}{2}}{\sf Q})
 (u-\lambda^{-\frac{n-2}{2}}{\sf Q})^2
 {\bf 1}_{|y|<{\sf l}_1}
 \\
 &\lesssim
 (\lambda^{-\frac{n-2}{2}}{\sf Q})^{p-2}
 (
 (\lambda^{-\frac{n-2}{2}}\sigma)^2
 +
 u_1^2
 )
 {\bf 1}_{|y|<{\sf l}_1}
 \\
 &\lesssim
 (\lambda^{-\frac{n-2}{2}}{\sf Q})^{p-2}
 (\lambda^{-\frac{n-2}{2}}\sigma)^2
 {\bf 1}_{|y|<{\sf l}_1}
 \\
 &\lesssim
 \lambda^{-\frac{(n-2)p}{2}}
 (1+|y|^2)^{-\frac{1}{2}(n-2)(p-2)}
 \sigma^2
 {\bf 1}_{|y|<{\sf l}_1}
 \\
 &=
 (1+|y|^2)^{\frac{1}{2}(n-4)}
 \sigma
 \cdot
 \lambda^{-\frac{n+2}{2}}
 \sigma
 (1+|y|^2)^{-1}
 {\bf 1}_{|y|<{\sf l}_1}
 \\
 &=
 \sigma^\frac{2}{n-2}
 \cdot
 \lambda^{-\frac{n+2}{2}}
 \sigma
 (1+|y|)^{-2}
 {\bf 1}_{|y|<{\sf l}_1},
 \\
%%%%%%%%%%%%%%%%%%%%%%%%%%%%%%%%%%%%%%%%%%%%%%%%%%%%%%%%%%%%%%%%
 N_1[\epsilon,v,w]
 {\bf 1}_{|y|>{\sf l}_1}
 {\bf 1}_{|\xi|<1}
 &=
 \{
 f(u)-f(\lambda^{-\frac{n-2}{2}}{\sf Q})
 -
 \lambda^{-2}
 V
 (u-\lambda^{-\frac{n-2}{2}}{\sf Q})
 \}
 \chi_2
 {\bf 1}_{|y|>{\sf l}_1}
 {\bf 1}_{|\xi|<1}
 \\
 &\lesssim
 (
 (\eta^\frac{2}{1-q}{\sf U})^p
 +
 u_1^p
 )
 {\bf 1}_{|y|>{\sf l}_1}
 {\bf 1}_{|\xi|<1}
 \\
 &\lesssim
 (\eta^\frac{2}{1-q}{\sf U})^p
 {\bf 1}_{|y|>{\sf l}_1}
 {\bf 1}_{|\xi|<1}
 \\
 &\lesssim
 \eta^\frac{2(p-q)}{1-q}
 \cdot
 \eta^\frac{2q}{1-q}
 {\bf 1}_{|y|>{\sf l}_1}
 {\bf 1}_{|\xi|<1},
 \end{align*}
%%%%%%%%%%%%%%%%%%%%%%%%%%%%%%%%%%%%%%%%%%%%%%%%%%%%%%%%%%%%%%%%
 \begin{align*}
 N_1[\epsilon,v,w]
 {\bf 1}_{|\xi|>1}
 &=
 \{
 f(u)-f(\lambda^{-\frac{n-2}{2}}{\sf Q})
 -
 \lambda^{-2}
 V
 (u-\lambda^{-\frac{n-2}{2}}{\sf Q})
 \}
 \chi_2
 {\bf 1}_{|\xi|>1}
 \\
 &\lesssim
 (
 {\sf U}_\infty^p
 +
 u_1^p
 )
 {\bf 1}_{1<|\xi|<2{\sf l}_2}
 \\
 &\lesssim
 {\sf U}_\infty^p
 {\bf 1}_{1<|\xi|<4{\sf l}_2}
 \\
 &=
 \eta^\frac{2(p-q)}{1-q}
 |\xi|^{\frac{2p}{1-q}-\gamma+2}
 \cdot
 \eta^\frac{2q}{1-q}
 |\xi|^{\gamma-2}
 {\bf 1}_{1<|\xi|<2{\sf l}_2}.
 \end{align*}
%%%%%%%%%%%%%%%%%%%%%%%%%%%%%%%%%%%%%%%%%%%%%%%%%%%%%%%%%%%%%%%%
 We apply the Taylor expansion to get
 \begin{align*}
 N_2[\epsilon,v,w]
 &=
 \{
 f_2(u)
 +
 f_2(\eta^\frac{2}{1-q}{\sf U})
 -
 q\eta^{-2}{\sf U}^{q-1}
 (u+\eta^\frac{2}{1-q}{\sf U})
 \}
 (1-\chi_1)
 \chi_2
 \\
 &=
 \tfrac{1}{2}
 f_2''(u_\kappa)
 (u+\eta^\frac{2}{1-q}{\sf U})^2
 (1-\chi_1)
 \chi_2,
 \end{align*}
 where
 $u_\kappa
 =
 \kappa u-(1-\kappa)\eta^\frac{2}{1-q}{\sf U}$
 with
 $\kappa\in(0,1)$.
%%%%%%%%%%%%%%%%%%%%%%%%%%%%%%%%%%%%%%%%%%%%%%%%%%%%%%%%%%%%%%%%
% We divide $N_2[\epsilon,v,w]$ into several parts.
 From ({\sf Eii}),
 we have
 \begin{align*}
 N_2[\epsilon,v,w]
 {\bf 1}_{|y|<2{\sf l}_1}
 &=
 \tfrac{1}{2}
 f_2''(u_\kappa)
 (u+\eta^\frac{2}{1-q}{\sf U})^2
 (1-\chi_1)
 \chi_2
 {\bf 1}_{|y|<2{\sf l}_1}
 \\
 &=
 \tfrac{1}{2}
 f_2''(u_\kappa)
 (
 \lambda^{-\frac{n-2}{2}}
 {\sf Q}
 +
 \lambda^{-\frac{n-2}{2}}
 \sigma
 T_1
 \chi_1
 +
 \eta^\frac{2}{1-q}
 {\sf U}
 \chi_1
 +
 u_1
 )^2
 {\bf 1}_{{\sf l}_1<|y|<2{\sf l}_1}
 \\
 &\lesssim
 (\eta^\frac{2}{1-q}{\sf U})^{q-2}
 (\eta^\frac{2}{1-q}{\sf U})^2
 {\bf 1}_{{\sf l}_1<|y|<2{\sf l}_1}
 \\
 &\lesssim
 \lambda^2
 \eta^{-2}
 |y|^2
 \cdot
 \lambda^{-\frac{n+2}{2}}
 \sigma
 |y|^{-2}
 {\bf 1}_{{\sf l}_1<|y|<2{\sf l}_1}
 \\
 &=
 |\xi|^2
 \cdot
 \lambda^{-\frac{n+2}{2}}
 \sigma
 |y|^{-2}
 {\bf 1}_{{\sf l}_1<|y|<2{\sf l}_1}.
 \end{align*}
%%%%%%%%%%%%%%%%%%%%%%%%%%%%%%%%%%%%%%%%%%%%%%%%%%%%%%%%%%%%%%%%
 Furthermore
 from ({\sf Eii}),
 we get
 \begin{align*}
 N_2[\epsilon,v,w]
 {\bf 1}_{|y|>2{\sf l}_1}
 {\bf 1}_{|\xi|<1}
 &=
 \tfrac{1}{2}
 f_2''(u_\kappa)
 (u+\eta^\frac{2}{1-q}{\sf U})^2
 (1-\chi_1)
 \chi_2
 {\bf 1}_{|y|>2{\sf l}_1}
 {\bf 1}_{|\xi|<1}
 \\
 &=
 \tfrac{1}{2}
 f_2''(u_\kappa)
 (
 \lambda^{-\frac{n-2}{2}}{\sf Q}
 +
 u_1
 )^2
 {\bf 1}_{|y|>2{\sf l}_1}
 {\bf 1}_{|\xi|<1}
 \\
 &\lesssim
 \eta^\frac{2q-4}{1-q}
 (
 \lambda^{-\frac{n-2}{2}}
 {\sf Q}
 +
 (T-t)^{{\sf d}_1}
 \eta^\frac{2}{1-q}
 )^2
 {\bf 1}_{|y|>2{\sf l}_1}
 {\bf 1}_{|\xi|<1}
 \\
 &=
 \eta^\frac{2q}{1-q}
 (
 \sigma^{-1}
 {\sf Q}
 +
 (T-t)^{{\sf d}_1}
 )^2
 {\bf 1}_{|y|>2{\sf l}_1}
 {\bf 1}_{|\xi|<1}.
 \end{align*}
%%%%%%%%%%%%%%%%%%%%%%%%%%%%%%%%%%%%%%%%%%%%%%%%%%%%%%%%%%%%%%%%
 This together with the definition of ${\sf q}_1,{\sf q}_2$ implies
 \begin{align*}
 N_2[\epsilon,v,w]
 {\bf 1}_{|y|>2{\sf l}_1}
 {\bf 1}_{|\xi|<(T-t)^{{\sf q}_1}}
 &\lesssim
 \eta^\frac{2q}{1-q}
 (
 \sigma^{-1}
 {\sf Q}
 +
 (T-t)^{{\sf d}_1}
 )^2
 {\bf 1}_{|y|>2{\sf l}_1}
 {\bf 1}_{|\xi|<(T-t)^{{\sf q}_1}}
 \\
 &=
 \lambda^2
 \eta^{-2}
 |y|^2
 \{
 (
 \sigma^{-1}
 {\sf Q}
 +
 (T-t)^{2{\sf d}_1}
 \}
 \cdot
 \lambda^{-\frac{n+2}{2}}
 \sigma
 |y|^{-2}
 {\bf 1}_{|y|>2{\sf l}_1}
 {\bf 1}_{|\xi|<(T-t)^{{\sf q}_1}}
 \\
 &=
 |\xi|^2
 \{
 (
 \sigma^{-1}
 {\sf Q}
 )^2
 +
 (T-t)^{2{\sf d}_1}
 \}
 \cdot
 \lambda^{-\frac{n+2}{2}}
 \sigma
 |y|^{-2}
 {\bf 1}_{|y|>2{\sf l}_1}
 {\bf 1}_{|\xi|<(T-t)^{{\sf q}_1}}
 \\
 &\lesssim
 |\xi|^2
 \cdot
 \lambda^{-\frac{n+2}{2}}
 \sigma
 |y|^{-2}
 {\bf 1}_{|y|>2{\sf l}_1}
 {\bf 1}_{|\xi|<(T-t)^{{\sf q}_1}}
 \\
 &\lesssim
 (T-t)^{2{\sf q}_1}
 \cdot
 \lambda^{-\frac{n+2}{2}}
 \sigma
 |y|^{-2}
 {\bf 1}_{|y|>2{\sf l}_1}
 {\bf 1}_{|\xi|<(T-t)^{{\sf q}_1}},
 \\
%%%%%%%%%%%%%%%%%%%%%%%%%%%%%%%%%%%%%%%%%%%%%%%%%%%%%%%%%%%%%%%%
 N_2[\epsilon,v,w]
 {\bf 1}_{(T-t)^{{\sf q}_1}<|\xi|<1}
 &\lesssim
 \eta^\frac{2q}{1-q}
 (
 \sigma^{-1}
 {\sf Q}
 +
 (T-t)^{{\sf d}_1}
 )^2
% {\bf 1}_{|y|>2{\sf l}_1}
 {\bf 1}_{(T-t)^{{\sf q}_1}<|\xi|<1}
 \\
 &\lesssim
 \eta^\frac{2q}{1-q}
 \{
 (T-t)^{2(n-2){\sf q}_2}
 +
 (T-t)^{2{\sf d}_1}
 \}
 {\bf 1}_{(T-t)^{{\sf q}_1}<|\xi|<1}
 \\
 &\lesssim
 (T-t)^{2{\sf d}_1}
 \eta^\frac{2q}{1-q}
 {\bf 1}_{(T-t)^{{\sf q}_1}<|\xi|<1}.
 \end{align*}
 In the last line,
 we use ${\sf d}_1<(n-2){\sf q}_2$.
%%%%%%%%%%%%%%%%%%%%%%%%%%%%%%%%%%%%%%%%%%%%%%%%%%%%%%%%%%%%%%%%
 We divide $N_2[\epsilon,v,w]{\bf 1}_{|\xi|>1}$ into four parts.
 \begin{align*}
 N_2
 [\epsilon,v,w]
 {\bf 1}_{|\xi|>1}
 &=
 \tfrac{1}{2}
 f_2''(u_\kappa)
 (u+\eta^\frac{2}{1-q}{\sf U})^2
 (1-\chi_1)
 \chi_2
 {\bf 1}_{|\xi|>1}
 \\
 &\lesssim
 {\sf U}_\infty^{q-2}
 \{
 \lambda^{-\frac{n-2}{2}}{\sf Q}
 \chi_2
 -
 \eta^\frac{2}{1-q}{\sf U}
 \chi_2
 -
 {\sf U}_\infty
 (1-\chi_2)
 -
 (\Theta_J+\theta)
 (1-\chi_2)
 \\
 & \quad
 +
 \eta^\frac{2}{1-q}{\sf U}
 +
 u_1
 \}^2
 {\bf 1}_{1<|\xi|<2{\sf l}_2}
 \\
 &=
 {\sf U}_\infty^{q-2}
 \{
 \underbrace{
 \lambda^{-\frac{n-2}{2}}
 {\sf Q}
 }_{={\sf t}_1}
 +
 \underbrace{
 (
 \eta^\frac{2}{1-q}{\sf U}
 -
 {\sf U}_\infty
 -
 \Theta_J
 )
 (1-\chi_2)
 }_{={\sf t}_2}
 -
 \underbrace{
 \theta
 (1-\chi_2)
 }_{={\sf t}_3}
 +
 \underbrace{
 u_1
 }_{={\sf t}_4}
 \}^2
 \\
 & \quad
 \times
 {\bf 1}_{1<|\xi|<2{\sf l}_2}.
 \end{align*}
 From the definition of ${\sf q}_1,{\sf q_2}$
 (see \eqref{eq9.1}),
 we note that
 \begin{align}\label{eq9.2}
 (\sigma^{-1}{\sf Q})
 {\bf 1}_{|\xi|>1}
 \lesssim
 (T-t)^{({\sf q}_1+{\sf q}_2)(n-2)}.
 \end{align}
%%%%%%%%%%%%%%%%%%%%%%%%%%%%%%%%%%%%%%%%%%%%%%%%%%%%%%%%%%%%%%%%
 We compute ${\sf t}_i$ ($i=1,2,3,4$)
 from ({\sf Eii}), ({\sf Evi}) and \eqref{eq9.2}.
 \begin{align*}
 {\sf t}_1
 &=
 {\sf U}_\infty^{q-2}
 (
 \lambda^{-\frac{n-2}{2}}
 {\sf Q}
 )^2
 {\bf 1}_{1<|\xi|<2{\sf l}_2}
 \\
 &\lesssim
 \eta^\frac{2(q-2)}{1-q}
 |\xi|^{\frac{2(q-2)}{1-q}}
 (
 \lambda^{-\frac{n-2}{2}}
 {\sf Q}
 )^2
 {\bf 1}_{1<|\xi|<2{\sf l}_2}
 \\
 &=
 (\sigma^{-1}{\sf Q})^2
 (
 \eta^{-\frac{2}{1-q}}
 \lambda^{-\frac{n-2}{2}}
 \sigma
 )^2
 \cdot
 \eta^\frac{2q}{1-q}
 |\xi|^{-\frac{2}{1-q}-2}
 {\bf 1}_{1<|\xi|<2{\sf l}_2}
 \\
 &=
 (\sigma^{-1}{\sf Q})^2
 \cdot
 \eta^\frac{2q}{1-q}
 |\xi|^{-\frac{2}{1-q}-2}
 {\bf 1}_{1<|\xi|<2{\sf l}_2}
 \\
 &\lesssim
 (T-t)^{2({\sf q}_1+{\sf q}_2)(n-2)}
 \cdot
 \eta^\frac{2q}{1-q}
 |\xi|^{-\frac{2}{1-q}-2}
 {\bf 1}_{1<|\xi|<2{\sf l}_2},
 \\
%%%%%%%%%%%%%%%%%%%%%%%%%%%%%%%%%%%%%%%%%%%%%%%%%%%%%%%%%%%%%%%%
 {\sf t}_2
 &=
 {\sf U}_\infty^{q-2}
 (\eta^\frac{2}{1-q}{\sf U}-{\sf U}_\infty-\Theta_J)^2
 (1-\chi_2)^2
 {\bf 1}_{1<|\xi|<2{\sf l}_2}
 \\
 &\lesssim
 \eta^\frac{2(q-2)}{1-q}
 |\xi|^{\frac{2(q-2)}{1-q}}
 \cdot
 \eta^\frac{4}{1-q}
 |\xi|^{2\gamma}
 (|\xi|^{-{\sf k}_1}+|z|^2)^2
 {\bf 1}_{{\sf l}_2<|\xi|<4{\sf l}_2}
 \\
 &\lesssim
 (|\xi|^{-{\sf k}_1}+|z|^2)^2
 \cdot
 \eta^\frac{2q}{1-q}
 |\xi|^{\gamma-2-(\frac{2}{1-q}-\gamma)}
 {\bf 1}_{{\sf l}_2<|\xi|<4{\sf l}_2},
 \\
%%%%%%%%%%%%%%%%%%%%%%%%%%%%%%%%%%%%%%%%%%%%%%%%%%%%%%%%%%%%%%%%
 {\sf t}_3
 &=
 {\sf U}_\infty^{q-2}
 \theta^2
 (1-\chi_2)^2
 {\bf 1}_{1<|\xi|<2{\sf l}_2}
 \\
 &\lesssim
 {\sf U}_\infty^{q-1}
 \theta
 {\bf 1}_{1<|\xi|<2{\sf l}_2}
 \\
 &\lesssim
 \eta^\frac{2(p-q)}{1-q}
 |\xi|^{\frac{2p}{1-q}-\gamma+2}
 \cdot
 \eta^\frac{2q}{1-q}
 |\xi|^{\gamma-2}
 {\bf 1}_{1<|\xi|<2{\sf l}_2},
 \\
%%%%%%%%%%%%%%%%%%%%%%%%%%%%%%%%%%%%%%%%%%%%%%%%%%%%%%%%%%%%%%%%
 {\sf t}_4
 &=
 {\sf U}_\infty^{q-2}
 u_1^2
 {\bf 1}_{1<|\xi|<2{\sf l}_2}
 \\
 &\lesssim
 \eta^\frac{2(q-2)}{1-q}
 |\xi|^{\frac{2(q-2)}{1-q}}
 \cdot
 (T-t)^{2{\sf d}_1}
 \eta^\frac{4}{1-q}
 |\xi|^{2\gamma}
 {\bf 1}_{1<|\xi|<2{\sf l}_2}
 \\
 &\lesssim
 (T-t)^{2{\sf d}_1}
 \cdot
 \eta^\frac{2q}{1-q}
 |\xi|^{\gamma-2-(\frac{2}{1-q}-\gamma)}
 {\bf 1}_{1<|\xi|<2{\sf l}_2}.
 \end{align*}
 This proves the estimate of $N_2{\bf 1}_{|\xi|>1}$.
%%%%%%%%%%%%%%%%%%%%%%%%%%%%%%%%%%%%%%%%%%%%%%%%%%%%%%%%%%%%%%%%
 A computation for $N_3$ is postponed to the end of this subsection.
 From ({\sf Ei}) and ({\sf Eiii}),
 we have
 \begin{align*}
 N_4[\epsilon,v,w]
 {\bf 1}_{|y|<1}
 &=
 f_2(u)
 \chi_1
 {\bf 1}_{|y|<1}
 \\
 &\lesssim
 (\lambda^{-\frac{n-2}{2}}{\sf Q})^q
 {\bf 1}_{|y|<1}
 \\
 &\lesssim
 \lambda^{-\frac{(n-2)q}{2}}
 {\bf 1}_{|y|<1}
 \\
 &=
 \lambda^{-\frac{(n-2)q}{2}+\frac{n+2}{2}}
 \sigma^{-1}
 \cdot
 \lambda^{-\frac{n+2}{2}}
 \sigma
 {\bf 1}_{|y|<1}
 \\
 &=
 \lambda^{-\frac{(n-2)q}{2}+\frac{n+2}{2}}
 ((T-t)^{-1}\lambda^2)^{-1}
 \cdot
 \lambda^{-\frac{n+2}{2}}
 \sigma
 {\bf 1}_{|y|<1}
 \\
 &=
 (T-t)
 \lambda^{\frac{(n-2)(1-q)}{2}}
 \cdot
 \lambda^{-\frac{n+2}{2}}
 \sigma
 {\bf 1}_{|y|<1},
 \\
%%%%%%%%%%%%%%%%%%%%%%%%%%%%%%%%%%%%%%%%%%%%%%%%%%%%%%%%%%%%%%%%
 N_4[\epsilon,v,w]
 {\bf 1}_{|y|>1}
 &=
 f_2(u)
 \chi_1
 {\bf 1}_{|y|>1}
 \\
 &\lesssim
 (\lambda^{-\frac{n-2}{2}}{\sf Q})^q
 {\bf 1}_{1<|y|<2{\sf l}_1}
 +
 u_1^q
 {\bf 1}_{1<|y|<2{\sf l}_1}
 \\
 &\lesssim
 (\lambda^{-\frac{n-2}{2}}{\sf Q})^q
 {\bf 1}_{1<|y|<2{\sf l}_1}
 +
 (\lambda^{-\frac{n-2}{2}}\sigma)^q
 {\bf 1}_{1<|y|<2{\sf l}_1}
 \\
 &\lesssim
 (\lambda^{-\frac{n-2}{2}}{\sf Q})^q
 {\bf 1}_{1<|y|<2{\sf l}_1}
 \\
 &\lesssim
 \lambda^{-\frac{(n-2)q}{2}+\frac{n+2}{2}}
 \sigma^{-1}
 |y|^{-(n-2)q+2}
 \cdot
 \lambda^{-\frac{n+2}{2}}
 \sigma
 |y|^{-2}
 {\bf 1}_{1<|y|<2{\sf l}_1}
 \\
 &=
 (T-t)
 \lambda^{\frac{(n-2)(1-q)}{2}}
 |y|^{-(n-2)q+2}
 \cdot
 \lambda^{-\frac{n+2}{2}}
 \sigma
 |y|^{-2}
 {\bf 1}_{1<|y|<2{\sf l}_1}.
 \end{align*}
 This proves estimates of $N_4{\bf 1}_{|y|>1}$ for the case $-(n-2)q+2\leq0$.
 On the other hand,
 for the case $-(n-2)q+2>0$,
 it holds from ({\sf Eiii}) that
 \begin{align*}
 N_4[\epsilon,v,w]
 {\bf 1}_{|y|>1}
 &\lesssim
 \lambda^{-\frac{(n-2)q}{2}+\frac{n+2}{2}}
 \sigma^{-1}
 |y|^{-(n-2)q+2}
 \cdot
 \lambda^{-\frac{n+2}{2}}
 \sigma
 |y|^{-2}
 {\bf 1}_{1<|y|<2{\sf l}_1}
 \\
 &\lesssim
 \lambda^{-\frac{(n-2)q}{2}+\frac{n+2}{2}}
 \sigma^{-1}
 (\sigma^{-\frac{1}{n-2}})^{-(n-2)q+2}
 \cdot
 \lambda^{-\frac{n+2}{2}}
 \sigma
 |y|^{-2}
 {\bf 1}_{1<|y|<2{\sf l}_1}
 \\
 &=
 \lambda^{-\frac{(n-2)q}{2}+\frac{n+2}{2}}
 \sigma^{-1+q-\frac{2}{n-2}}
 \cdot
 \lambda^{-\frac{n+2}{2}}
 \sigma
 |y|^{-2}
 {\bf 1}_{1<|y|<2{\sf l}_1}
 \\
 &=
 \lambda^{-\frac{(n-2)q}{2}+\frac{n+2}{2}}
 ((T-t)^{-1}\lambda^2)^{-1+q-\frac{2}{n-2}}
 \cdot
 \lambda^{-\frac{n+2}{2}}
 \sigma
 |y|^{-2}
 {\bf 1}_{1<|y|<2{\sf l}_1}
 \\
 &=
 (T-t)^{1-q+\frac{2}{n-2}}
 \lambda^{\frac{(n-2)^2-8}{2(n-2)}+\frac{(6-n)q}{2}}
 \cdot
 \lambda^{-\frac{n+2}{2}}
 \sigma
 |y|^{-2}
 {\bf 1}_{1<|y|<2{\sf l}_1}.
 \end{align*}
 Therefore
 it follows that
 \begin{align*}
 N_4[\epsilon,v,w]
 {\bf 1}_{|y|>1}
 &\lesssim
 (T-t)
 \cdot
 \lambda^{-\frac{n+2}{2}}
 \sigma
 |y|^{-2}
 {\bf 1}_{1<|y|<2{\sf l}_1}
 \qquad
 \text{for any }
 q\in(0,1).
 \end{align*}
%%%%%%%%%%%%%%%%%%%%%%%%%%%%%%%%%%%%%%%%%%%%%%%%%%%%%%%%%%%%%%%%
%%%%%%%%%%%%%%%%%%%%%%%%%%%%%%%%%%%%%%%%%%%%%%%%%%%%%%%%%%%%%%%%
%%%%%%%%%%%%%%%%%%%%%%%%%%%%%%%%%%%%%%%%%%%%%%%%%%%%%%%%%%%%%%%%
 Estimates for $N_5[w]$ is obvious.
 We write $N_6[w]$ as
 \begin{align*}
 N_6[w]
 &=
 \{
 \dot{\sf M}
 +
 f(u)
 -
 f_2(u)
 \}
 (1-\chi_4)
 \\
 &=
 \underbrace{
 \{
 \dot{\sf M}
 +
 f(u)
 -
 f_2(u)
 \}
 (1-\chi_{4,{\sf a}})
 }_{={\sf t}_5}
 +
 \underbrace{
 \{
 \dot{\sf M}
 +
 f(u)
 -
 f_2(u)
 \}
 (\chi_{4,{\sf a}}-\chi_4)
 }_{={\sf t}_6}.
 \end{align*}
%%%%%%%%%%%%%%%%%%%%%%%%%%%%%%%%%%%%%%%%%%%%%%%%%%%%%%%%%%%%%%%%
 From the definition of ${\sf M}$
 ($\dot {\sf M}=f({\sf M})-f_2({\sf M})$),
 it holds that
 \begin{align*}
 {\sf t}_5
 &=
 \{
 \dot{\sf M}
 +
 f(u)
 -
 f_2(u)
 \}
 (1-\chi_{4,{\sf a}})
 \\
 &=
 (
 f(u)
 -
 f_2(u)
 +
 f({\sf M})
 -
 f_2({\sf M})
 )
 (1-\chi_{4,{\sf a}})
 \\
 &=
 (
 f(-{\sf M}+w)
 -
 f(-{\sf M})
 -
 f_2(-{\sf M}+w)
 +
 f_2(-{\sf M})
 )
 (1-\chi_{4,{\sf a}})
 \\
 &=
 (
 f'(-{\sf M}_\kappa
 )w
 -
 f_2'(-{\sf M}_\kappa) 
 w
 )
 (1-\chi_{4,{\sf a}})
 \\
 &\lesssim
 w
 {\bf 1}_{|x|>2},
 \end{align*}
 where
 ${\sf M}_\kappa=\kappa{\sf M}-(1-\kappa)w$.
 The estimate for ${\sf t}_6$ is obvious.
 \begin{align*}
 {\sf t}_6
 &=
 \{
 \dot{\sf M}
 +
 f(u)
 -
 f_2(u)
 \}
 (\chi_{4,{\sf a}}-\chi_4)
 \lesssim
 {\bf 1}_{1<|x|<4}.
 \end{align*}
%%%%%%%%%%%%%%%%%%%%%%%%%%%%%%%%%%%%%%%%%%%%%%%%%%%%%%%%%%%%%%%%
 We finally discuss $N_3[\epsilon,v,w]$.
 We put
 \[
 n_3
 =
 f(u)
 -
 f_2(u)
 -
 f_2({\sf U}_\infty)
 +
 q{\sf U}_\infty^{q-1}
 (u+{\sf U}_\infty)
 -
 (\Delta_x\theta-q{\sf U}_\infty^{q-1}\theta)
 \chi_3.
 \]
 We now define two cut off functions.
 \begin{itemize}
 \item
 $\chi_{2,{\sf a}}=\chi_2(\cdot/2)$,
 \item
 $\chi_{3,{\sf e}}=\chi_3(2\cdot)$.
 \end{itemize}
 Then
 $N_3$ can be written as
 \begin{align*}
 N_3
 [\epsilon,v,w]
 &=
 n_3
 (1-\chi_2)
 \chi_4
 \\
 &=
 \underbrace{
 n_3
 (\chi_{2,{\sf a}}-\chi_2)
 }_{={\sf t}_7}
 +
 \underbrace{
 n_3
 (\chi_4-\chi_{3,{\sf e}})
 }_{={\sf t}_8}
 +
 \underbrace{
 n_3
 (1-\chi_{2,{\sf a}})
 \chi_{3,{\sf e}}
 }_{={\sf t}_9}.
 \end{align*}
 We first compute ${\sf t}_7$.
 \begin{align*}
 {\sf t}_7
 &=
 n_3
 (\chi_{2,{\sf a}}-\chi_2)
 \\
 &=
 f(u)
 (\chi_{2,{\sf a}}-\chi_2)
 -
 (\Delta_x\theta-q{\sf U}_\infty^{q-1}\theta)
 (\chi_{2,{\sf a}}-\chi_2)
 \\
 & \quad
 -
 \{
 f_2(u)
 +
 f_2({\sf U}_\infty)
 -
 q{\sf U}_\infty^{q-1}
 (u+{\sf U}_\infty)
 \}
 (\chi_{2,{\sf a}}-\chi_2)
 \\
 &\lesssim
 \{
 {\sf U}_\infty^p
 +
 |x|^\frac{2p}{1-q}
 +
 f_2''({\sf U}_\infty)
 (u+{\sf U}_\infty)^2
 \}
 {\bf 1}_{{\sf l}_2<|\xi|<4{\sf l}_2}
 \\
 &\lesssim
 |x|^\frac{2p}{1-q}
 {\bf 1}_{{\sf l}_2<|\xi|<4{\sf l}_2}
 +
 f_2''({\sf U}_\infty)
 (u+{\sf U}_\infty)^2
 {\bf 1}_{{\sf l}_2<|\xi|<4{\sf l}_2}.
 \end{align*}
 From
 ({\sf Evi}) and \eqref{eq9.2},
 we have
 \begin{align*}
 u
 +
 {\sf U}_\infty
 &=
 \lambda^{-\frac{n-2}{2}}
 {\sf Q}
 -
 \eta^{\frac{2}{1-q}}
 {\sf U}
 \chi_2
 +
 {\sf U}_\infty
 \chi_2
 -
 (\theta+\Theta_J)
 (1-\chi_2)
 \\
 &=
 \lambda^{-\frac{n-2}{2}}
 {\sf Q}
 -
 (
 \eta^{\frac{2}{1-q}}
 {\sf U}
 -
 {\sf U}_\infty
 )
 \chi_2
 -
 (\theta+\Theta_J)
 (1-\chi_2)
 \\
 &\lesssim
 \lambda^{-\frac{n-2}{2}}
 \sigma
 \cdot
 (
 \sigma^{-1}
 {\sf Q}
 )
 +
 \Theta_J
 \chi_2
 +
 \Theta_J
 (1-\chi_2)
 \\
 &\lesssim
 \eta^\frac{2}{1-q}
 (T-t)^{({\sf q}_1+{\sf q}_2)(n-2)}
 +
 \eta^\frac{2}{1-q}
 |\xi|^\gamma
 \\
 &\lesssim
 \eta^\frac{2}{1-q}
 |\xi|^\gamma
 \qquad
 \text{for }
 {\sf l}_2<|\xi|<4{\sf l}_2.
 \end{align*}
%%%%%%%%%%%%%%%%%%%%%%%%%%%%%%%%%%%%%%%%%%%%%%%%%%%%%%%%%%%%%%%%
%%%%%%%%%%%%%%%%%%%%%%%%%%%%%%%%%%%%%%%%%%%%%%%%%%%%%%%%%%%%%%%%
 We recall that
 ${\sf l}_2(t)=(T-t)^{{\sf b}}$.
 Since
 \begin{align*}
 |x|^\frac{2p}{1-q}
 &=
 |x|^\frac{2(p-q+2)}{1-q}
 \eta^{-\frac{4}{1-q}}
 |\xi|^{-2\gamma}
 \cdot
 |x|^\frac{2(q-2)}{1-q}
 \eta^\frac{4}{1-q}
 |\xi|^{2\gamma}
 \\
 &=
 \eta^\frac{2(p-q)}{1-q}
 |\xi|^{\frac{2(p-q+2)}{1-q}-2\gamma}
 \cdot
 |x|^\frac{2(q-2)}{1-q}
 \eta^\frac{4}{1-q}
 |\xi|^{2\gamma},
 \end{align*}
 there exists ${\sf b}_1>0$ depending only on $q,n$
 such that
 if $0<{\sf b}<{\sf b}_1$
 \begin{align*}
 |x|^\frac{2p}{1-q}
 <
 |x|^\frac{2(q-2)}{1-q}
 \eta^\frac{4}{1-q}
 |\xi|^{2\gamma}
 \qquad
 \text{for }
 {\sf l}_2<|\xi|<4{\sf l}_2.
 \end{align*}
 Due to these estimates,
 we obtain
 \begin{align*}
 {\sf t}_7
 &\lesssim
 |x|^\frac{2p}{1-q}
 {\bf 1}_{{\sf l}_2<|\xi|<4{\sf l}_2}
 +
 f_2''({\sf U}_\infty)
 (u+{\sf U}_\infty)^2
 {\bf 1}_{{\sf l}_2<|\xi|<4{\sf l}_2}
 \\
 &\lesssim
 |x|^\frac{2(q-2)}{1-q}
 \eta^\frac{4}{1-q}
 |\xi|^{2\gamma}
 {\bf 1}_{{\sf l}_2<|\xi|<4{\sf l}_2}
 \\
 &\lesssim
 \eta^\frac{2q}{1-q}
 |\xi|^{\gamma-2-\frac{2}{1-q}+\gamma}
 {\bf 1}_{{\sf l}_2<|\xi|<4{\sf l}_2}.
 \end{align*}
 We note that
 $u,\theta\lesssim{\sf U}_\infty$
 for $\tfrac{1}{2}{\sf r}_3<|x|<2$.
 Hence
 it follows that
 \begin{align*}
 {\sf t}_8
 &=
 n_3
 (\chi_4-\chi_{3,{\sf e}})
 \lesssim
 {\sf U}_\infty^q
 {\bf 1}_{\tfrac{1}{2}{\sf r}_3<|x|<2}.
 \end{align*}
%%%%%%%%%%%%%%%%%%%%%%%%%%%%%%%%%%%%%%%%%%%%%%%%%%%%%%%%%%%%%%%%
 We finally compute ${\sf t}_9$.
 From the definition of $\chi_{2,{\sf a}},\chi_{3,{\sf e}}$,
 we note that
 \[
 u
 =
 -
 {\sf U}_\infty
 -
 (\theta+\Theta_J)
 +
 w
 \qquad
 \text{when }
 (1-\chi_{2,{\sf a}})
 \chi_{3,{\sf e}}
 \not=0.
 \]
 For simplicity,
 we write
 \begin{align*}
 \Theta_{\sf a}
 &=
 \Theta_J-w,
 \\
 P_1\theta
 &=
 \Delta_x\theta-q{\sf U}_\infty^{q-1}\theta.
 \end{align*}
 Then
 ${\sf t}_9$
 is expressed as
 \begin{align*}
 {\sf t}_9
 &=
 \{
 f({\sf U}_\infty+\Theta_{\sf a}+\theta)
 -
 f_2({\sf U}_\infty+\Theta_{\sf a}+\theta)
 +
 f_2({\sf U}_\infty)
 +
 {\sf U}_\infty^{q-1}
 (\Theta_{\sf a}+\theta)
 +
 P_1\theta
 \}
 \\
 & \quad
 \times
 (1-\chi_{2,{\sf a}})
 \chi_{3,{\sf e}}.
 \end{align*}
%%%%%%%%%%%%%%%%%%%%%%%%%%%%%%%%%%%%%%%%%%%%%%%%%%%%%%%%%%%%%%%%
 From the Taylor expansion,
 we see that
 \begin{align*}
 {\sf t}_9
 &=
 \underbrace{
 \left\{
 \sum_{i=0}^N
 \tfrac{f^{(i)}({\sf U}_\infty)}{i!}
 (\Theta_{\sf a}+\theta)^i
 -
 \sum_{i=2}^N
 \tfrac{f_2^{(i)}({\sf U}_\infty)}{i!}
 (\Theta_{\sf a}+\theta)^i
 +
 P_1\theta
 \right\}
 (1-\chi_{2,{\sf a}})
 \chi_{3,{\sf e}}
 }_{={\sf t}_{9,1}}
 \\
 & \quad
 +
 \left(
 \underbrace{
 \tfrac{f^{(N+1)}({\sf U}_{\infty}^{(\kappa)})}{(N+1)!}
 }_{={\sf t}_{9,2}}
 -
 \underbrace{
 \tfrac{f_2^{(N+1)}({\sf U}_{\infty}^{(\kappa)})}{(N+1)!}
 }_{={\sf t}_{9,3}}
 \right)
 (\Theta_{\sf a}+\theta)^{N+1}
 (1-\chi_{2,{\sf a}})
 \chi_{3,{\sf e}}.
 \end{align*}
 Here
 we write
 ${\sf U}_{\infty}^{(\kappa)}
 =\kappa{\sf U}_\infty+(1-\kappa)(\Theta_{\sf a}+\theta)$.
 We recall that $\theta(x)$ satisfies
 (see Section \ref{sec_4.5})
 \begin{align*}
 P_1\theta
 &=
 \Delta\theta
 -
 q{\sf U}_\infty^{q-1}
 \theta
 \\
 &=
 -
 f({\sf U}_\infty)
 -
 \sum_{i=1}^N
 \tfrac{f^{(i)}({\sf U}_\infty)}{i!}
 (\theta-\theta_L)^i
 +
 \sum_{i=2}^N
 \tfrac{f_2^{(i)}({\sf U}_\infty)}{i!}
 (\theta-\theta_L)^i.
 \end{align*}
 By using this relation,
 we divide ${\sf t}_{9,1}$ into three parts.
 \begin{align*}
 {\sf t}_{9,1}
 &=
 \left
 \{
 \sum_{i=0}^N
 \tfrac{f^{(i)}({\sf U}_\infty)}{i!}
 (\Theta_{\sf a}+\theta)^i
 -
 \sum_{i=2}^N
 \tfrac{f_2^{(i)}({\sf U}_\infty)}{i!}
 (\Theta_{\sf a}+\theta)^i
 +
 P_1\theta
 \right
 \}
 (1-\chi_{2,{\sf a}})
 \chi_{3,{\sf e}}
 \\
 &=
 \underbrace{
 f'({\sf U}_\infty)
 (\Theta_{\sf a}+\theta_L)
 (1-\chi_{2,{\sf a}})
 \chi_{3,{\sf e}}
 }_{={\sf z}_1}
 \\
 & \quad
 +
 \underbrace{
 \sum_{i=2}^N
 \tfrac{f^{(i)}({\sf U}_\infty)}{i!}
 (
 (\Theta_{\sf a}+\theta)^i
 -
 (\theta-\theta_L)^i
 )
 (1-\chi_{2,{\sf a}})
 \chi_{3,{\sf e}}
 }_{={\sf z}_2}
 \\
 & \quad
 -
 \underbrace{
 \sum_{i=2}^N
 \tfrac{f_2^{(i)}({\sf U}_\infty)}{i!}
 (
 (\Theta_{\sf a}+\theta)^i
 -
 (\theta-\theta_L)^i
 )
 (1-\chi_{2,{\sf a}})
 \chi_{3,{\sf e}}
 }_{={\sf z}_3}.
 \end{align*}
%%%%%%%%%%%%%%%%%%%%%%%%%%%%%%%%%%%%%%%%%%%%%%%%%%%%%%%%%%%%%%%%
 We note that
 $\theta_L(x)\ll\Theta_J(x,t)$ for $|x|<{\sf r}_3$.
 Hence
 we get
 \begin{align*}
 {\sf z}_2
 &=
 \sum_{i=2}^N
 \tfrac{f^{(i)}({\sf U}_\infty)}{i!}
 (
 (\Theta_{\sf a}+\theta)^i
 -
 (\theta-\theta_L)^i
 )
 (1-\chi_{2,{\sf a}})
 \chi_{3,{\sf e}}
 \\
 &=
 \sum_{i=2}^N
 f^{(i)}({\sf U}_\infty)
 (
 \sum_{l=0}^{i-1}
 c_{i,l}
 \theta^{l}
 \Theta_{\sf a}^{i-l}
 +
 \sum_{l=0}^{i-1}
 d_{i,l}
 \theta^l
 \theta_L^{i-l}
 )
 {\bf 1}_{|\xi|>2{\sf l}_2}
 {\bf 1}_{|x|<{\sf r}_3}
 \\
 &=
 \sum_{i=2}^N
 \sum_{l=0}^{i-1}
 (
 c_{i,l}
 {\sf U}_\infty^{p-i}
 \theta^l
 \Theta_{\sf a}^{i-l}
 +
 d_{i,l}
 {\sf U}_\infty^{p-i}
 \theta^l
 \theta_L^{i-l}
 )
 {\bf 1}_{|\xi|>2{\sf l}_2}
 {\bf 1}_{|x|<{\sf r}_3}
 \\
 &\lesssim
 \sum_{i=2}^N
 \sum_{l=0}^{i-1}
 {\sf U}_\infty^{p-i}
 \theta^l
 \Theta_{\sf a}^{i-l}
 {\bf 1}_{|\xi|>2{\sf l}_2}
 \chi_{3,{\sf e}}
 \\
 &=
 {\sf U}_\infty^{p}
 \sum_{i=2}^N
 \sum_{l=0}^{i-1}
 \{
 {\sf U}_\infty^{-1}
 \theta
 \}^l
 \{
 {\sf U}_\infty^{-1}
 \Theta_{\sf a}
 \}^{i-l}
 {\bf 1}_{|\xi|>2{\sf l}_2}
 {\bf 1}_{|x|<{\sf r}_3}
 \\
 &\lesssim
 {\sf U}_\infty^{p}
 \sum_{i=2}^N
 \left(
 \{
 {\sf U}_\infty^{-1}
 \Theta_{\sf a}
 \}^i
 +
 \sum_{l=1}^{i-1}
 \{
 {\sf U}_\infty^{-1}
 \theta
 \}^l
 \{
 {\sf U}_\infty^{-1}
 \Theta_{\sf a}
 \}^{i-l}
 \right)
 {\bf 1}_{|\xi|>2{\sf l}_2}
 {\bf 1}_{|x|<{\sf r}_3}.
 \end{align*}
 Since
 $|\Theta_J|+|\theta|+|w|<{\sf U}_\infty$
 for $|\xi|\gg1$, $|x|<{\sf r}_3$,
 we obtain
 \begin{align*}
 {\sf z}_2
 &\lesssim
 {\sf U}_\infty^{p}
 (
 \{
 {\sf U}_\infty^{-1}
 \Theta_{\sf a}
 \}^2
 +
 \{
 {\sf U}_\infty^{-1}
 \theta
 \}
 \{
 {\sf U}_\infty^{-1}
 \Theta_{\sf a}
 \}
 )
 {\bf 1}_{|\xi|>2{\sf l}_2}
 {\bf 1}_{|x|<{\sf r}_3}
 \\
 &=
 {\sf U}_\infty^{p-2}
 (
 \Theta_{\sf a}
 +
 \theta
 )
 \Theta_{\sf a}
 {\bf 1}_{|\xi|>2{\sf l}_2}
 {\bf 1}_{|x|<{\sf r}_3}.
 \end{align*}
%%%%%%%%%%%%%%%%%%%%%%%%%%%%%%%%%%%%%%%%%%%%%%%%%%%%%%%%%%%%%%%%
 In the exact same manner,
 we can verify that
 \begin{align*}
 {\sf z}_3
 &\lesssim
 {\sf U}_\infty^{q-2}
 (\Theta_{\sf a}+\theta)
 \Theta_{\sf a}
 {\bf 1}_{|\xi|>2{\sf l}_2}
 {\bf 1}_{|x|<{\sf r}_3}.
 \end{align*}
%%%%%%%%%%%%%%%%%%%%%%%%%%%%%%%%%%%%%%%%%%%%%%%%%%%%%%%%%%%%%%%%
 Combining these estimates,
 we obtain
 \begin{align*}
 {\sf t}_{9,1}
 &\lesssim
 {\sf z}_1
 +
 {\sf z}_2
 +
 {\sf z}_3
 \\
 &\lesssim
 {\sf U}_\infty^{p-1}
 \Theta_{\sf a}
 {\bf 1}_{|\xi|>2{\sf l}_2}
 {\bf 1}_{|x|<{\sf r}_3}
 +
 (
 {\sf U}_\infty^{p-2}
 +
 {\sf U}_\infty^{q-2}
 )
 (\Theta_{\sf a}+\theta)
 \Theta_{\sf a}
 {\bf 1}_{|\xi|>2{\sf l}_2}
 {\bf 1}_{|x|<{\sf r}_3}
 \\
 &\lesssim
 {\sf U}_\infty^{p-1}
 \Theta_{\sf a}
 {\bf 1}_{|\xi|>2{\sf l}_2}
 {\bf 1}_{|x|<{\sf r}_3}
 +
 {\sf U}_\infty^{q-2}
 (\Theta_{\sf a}+\theta)
 \Theta_{\sf a}
 {\bf 1}_{|\xi|>2{\sf l}_2}
 {\bf 1}_{|x|<{\sf r}_3}
 \\
 &\lesssim
 |x|^\frac{2(p-1)}{1-q}
 \Theta_{\sf a}
 {\bf 1}_{|\xi|>2{\sf l}_2}
 {\bf 1}_{|x|<{\sf r}_3}
 +
 {\sf U}_\infty^{q-2}
 \Theta_{\sf a}^2
 +
 |x|^\frac{2(p-1)}{1-q}
 \Theta_{\sf a}
 {\bf 1}_{|\xi|>2{\sf l}_2}
 {\bf 1}_{|x|<{\sf r}_3}.
 \end{align*}
 We next compute ${\sf t}_{9,2}$.
 We note that
 ${\sf U}_{\infty}^{(\kappa)}
 =\kappa{\sf U}_\infty+(1-\kappa)(\Theta_{\sf a}+\theta)
 =\kappa{\sf U}_\infty(1+o)$
 for $|\xi|\gg1$, $|x|<{\sf r}_3$.
 From this fact,
 we have
 \begin{align*}
 {\sf t}_{9,2}
 &=
 \tfrac{f^{(N+1)}({\sf U}_{\infty}^{(\kappa)})}{(N+1)!}
 (\Theta_{\sf a}+\theta)^{N+1}
 (1-\chi_{2,{\sf a}})
 \chi_{3,{\sf e}}
 \\
 &\lesssim
 {\sf U}_\infty^{p-N-1}
 \cdot
 (\Theta_{\sf a}^{N+1}+\theta^{N+1})
 {\bf 1}_{|\xi|>2{\sf l}_2}
 {\bf 1}_{|x|<{\sf r}_3}
 \\
 &=
 {\sf U}_\infty^p
 (
 ({\sf U}_\infty^{-1}\Theta_{\sf a})^{N+1}
 +
 ({\sf U}_\infty^{-1}\theta)^{N+1}
 )
 {\bf 1}_{|\xi|>2{\sf l}_2}
 {\bf 1}_{|x|<{\sf r}_3}
 \\
 &\lesssim
 {\sf U}_\infty^p
 (
 (
 {\sf U}_\infty^{-1}
 \Theta_{\sf a}
 )^2
 +
 ({\sf U}_\infty^{-1}\theta)^{N+1}
 )
 {\bf 1}_{|\xi|>2{\sf l}_2}
 {\bf 1}_{|x|<{\sf r}_3}.
 \end{align*}
%%%%%%%%%%%%%%%%%%%%%%%%%%%%%%%%%%%%%%%%%%%%%%%%%%%%%%%%%%%%%%%%
 We now take $N$ large enough such that
 \[
 ({\sf U}_\infty^{-1}\theta)^{N+1}
 <
 (
 {\sf U}_\infty^{-1}
 \Theta_{\sf a}
 )^2
 \qquad
 \text{for }
 |x|<{\sf r}_3.
 \]
 Then
 it follows that
 \[
 {\sf t}_{9,2}
 \lesssim
 {\sf U}_\infty^{p-2}
 \Theta_{\sf a}^2
 {\bf 1}_{|\xi|>2{\sf l}_2}
 {\bf 1}_{|x|<{\sf r}_3}.
 \]
 From the same computation,
 we obtain
 ${\sf t}_{9,3}
 \lesssim
 {\sf U}_\infty^{q-2}
 \Theta_{\sf a}^2
 {\bf 1}_{|\xi|>2{\sf l}_2}
 {\bf 1}_{|x|<{\sf r}_3}$.
 As a consequence,
 we conclude
 \begin{align*}
 {\sf t}_9
 &=
 {\sf t}_{9,1}
 +
 {\sf t}_{9,2}
 +
 {\sf t}_{9,3}
 \\
 &\lesssim
 |x|^\frac{2(p-1)}{1-q}
 \Theta_{\sf a}
 {\bf 1}_{|\xi|>2{\sf l}_2}
 {\bf 1}_{|x|<{\sf r}_3}
 +
 {\sf U}_\infty^{q-2}
 \Theta_{\sf a}^2
 {\bf 1}_{|\xi|>2{\sf l}_2}
 {\bf 1}_{|x|<{\sf r}_3}.
 \end{align*}
%%%%%%%%%%%%%%%%%%%%%%%%%%%%%%%%%%%%%%%%%%%%%%%%%%%%%%%%%%%%%%%%
 From the definition of $X_1$,
 we can verify that
 \begin{align*}
 \Theta_{\sf a}
 &\lesssim
 \Theta_J
 \lesssim
 \eta^\frac{2}{1-q}
 |\xi|^\gamma
 \qquad
 \text{for } |z|<1,
 \\
%%%%%%%%%%%%%%%%%%%%%%%%%%%%%%%%%%%%%%%%%%%%%%%%%%%%%%%%%%%%%%%%
 \Theta_{\sf a}
 &\lesssim
 \Theta_J
 \lesssim
 |x|^{\gamma+2J}
 \qquad
 \text{for } 1<|z|<(T-t)^{-\frac{1}{3}},
 \\
%%%%%%%%%%%%%%%%%%%%%%%%%%%%%%%%%%%%%%%%%%%%%%%%%%%%%%%%%%%%%%%%
 \Theta_{\sf a}
 &\lesssim
 \Theta_J
 +
 {\sf R}_1^{-1}
 (T-t)^{\frac{\gamma}{2}+J+{\sf d}_1}
 |z|^{\gamma+2J+3{\sf d}_1}
 \\
 &\lesssim
 (T-t)^{\frac{\gamma}{2}+J+{\sf d}_1}
 |z|^{\gamma+2J+3{\sf d}_1}
 \\
 &=
 (T-t)^{-\frac{1}{2}{\sf d}_1}
 |x|^{\gamma+2J+3{\sf d}_1}
 \qquad
 \text{for } (T-t)^{-\frac{1}{3}}<|z|<{\sf l}_{\sf out},
 \\
%%%%%%%%%%%%%%%%%%%%%%%%%%%%%%%%%%%%%%%%%%%%%%%%%%%%%%%%%%%%%%%%
 \Theta_{\sf a}
 &\lesssim
 \Theta_J
 +
 {\sf R}_1^{-1}
 {\sf U}_\infty
 \lesssim
 {\sf U}_\infty
 \qquad
 \text{for } {\sf l}_{\sf out}\sqrt{T-t}<|x|<{\sf r}_3.
 \end{align*}
%%%%%%%%%%%%%%%%%%%%%%%%%%%%%%%%%%%%%%%%%%%%%%%%%%%%%%%%%%%%%%%%
 Therefore
 we get
 \begin{align*}
 {\sf U}_\infty^{q-2}
 \Theta_{\sf a}^2
 &\lesssim
 {\sf U}_\infty^{q-2}
 \Theta_J^2
 \\
 &\lesssim
 \eta^\frac{2q}{1-q}
 |\xi|^{\gamma-2-\frac{2}{1-q}+\gamma}
 \qquad
 \text{for } |z|<1,
 \\
%%%%%%%%%%%%%%%%%%%%%%%%%%%%%%%%%%%%%%%%%%%%%%%%%%%%%%%%%%%%%%%%
 {\sf U}_\infty^{q-2}
 \Theta_{\sf a}^2
 &\lesssim
 |x|^{-2-\frac{2}{1-q}}
 \Theta_J
 \cdot
 \Theta_J
 \\
 &\lesssim
 (T-t)^{-1}
 |z|^{-2}
 \cdot
 |x|^{\gamma+2J-\frac{2}{1-q}}
 \cdot
 \Theta_J
 \\
 &\lesssim
 (T-t)^{-1}
 (T-t)^{\frac{1}{6}(\gamma+2J-\frac{2}{1-q})}
 \Theta_J
 \\
 &\lesssim
 (T-t)^{-1+\frac{1}{6}(\gamma+2J-\frac{2}{1-q})}
 \Theta_J
 \qquad
 \text{for } 1<|z|<(T-t)^{-\frac{1}{3}},
 \\
%%%%%%%%%%%%%%%%%%%%%%%%%%%%%%%%%%%%%%%%%%%%%%%%%%%%%%%%%%%%%%%%
 {\sf U}_\infty^{q-2}
 \Theta_{\sf a}^2
 &\lesssim
 |x|^{-2-\frac{2}{1-q}}
 \Theta_{\sf a}
 \cdot
 \Theta_{\sf a}
 \\
 &\lesssim
 |x|^{-2}
 \cdot
 (T-t)^{-\frac{1}{2}{\sf d}_1}
 |x|^{\gamma+2J-\frac{2}{1-q}+3{\sf d}_1}
 \cdot
 (T-t)^{-\frac{1}{2}{\sf d}_1}
 |x|^{\gamma+2J+3{\sf d}_1}
 \\
 &\lesssim
 |x|^{-2}
 \cdot
 (T-t)^{-\frac{1}{2}{\sf d}_1}
 \cdot
 (T-t)^{-\frac{1}{2}{\sf d}_1}
 |x|^{\gamma+2J+3{\sf d}_1}
 \\
 &\lesssim
 (T-t)^{-\frac{1}{3}}
 (T-t)^{-{\sf d}_1}
 |x|^{\gamma+2J+3{\sf d}_1}
 \\
 &\lesssim
 (T-t)^{-\frac{1}{3}-{\sf d}_1}
 |x|^{\gamma+2J+3{\sf d}_1}
 \qquad
 \text{for } (T-t)^{-\frac{1}{3}}<|z|<{\sf l}_{\sf out},
 \\
%%%%%%%%%%%%%%%%%%%%%%%%%%%%%%%%%%%%%%%%%%%%%%%%%%%%%%%%%%%%%%%%
 {\sf U}_\infty^{q-2}
 \Theta_{\sf a}^2
 &\lesssim
 {\sf U}_\infty^q
 \qquad
 \text{for } {\sf l}_{\sf out}\sqrt{T-t}<|x|<{\sf r}_3.
 \end{align*}
%%%%%%%%%%%%%%%%%%%%%%%%%%%%%%%%%%%%%%%%%%%%%%%%%%%%%%%%%%%%%%%%
 Furthermore
 we see that
 \begin{align*}
 |x|^\frac{2(p-1)}{1-q}
 \Theta_{\sf a}
 &\lesssim
 |x|^\frac{2(p-1)}{1-q}
 \Theta_J
 \\
 &=
 |x|^\frac{2(p-1)}{1-q}
 \eta^\frac{2}{1-q}
 |\xi|^\gamma
 \\
 &=
 |x|^\frac{2(p-1)}{1-q}
 \eta^2
 |\xi|^2
 \cdot
 \eta^\frac{2q}{1-q}
 |\xi|^{\gamma-2}
 \\
 &\lesssim
 |x|^{\frac{2(p-1)}{1-q}+2}
 \cdot
 \eta^\frac{2q}{1-q}
 |\xi|^{\gamma-2} \qquad
 \text{for } |z|<1,
 \\
%%%%%%%%%%%%%%%%%%%%%%%%%%%%%%%%%%%%%%%%%%%%%%%%%%%%%%%%%%%%%%%%
 |x|^\frac{2(p-1)}{1-q}
 \Theta_{\sf a}
 &\lesssim
 |x|^\frac{2(p-1)}{1-q}
 \Theta_J
% \\
% &\lesssim
% (T-t)^\frac{(p-1)}{1-q}
% |z|^\frac{2(p-1)}{1-q}
% \Theta_J
 \\
 &\lesssim
 (T-t)^\frac{p-1}{3(1-q)}
 \Theta_J
 \qquad
 \text{for } 1<|z|<(T-t)^{-\frac{1}{3}},
 \\
%%%%%%%%%%%%%%%%%%%%%%%%%%%%%%%%%%%%%%%%%%%%%%%%%%%%%%%%%%%%%%%%
 |x|^\frac{2(p-1)}{1-q}
 \Theta_{\sf a}
 &\lesssim
 |x|^\frac{2(p-1)}{1-q}
 \cdot
 (T-t)^{-\frac{1}{2}{\sf d}_1}
 |x|^{\gamma+2J-\frac{2}{1-q}+3{\sf d}_1}
 \\
 &\lesssim
 (T-t)^{-\frac{1}{2}{\sf d}_1}
 |x|^{\gamma+2J-\frac{2}{1-q}+3{\sf d}_1}
 \qquad
 \text{for } (T-t)^{-\frac{1}{3}}<|z|<{\sf l}_{\sf out},
 \\
%%%%%%%%%%%%%%%%%%%%%%%%%%%%%%%%%%%%%%%%%%%%%%%%%%%%%%%%%%%%%%%%
 |x|^\frac{2(p-1)}{1-q}
 \Theta_{\sf a}
 &\lesssim
 |x|^\frac{2(p-1)}{1-q}
 {\sf U}_\infty
 \qquad
 \text{for } {\sf l}_{\sf out}\sqrt{T-t}<|x|<{\sf r}_3.
 \end{align*}
 The proof is completed.
 \end{proof}
%%%%%%%%%%%%%%%%%%%%%%%%%%%%%%%%%%%%%%%%%%%%%%%%%%%%%%%%%%%%%%%%

 \subsection{Proof of Lemma \ref{Lem7.3}\ and Lemma \ref{Lem8.3}\
 (computations of $F$)}
 \label{sec_9.5}
%%%%%%%%%%%%%%%%%%%%%%%%%%%%%%%%%%%%%%%%%%%%%%%%%%%%%%%%%%%%%%%%
 We here provide estimates for $F_1,F_2$.
 These estimates prove Lemma \ref{Lem7.3} and Lemma \ref{Lem8.3}.
 We recall that
 \begin{itemize}
 \item
 $F_1[v,w]=
 \lambda^{-2}
 V
 (\eta^\frac{2}{1-q}v\chi_{\sf mid}+w)
 \chi_2$,
 
 \item
 $F_2[w]
 =
 q
 (\eta^{-2}{\sf U}^{q-1}(1-\chi_1)-{\sf U}_\infty^{q-1})
 w
 \chi_2$.
 \end{itemize}
%%%%%%%%%%%%%%%%%%%%%%%%%%%%%%%%%%%%%%%%%%%%%%%%%%%%%%%%%%%%%%%%
%%%%%%%%%%%%%%%%%%%%%%%%%%%%%%%%%%%%%%%%%%%%%%%%%%%%%%%%%%%%%%%%
 \begin{lem}\label{Lem_A.3}
 Assume {\rm(A1)} - {\rm(A3)}.
 Then it holds that
% There exist positive constants ${\sf a}_1$ and ${\sf c}_0,{\sf c}_1,{\sf c}_2$
% depending only on $q$, $n$, $J$ and $D_1$
% {\rm(}independent of
% $\delta$,
% ${\sf a}$,
% ${\sf d}_1$,
% ${\sf R}_{\sf in}$,
% ${\sf R}_{\sf mid}$,
% ${\sf R}_1$,
% ${\sf r}_0$
% and
% ${\sf r}_3${\rm)}
% such that
% if
% ${\sf a}\in(0,{\sf a}_1)$,
% it holds that
%%%%%%%%%%%%%%%%%%%%%%%%%%%%%%%%%%%%%%%%%%%%%%%%%%%%%%%%%%%%%%%%
 \begin{align*}
 F_1[v,w]
 (1-\chi_{\sf in})
 {\bf 1}_{|\xi|<1}
 &\lesssim
 (T-t)^{{\sf d}_1}
 \lambda^{-\frac{n+2}{2}}
 \sigma
 |y|^{-4}
 {\bf 1}_{|y|>{\sf R}_{\sf in}}
 {\bf 1}_{|\xi|<1},
 \\
%%%%%%%%%%%%%%%%%%%%%%%%%%%%%%%%%%%%%%%%%%%%%%%%%%%%%%%%%%%%%%%%
 F_1[v,w]
 {\bf 1}_{|\xi|>1}
 &\lesssim
 (\lambda^2\eta^{-2})
 \cdot
 \eta^\frac{2q}{1-q}
 |\xi|^{\gamma-4}
 {\bf 1}_{1<|\xi|<2{\sf l}_2},
 \\
%%%%%%%%%%%%%%%%%%%%%%%%%%%%%%%%%%%%%%%%%%%%%%%%%%%%%%%%%%%%%%%%
 F_2[w]
 {\bf 1}_{|\xi|<1}
 &\leq
 {\sf R}_1^{-1}
 (T-t)^{{\sf d}_1}
 \eta^\frac{2q}{1-q}
 |\xi|^{\gamma-2}
 {\bf 1}_{|\xi|<1},
 \\
%%%%%%%%%%%%%%%%%%%%%%%%%%%%%%%%%%%%%%%%%%%%%%%%%%%%%%%%%%%%%%%%
 F_2[w]
 {\bf 1}_{|\xi|>1}
 &\leq
 {\sf R}_1^{-1}
 (T-t)^{{\sf d}_1}
 \eta^\frac{2q}{1-q}
 |\xi|^{\gamma-2-(\frac{2}{1-q}-\gamma)}
 {\bf 1}_{1<|\xi|<2{\sf l}_2}.
 \end{align*}
 \end{lem}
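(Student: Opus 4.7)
The plan is to derive the four pointwise bounds by elementary estimation using four ingredients: the decay $V(y)=p{\sf Q}(y)^{p-1}\lesssim (1+|y|^{2})^{-2}$ of the Talenti potential; the pointwise bounds on $v$ and $w$ inherited from $(v,w)\in X$ through (A3); the scaling identity (\textsf{Eii}) linking $\eta^{2/(1-q)}$ to $\lambda^{-(n-2)/2}\sigma$; and the far-field expansion (\textsf{Evi}) matching ${\sf U}(\xi)$ with ${\sf U}_\infty(x)$ for large $|\xi|$.

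For the two $F_{1}$ estimates I would first note that on the support of $\chi_{2}(1-\chi_{\sf in})$ one has $|y|\ge{\sf R}_{\sf in}\gg 1$, so $V(y)\lesssim |y|^{-4}$. In the subregion $|\xi|<1$, the definition of $X$ gives $|v|\lesssim (T-t)^{{\sf d}_{1}}$ and $|w|\lesssim {\sf R}_{1}^{-1}(T-t)^{{\sf d}_{1}}\eta^{2/(1-q)}|\xi|^{\gamma}$; plugging these into $F_{1}=\lambda^{-2}V(\eta^{2/(1-q)}v\chi_{\sf mid}+w)\chi_{2}$ and rewriting $\lambda^{-2}\eta^{2/(1-q)}$ as $\lambda^{-(n+2)/2}\sigma$ via (\textsf{Eii}) yields the first inequality. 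In the subregion $|\xi|>1$ I would split according to the branches of ${\cal W}$ (the sub-cases $|z|<1$, $1<|z|<{\sf l}_{\sf out}$ and ${\sf l}_{\sf out}\sqrt{T-t}<|x|<1$), bound $w$ by the corresponding branch and $v$ by $(T-t)^{{\sf d}_{1}}(1+|\xi|^{2})^{\gamma/2}$, and combine $\lambda^{-2}|y|^{-4}=\lambda^{2}\eta^{-4}|\xi|^{-4}$ with the identity $\eta^{2/(1-q)-4}=\eta^{-2}\eta^{2q/(1-q)}$ to rearrange the bound into the claimed form $\lambda^{2}\eta^{-2}\cdot\eta^{2q/(1-q)}|\xi|^{\gamma-4}$.

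For the two $F_{2}$ estimates the central task is to control the coefficient
\[
K(\xi,t):=\eta^{-2}{\sf U}(\xi)^{q-1}(1-\chi_{1})-{\sf U}_\infty(x)^{q-1}.
\]
On $|\xi|<1$ I would bound each term separately: since ${\sf U}(\xi)$ is bounded from below by a positive constant, $\eta^{-2}{\sf U}^{q-1}\lesssim\eta^{-2}$, while ${\sf U}_\infty(x)^{q-1}=L_{1}^{q-1}\eta^{-2}|\xi|^{-2}$ dominates, so $|K|\lesssim \eta^{-2}|\xi|^{-2}$; multiplying by the ${\cal W}$-bound on $w$ and using $\eta^{2/(1-q)-2}=\eta^{2q/(1-q)}$ gives the third inequality. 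On $|\xi|>1$ I would invoke (\textsf{Evi}) to write ${\sf U}(\xi)={\sf U}_\infty(\xi)+O(|\xi|^{\gamma})$; since ${\sf U}_\infty(\xi)=\eta^{-2/(1-q)}{\sf U}_\infty(x)$, a Taylor expansion of the $(q-1)$-st power produces $\eta^{-2}{\sf U}(\xi)^{q-1}={\sf U}_\infty(x)^{q-1}\bigl(1+O(|\xi|^{\gamma-2/(1-q)})\bigr)$, and hence $|K|\lesssim \eta^{-2}|\xi|^{\gamma-2-2/(1-q)}$; multiplying by the bound on $w$ yields the fourth inequality.

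The main technical obstacle is the $F_{2}$ analysis on $|\xi|>1$: one must expand the $(q-1)$-power of the diverging profile ${\sf U}(\xi)$ while retaining the error of relative order $|\xi|^{\gamma-2/(1-q)}$ and then cancel it against the singular weight $|x|^{-2}$ coming from ${\sf U}_\infty^{q-1}$ to extract the crucial gain $|\xi|^{-(2/(1-q)-\gamma)}$. A secondary but routine point is that the cutoff $(1-\chi_{1})$ only truncates the first term of $K$, so on the transition region $|y|\sim{\sf l}_{1}$ the bound is still governed by ${\sf U}_\infty^{q-1}$. The remaining bookkeeping of powers of $\lambda$, $\eta$ and $(T-t)$ is carried out in parallel to the proofs of Lemma \ref{Lem9.1} and Lemma \ref{Lem9.2}.
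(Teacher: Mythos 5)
Your proposal is correct and follows essentially the same route as the paper: the paper dismisses the two $F_1$ bounds as obvious (your bookkeeping with $V\lesssim|y|^{-4}$, the $X$-bounds on $v,w$, and the identity $\lambda^{-2}\eta^{\frac{2}{1-q}}\sim\lambda^{-\frac{n+2}{2}}\sigma$ is exactly what is being omitted), and for $F_2$ it uses the same two observations you do, namely that ${\sf U}_\infty(x)^{q-1}$ dominates for $|\xi|<1$ and that the mean-value/Taylor expansion of the $(q-1)$-power combined with ${\sf U}(\xi)-{\sf U}_\infty(\xi)=O(|\xi|^\gamma)$ produces the gain $|\xi|^{-(\frac{2}{1-q}-\gamma)}$ for $|\xi|>1$. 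The only cosmetic difference is that on $\mathrm{supp}\,\chi_2$ one automatically has $|z|\ll1$ (for ${\sf b}$ small), so the case split over the branches of ${\cal W}$ you anticipate for $|\xi|>1$ is not actually needed.
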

%%%%%%%%%%%%%%%%%%%%%%%%%%%%%%%%%%%%%%%%%%%%%%%%%%%%%%%%%%%%%%%%
 \begin{proof}
 Since the first two estimates are obvious,
 we omit their proofs.
 We recall that
 ${\sf U}_\infty<\eta^{-2}{\sf U}$
 and
 ${\sf U}(\xi)={\sf U}_\infty(\xi)+O(|\xi|^\gamma)$
 as $|\xi|\to\infty$.
 Therefore
 we get
 \begin{align*}
 F_2[w]
 {\bf 1}_{|\xi|<1}
 &=
 q
 (
 \eta^{-2}
 {\sf U}^{q-1}
 (1-\chi_1)
 -
 {\sf U}_\infty^{q-1}
 )
 w
 \chi_2
 \\
 &\lesssim
 {\sf U}_\infty^{q-1}
 w
 (1-\chi_{1})
 {\bf 1}_{|\xi|<1}
 \\
 &\lesssim
 {\sf R}_1^{-1}
 (T-t)^{{\sf d}_1}
 \eta^\frac{2q}{1-q}
 |\xi|^{\gamma-2}
 {\bf 1}_{|y|>{\sf l}_1}
 {\bf 1}_{|\xi|<1},
 \\
%%%%%%%%%%%%%%%%%%%%%%%%%%%%%%%%%%%%%%%%%%%%%%%%%%%%%%%%%%%%%%%%
 F_2[w]
 {\bf 1}_{|\xi|>1}
 &=
 q
 (
 \eta^{-2}
 {\sf U}^{q-1}
 -
 {\sf U}_\infty^{q-1}
 )
 w
 \chi_2
 {\bf 1}_{|\xi|>1}
 \\
 &\lesssim
 \eta^{-2}
 {\sf U}_\infty^{q-2}
 |{\sf U}_\infty(\xi)-{\sf U}(\xi)|
 \cdot
 w
 {\bf 1}_{1<|\xi|<2{\sf l}_2}
 \\
 &\lesssim
 {\sf R}_1^{-1}
 (T-t)^{{\sf d}_1}
 \eta^\frac{2q}{1-q}
 |\xi|^{\gamma-2-(\frac{2}{1-q}-\gamma)}
 {\bf 1}_{1<|\xi|<2{\sf l}_2}.
 \end{align*}
 The proof is completed.
 \end{proof}
%%%%%%%%%%%%%%%%%%%%%%%%%%%%%%%%%%%%%%%%%%%%%%%%%%%%%%%%%%%%%%%%

%%%%%%%%%%%%%%%%%%%%%%%%%%%%%%%%%%%%%%%%%%%%%%%%%%%%%%%%%%%%%%%%
\section*{Acknowledgement}
%The author would like to thank the referee for his/her careful reading of
%this manuscript.
The author is partly supported by
Grant-in-Aid for Young Scientists (B) No. 26800065.
%%%%%%%%%%%%%%%%%%%%%%%%%%%%%%%%%%%%%%%%%%%%%%%%%%%%%%%%%%%%%%%%

%%%%%%%%%%%%%%%%%%%%%%%%%%%%%%%%%%%%%%%%%%%%%%%%%%%%%%%%%%%%%%%%
 
%%%%%%%%%%%%%%%%%%%%%%%%%%%%%%%%%%%%%%%%%%%%%%%%%%%%%%%%%%%%%%%%
\end{document}